%%%%%%%%%%%%%%%%%%%%%%%%%%%%%%%%
% LaTeX file
%%%%%%%%%%%%%%%%%%%%%%%%%%%%%%%%

\documentclass[11pt, draft]{amsart}
\usepackage{amssymb, amstext, amscd, amsmath, amssymb}
\usepackage{mathtools, xypic, paralist, color, dsfont, rotating}
\usepackage{mathabx}
\usepackage{verbatim}
\usepackage{enumerate,enumitem}
\usepackage{float}
\usepackage{setspace}

% change font style
%\usepackage{palatino}
%\usepackage{times}
%\usepackage{tgbonum}
%\usepackage{charter}
%\usepackage{times}

% check if all items are cited
%\usepackage{refcheck}
%%%%%

%\usepackage[notcite,notref]{showkeys}
%\usepackage{showlabels}
%\usepackage{color}
%\usepackage{hyperref}
%\usepackage{tikz-cd}
\usepackage{tikz}

% boxed figures
\floatstyle{boxed} 
\restylefloat{figure}

% style of text
\numberwithin{equation}{section}
\footskip=20pt %for numbering of pages

\usepackage[a4paper]{geometry}
\geometry{
    tmargin= 3cm, %3cm, %1.3in, %3.5cm, %2.65cm,
    bmargin= 2.5cm, %2.3cm, %1.1in, %3cm, %2.5cm,
    rmargin= 2.5cm, %2.3cm, %1.1in, %2.75cm, %2.3cm,
    lmargin= 2.5cm %2.3cm %1.1in %2.75cm, %2.3cm
    }

% quoting: if quoting package is not applicable, mute the following orders and substitute quoting <----> quote
\usepackage{quoting}
\quotingsetup{vskip=.1in}
\quotingsetup{leftmargin=.17in}
\quotingsetup{rightmargin=.17in}
%%%%%

% adjust spaces in bibliography
\let\OLDthebibliography\thebibliography
\renewcommand\thebibliography[1]{
  \OLDthebibliography{#1}
  \setlength{\parskip}{0pt}
  \setlength{\itemsep}{2pt plus 0.5ex}
}

%%%%%%%%%% begin macros %%%%%%%%%%%%%%%
%
%      Cites in bold rather than roman.
\makeatletter
\def\@cite#1#2{{\m@th\upshape\bfseries%
[{#1\if@tempswa{\m@th\upshape\mdseries, #2}\fi}]}}
\makeatother
%
% Proclamation definitions in the most emphatic (plain) style:
\theoremstyle{plain}
\newtheorem{theorem}{Theorem}[section]
\newtheorem{corollary}[theorem]{Corollary}
\newtheorem{proposition}[theorem]{Proposition}
\newtheorem{lemma}[theorem]{Lemma}
% Proclamation definitions in the less emphatic (definition) style:
\theoremstyle{definition}
\newtheorem{definition}[theorem]{Definition}
\newtheorem{example}[theorem]{Example}

\newtheorem{remark}[theorem]{Remark}

\newtheorem*{acknow}{Acknowledgements}

% Proclamation definitions in the least emphatic (remark) style:
\theoremstyle{remark}
%\newtheorem{remark}{Remark}

%      Proof environment

% Change first-level `enumerate' numbering from arabic to roman.
%

% properly align :=
\mathtoolsset{centercolon}
%
%      Capital script letters
  \newcommand{\A}{{\mathcal{A}}}
  \newcommand{\B}{{\mathcal{B}}}
  \newcommand{\C}{{\mathcal{C}}}

  \newcommand{\F}{{\mathcal{F}}}
  \newcommand{\G}{{\mathcal{G}}}
\renewcommand{\H}{{\mathcal{H}}}
  \newcommand{\I}{{\mathcal{I}}}
  
  \newcommand{\K}{{\mathcal{K}}}
\renewcommand{\L}{{\mathcal{L}}}
  
  \newcommand{\N}{{\mathcal{N}}}
\renewcommand{\O}{{\mathcal{O}}}
\renewcommand{\P}{{\mathcal{P}}}

\renewcommand{\S}{{\mathcal{S}}}
  \newcommand{\T}{{\mathcal{T}}}

  \newcommand{\X}{{\mathcal{X}}}
  \newcommand{\Y}{{\mathcal{Y}}}
  
%Greek Letters
\newcommand{\eps}{\varepsilon}
\def\al{\alpha}
\def\be{\beta}

\def\ga{\gamma}
\def\De{\Delta}
\def\de{\delta}

\def\la{\lambda}

\def\om{\omega}

\newcommand\vphi{\varphi}
%\def\eps{\epsilon}

%Roman letters for math

%Math capital letters

\newcommand{\bC}{\mathbb{C}}

\newcommand{\bF}{\mathbb{F}}
\newcommand{\bK}{\mathbb{K}}
\newcommand{\bN}{\mathbb{N}}

\newcommand{\bZ}{\mathbb{Z}}
\newcommand{\bR}{\mathbb{R}}
%Gothic letters

%      Lower case bold letters

%      Text used in equations

\newcommand{\foral}{\text{ for all }}
\newcommand{\qand}{\quad\text{and}\quad}

\newcommand{\qiff}{\quad\text{if and only if}\quad}
\newcommand{\qfor}{\quad\text{for}\quad}
\newcommand{\qforal}{\quad\text{for all}\ }

%      Useful shortforms

\newcommand{\ca}{\mathrm{C}^*}

\newcommand{\cenv}{\mathrm{C}^*_{\textup{env}}}

\newcommand{\ol}{\overline}
\newcommand{\wt}{\widetilde}
\newcommand{\wh}{\widehat}

%      Better math forms

%      Operators

\newcommand{\Aut}{\operatorname{Aut}}
\newcommand{\alg}{\operatorname{alg}}

\newcommand{\env}{\operatorname{\textup{env}}}

\newcommand{\id}{{\operatorname{id}}}

\newcommand{\mt}{\emptyset}

\newcommand{\scv}{\operatorname{sc}}
\newcommand{\spn}{\operatorname{span}}

\newcommand{\sumoplus}{\operatornamewithlimits{\sum \strut^\oplus}}

%Other
\newcommand{\sca}[1]{\left\langle#1\right\rangle} %\sca{a,b} =<a,b>
\newcommand{\nor}[1]{\left\Vert #1\right\Vert} %\nor{x}=||x||
 %bold math symbols
 %underline math symbols
 %norm closure linear span
 % weak* closure linear span
\newcommand{\quo}[2]{{\raisebox{.1em}{$#1$}\left/ \, \raisebox{-.1em}{$#2$}\right.}} % quotient of objects
%Subshifts

%Do not show subsections in the table of contents
%\addtocontents{toc}{\protect\setcounter{tocdepth}{1}}

%%%%%%%%%%%%%%%%%%%%%%%%%%%%%%%%
\begin{document}
%%%%%%%%%%%%%%%%%%%%%%%%%%%%%%%%

%%%%%%%%%%%%%%%%%%%%%%%%%%%%%%%%
\title[C*-algebras of product systems and controlled maps]{Co-universality and controlled maps on product systems over right LCM-semigroups}

%should we add group-embeddable in the title?

\author[E.T.A. Kakariadis]{Evgenios T.A. Kakariadis}
\address{School of Mathematics, Statistics and Physics\\ Newcastle University\\ Newcastle upon Tyne\\ NE1 7RU\\ UK}
\email{evgenios.kakariadis@newcastle.ac.uk}

\author[E.G. Katsoulis]{Elias~G.~Katsoulis}
\address{Department of Mathematics\\ East Carolina University\\ Greenville\\ NC 27858\\USA}
\email{katsoulise@ecu.edu}

\author[M. Laca]{Marcelo Laca}
\address{Department of Mathematics and Statistics\\ University of Victoria\\ Victoria\\ BC\\ Canada}
\email{laca@uvic.ca}

\author[X. Li]{Xin Li}
\address{School of Mathematics and Statistics\\ University of Glasgow\\ University Place\\ Glasgow\\ G12 8QQ\\ UK}
\email{xin.li@glasgow.ac.uk}

\thanks{2010 {\it  Mathematics Subject Classification.} 46L08, 46L05}

\thanks{{\it Key words and phrases:} Product systems, Nica-Pimsner algebras, C*-envelope.}

%%%%%%%%%%%%%%%%%%%%%%%%%%%%%%%%
\begin{abstract}
We study the structure of C*-algebras associated with compactly aligned product systems over group embeddable right LCM-semigroups.
Towards this end we employ controlled maps and a controlled elimination method that associates the original cores to those of the controlling pair, and we combine with applications of the C*-envelope theory for cosystems of nonselfadjoint operator algebras recently produced. 
We derive several applications of these methods that generalize results on single C*-correspondences.

First we show that if the controlling group is exact then the co-universal C*-algebra of the product system coincides with the quotient of the Fock C*-algebra by the ideal of strong covariance relations.
We show that if the controlling group is amenable then the product system is amenable.
In particular if the controlling group is abelian then the co-universal C*-algebra is the C*-envelope of the tensor algebra.

Secondly we give necessary and sufficient conditions for the Fock C*-algebra to be nuclear and exact.
When the controlling group is amenable we completely characterize nuclearity and exactness of any equivariant injective Nica-covariant representation of the product system. 

Thirdly we consider controlled maps that enjoy a saturation property. 
In this case we induce a compactly aligned product system over the controlling pair that shares the same Fock representation, and preserves injectivity.
By using co-universality, we show that they share the same reduced covariance algebras.
If in addition the controlling pair is a total order then the fixed point algebra of the controlling group induces a super-product system that has the same reduced covariance algebra and is moreover reversible.
\end{abstract}

\maketitle

%\tableofcontents

%%%%%%%%%%%%%%%%%%%%%%%%%%%%%%%%
\section{Introduction} \label{S:sgrp alg} \addtocontents{toc}{\protect\setcounter{tocdepth}{1}}
%%%%%%%%%%%%%%%%%%%%%%%%%%%%%%%%

%%%%%%%%%%%%%%%%%%%%%%%%%%%%%%%%
\subsection{Framework}
%%%%%%%%%%%%%%%%%%%%%%%%%%%%%%%%

In the present project we study further the effect of nonselfadjoint operator algebras and boundary theory of group coactions on the C*-algebras theory recently initiated in \cite{DKKLL20}.
We work in the class of algebras of a compactly aligned product system $X$ over a right LCM-semigroup $P$ in a group $G$ with coefficients in a C*-algebra $A$ (for brevity we will say that such a pair $(G,P)$ is a \emph{weak right LCM-inclusion}).
Continuous product systems of Hilbert spaces were coined by Arveson \cite{Arv89} for $\bR^+$, and their discrete counterparts were studied by Dinh \cite{Din91}. 
Motivated by Pimsner's seminal work \cite{Pim97}, Fowler \cite{Fow99} studied product systems of correspondences over quasi-lattices.
Since then discrete product systems have been studied by many authors (far too many to list here) and constitute an active area of research in their own right.
Recently there has been a growing interest in passing from quasi-lattices to right LCM-semigroups.
Kwa\'{s}niewski and Larsen \cite{KL19b} studied the Toeplitz-Nica-Pimsner C*-algebra $\N\T(X)$ for right LCM-semigroups proving Toeplitz-Cuntz-Krieger-type uniqueness theorems.
Here we turn our focus to equivariant quotients with an eye towards Cuntz-type covariant realizations.

One of the main questions in this direction has been to identify the appropriate quotient of $\N\T(X)$ so that faithful representations of $A$ lift to faithful representations of the quotient.
This cannot be expected to hold unconditionally.
The next best hope is thus to locate the quotient of $\N\T(X)$ so that faithful representations of $A$ lift to faithful representations of its fixed point algebra.
Sehnem \cite{Seh18} has provided a full answer by introducing the strongly covariant representations.
This generalizes the study of Cuntz-Nica-Pimsner relations, initiated by Sims and Yeend \cite{SY11}, and later continued by Carlsen-Larsen-Sims-Vittadello \cite{CLSV11}.
A second aim of \cite{CLSV11} was to use these relations and provide a co-universal object by passing to an appropriate reduced quotient.
This was achieved under extra conditions on the product system (such as injectivity or directness of the quasi-lattice).

Co-universality and boundary representations arise naturally in the context of nonselfadjoint operator algebras and their C*-envelope in the sense of Arveson.
With Dor-On, in \cite{DKKLL20} we introduced a coaction variant of the C*-envelope and used it to fully answer the problem of Carlsen-Larsen-Sims-Vittadello \cite{CLSV11} without any assumptions on the product system $X$.
Even more, the results of \cite{DKKLL20} apply to weak right LCM-inclusions $(G,P)$ rather than just quasi-lattices; more specifically, the C*-envelope $\cenv(\T_\la(X)^+, G, \ol{\de}_G)$ of the Fock tensor algebra $\T_\la(X)^+$ with its normal coaction is co-universal for equivariant injective Nica-covariant representations of $X$.
Seeing Sehnem's covariance algebra $A \times_{X} P$ as the universal C*-algebra of an induced Fell bundle we further showed that $\cenv(\T_\la(X)^+, G, \ol{\de})$ coincides with the reduced C*-algebra of this Fell bundle, here denoted by $A \times_{X, \la} P$.

The algebraic structure of $\cenv(\T_\la(X)^+, G, \ol{\de}_G)$ was studied in \cite{DKKLL20}.
Pivotal in this endeavour was the remark that the strong covariance relations of Sehnem are actually filtered through the Fock representation.
Following \cite{Seh18} we will denote by $A \times_{X} P$ the universal C*-algebra with respect to the strongly covariant representations of $X$.
We further consider the induced quotient $q_{\scv}(\T_\la(X))$ of $\T_\la(X)$ by the strong covariance relations.
In \cite{DKKLL20} it is shown that the canonical map
\begin{equation}\label{eq:maps}
q_{\scv}(\T_\la(X)) \longrightarrow \cenv(\T_\la(X)^+, G, \ol{\de}) \simeq A \times_{X, \la} P
\end{equation}
is faithful if and only if the normal coaction of $\T_\la(X)$ descends to a normal coaction on $q_{\scv}(\T_\la(X))$, e.g., when $G$ is exact.

The motivation for the present work is two-fold.
On one hand we wish to explore further general settings that entail normality of the coaction of $q_{\scv}(\T_\la(X))$ and thus identify the algebraic structure of the co-universal object.
Our main theorem here is that this happens when $(G,P)$ is controlled by another weak right LCM-inclusion $(\G,\P)$ with $\G$ is exact.
When $\G$ is abelian we can further induce dual actions on the C*-algebras.
This has the remarkable consequence that the canonical $*$-epimorphism
\begin{equation}
\cenv(\T_\la(X)^+, G, \ol{\de}) \longrightarrow \cenv(\T_\la(X)^+)
\end{equation}
is faithful. 
On the other hand we wish to use the co-universal property in such a context and apply it in the identification of C*-algebras.
The quotient by the strong covariance relations is used as a model in several constructions and this line of reasoning allows to show functoriality without checking a long list of C*-properties.
This is quite pleasing in particular because reduced C*-algebras do not enjoy a priori universal properties.
In fact we follow the reverse route of using the identification of reduced objects and then lift them to $*$-isomorphisms of the universal ones.

%%%%%%%%%%%%%%%%%%%%%%%%%%%%%%%%
\subsection{Main results}
%%%%%%%%%%%%%%%%%%%%%%%%%%%%%%%%

Controlled maps $\vartheta \colon (G,P) \to (\G,\P)$ between quasi lattice ordered groups were introduced by Laca-Raeburn \cite{LR96} with the purpose of extending the range of application of the faithfulness and uniqueness theorems for Toeplitz algebras of quasi lattice ordered groups.  
The key idea is that $(G,P)$ is amenable in the sense of Nica \cite{Nic92} provided that $\G$ is an amenable group.
A similar notion of controlled map was formulated simultaneously and independently by  
Crisp to prove that some Artin monoids inject in their groups \cite{Cri99}.  
The combination of these two sets of ideas led to the amenability and nonamenability results for Artin monoids in \cite{CL02}. 
Similar results can be derived for the Fock algebra $\T_\la(X)$ of a product system over $P$, as it has a $\P$-core that can be expressed as a direct sum of matrix algebras (see for example the proof of Theorem \ref{T:ntx un}).
As a consequence one obtains for example that compactly aligned product systems over the free semigroup $\bF_+^n$ are amenable, although the group $\bF^n$ is not, the reason being that the pair $(\bF^n, \bF_+^n)$ is controlled by its abelianization or by its length map on $(\bZ, \bZ_+)$.

However this type of argument is no longer valid for equivariant quotients as these relations live in the diagonal of the $P$-core (and thus in the $\P$-core).
An elimination method was recently developed in \cite{Kak19} when $(\G,\P) = (\bZ^n, \bZ_+^n)$ with the purpose of studying nuclearity and exactness properties.
By building further on these techniques, in Subsection \ref{Ss:ce} we give a controlled elimination method for passing from the $\P$-cores to the $P$-cores of injective Nica-covariant representations.
Essentially the method asserts that any relation in a $\P$-core must live at the diagonal and thus in a $P$-core.
We then use this to lift all properties from the realm of the $P$-fixed point algebras to the $\P$-fixed point algebras.
For example this applies to the fixed-point-algebra property of Sehnem's algebra \cite{Seh18} (Corollary \ref{C:fpa con}).
In particular exactness of $\G$ impacts on the maps appearing in (\ref{eq:maps}).

\vspace{8pt}

\noindent
{\bf Theorem A.} [Theorem \ref{T:cenv red}].
\emph{Let $\vartheta \colon (G,P) \to (\G, \P)$ be a controlled map between weak right LCM-inclusions and let $X$ be a compactly aligned product system over $P$ with coefficients in $A$. 
Let the canonical $*$-epimorphisms
\begin{equation}
q_{\scv}(\T_\la(X)) \longrightarrow A \times_{X, \la} P \simeq \cenv(\T_\la(X)^+, G, \ol{\de}_G) \longrightarrow \cenv(\T_\la(X)^+).
\end{equation}
If $\G$ is exact then the left map is faithful.
If in addition $\G$ is abelian then the right map is also faithful.
}

\vspace{8pt}

Theorem A implies that the coaction on $q_{\scv}(\T_\la(X))$ is normal when $\G$ is exact.
As pointed out in \cite{DKKLL20} this implies that the reduced Hao-Ng problem over discrete group actions has a positive answer (Remark \ref{R:Hao-Ng}).
A similar method applies whenever the C*-envelope functor is stable under crossed products, e.g., for dynamics over abelian locally compact groups or when the tensor algebra is hyperrigid \cite{Kat20, KR16}, and we leave this to the interested reader.
A further consequence of Theorem A is that amenability of $\G$ implies amenability of the product system and thus universality of the reduced constructions (Theorem \ref{T:ntx un}).
The case of abelian $\G$ directly generalizes the results of \cite{DK18a}.
There is further potential for Takai duality results even when $(G,P)$ does not admit a dual.
A further consequence of Theorem A provides a generalization of the Extension Theorem of \cite{KR16}, which recognizes a Fock tensor algebra by the presence of a coaction (Corollary \ref{C:extension con}).

Another application of the controlled elimination method concerns nuclearity/exactness results.
It has been observed by Katsura \cite{Kat04} that nuclearity of a Cuntz-Pimsner algebra is equivalent to the coefficient algebra being nuclearly embedded in the fixed point algebra.
Kakariadis \cite{Kak19}  produced similar results for $\bZ_+^n$.
In Theorem \ref{T:nuclear} we first give an equivalent characterization for nuclearity of $\T_\la(X)$ for right LCM-semigroups.
Although our original goal was to exploit $A \times_{X} P$, we tackle any equivariant quotient of $\N\T(X)$ that is injective on $A$.

\vspace{8pt}

\noindent
{\bf Theorem B.} [Theorem \ref{T:exact}, Theorem \ref{T:nuclear 2}].
\emph{Let $\vartheta \colon (G,P) \to (\G,\P)$ be a controlled map between weak right LCM-inclusions with $\G$ amenable and let $X$ be a compactly aligned product system over $P$ with coefficients in $A$.
Let $(\pi,t)$ be an equivariant injective Nica-covariant representation of $X$.
Then:
\begin{enumerate}
\item $A$ is exact if and only if $\ca(\pi,t)$ is exact.
\item $A \hookrightarrow \ca(\pi,t)$ is nuclear if and only if $\ca(\pi,t)$ is nuclear.
\end{enumerate}}

\vspace{8pt}

We emphasize that the controlled elimination process occurs at the level of representations.
One might be intrigued to introduce a product system $Y$ over $\P$ that would share the same algebras with $X$ over $P$.
However it is not clear that such a procedure gives a compactly aligned product system.
For this reason we introduce the notion of saturation for controlled maps, which preserves inclusions of ideals in the semigroups.
Under this condition we do get a super-product system on the \emph{same} coefficient algebra that does the job.

\vspace{8pt}

\noindent
{\bf Theorem C.} [Theorem \ref{T:con ta}].
\emph{Let $\vartheta \colon (G, P) \to (\G, \P)$ be a saturated controlled map between weak right LCM-inclusions.
Let $X$ be a (resp.\ injective) compactly aligned product system over $P$ with coefficients in $A$ and let
\begin{equation*} \label{eq:Y}
Y_h: = \sumoplus_{p \in \vartheta^{-1}(h)} X_p \qfor h \in \P.
\end{equation*}
Then the collection $Y = \{Y_h\}_{h \in \P}$ is a (resp.\ injective) compactly aligned product system over $\P$ with coefficients in $A$ such that $\T_\la(X)^+ \simeq \T_\la(Y)^+$ with
\[
\T_\la(X) \simeq \T_\la(Y) 
\qand
A \times_{X, \la} P \simeq  A \times_{Y, \la} \P,
\]
by $*$-isomorphisms that preserve the inclusions $X_p \mapsto Y_{\vartheta(p)}$ for all $p \in P$.
These $*$-isomor\-phisms further lift to $*$-isomorphisms
\[
\N\T(X) \simeq \N\T(Y)
\qand
A \times_{X} P \simeq A \times_{Y} \P,
\]
 that preserve the inclusions $X_p \hookrightarrow Y_{\vartheta(p)}$ for all $p \in P$.}

\vspace{8pt}

Our method here is to show that the $*$-isomorphism $\T_\la(X) \simeq \T_\la(Y)$ is canonical on the tensor algebras and then apply the C*-envelope machinery to induce the $*$-isomorphism $A \times_{X, \la} P \simeq  A \times_{Y, \la} \P$.
The saturation property can be induced by free products of abelian total orders, and is preserved by semi-direct products.
As a notable application of this method we deduce that Sehnem's covariance algebra of a product system over $\bF_n^+$ is nothing more than the Cuntz-Pimsner algebra of a single C*-correspondence, in a similar way that the Nica-Cuntz-Pimsner algebra of $\bF_n^+$ coincides with $\O_n$ (Corollary \ref{C:freesem}).

We then take a closer look at total orders.
To further motivate these results, recall that the Cuntz algebra $\O_n$ may be viewed as the Cuntz-Pimsner algebra of a Hilbert bimodule over the $n^{\infty}$-hyperfinite C*-algebra. 
In spite of the coefficient algebra of the latter being much larger, Hilbert bimodules are better behaved than other types of C*-correspondences and they allow for a rich theory, including versions of Takai duality. 
Here we will show that the situation with $\O_n$ generalizes to product systems that are controlled by exact total orders.
Towards this end we consider \emph{reversible} product systems for which the image of every fiber in $A \times_{X, \la} P$ is a Hilbert bimodule.
We then show that reversible product systems produce all possible covariance algebras for weak right LCM-inclusions that are controlled by total orders in a saturated way.
The construction relies on using the fixed point algebra and generalizes results of Pimsner \cite{Pim97}, Abadie, Eilers and Exel \cite{AEE98}, Schweizer \cite{Sch01}, Kakariadis and Katsoulis \cite{KK12}, and Meyer and Sehnem \cite{MS19}.
However our proof uses the C*-envelope machinery and thus avoids categorical arguments.

\vspace{8pt}

\noindent
{\bf Theorem D.} [Theorem \ref{T:dil rev}].
\emph{Let $\vartheta \colon (G,P) \to (\G,\P)$ be a saturated controlled map between weak right LCM-inclusions and suppose that $(\G,\P)$ is a total order.
Let $X$ be a (resp.\ injective) product system over $P$ with coefficients in $A$.
Then there exists a (resp.\ injective) reversible product system $Z$ over $\P$ with coefficients in a C*-algebra $B$ such that
\begin{equation}
A \subseteq B \qand X_p \subseteq Z_{\vartheta(p)} \foral p \in P,
\end{equation}
that satisfies
\begin{equation}
A \times_{X} P \simeq B \times_{Z} \P
\qand
A \times_{X, \la} P \simeq B \times_{Z, \la} \P,
\end{equation}
by $*$-isomorphisms that preserve the inclusions $X_p \hookrightarrow Z_{\vartheta(p)}$ for all $p \in P$.
}

\vspace{8pt}

Semigroup C*-algebras have been an important source of inspiration for this study.
Our results have a direct application to C*-algebras of right LCM-semigroups where $X_p = \bC$ for every $p \in P$.
In this case the Nica-Toeplitz C*-algebra is denoted by $\ca_s(P)$ for the Nica-covariant representations of $P$ and Theorem A (and in particular Theorem \ref{T:ntx un}) is a direct generalization of \cite[Theorem 4.7]{CL07}.
Faithfulness of the maps of Theorem A has been further investigated in \cite{KKLL21} for (not-necessarily right LCM) semigroups that embed in exact groups.
Theorem B asserts that every quotient of $\ca_s(P)$ is nuclear and aligns with \cite[Corollary 8.3]{Li13} for quasi-lattices.
Under the saturation property, Theorem C asserts that the operator algebras of $P$ coincide with those of a product system $Y$ over $\P$ with $Y_h = \bC^{|\vartheta^{-1}(h)|}$ for $h \in \P$.
This follows a recurring idea of obtaining realizations of the same C*-algebra in different classes.
It has been shown in \cite{Li17} that $\bC \times_{\bC, \la} P$ can be realized as the partial crossed product of the smallest $\G$-invariant subspace of the fixed point algebra of $\ca_s(\P)$ by $G$.
Theorem D provides a similar (augmented) realization when $\vartheta$ is saturated and $(\G,\P)$ is a total order.

Let us close with a remark on controlled maps.
It has been known that controlled maps cannot handle HNN extensions of quasi-lattices as the height map does not have a trivial kernel on the semigroup.
In order to resolve this, recently an Huef, Nucinkis, Sehnem and Yang \cite{HNSY19} introduced a more general definition of controlled maps for weak quasi-lattices that allows infinite descending chains and thus produces direct limits of matrix algebras.
The controlled elimination arguments we provide here should be compatible with this general definition, as they refer to ideals of representations, which are compatible with direct limits.

%%%%%%%%%%%%%%%%%%%%%%%%%%%%%%%%
\subsection{Structure of the paper}
%%%%%%%%%%%%%%%%%%%%%%%%%%%%%%%%

In Section \ref{S:coa} we review the boundary theory and the theory of the cosystems from \cite{DKKLL20}.
In Sections \ref{S:oaps} and \ref{S:caps lcm} we review the main elements of the product systems theory, and we see how they are enriched under the presence of a controlled map.
We have included more details from \cite{DKKLL20} in order to set the ground for the next sections, and also prove additional results that are not covered in \cite{DKKLL20}.
In Section \ref{S:ce} we present the controlled elimination method.
Section \ref{S:app} contains the applications to Sehnem's covariance algebra, the structure of the co-universal C*-algebra, amenable product systems, nuclearity/exactness, and the reduced Hao-Ng problem.
In Section \ref{S:scm} we give the product system re-parametrizations under the saturation property with applications  to reversible product systems.

%%%%%%%%%%%%%%%%%%%%%%%%%%%%%%%%
\begin{acknow}
%Part of the research was carried out at the Banff International Research Station during the Focused Research Group week on ``Noncommutative boundaries for tensor algebras'' (20frg248).
Evgenios Kakariadis was partially supported by EPSRC (Grant No.\ EP/T02576X/1) and LMS (Grant No.\ 41908).
Marcelo Laca was partially supported by NSERC Discovery Grant RGPIN-2017-04052.
Xin Li has received funding from the European Research Council (ERC) under the European Union's Horizon 2020 research and innovation programme (grant agreement No. 817597).
\end{acknow}

%%%%%%%%%%%%%%%%%%%%%%%%%%%%%%%%
\section{Operator algebras and their coactions} \label{S:coa}
%%%%%%%%%%%%%%%%%%%%%%%%%%%%%%%%

%%%%%%%%%%%%%%%%%%%%%%%%%%%%%%%%
%\subsection{Notation}
%%%%%%%%%%%%%%%%%%%%%%%%%%%%%%%%

%%%%%%%%%%%%%%%%%%%%%%%%%%%%%%%%
\subsection{Operator algebras}
%%%%%%%%%%%%%%%%%%%%%%%%%%%%%%%%

The reader may refer to \cite{BL04, Pau02} for the general theory of nonselfadjoint operator algebras and dilations of their representations.

Let $\A$ be an operator algebra, which in this paper means a subalgebra of $\B(H)$ for a Hilbert space $H$.
We say that $(C, \iota)$ is a \emph{C*-cover} of $\A$ if $\iota \colon \A \to C$ is a completely isometric representation with $C = \ca(\iota(\A))$.
The \emph{C*-envelope} $\cenv(\A)$ of $\A$ is a C*-cover $(\cenv(\A), \iota)$ with the following co-universal property:
if $(C', \iota')$ is a C*-cover of $\A$ then there exists a (necessarily unique) $*$-epimorphism $\Phi \colon C' \to \cenv(\A)$ such that $\Phi(\iota'(a)) = \iota(a)$ for all $a \in \A$.
Arveson defined the C*-envelope in \cite{Arv69} and computed it for a variety of operator algebras, predicting its existence in general.
Ten years later Hamana \cite{Ham79} confirmed Arveson's prediction by proving the existence of injective envelopes for the unital case.
The C*-envelope is the C*-algebra generated in the injective envelope of $\A$ once this is endowed with the Choi-Effros C*-structure. 

Dritschel-McCullough \cite{DM05} provided an alternative proof based on maximal dilations for the unital case.
A \emph{dilation} of a representation $\phi \colon \A \to \B(H)$ is a representation $\phi' \colon \A \to \B(H')$ such that $H \subseteq H'$ and $\phi(a) = P_H \phi'(a) |_H$ for all $a \in \A$.
A completely contractive map $\phi \colon \A \to \B(H)$ is called \emph{maximal} if every dilation $\phi' \colon \A \to \B(H')$ is trivial, i.e., $P_H \phi'(a) =\phi(a) = \phi'(a) |_H$ for all $a \in \A$.
It follows that the C*-envelope is the C*-algebra generated by a maximal completely isometric representation.

It does not hold in general that if $\pi \colon \cenv(\A) \to \B(H)$ is a $*$-representation then it is the unique ccp extension of $\pi|_{\A}$.
The algebra $\A$ is called \emph{hyperrigid} if this is the case for any representation $\pi$ of $\cenv(\A)$.
An operator algebra $\A$ is said to be \textit{Dirichlet} if
\[
\cenv(\A) = \overline{\A+\A^*}.
\] 
Equivalently, $\A$ is Dirichlet if there exists a C*-cover $(C,\iota)$ of $\A$ such that $C = \ol{\iota(\A) + \iota(\A)^*}$, in which case $C = \cenv(\A)$.
It follows that Dirichlet algebras are automatically hyperrigid.

%%%%%%%%%%%%%%%%%%%%%%%%%%%%%%%%
\subsection{Co-actions on operator algebras}
%%%%%%%%%%%%%%%%%%%%%%%%%%%%%%%%

If $\X$ and $\Y$ are subspaces of some $\B(H)$ then we write $[\X \Y]:= \ol{\spn}\{x y \mid x \in \X, y \in \Y\}$.
All groups and semigroups we consider are discrete and unital.
We denote the spatial tensor product by $\otimes$.

For a discrete group $G$ we write $u_g$ for the unitary generator associated with $g \in G$ in the full group C*-algebra $\ca(G)$.
We write $\la_g$ for the generators of the left regular representation $\ca_\la(G)$.
We write $\la \colon \ca(G) \to \ca_\la(G)$ for the canonical $*$-epimorphism.
Recall that $\ca(G)$ admits a faithful $*$-homomorphism
\[
\De \colon \ca(G) \longrightarrow \ca(G) \otimes \ca(G) 
\text{ such that }
\De(u_g) = u_g \otimes u_g,
\]
given by the universal property of $\ca(G)$, and with left inverse given by $\id \otimes \chi$ for the character $\chi$ of $\ca(G)$.
We will require some preliminaries from \cite{DKKLL20} on coactions on operator algebras.

%%%%%%%%%%%%%%%%%%%%%%%%%%%%%%%%%%%%%%%%%%%
\begin{definition}\label{D:cis coa} \cite[Definition 3.1]{DKKLL20}
Let $\A$ be an operator algebra.
A \emph{coaction of $G$ on $\A$} is a completely isometric representation $\de \colon \A \to \A \otimes \ca(G)$ such that the linear span of the induced subspaces
\[
\A_g := \{a \in \A \mid \de(a) = a \otimes u_g\}
\] 
is norm-dense in $\A$, in which case $\de$ satisfies the coaction identity
\[
(\de \otimes \id_{\ca(G)}) \de = (\id_{\A} \otimes \De) \de.
\]
If, in addition, the map $(\id \otimes \la) \de$ is injective then the coaction $\de$ is called \emph{normal}. 

If $\A$ is an operator algebra and $\de \colon \A \to \A \otimes \ca(G)$ is a coaction on $\A$, then we will refer to the triple $(\A, G, \de)$ as a \emph{cosystem}. 
A map $\phi \colon \A \to \A'$ between two cosystems $(\A, G, \de)$ and $(\A', G, \de')$ is said to be \emph{$G$-equivariant}, or simply equivariant, if $\de' \phi=(\phi\otimes \id)\de$.
\end{definition}

If $(\A, G, \de)$ is a cosystem then $\A_r \cdot \A_s \subseteq \A_{r s}$ for all $r, s \in G$, since $\de$ is a homomorphism.

%%%%%%%%%%%%%%%%%%%%%%%%%%%%%%%%
\begin{remark}\label{R:nd cis} \cite{DKKLL20}
Suppose that $(\A, G, \de)$ is a cosystem and that $\de$ extends to a $*$-homomorphism $\de \colon \ca(\A) \to \ca(\A) \otimes \ca(G)$ that satisfies the coaction identity
\[
(\de \otimes \id) \de(c) = (\id \otimes \De) \de(c) \foral c \in \ca(\A).
\]
Then $\de$ is automatically non-degenerate on $\ca(\A)$ in the sense that
\[
\big[ \de(\ca(\A)) \ca(\A) \otimes \ca(G) \big] = \ca(\A) \otimes \ca(G).
\]
Moreover Definition \ref{D:cis coa} covers that of full coactions of Quigg \cite{Qui96} when $\A$ is a C*-algebra.
In this case $\de$ is a faithful $*$-homomorphism and we have that
\[
(\A_g)^* = \{a^* \in \A \mid \de(a^*) = a^* \otimes u_{g^{-1}} \} = \A_{g^{-1}}.
\]
\end{remark}

Due to the Fell absorption principle, the existence of a ``reduced'' coaction implies that of a normal coaction.

%%%%%%%%%%%%%%%%%%%%%%%%%%%%%%%%
\begin{proposition}\label{P:Fell ind} \cite[Proposition 3.4]{DKKLL20}
Let $\A$ be an operator algebra.
Suppose there is a group $G$ that induces a grading on $\A$, i.e., there are subspaces $\{\A_g\}_{g \in G}$ such that $\sum_{g \in G} \A_g$ is norm-dense in $\A$, and a completely isometric homomorphism
\[
\de_\la \colon \A \longrightarrow \A \otimes \ca_\la(G),
\]
such that
\[
\de_\la(a_g) = a_g \otimes \la_g \foral a_g \in \A_g, \foral g \in G.
\]
Then $\A$ admits a normal coaction $\de$ of $G$ such that $\de_\la = (\id \otimes \la) \de$.
\end{proposition}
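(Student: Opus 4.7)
The plan is to define $\de$ on the algebraic sum $\A_0 := \spn\{\A_g : g \in G\}$ by the formula $\de^0(a_g) := a_g \otimes u_g$ for $a_g \in \A_g$ and extend by continuity. First I note that this sum is automatically direct---applying $\de_\la$ to any finite sum $\sum_g a_g = 0$ with $a_g \in \A_g$ yields $\sum_g a_g \otimes \la_g = 0$, and linear independence of $\{\la_g\}$ as operators on $\ell^2(G)$ (obtained by evaluating on any fixed basis vector) then forces each $a_g = 0$---so $\de^0$ is well defined and, because $\A_g \cdot \A_h \subseteq \A_{gh}$, a homomorphism.

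The key device for extending $\de^0$ continuously is the Fell absorption principle, which tells us that $g \mapsto \la_g \otimes u_g$ is unitarily equivalent to a multiple of the left regular representation. This produces a well-defined injective $*$-homomorphism
\[
\de_G \colon \ca_\la(G) \longrightarrow \ca_\la(G) \otimes \ca(G), \qquad \la_g \longmapsto \la_g \otimes u_g,
\]
which is automatically completely isometric. A direct calculation on generators then gives the identity
\[
(\de_\la \otimes \id_{\ca(G)}) \circ \de^0 \;=\; (\id_\A \otimes \de_G) \circ \de_\la \quad \text{on } \A_0,
\]
since both sides send $a_g$ to $a_g \otimes \la_g \otimes u_g$. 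The right-hand side is defined on all of $\A$ and is completely isometric as the composition of completely isometric maps (the spatial tensor product preserves complete isometries, so both $\id_\A \otimes \de_G$ and $\de_\la \otimes \id_{\ca(G)}$ are completely isometric). Comparing matrix norms over $\A_0$ therefore forces $\de^0$ to be completely isometric, and it extends uniquely to a completely isometric representation $\de \colon \A \to \A \otimes \ca(G)$.

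With $\de$ in hand, I would verify the remaining axioms. By construction, $\A_g \subseteq \{a \in \A : \de(a) = a \otimes u_g\}$, so the spectral subspaces of $\de$ already span a dense subset of $\A$ by hypothesis, which is the grading requirement in Definition \ref{D:cis coa}; the coaction identity then follows automatically, or can be checked on generators where both sides map $a_g$ to $a_g \otimes u_g \otimes u_g$. Normality is immediate: on generators $(\id_\A \otimes \la)\de(a_g) = a_g \otimes \la_g = \de_\la(a_g)$, which extends by continuity to $(\id \otimes \la)\de = \de_\la$, an injective (indeed completely isometric) map.

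The main obstacle I anticipate is ensuring that the continuous extension of $\de^0$ actually lives in $\A \otimes \ca(G)$ rather than in some larger ambient C*-algebra. The Fell absorption identity above resolves this cleanly by sandwiching $\de^0$ between two completely isometric maps, certifying that $\de^0$ itself is completely isometric on $\A_0$ with values in $\A \otimes \ca(G)$ and hence admits the desired extension.
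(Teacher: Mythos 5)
Your argument is correct and is essentially the approach the paper relies on (the proof is cited from [DKKLL20], and the same Fell--absorption sandwich appears verbatim in the proof of Proposition \ref{P:coa ind} here): one injects $\ca_\la(G)$ into $\ca_\la(G)\otimes\ca(G)$ via $\la_g\mapsto\la_g\otimes u_g$, uses the resulting commuting square to certify that $a_g\mapsto a_g\otimes u_g$ is completely isometric on the dense graded subalgebra, and reads off normality from $(\id\otimes\la)\de=\de_\la$. The only cosmetic point is that $\A_g\cdot\A_h\subseteq\A_{gh}$ is not literally a hypothesis, but it is harmless since one may enlarge each $\A_g$ to the full spectral subspace $\{a:\de_\la(a)=a\otimes\la_g\}$ of $\de_\la$, where multiplicativity of $\de_\la$ gives the inclusion.
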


%%%%%%%%%%%%%%%%%%%%%%%%%%%%%%%%
\begin{example}
The reduced group C*-algebra $\ca_\la(G)$ admits a faithful $*$-homomophism
\[
\De_\la \colon \ca_\la(G) \longrightarrow \ca_\la(G) \otimes \ca_\la(G) 
\text{ such that }
\De_\la(\la_g) = \la_g \otimes \la_g.
\]
Thus $\ca_\la(G)$ admits a normal coaction $\de$ of $G$ such that $\De_\la = (\id \otimes \la) \de$.
\end{example}

%%%%%%%%%%%%%%%%%%%%%%%%%%%%%%%%%%%%%%%%%%%
\begin{definition}\label{D:co-action} \cite[Definition 3.6]{DKKLL20}
Let $(\A, G, \de)$ be a cosystem.
A triple $(C, \iota, \de_C)$ is called a \emph{C*-cover} for $(\A, G, \de)$ if $(C, \iota)$ is a C*-cover of $\A$ and $\de_C \colon C \to C \otimes \ca(G)$ is a coaction on $C$ such that the following diagram
\[
\xymatrix{
\A \ar[rr]^{\iota} \ar[d]^{\de} & & C \ar[d]^{\de_C} \\
\A \otimes \ca(G) \ar[rr]^{\iota \otimes \id} & & C \otimes \ca(G)
}
\]
commutes.
When the coaction is understood we will say that $C$ is a C*-cover for $\A$ over $G$.
\end{definition}

%%%%%%%%%%%%%%%%%%%%%%%%%%%%%%%%%%%%%%%%%%%
\begin{definition} \cite[Definition 3.7]{DKKLL20}
Let $(\A, G, \de)$ be a cosystem.
The \emph{C*-envelope} of $(\A, G, \de)$ is a C*-cover $(\cenv(\A, G, \de), \iota, \de_{\env})$ such that: for every C*-cover $(C', \iota', \de')$ of $(\A, G, \de)$ there exists a $*$-epimorphism $\Phi \colon C' \to \cenv(\A, G, \de)$ that fixes $\A$ and intertwines the coactions, i.e., the diagram
\[
\xymatrix{
\iota'(\A) \ar[rrr]^{\de'} \ar[d]^{\Phi} & & & C' \otimes \ca(G) \ar[d]^{\Phi \otimes \id} \\
\iota(\A) \ar[rrr]^{\de_{\env}} & & & \cenv(\A, G, \de) \otimes \ca(G)
}
\]
is commutative on $\A$, and thus is commutative on $C'$.
\end{definition}

The existence of the C*-envelope of a cosystem was proved in \cite{DKKLL20} by a direct computation  that uses the C*-envelope of the ambient operator algebra.
In order to state the result explicitly we need to make some preliminary remarks and establish the notation.
Suppose $(\A, G, \de)$ is a cosystem, let $i \colon \A \to \cenv(\A)$ be the C*-envelope of $\A$, and recall that the spatial tensor product of completely isometric maps is completely isometric.
Then the representation of $\A$  obtained via the  composition
\[
\xymatrix@C=2cm{
\A \ar[r]^{\de} & \A \otimes \ca(G) \ar[r]^{i \otimes \id} \ar[r] & \cenv(\A) \otimes \ca(G)
}
\]
is completely isometric, and the C*-algebra 
\[
\ca((i \otimes \id) \de(\A)) := \ca(i(a_g) \otimes u_g \mid g \in G)
\]
becomes a C*-cover of $\A$.
This C*-cover is special because it admits a coaction $\id \otimes \De$, such that the triple
\[
(\ca(i(a_g) \otimes u_g \mid g \in G), (i \otimes \id) \de, \id \otimes \De)
\]
is a C*-cover for $(\A, G, \de)$. 
The following theorem summarizes fundamental results about existence and representations of C*-envelopes for cosystems.

%%%%%%%%%%%%%%%%%%%%%%%%%%%%%%%%%%%%%%%%%%%
\begin{theorem} \label{T:fpa 1-1} \cite[Theorem 3.8, Corollary 3.9 and Corollary 3.10]{DKKLL20}
Let $(\A, G, \de)$ be a cosystem and let $i \colon \A \to \cenv(\A)$ be the inclusion map.
Then
\[
(\cenv(\A, G, \de), \iota, \de_{\env}) \simeq (\ca(i(a_g) \otimes u_g \mid g \in G), (i \otimes \id )\de, \id \otimes \De).
\]
If in addition $\de$ is normal on $\A$ then $\de_{\env}$ is normal on $\cenv(\A, G, \de)$.

Moreover if $\Phi \colon \cenv(\A, G, \de) \to B$ is a $*$-homomorphism that is completely isometric on $\A$ then it is faithful on the fixed point algebra of $\cenv(\A, G, \de)$.
\end{theorem}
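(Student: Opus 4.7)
The plan is to verify directly that the triple $(C,\iota,\de_C)$, with
\[
C := \ca\bigl(i(a_g) \otimes u_g \mid g \in G\bigr), \quad \iota := (i \otimes \id)\de, \quad \de_C := \id \otimes \De,
\]
is a C*-cover of the cosystem $(\A, G, \de)$, and then to show it enjoys the defining co-universal property of $\cenv(\A, G, \de)$. Once this is accomplished, the normality and faithfulness assertions will both follow by applying the co-universal property to appropriately chosen secondary C*-covers.

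For the C*-cover verification, $\iota = (i \otimes \id)\de$ is completely isometric because $i$ is a completely isometric embedding and $\de$ is a completely isometric coaction, the image generates $C$ by construction, and $\id \otimes \De$ restricts to a coaction on $C$ because $\De$ satisfies the coaction identity and sends $u_g$ to $u_g \otimes u_g$. The compatibility diagram of Definition \ref{D:co-action} is immediate on the generators $i(a_g) \otimes u_g$. For the co-universal property, given any C*-cover $(C', \iota', \de')$ of the cosystem, the underlying pair $(C', \iota'|_\A)$ is a C*-cover of $\A$ in the ordinary sense, so the universal property of $\cenv(\A)$ provides a $*$-epimorphism $\pi \colon C' \to \cenv(\A)$ with $\pi\iota' = i$. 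The composition $\Phi := (\pi \otimes \id) \de'$ sends $\iota'(a_g)$ to $i(a_g) \otimes u_g$, and so factors through $C$ as a $*$-epimorphism that fixes $\A$ and intertwines $\de'$ with $\de_C$.

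For normality, assume $\de$ is normal so that $(i \otimes \la)\de$ is completely isometric on $\A$, and set $C^\la := \ca(i(a_g) \otimes \la_g \mid g \in G)$, which is therefore a C*-cover of $\A$. Proposition \ref{P:Fell ind} applied to the diagonal map $\id \otimes \De_\la$ endows $C^\la$ with a normal coaction of $G$, so $C^\la$ is in fact a C*-cover of the cosystem. The co-universal property of $C$ yields a $*$-epimorphism $\Psi \colon C^\la \to C$ fixing $\A$, while the canonical map $\id \otimes \la \colon C \to C^\la$ is a $*$-epimorphism that also fixes $\A$; the two compositions act as the identity on the generators $i(a_g) \otimes u_g$ and $i(a_g) \otimes \la_g$ respectively, and are therefore mutually inverse isomorphisms. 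In particular $\id \otimes \la$ is injective on $C$, which is precisely normality of $\de_C$.

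For the final faithfulness statement, suppose $\Phi \colon C \to B$ is a $*$-homomorphism that is completely isometric on $\iota(\A)$. Then $\Phi\iota \colon \A \to B$ is completely isometric, so $(\ca(\Phi\iota(\A)), \Phi\iota)$ is a C*-cover of $\A$ and the universal property of $\cenv(\A)$ furnishes a $*$-epimorphism $\psi \colon \ca(\Phi\iota(\A)) \to \cenv(\A)$ with $\psi \Phi \iota = i$. Any element of the fixed point algebra of $C$ is of the form $y = x \otimes 1 \in \cenv(\A) \otimes \ca(G)$, where $x \in \cenv(\A)$ is a norm-limit of sums of products $i(a_{g_1^k}) \cdots i(a_{g_n^k})$ over tuples with $g_1^k \cdots g_n^k = e$; the multiplicativity and continuity of $\psi$ then give $\psi\Phi(y) = x$. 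Consequently, $\Phi(y) = 0$ forces $x = 0$ and hence $y = 0$, yielding the claimed faithfulness. I expect the normality step to be the main obstacle, as it depends on showing that $C^\la$ inherits a genuinely normal coaction of $G$ via Proposition \ref{P:Fell ind} before the co-universal property can be invoked twice to close the loop.
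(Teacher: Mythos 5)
The paper does not actually prove this theorem: it is quoted from \cite{DKKLL20} (Theorem 3.8 and Corollaries 3.9--3.10 there), so there is no in-paper argument to compare against. Your reconstruction follows the same route as that source: verify that $\bigl(\ca(i(a_g)\otimes u_g\mid g\in G),(i\otimes\id)\de,\id\otimes\De\bigr)$ is a C*-cover of the cosystem and that every other C*-cover surjects onto it through the universal property of $\cenv(\A)$; obtain normality by playing the full picture $C$ against the reduced picture $C^\la$ via co-universality; and get faithfulness on the fixed point algebra by composing $\Phi$ with the map $\psi$ furnished by the universal property of $\cenv(\A)$, so that $\psi\Phi$ acts as $x\otimes 1\mapsto x$ on $[\cenv(\A,G,\de)]_e$. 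The first and third parts are correct as written (in the third, note that the degree-$e$ words also involve adjoints $i(a_g)^*\otimes u_g^{-1}$, but this does not affect the argument, and the identification of the fixed point algebra with the closed span of degree-$e$ words follows from the contractive Fourier coefficient maps).

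The normality step has a genuine gap, and not where you locate it. You write ``assume $\de$ is normal so that $(i\otimes\la)\de$ is completely isometric on $\A$,'' but Definition \ref{D:cis coa} only says that a normal coaction has $(\id\otimes\la)\de$ \emph{injective}; for a completely contractive homomorphism of nonselfadjoint operator algebras, injectivity does not imply complete isometry, and there is no completely contractive left inverse $\id\otimes\chi$ available on the reduced side when $G$ is not amenable. Your argument needs $(i\otimes\la)\de$ to be completely isometric in order for $C^\la=\ca(i(a_g)\otimes\la_g\mid g\in G)$ to qualify as a C*-cover of $\A$; without that, the co-universal property cannot be invoked to produce $\Psi\colon C^\la\to C$ and the loop does not close. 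Bridging this requires the separate fact (established in \cite{DKKLL20} by a Fell absorption argument) that a coaction is normal if and only if $(\id\otimes\la)\de$ is completely isometric; the step you flag as the obstacle --- endowing $C^\la$ with a normal coaction via Proposition \ref{P:Fell ind} --- is in fact unproblematic, since $\id\otimes\De_\la$ restricts to a faithful $*$-homomorphism of C*-algebras. A second, smaller omission: injectivity of $\id_{\cenv(\A)}\otimes\la$ on $C$ is not literally normality of $\de_C$, which concerns the map $x\otimes u_g\mapsto x\otimes u_g\otimes\la_g$; the two are identified either by the Fell absorption isomorphism $u_g\otimes\la_g\mapsto\la_g$ or by composing with $\id\otimes\chi\otimes\id$, and this identification should be recorded.
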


%%%%%%%%%%%%%%%%%%%%%%%%%%%%%%%%
\begin{remark}\label{R:cenv ab}
A co-action of an abelian group $G$ is equivalent to point-norm continuous actions $\{\be_\ga\}_{\ga \in \wh{G}}$ of the dual group $\wh{G}$.
Since every $\be_\ga$ is a completely isometric automorphism it extends to the C*-envelope.
Hence the C*-envelope of a cosystem coincides with the usual C*-envelope of the ambient operator algebra when $G$ is abelian.
Equivalently, every coaction of an abelian group on an operator algebra lifts to a coaction on its C*-envelope.
As pointed out in \cite{DKKLL20}, it is unknown if this is the case for general amenable groups.
\end{remark}

Group homomorphisms implement coactions.
Note that the following proposition for $\G = \{e_\G\}$ says nothing more than that every C*-cover of a cosystem is a C*-cover of the ambient operator algebra.

%%%%%%%%%%%%%%%%%%%%%%%%%%%%%%%%
\begin{proposition}\label{P:coa ind}
Let $(\A, G, \de_G)$ be a (resp. normal) cosystem and let $\vartheta \colon G \to \G$ be a group homomorphism.
Then $\G$ induces a (resp. normal) coaction $\de_\G$ on $\A$.
Thus every C*-cover of $\A$ over $G$ is also a C*-cover of $\A$ over $\G$.
\end{proposition}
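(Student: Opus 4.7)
The plan is to define $\de_\G := (\id_\A \otimes \vartheta_*) \circ \de_G$, where $\vartheta_* \colon \ca(G) \to \ca(\G)$, $u_g \mapsto u_{\vartheta(g)}$, is the $*$-homomorphism induced via the universal property of $\ca(G)$. First I would observe that $\de_\G$ is a completely contractive homomorphism and that $\de_\G(a_g) = a_g \otimes u_{\vartheta(g)}$ for $a_g \in \A_g$, so that $\A_g$ lies inside the $\vartheta(g)$-spectral subspace of $\de_\G$ and the $\G$-spectral subspaces span a norm-dense subset of $\A$.

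The main work will be to verify complete isometry of $\de_\G$. For the normal case I plan to invoke Proposition \ref{P:Fell ind} via Fell absorption. The unitary $U_0$ on $\ell^2(G) \otimes \ell^2(\G)$ defined on the canonical basis by $U_0(e_g \otimes e_h) = e_g \otimes e_{\vartheta(g) h}$ intertwines $\la_G(g) \otimes I$ with $\la_G(g) \otimes \la_\G(\vartheta(g))$, yielding a completely isometric $*$-embedding $\ca_\la(G) \hookrightarrow \ca_\la(G) \otimes \ca_\la(\G)$ sending $\la_g \mapsto \la_g \otimes \la_{\vartheta(g)}$. Composing with the completely isometric $(\id \otimes \la) \de_G$ (available by normality of $\de_G$) and then collapsing the middle factor onto $\A \otimes \ca_\la(\G)$ produces $\de_{\la, \G}(a_g) = a_g \otimes \la_{\vartheta(g)}$ as a completely isometric map; Proposition \ref{P:Fell ind} then lifts this to the normal coaction $\de_\G$. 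For the general case, I plan to use the analogous argument with the completely isometric $*$-homomorphism $\Psi \colon \ca(G) \to \ca(G) \otimes \ca(\G)$, $u_g \mapsto u_g \otimes u_{\vartheta(g)}$, whose complete isometry follows from the left inverse $\id \otimes \chi$ with $\chi$ the counit of $\ca(\G)$.

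For the final clause, given any C*-cover $(C, \iota, \de_G^C)$ of $(\A, G, \de_G)$, I would define $\de_\G^C := (\id_C \otimes \vartheta_*) \de_G^C$ and check that the compatibility $\de_\G^C \iota = (\iota \otimes \id) \de_\G$ is immediate from the definitions. The same argument at the level of $C$ verifies that $\de_\G^C$ is a coaction of $\G$ on $C$, producing the required C*-cover over $\G$.

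The hard part will be the isometric-collapse step in the normal case: identifying the Fell-absorbed image inside $\A \otimes \ca_\la(G) \otimes \ca_\la(\G)$ with a completely isometric copy of $\A \otimes \ca_\la(\G)$. I expect this to require exploiting the direct-sum structure of $\{\A_g\}_g$ (via the Fourier-coefficient functionals coming from $\de_G$) together with the coaction identity to transfer norm estimates across tensor factors.
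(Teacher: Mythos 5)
Your proposal is correct and follows essentially the same route as the paper: define $\de_\G = (\id \otimes \wt{\vartheta})\de_G$, observe that $\id \otimes \chi$ is a completely contractive left inverse (giving complete isometry at once), check density of the $\G$-spectral subspaces, and obtain normality from Fell absorption via $\la_g \mapsto \la_g \otimes \la_{\vartheta(g)}$. The collapse step you flag as the hard part is in fact immediate: the paper applies $\de_{G,\la}^{-1} \otimes \id$ and uses that the spatial tensor product of a completely isometric map with the identity is completely isometric, so no Fourier-coefficient estimates are needed.
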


\begin{proof}
By the universal property of $\ca(G)$ we have a $*$-homomorphism
\[
\wt{\vartheta} \colon \ca(G) \longrightarrow \ca(\G) ; u_g \mapsto u_{\vartheta(g)}.
\]
We then have the canonical completely contractive homomorphism
\[
\xymatrix{
\de_\G \colon \A \ar[rr]^{\de_G \phantom{ooooo}} & & \A \otimes \ca(G) \ar[rr]^{\id \otimes \wt{\vartheta}} & & \A \otimes \ca(\G)
}
\]
which has $\id \otimes \chi$ as a completely contractive left inverse.
By definition we have that
\[
\A_h := \{a \in \A \mid \de_\G(a) = a \otimes u_h\} \supseteq \{a \in \A_g \mid \vartheta(g) = h \}
\]
and thus
\[
\A = \ol{\sum_{g \in G} \A_g} \subseteq \ol{\sum_{h \in \G} \A_h} \subseteq \A.
\]
Hence $(\id \otimes \wt{\vartheta}) \de_G$ defines a coaction of $\G$ on $\A$.

Next suppose that $\de_G$ is normal and let $\de_{G, \la} = (\id \otimes \la) \de_G$.
Let $\de_\G$ be the coaction induced by $\de_G$.
By Fell's absorption principle we have that the map $\la_g \mapsto \la_g \otimes \la_{\vartheta(g)}$ gives a faithful $*$-homomorphism of $\ca_\la(G)$ and thus we get the induced completely isometric representation
\[
\xymatrix{
\A \ar[rr]^{\de_{G, \la} \phantom{oooooooo} } \ar@{.>}[d]_{\de_{\G, \la}} & & \ol{\alg}\{ a_g \otimes \la_g \mid g \in G \} \ar[d] \\
\ol{\alg}\{ a_g \otimes \la_{\vartheta(g)} \mid g \in \G \} & & \ol{\alg}\{ a_g \otimes \la_g \otimes \la_{\vartheta(g)} \mid g \in G\} \ar[ll]_{\de_{G, \la}^{-1} \otimes \id}
}
\]
which induces a faithful $*$-homomorphism $\de_{\G, \la}$.
It follows that $\de_{\G, \la} = (\id \otimes \la) \de_\G$ and thus $\de_\G$ is a normal coaction of $\G$ on $\A$.
\end{proof}

%Let us see an application on crossed products by discrete groups $\fG$.
%Suppose that $\A \subseteq \B(H)$ admits a $\fG$-action by completely isometric automorphisms.
%Then one can define the reduced crossed product $\A \rtimes_{\al, \la} \fG$ of $\A$ by $\fG$ as the norm-closed subalgebra of $\B(H) \otimes \B(\ell^2(\fG))$ generated by $\{\pi(a) \fU_{\fg} \mid a \in \A, \fg \in \fG\}$ such that
%\[
%\pi(a) (\xi \otimes e_{\fh}) = \al_{\fh}(a)\xi \otimes e_{\fh}
%\foral a \in \A
%\qand
%\fU_\fg (\xi \otimes e_{\fh}) = \xi \otimes e_{\fg \cdot \fh}
%\foral
%\fg \in \fG.
%\]
%At the same time the $\fG$-action extends to an action on $\cenv(\A)$ and one can form the reduced C*-crossed product.
%Katsoulis \cite[Theorem 2.5]{Kat17} has shown that
%\[
%\cenv(\A \rtimes_{\al, \la} \fG) \simeq \cenv(\A) \rtimes_{\al, \la} \fG.
%\]
%In the presence of a canonical coaction this passes down to the C*-envelopes of the cosystems.
%
%%%%%%%%%%%%%%%%%%%%%%%%%%%%%%%%%%
%\begin{proposition}\label{P:cp env} \cite{DKKLL20}
%Let $(\A, G, \de)$ be a (resp. normal) coaction pair by a group $G$.
%Let $\fG$ be a group acting on $\A$ by completely isometric automorphisms so that
%\[
%\de \al_{\fg} = (\al_{\fg} \otimes \id) \de
%\foral
%\fg \in \fG.
%\]
%Then $G$ induces a (resp. normal) coaction $\de \otimes \id$ on $\A \rtimes_{\al, \la} \fG$ and
%\[
%\cenv(\A \rtimes_{\al, \la} \fG, G, \de) \simeq \cenv(\A, G, \de \otimes \id) \rtimes_{\al, \la} \fG.
%\]
%\end{proposition}

Let us close this section with some remarks on topological gradings from \cite{Exe97, Exe17}.
Recall that a \emph{topological grading} $\{\B_g\}_{g \in G}$ of a C*-algebra $\B$ consists of linearly independent subspaces that span a dense subspace of $\B$ and are compatible with the group $G$, i.e., $\B_g^* = \B_{g^{-1}}$ and $\B_g \cdot \B_h \subseteq \B_{g h}$.
By \cite[Theorem 3.3]{Exe97} the linear independence condition can be substituted by the existence of a conditional expectation on $\B_e$.
The maximal C*-algebra $\ca(\B)$ of $\B$ is defined as universal with respect to the representations of $\B$.
The reduced C*-algebra $\ca_\la(\B)$ of $\B$ is defined by the left regular representation of $\B$ on $\ell^2(\B)$.

%%%%%%%%%%%%%%%%%%%%%%%%%%%%%%%%
\begin{definition}
Let $\B = \{\B_g\}_{g \in G}$ be a topological grading over a group $G$ in a C*-algebra $\ca(\B)$ that it generates, with completely contractive Fourier maps $E_g \colon \ca(\B) \to \B_g$, i.e., 
\[
E_g(b) = \de_{g,h} b \foral b \in \B_h \text{ and } g, h \in G.
\]
An ideal $\I \lhd \ca(\B)$ is called \emph{induced} if $\I = \sca{\I \cap \B_e}$.
An ideal $\I \lhd \ca(\B)$ is called \emph{Fourier} if $E_g(f) \subseteq \I$ for every $f \in \I$.
\end{definition}

%%%%%%%%%%%%%%%%%%%%%%%%%%%%%%%%
\begin{remark}\label{R:Exel 1}
It follows that an ideal $\I \lhd \ca(\B)$ is Fourier if and only if $E_e(f^*f) \in \I$ for all $f \in \I$.
Every induced ideal is a Fourier ideal.
The converse holds if $G$ is exact and $E_e$ is a faithful conditional expectation.
These can be found at \cite[Proposition 23.9]{Exe17}.
\end{remark}

A topological grading defines a \emph{Fell bundle} and once a representation of a Fell bundle is established the two notions are the same.
In a loose sense a Fell bundle $\B$ over a discrete group $G$ is a collection of Banach spaces $\{\B_g\}_{g \in G}$, often called \emph{the fibers of $\B$}, that satisfy canonical algebraic properties and the C*-norm properties; see \cite[Definition 16.1]{Exe17}.
So we will alternate freely between Fell bundles and topologically graded C*-algebras.
Spectral subspaces of coactions on C*-algebras are an important source of topological gradings.

%%%%%%%%%%%%%%%%%%%%%%%%%%%%%%%%
\begin{definition}
Let $\de$ be a coaction of $G$ on a C*-algebra $C$ and let $\I \lhd C$ be an ideal of $C$.
We say that the quotient map is \emph{$G$-equivariant}, or that the quotient $C/\I$ is \emph{$G$-equivariant} if $\de$ descends to a coaction of $G$ on $C/\I$.
\end{definition}

%%%%%%%%%%%%%%%%%%%%%%%%%%%%%%%%
\begin{remark}\label{R:Exel 2}
If $\de \colon C \to C \otimes \ca(G)$ is a coaction and $\I \lhd C$ is an induced ideal then $\de$ descends to a faithful coaction of $G$ on $C/\I$, see for example \cite[Proposition A.1]{CLSV11}.
The same holds for the normal actions when $G$ is exact, see for example \cite[Proposition A.5]{CLSV11}. 
\end{remark}

%%%%%%%%%%%%%%%%%%%%%%%%%%%%%%%%
\section{Operator algebras of product systems} \label{S:oaps}
%%%%%%%%%%%%%%%%%%%%%%%%%%%%%%%%

%%%%%%%%%%%%%%%%%%%%%%%%%%%%%%%%
\subsection{C*-correspondences}
%%%%%%%%%%%%%%%%%%%%%%%%%%%%%%%%

A \emph{C*-correspondence} $X$ over $A$ is a right Hilbert module over $A$ with a left action given by a $*$-homomorphism $\vphi_X \colon A \to \L X$.
We write $\L X$ and $\K X$ for the adjointable operators and the compact operators of $X$, respectively.
For two C*-corresponden\-ces $X, Y$ over the same $A$ we write $X \otimes_A Y$ for the balanced tensor product over $A$.
We say that $X$ is unitarily equivalent to $Y$ (symb. $X \simeq Y$) if there is a surjective adjointable operator $U \in \L(X,Y)$ such that $\sca{U \xi, U \eta} = \sca{\xi, \eta}$ and $U (a \xi b) = a U(\xi) b$ for all $\xi, \eta \in X$ and $a,b \in A$.
A C*-correspondence is called \emph{injective} if the left action is injective.

A \emph{representation} $(\pi,t)$ of a C*-correspondence is a left module map that preserves the inner product.
Then $(\pi,t)$ is automatically a bimodule map.
Moreover there exists a $*$-homomorphism $\psi$ on $\K X$ such that $\psi(\theta_{\xi, \eta}) = t(\xi) t(\eta)^*$ for all $\theta_{\xi, \eta} \in \K X$. When $\pi$ is injective, then both $t$ and $\psi$ are isometric.
A representation $(\pi,t)$ is called covariant if it satisfies $\pi(a) = \psi(\vphi_X(a))$ for all $a$ in Katsura's ideal $J_X := \ker\vphi_X^\perp \bigcap \vphi_X^{-1}(\K X)$.

%%%%%%%%%%%%%%%%%%%%%%%%%%%%%%%%
\subsection{Toeplitz algebras}
%%%%%%%%%%%%%%%%%%%%%%%%%%%%%%%%

Let $P$ be a unital subsemigroup of a group $G$.
We will write $P^*$ for the set of elements in $P$ that are invertible in $P$.
A \emph{product system $X$ over $P$} is a family $\{X_p \mid p \in P\}$ of C*-correspondences over the same C*-algebra $A$ such that:
\begin{enumerate}
\item $X_e = A$.
\item There are multiplication rules $X_p \otimes_A X_q \simeq_{u_{p,q}} X_{pq}$ for every $p, q \in P \setminus \{e\}$.
\item There are multiplication rules $A \otimes_A X_p \simeq_{u_{e, p}} [A \cdot X_p]$ and $X_p \otimes_A A \simeq_{u_{p, e}} [X_p \cdot A] = X_p$ for all $p \in P$.
\item The multiplication rules are associative in the sense that
\[
u_{pq, r} (u_{p,q} \otimes \id_{X_r}) = u_{p, qr} (\id_{X_p} \otimes u_{q,r}) \foral p,q,r \in P.
\]
\end{enumerate}
We say that $X$ is \emph{injective} if every $X_p$ is injective.
If $x \in P^*$ then the multiplication rules impose that
\[
X_x \otimes_A X_{x^{-1}} \simeq A \simeq X_{x^{-1}} \otimes_A X_x.
\]
In particular every such $X_x$ is non-degenerate since
\[
A \otimes_A X_x \simeq X_x \otimes_A X_{x^{-1}} \otimes_A X_x \simeq X_{x} \otimes_A A = X_x.
\]
Throughout this work we will be assuming that all left actions are non-degenerate.
We do this in order to be able to use freely the results from \cite{DKKLL20, Seh18}.
Nevertheless it is possible that this assumption can be removed.

Henceforth we will suppress the use of symbols for the multiplication rules.
Thus we write $\xi_p \xi_q$ for the image of $\xi_p \otimes \xi_q$ under $u_{p,q}$, and so
\[
\vphi_{pq}(a)(\xi_p \xi_q) = (\vphi_p(a) \xi_p) \xi_q \foral a \in A \text{ and } \xi_p \in X_p, \xi_q \in X_q.
\]
The product system structure gives maps
\[
i_{p}^{pq} \colon \L X_{p} \longrightarrow \L X_{pq}
\; \textup{ such that } \;
i_{p}^{pq}(S) (\xi_{p} \xi_{q})
=
(S \xi_{p}) \xi_{q}.
\]
If $x \in P^*$ then $i_{r}^{rx} \colon \L X_r \to \L X_{rx}$ is a $*$-isomorphism with inverse $i_{rx}^{rxx^{-1}} \colon \L X_{rx} \to \L X_{r}$.

%%%%%%%%%%%%%%%%%%%%%%%%%%%%%%%%
\begin{definition}
Let $P$ be a unital subsemigroup of a group $G$ and $X$ be a product system over $P$ with coefficients in $A$.
A \emph{Toeplitz representation $(\pi,t)$ of $X$} consists of a family of representations $(\pi, t_p)$ of $X_p$ over $A$ such that
\[
t_p(\xi_p) t_q(\xi_q) = t_{pq}(\xi_p \xi_q) \foral \xi_p \in X_p, \xi_q \in X_q.
\]
The \emph{Toeplitz algebra $\T(X)$ of $X$} is the universal C*-algebra generated by $A$ and $X$ with respect to the representations of $X$.
The \emph{Toeplitz tensor algebra $\T(X)^+$ of $X$} is the subalgebra of $\T(X)$ generated by $A$ and $X$.
\end{definition}

If $(\pi, t)$ is a Toeplitz representation then we write $\psi_p$ for the induced representation on $\K X_p$.
We obtain a bimodule triple $(\psi_r, \psi_{r,s}, \psi_s)$ on the bimodule $(\K X_r, \K(X_s, X_r), \K X_s)$ so that $\psi_{r,s}(\theta_{\xi_r, \xi_s}) = t_r(\xi_r) t_s(\xi_s)^*$.
We will often interpret $\pi$ as $t_e$ or $\psi_e$ to simplify our notation henceforth.

%%%%%%%%%%%%%%%%%%%%%%%%%%%%%%%%
\begin{proposition}\label{P:star inv LCM} \cite[Proposition 2.4]{DKKLL20}
Let $X$ be a product system over $P$ with coefficients in $A$.
Let $(\pi,t)$ be a Toeplitz representation of $X$.
If $x \in P^*$ then 
\[
t_{x}(X_x)^* = t_{x^{-1}}(X_{x^{-1}}).
\]
If $w \in P$ and $x \in P^*$ then
\[
i_w^{wx}(k_w) \in \K X_{wx}
\text{ and }
\psi_{wx}(i_{w}^{wx}(k_w)) = \psi_w(k_w) \foral k_w \in \K X_w.
\]
\end{proposition}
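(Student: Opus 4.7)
The plan is to exploit the fact that $x \in P^*$ forces $X_x$ to be an equivalence $A$-$A$-bimodule with inverse $X_{x^{-1}}$: the multiplication isomorphisms $X_x \otimes_A X_{x^{-1}} \simeq A \simeq X_{x^{-1}} \otimes_A X_x$ are precisely the two Rieffel imprimitivity relations. Consequently the left action $\vphi_x \colon A \to \K X_x$ is a $*$-isomorphism, and there is a canonical conjugate-linear bijection $\ast \colon X_x \to X_{x^{-1}}$, $\xi \mapsto \xi^*$, satisfying $\xi^* \cdot \xi' = \sca{\xi, \xi'}_A$ and $\xi \cdot \eta^* = {}_A\sca{\xi, \eta}$ for $\xi, \xi', \eta \in X_x$.

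For the first identity I would prove the stronger pointwise statement $t_x(\xi)^* = t_{x^{-1}}(\xi^*)$. Decomposing $a \in A$ as (a limit of) finite sums $\sum_i \xi_i \eta_i$ via $u_{x, x^{-1}}$, the Toeplitz relations give
\[
t_x(\xi)^* \pi(a) = \sum_i \pi(\sca{\xi, \xi_i}) t_{x^{-1}}(\eta_i) = \sum_i t_{x^{-1}}(\xi^* \xi_i \eta_i) = t_{x^{-1}}(\xi^* \cdot a),
\]
extended to all $a \in A$ by continuity. For an approximate identity $(e_\lambda)$ of $A$, the nondegeneracy of the left actions yields $\pi(e_\lambda) t_x(\xi) = t_x(\vphi_x(e_\lambda) \xi) \to t_x(\xi)$ in norm, whence
\[
t_x(\xi)^* = \lim_\lambda t_x(\xi)^* \pi(e_\lambda) = \lim_\lambda t_{x^{-1}}(\xi^* \cdot e_\lambda) = t_{x^{-1}}(\xi^*).
\]
The reverse inclusion follows by symmetry applied to $x^{-1} \in P^*$.

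For the second part, membership $i_w^{wx}(k_w) \in \K X_{wx}$ is an instance of the Morita-equivalence principle: since $X_x$ is an equivalence bimodule, the map $k \mapsto k \otimes \id_{X_x}$ defines a $*$-homomorphism $\K X_w \to \K(X_w \otimes_A X_x) = \K X_{wx}$, and under the canonical identification it coincides with $i_w^{wx}$. For the identity $\psi_{wx}(i_w^{wx}(k_w)) = \psi_w(k_w)$ it suffices by linearity and norm continuity to treat $k_w = \theta_{\xi_w, \eta_w}$. Picking an approximate Parseval frame $\{c_k^\lambda\}_k \subset X_x$, so that $u_\lambda := \sum_k \theta_{c_k^\lambda, c_k^\lambda} \to \id_{X_x}$ strictly in $\L X_x$, the finite sums $\sum_k \theta_{\xi_w c_k^\lambda, \eta_w c_k^\lambda} \in \K X_{wx}$ converge strictly in $\L X_{wx}$ to $i_w^{wx}(\theta_{\xi_w, \eta_w})$. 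Setting $e_\lambda' := \sum_k {}_A\sca{c_k^\lambda, c_k^\lambda} \in A$ (an approximate identity of $A$, since $\vphi_x: A \simeq \K X_x$), applying $\psi_{wx}$ and invoking the first part to substitute $t_x(c_k^\lambda)^* = t_{x^{-1}}((c_k^\lambda)^*)$ yields
\[
\psi_{wx}\Big(\sum_k \theta_{\xi_w c_k^\lambda, \eta_w c_k^\lambda}\Big) = t_w(\xi_w) \pi(e_\lambda') t_w(\eta_w)^* = t_w(\xi_w e_\lambda') t_w(\eta_w)^* \longrightarrow \psi_w(\theta_{\xi_w, \eta_w})
\]
in norm.

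The main obstacle is that the approximants $\sum_k \theta_{\xi_w c_k^\lambda, \eta_w c_k^\lambda}$ typically converge only strictly in $\L X_{wx}$ rather than in norm in $\K X_{wx}$, so $\psi_{wx}$ cannot be passed through the limit naively. The resolution is to extend $\psi_{wx}$ strictly-continuously to the multiplier algebra $M(\K X_{wx}) = \L X_{wx}$; the strict convergence of the approximants then forces strong-operator convergence of their images, which combined with the norm limit computed above identifies $\psi_{wx}(i_w^{wx}(\theta_{\xi_w, \eta_w})) = \psi_w(\theta_{\xi_w, \eta_w})$. Linearity and norm continuity then extend the equality to all $k_w \in \K X_w$.
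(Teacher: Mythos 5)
Your proof is correct, and since the paper only cites this statement from \cite[Proposition 2.4]{DKKLL20} without reproducing an argument, there is no in-paper proof to compare against; your route is essentially the standard one that the cited proof follows (for the first identity, combine $[t_{x^{-1}}(X_{x^{-1}})t_x(X_x)]=\pi(A)=[t_x(X_x)t_{x^{-1}}(X_{x^{-1}})]$ with non-degeneracy of $\vphi_x$; for the second, approximate $\id_{X_x}$ by finite sums of rank-one operators and use that $t_x(c)t_x(c)^*=t_x(c)t_{x^{-1}}(c^*)\in\pi(A)$). Two remarks. First, the existence of the conjugate-linear map $\xi\mapsto\xi^*$ with $\xi^*\xi'=\sca{\xi,\xi'}$ and $\xi\eta^*={}_A\sca{\xi,\eta}$ is the load-bearing input of your first paragraph; it does hold, being the uniqueness of inverses for the invertible correspondence $X_x$ (note that the second identity follows from the first via associativity of the multiplication maps and injectivity of $\vphi_x$), but strictly speaking it deserves a line of justification rather than a bare assertion. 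Second, the ``main obstacle'' you describe is not actually there: writing $\eta_w=\eta_w'b$ by Cohen factorization and $\Theta_{\xi_w}\colon X_x\to X_w\otimes_A X_x$, $z\mapsto\xi_w\otimes z$, one has $\sum_k\theta_{\xi_wc_k^\la,\eta_wc_k^\la}=\Theta_{\xi_w}u_\la\vphi_x(b^*)\Theta_{\eta_w'}^*$ and $i_w^{wx}(\theta_{\xi_w,\eta_w})=\Theta_{\xi_w}\vphi_x(b^*)\Theta_{\eta_w'}^*$, and since $\vphi_x(b^*)\in\K X_x$ the strict convergence $u_\la\to\id$ already gives $\|(u_\la-\id)\vphi_x(b^*)\|\to0$, hence \emph{norm} convergence of the approximants in $\K X_{wx}$. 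One may then simply apply the norm-continuous $\psi_{wx}$; the detour through the multiplier algebra is harmless but superfluous.
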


Suppose that $\T(X)$ is faithfully represented by $(\wt{\pi}, \wt{t})$.
By the universal property of $\T(X)$ there is a canonical $*$-homomorphism
\[
\wt{\de} \colon \T(X) \longrightarrow \T(X) \otimes \ca(G) ; \wt{t}(\xi_p) \mapsto \wt{t}(\xi_p) \otimes u_p.
\]
Sehnem \cite[Lemma 2.2]{Seh18} has shown that $\wt{\de}$ is a non-degenerate and faithful coaction of $\T(X)$ when $X$ is non-degenerate, with each spectral space $\T(X)_g$, with $g \in G$, be given by the products
\[
\wt{t}_{p_1}(\xi_{p_1}) \wt{t}_{p_2}(\xi_{p_2})^* \cdots \wt{t}_{p_n}(\xi_{p_n})^* \qfor p_1 p_2^{-1} \cdots p_n^{-1} = g.
\]
We will do a little bit more for semigroup homomorphisms.

%%%%%%%%%%%%%%%%%%%%%%%%%%%%%%%%
\begin{definition}
Let $P$ (resp. $\P$) be a unital subsemigroup of a group $G$ (resp. $\G$).
If $\vartheta \colon G \to \G$ is a group homomorphism such that $\vartheta(P) \subseteq \P$, we write $\vartheta \colon (G, P) \to (\G,\P)$ and say that $\vartheta$ is a \emph{semigroup preserving homomorphism}.
\end{definition}

%%%%%%%%%%%%%%%%%%%%%%%%%%%%%%%%
\begin{proposition}\label{P:t coaction}
Let $P$ be a unital subsemigroup of a group $G$ and $X$ be a product system over $P$ with coefficients in $A$.
Let $\vartheta \colon (G,P) \to (\G,\P)$ be a semigroup preserving homomorphism and suppose that $(\wt{\pi}, \wt{t})$ is a faithful representation of $\T(X)$.
Then there is a coaction of $\G$ on $\T(X)$ such that
\[
\wt{\de} \colon \T(X) \longrightarrow \T(X) \otimes \ca(\G) ; \wt{t}(\xi_p) \mapsto \wt{t}(\xi_p) \otimes u_{\vartheta(p)}.
\]
Moreover each spectral space $\T(X)_h$ with $h \in \G$ is given by the products of the form
\[
\wt{t}_{p_1}(\xi_{p_1}) \wt{t}_{p_2}(\xi_{p_2})^* \cdots \wt{t}_{p_n}(\xi_{p_n})^* \qfor \vartheta(p_1) \vartheta(p_2)^{-1} \cdots \vartheta(p_n)^{-1} = h,
\]
where we impose that $\wt{t}_{p_i}(\xi_{p_i}) = I$ when $p_i = e_P$ and $h \neq e_\P$.
\end{proposition}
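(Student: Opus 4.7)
The statement is essentially a functorial extension of Sehnem's coaction along the semigroup-preserving homomorphism $\vartheta$, so my first step would be to invoke Sehnem \cite[Lemma 2.2]{Seh18} (as recalled immediately before the statement) to obtain the non-degenerate faithful coaction
\[
\wt{\de}_G \colon \T(X) \longrightarrow \T(X) \otimes \ca(G), \qquad \wt{t}_p(\xi_p) \longmapsto \wt{t}_p(\xi_p) \otimes u_p,
\]
with the spectral subspace $\T(X)_g$ for $g \in G$ being the closed linear span of products $\wt{t}_{p_1}(\xi_{p_1}) \wt{t}_{p_2}(\xi_{p_2})^* \cdots \wt{t}_{p_n}(\xi_{p_n})^*$ for which $p_1 p_2^{-1} \cdots p_n^{-1} = g$. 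Then I would apply Proposition \ref{P:coa ind} to the cosystem $(\T(X), G, \wt{\de}_G)$ and to $\vartheta$; this yields the $\G$-coaction
\[
\wt{\de} := (\id_{\T(X)} \otimes \wt{\vartheta}) \circ \wt{\de}_G,
\]
where $\wt{\vartheta} \colon \ca(G) \to \ca(\G)$ is the $*$-homomorphism induced by $\vartheta$. By construction $\wt{\de}(\wt{t}_p(\xi_p)) = \wt{t}_p(\xi_p) \otimes u_{\vartheta(p)}$, and since $\wt{\de}$ is a composition of $*$-homomorphisms with a completely contractive left inverse $\id \otimes \chi_\G$, Remark \ref{R:nd cis} gives that $\wt{\de}$ is faithful, non-degenerate and satisfies the coaction identity.

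For the description of the spectral subspaces, fix $h \in \G$. The proof of Proposition \ref{P:coa ind} already shows that
\[
\K_h := \ol{\sum_{g \in \vartheta^{-1}(h)} \T(X)_g} \;\subseteq\; \T(X)_h,
\]
and substituting Sehnem's description of each $\T(X)_g$ expresses $\K_h$ precisely as the closure of the linear span of products $\wt{t}_{p_1}(\xi_{p_1}) \wt{t}_{p_2}(\xi_{p_2})^* \cdots \wt{t}_{p_n}(\xi_{p_n})^*$ with $\vartheta(p_1) \vartheta(p_2)^{-1} \cdots \vartheta(p_n)^{-1} = \vartheta(p_1 p_2^{-1} \cdots p_n^{-1}) = h$. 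The reverse containment $\T(X)_h \subseteq \K_h$ I would obtain from the density of $\sum_{g \in G} \T(X)_g$ in $\T(X)$ together with the standard Fejér-type argument: every $a \in \T(X)_h$ may be approximated in norm by finite sums $a_F = \sum_{g \in F} a_g$ with $a_g \in \T(X)_g$, and applying the conditional expectation associated with $\wt{\de}$ (which, after composing with $\id \otimes E_h$ on $\T(X) \otimes \ca(\G)$, vanishes on $\T(X)_g$ whenever $\vartheta(g) \neq h$) forces $a$ into $\K_h$.

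The convention that $\wt{t}_{p_i}(\xi_{p_i}) = I$ when $p_i = e_P$ and $h \neq e_\P$ is a bookkeeping device: insertions of coefficients in $\wt\pi(A)$ do not change the $\G$-degree since $\vartheta(e_P) = e_\G$, so they can be absorbed into adjacent factors, and this simply reflects that Sehnem's original word decomposition $p_1 p_2^{-1}\cdots = g$ lifts to $\vartheta(p_1)\vartheta(p_2)^{-1}\cdots = h$ with no new constraints. The one point that requires genuine care is the second paragraph above, where one must verify that the spectral projection onto $\T(X)_h$ with respect to $\wt{\de}$ is compatible with the $G$-grading; this is where I anticipate the main (mild) obstacle, as it requires care that the conditional expectation onto $\T(X)_h$ restricts to the sum of Sehnem's $G$-conditional expectations indexed by $\vartheta^{-1}(h)$, which I would justify directly from the identity $\wt\de = (\id \otimes \wt{\vartheta}) \wt\de_G$.
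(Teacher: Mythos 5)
Your proposal is correct and follows essentially the same route as the paper: the $G$-coaction comes from the universal property (equivalently Sehnem's \cite[Lemma 2.2]{Seh18}) and the $\G$-coaction is then obtained by applying Proposition \ref{P:coa ind}. The only difference is that you spell out the reverse containment $\T(X)_h \subseteq \ol{\sum_{g \in \vartheta^{-1}(h)} \T(X)_g}$ via the Fourier-coefficient/Fej\'er argument, a detail the paper's proof leaves implicit; your argument for it is sound.
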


\begin{proof}
The universal property induces a $*$-homomorphism $\wt{\de} \colon \T(X) \to \T(X) \otimes \ca(G)$.
Moreover $\wt{\de}$ is injective with left inverse given by $\id \otimes \chi$.
By construction the fibers $[\T(X)]_g$ contain the generators of $\T(X)$.
By Remark \ref{R:nd cis} and the definition of $\T(X)^+$, this gives the coaction of $G$.
Proposition \ref{P:coa ind} provides the coaction of $\G$.
\end{proof}

%%%%%%%%%%%%%%%%%%%%%%%%%%%%%%%%
\begin{remark}
The Fock space representation of Fowler \cite{Fow02} ensures that $A$, and thus $X$, embeds isometrically in $\T(X)$.
In short, let $\F(X) = \sum^\oplus_{q \in P} X_q$ and for $a \in A$ and $\xi_p \in X_p$ define $(\ol{\pi}, \ol{t}_p)$ by
\[
\ol{\pi}(a) \xi_q = \vphi_q(a) \xi_q
\qand
\ol{t}_p(\xi_p) \xi_q = \xi_p \xi_q
\qforal
\xi_q \in X_q.
\]
Then every $(\ol{\pi}, \ol{t}_p)$ defines a representation of $X_p$ and hence it induces a representation of $\T(X)$.
By taking the compression at the $(e, e)$-entry we see that $\ol{\pi}$, and thus $\ol{t}_p$, is injective.
\end{remark}

%%%%%%%%%%%%%%%%%%%%%%%%%%%%%%%%
\begin{definition}
Let $P$ be a unital subsemigroup of a group $G$ and $X$ be a product system over $P$ with coefficients in $A$.
The \emph{Fock algebra} $\T_\la(X)$ is the C*-algebra generated by the Fock representation $(\ol{\pi}, \ol{t})$.
The \emph{Fock tensor algebra $\T_\la(X)^+$ of $X$} is the subalgebra of $\T_\la(X)$ generated by $A$ and $X$.
\end{definition}

It is shown in \cite[Proposition 4.1]{DKKLL20} that the Fock algebra admits an analogous normal coaction.
Proposition \ref{P:coa ind} yields the next proposition.

%%%%%%%%%%%%%%%%%%%%%%%%%%%%%%%%
\begin{proposition}\label{P:f coaction}
Let $P$ be a unital subsemigroup of a group $G$ and $X$ be a product system over $P$ with coefficients in $A$, and let $\T_\la(X) = \ca(\ol{\pi}, \ol{t})$ be its associated Fock algebra.
If $\vartheta \colon (G,P) \to (\G,\P)$ is a semigroup preserving homomorphism then there is a normal coaction of $\G$ on $\T_\la(X)$ such that
\[
\ol{\de}_\G \colon \T_\la(X) \longrightarrow \T_{\la}(X) \otimes \ca(\G) ; \ol{t}(\xi_p) \mapsto \ol{t}(\xi_p) \otimes u_{\vartheta(p)}.
\]
Moreover for each $h \in \G$ the spectral space $\T_\la(X)_h$ is the closed linear span of the products of the form
\[
\ol{t}_{p_1}(\xi_{p_1}) \ol{t}_{p_2}(\xi_{p_2})^* \cdots \ol{t}_{p_{n-1}}(\xi_{p_{n-1}}) \ol{t}_{p_n}(\xi_{p_n})^* \qfor \vartheta(p_1) \vartheta(p_2)^{-1} \cdots \vartheta(p_n)^{-1} = h,
\]
where we impose that $\ol{t}_{p_i}(\xi_{p_i}) = I$ when $p_i = e_P$ and $h \neq e_\P$.
\end{proposition}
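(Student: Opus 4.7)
My strategy is to push the known normal $G$-coaction on $\T_\la(X)$ forward through $\vartheta$ via Proposition \ref{P:coa ind}. I would first invoke \cite[Proposition 4.1]{DKKLL20} to secure a normal coaction $\ol{\de}_G \colon \T_\la(X) \to \T_\la(X) \otimes \ca(G)$ determined on generators by $\ol{t}(\xi_p) \mapsto \ol{t}(\xi_p) \otimes u_p$, together with its spectral description as the closed linear span of the alternating products $\ol{t}_{p_1}(\xi_{p_1}) \ol{t}_{p_2}(\xi_{p_2})^* \cdots \ol{t}_{p_n}(\xi_{p_n})^*$ with $p_1 p_2^{-1} \cdots p_n^{-1} = g$, subject to the $\ol{t}_{e_P}(\xi_{e_P}) = I$ convention. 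I would then apply Proposition \ref{P:coa ind} to the group homomorphism $\vartheta \colon G \to \G$: since $(\T_\la(X), G, \ol{\de}_G)$ is a normal cosystem, composition with $\id \otimes \wt{\vartheta}$, where $\wt{\vartheta} \colon \ca(G) \to \ca(\G)$ is the $*$-homomorphism induced by $\vartheta$, gives a normal coaction $\ol{\de}_\G := (\id \otimes \wt{\vartheta}) \ol{\de}_G$ of $\G$ on $\T_\la(X)$ sending $\ol{t}(\xi_p) \mapsto \ol{t}(\xi_p) \otimes u_{\vartheta(p)}$ on generators, as required.

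For the spectral decomposition, let $M_h$ denote the closed span appearing in the statement (indexed by $\vartheta(p_1) \vartheta(p_2)^{-1} \cdots \vartheta(p_n)^{-1} = h$). The inclusion $M_h \subseteq [\T_\la(X)]_h^{(\G)}$ is immediate: each such product sits inside $[\T_\la(X)]_g^{(G)}$ for $g = p_1 p_2^{-1} \cdots p_n^{-1}$ by the $G$-spectral description, and the identity $\ol{\de}_\G = (\id \otimes \wt{\vartheta}) \ol{\de}_G$ forces it into $[\T_\la(X)]_{\vartheta(g)}^{(\G)} = [\T_\la(X)]_h^{(\G)}$. For the reverse inclusion, I would use the contractive Fourier projections $E_h^{(\G)} \colon \T_\la(X) \to [\T_\la(X)]_h^{(\G)}$ attached to the normal coaction $\ol{\de}_\G$, obtainable by slicing the faithful representation $(\id \otimes \la)\ol{\de}_\G$ against suitable functionals on $\ca_\la(\G)$, or equivalently from the Fell bundle description of $\T_\la(X)$. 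Given $a \in [\T_\la(X)]_h^{(\G)}$, approximate $a$ in norm by finite sums $a_n = \sum_{g \in F_n} a_g^{(n)}$ with $a_g^{(n)} \in [\T_\la(X)]_g^{(G)}$ (possible since $\sum_g [\T_\la(X)]_g^{(G)}$ is dense in $\T_\la(X)$), and apply $E_h^{(\G)}$, which fixes each $a_g^{(n)}$ with $\vartheta(g) = h$ and annihilates the rest. Hence $E_h^{(\G)}(a_n) = \sum_{g \in F_n \cap \vartheta^{-1}(h)} a_g^{(n)}$ converges in norm to $E_h^{(\G)}(a) = a$, placing $a$ inside $M_h$.

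The main obstacle is the reverse inclusion in the spectral description; the rest of the argument is purely formal, depending only on Proposition \ref{P:coa ind} and \cite[Proposition 4.1]{DKKLL20}. Once the existence and contractivity of the Fourier projections for the normal coaction $\ol{\de}_\G$ are granted (standard Fell bundle machinery for normal coactions, see e.g.\ \cite{Exe17}), this obstacle dissolves and the identification of $[\T_\la(X)]_h^{(\G)}$ with $M_h$ is a routine bookkeeping translation through $\vartheta$ of the $G$-spectral description.
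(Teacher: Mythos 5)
Your proposal is correct and follows essentially the same route as the paper, whose proof consists precisely of citing \cite[Proposition 4.1]{DKKLL20} for the normal $G$-coaction on $\T_\la(X)$ and then applying Proposition \ref{P:coa ind}. The Fourier-projection argument you supply for the spectral description is sound bookkeeping that the paper leaves implicit.
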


In turn the coaction of $\G$ induces a faithful conditional expectation of the following form.

%%%%%%%%%%%%%%%%%%%%%%%%%%%%%%%%
\begin{proposition}\label{P:Fock ce}
Let $P$ be a unital subsemigroup of a group $G$ and $X$ be a product system over $P$ with coefficients in $A$.
Let $\vartheta \colon (G,P) \to (\G,\P)$ be a semigroup preserving homomorphism.
Then $\T_\la(X)$ admits a faithful conditional expectation $\ol{E}_\P$ such that
\[
\ol{E}_\P(\ol{\psi}_{r,s}(k_{r,s})) = \de_{\vartheta(r), \vartheta(s)} \ol{\psi}_{r,s}(k_{r,s}) \foral k_{r,s} \in \K(X_s,X_r).
\]
\end{proposition}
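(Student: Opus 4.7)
My plan is to exhibit $\ol{E}_\P$ as the composition of the normal coaction from Proposition \ref{P:f coaction} with a canonical faithful trace on the reduced group C*-algebra. Let $\tau$ denote the canonical faithful tracial state on $\ca_\la(\G)$ (so $\tau(\la_h) = \de_{h, e_\G}$), and set
\[
\ol{E}_\P := (\id_{\T_\la(X)} \otimes \tau) \circ (\id_{\T_\la(X)} \otimes \la) \circ \ol{\de}_\G.
\]
This is a composition of completely positive contractions, hence completely positive and contractive with values in $\T_\la(X) \otimes \bC = \T_\la(X)$.

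To verify the stated formula, I would first compute on rank-one compacts $k_{r,s} = \theta_{\xi_r, \xi_s}$ with $\xi_r \in X_r$, $\xi_s \in X_s$. Since $\ol{\psi}_{r,s}(\theta_{\xi_r, \xi_s}) = \ol{t}_r(\xi_r) \ol{t}_s(\xi_s)^*$ and $\ol{\de}_\G$ is a $*$-homomorphism by Proposition \ref{P:f coaction},
\[
\ol{\de}_\G(\ol{\psi}_{r,s}(\theta_{\xi_r, \xi_s})) = \ol{t}_r(\xi_r) \ol{t}_s(\xi_s)^* \otimes u_{\vartheta(r)} u_{\vartheta(s)}^* = \ol{\psi}_{r,s}(\theta_{\xi_r, \xi_s}) \otimes u_{\vartheta(r) \vartheta(s)^{-1}}.
\]
Applying $\id \otimes \la$ and then $\id \otimes \tau$ then yields $\de_{\vartheta(r), \vartheta(s)} \ol{\psi}_{r,s}(\theta_{\xi_r, \xi_s})$, since $\tau(\la_{\vartheta(r) \vartheta(s)^{-1}}) = \de_{\vartheta(r), \vartheta(s)}$. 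By continuity and linearity of $\ol{\psi}_{r,s}$ the identity extends to arbitrary $k_{r,s} \in \K(X_s, X_r)$. In particular $\ol{E}_\P$ fixes every element of the $e_\G$-spectral subspace $\T_\la(X)_{e_\G}$, where $\ol{\de}_\G$ takes values in $\T_\la(X) \otimes 1$; combined with the $\T_\la(X)$-linearity of the slice map this gives the bimodule property over $\T_\la(X)_{e_\G}$, and $\ol{E}_\P$ is therefore a conditional expectation.

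For faithfulness, by Proposition \ref{P:f coaction} the coaction $\ol{\de}_\G$ is normal, so the composition $(\id \otimes \la) \circ \ol{\de}_\G \colon \T_\la(X) \to \T_\la(X) \otimes \ca_\la(\G)$ is a faithful $*$-homomorphism. If $f \in \T_\la(X)$ is positive with $\ol{E}_\P(f) = 0$, then $y := (\id \otimes \la) \ol{\de}_\G(f)$ is a positive element of the minimal tensor product $\T_\la(X) \otimes \ca_\la(\G)$ annihilated by the slice map $\id \otimes \tau$. Since $\tau$ is a faithful tracial state, the slice map $\id \otimes \tau$ is faithful on positives of $\T_\la(X) \otimes_{\min} \ca_\la(\G)$ (this is the standard KSGNS/Gram-matrix argument), so $y = 0$ and then $f = 0$ by faithfulness of $(\id \otimes \la) \ol{\de}_\G$.

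The plan is essentially a routine unpacking once the normal coaction of Proposition \ref{P:f coaction} is in hand; the only nontrivial input is the standard fact that slicing a positive element of a minimal tensor product by a faithful tracial state is faithful, which is what I expect to flag as the main technical step (or simply cite as well known).
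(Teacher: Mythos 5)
Your proposal is correct and follows essentially the same route as the paper: the paper also defines $\ol{E}_\P := (\id \otimes \om_{e,e})(\id \otimes \la)\ol{\de}_\G$ for the canonical faithful vector state $\om_{e,e}$ on $\ca_\la(\G)$, and deduces faithfulness from normality of the coaction and faithfulness of the slice. The only cosmetic difference is that the paper verifies the displayed formula by first identifying $\ol{E}_\P$ with the block-diagonal compression $\sum_{h \in \P} Q_h \cdot Q_h$ on the Fock space, whereas you compute directly on the spectral subspaces; both verifications are valid.
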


\begin{proof}
Let $\ol{\de}_{\G} \colon \T_\la(X) \to \T_\la(X) \otimes \ca(\G)$ be the normal coaction and let $\om_{e,e}$ be the faithful conditional expectation on $\ca_\la(\G)$.
Then $\T_\la(X)$ admits the faithful conditional expectation
\[
\ol{E}_\P := (\id \otimes \om_{e,e}) (\id \otimes \la) \ol{\de}_G.
\]
On the other hand for $h \in \P$ let $Y_h := \sum^\oplus_{\vartheta(p) = h} X_h$ and let the projections $Q_{h} \colon \F(X) \to Y_h$.
We will show that
\[
\ol{E}_\P(\cdot) = \sum_{h \in \P} Q_h \cdot Q_h.
\]
It suffices to apply on the spanning elements of the form
\[
f
:=
\ol{t}_{p_1}(\xi_{p_1}) \ol{t}_{p_2}(\xi_{p_2})^* \cdots \ol{t}_{p_{n-1}}(\xi_{p_{n-1}}) \ol{t}_{p_n}(\xi_{p_n})^*
\]
where we impose that $\ol{t}_{p_i}(\xi_{p_i}) = I$ when $p_i = e_P$.
For $p \in P$ we directly compute
\[
\ol{E}_\P(f)
=
\begin{cases}
f \xi_p & \text{if } \vartheta(p_1^{-1}p_2 \cdots p_{n-1}^{-1} p_n) = e_{\G}, \\
0 & \text{ otherwise}.
\end{cases}
\]
If $f \xi_p \neq 0$ then it is in some $X_r$ with $r = p_1^{-1} p_2 \cdots p_{n-1}^{-1} p_n p$ which gives $\vartheta(r) = \vartheta(p)$.
On the other hand we have that
\[
\left( \sum_{h \in \P} Q_h f Q_h \right) \xi_p
=
\begin{cases}
f \xi_p & \text{if } \vartheta(p_1^{-1}p_2 \cdots p_{n-1}^{-1} p_n p) = \vartheta(p), \\
0 & \text{ otherwise}.
\end{cases}
\]
We have that $\vartheta(p_1^{-1}p_2 \cdots p_{n-1}^{-1} p_n p) = \vartheta(p)$ if and only if $\vartheta(p_1^{-1}p_2 \cdots p_{n-1}^{-1} p_n) = e_{\G}$ and so
\[
\ol{E}_\P(f) = \sum_{h \in \P} Q_h f Q_{h}.
\]

For the second part let $r, s \in P$ and $\xi_p \in Y_h$ so that $\vartheta(p) = h$.
Then we directly compute
\begin{align*}
\ol{E}_\P( \ol{\psi}_{r,s}(k_{r,s}) ) \xi_p
& =
Q_{h} \ol{\psi}_{r,s}(k_{r,s}) \xi_p
=
\begin{cases}
\ol{\psi}_{r,s}(k_{r,s}) \xi_p & \text{if } p = s s', \vartheta(p) = \vartheta(r s'),\\
0 & \text{ otherwise},
\end{cases} \\
& =
\de_{\vartheta(r), \vartheta(s)} \ol{\psi}_{r,s}(k_{r,s}) \xi_p,
\end{align*}
where we used that $\vartheta$ is a group homomorphism and so $\vartheta(s) \vartheta(s') = \vartheta(p) = \vartheta(r) \vartheta(s')$.
As $p \in P$ is arbitrary the proof is complete.
\end{proof}

%%%%%%%%%%%%%%%%%%%%%%%%%%%%%%%%
\subsection{Covariance algebras and Cuntz-Nica-Pimsner algebras}
%%%%%%%%%%%%%%%%%%%%%%%%%%%%%%%%

Let us review Sehnem's strong covariance relations from \cite{Seh18}.
We will be using a description presented in \cite{DKKLL20}.
Let $P$ be a unital subsemigroup of a group $G$.
For a finite set $F \subseteq G$ let
\[
K_F := \bigcap_{g \in F} gP.
\]
For $r \in P$ and $g \in F$ define the ideal of $A$ given by
\[
I_{r^{-1} K_{\{r,g\}}} :=
\begin{cases}
\bigcap\limits_{t \in K_{\{r,g\}}} \ker \vphi_{r^{-1}t} & \text{if } K_{\{r,g\}} \neq \mt \text{ and } r \notin K_{\{r,g\}},\\
A & \text{otherwise}.
\end{cases}
\]
Then let
\[
I_{r^{-1} (r \vee F)} := \bigcap_{g \in F} I_{r^{-1} K_{\{r,g\}}},
\]
and let the C*-correspondences
\[
X_F := \oplus_{r \in P} X_r I_{r^{-1} (r \vee F)}
\qand
X_F^+ := \oplus_{g \in G} X_{gF}.
\]
For every $p \in P$ define the representation $(\pi_F, t_{F, p})$ to $X_F^+$ given by
\[
t_{F, p}(\xi_p) (\eta_r) = u_{p,r}(\xi_p \otimes \eta_r) \in X_{pr} I_{(pr)^{-1}(pr \vee pF)}, \foral \eta_r \in X_r I_{r^{-1} (r \vee F)}.
\]
It is well-defined as $I_{r^{-1}(r \vee F)} = I_{(pr)^{-1}(pr \vee pF)}$ for all $r \in P$, and $I_{r^{-1}(r \vee F)} = I_{(s^{-1}r)^{-1}(s^{-1}r \vee s^{-1}F)}$ for all $r \in sP$.
This provides a representation $(\pi_F, t_F)$ of $X$ on $\L(X_F^+)$ that integrates to a representation
\[
\Phi_F \colon \T(X) \longrightarrow \L(X_F^+).
\]
Now let the projections $Q_{g,F} \colon X_F^+ \to X_{gF}$ and define 
\[
\nor{f}_F := \nor{Q_{e,F} \Phi_F(f) Q_{e,F}} \foral f \in \left[ \T(X) \right]_e.
\]
In particular we have that 
\[
t_{F,p}(\xi_p) Q_{g, F} = Q_{pg, F} t_{F,p}(\xi_p)
\qand
t_{F,p}(\xi_p)^* Q_{g,F} = Q_{p^{-1}g, F} t_{F,p}(\xi_p)^*.
\]
and so $Q_{e,F}$ is reducing for the fixed point algebra $[\T(X)]_e$ under $\Phi_F$.

%%%%%%%%%%%%%%%%%%%%%%%%%%%%%%%%
\begin{definition}\cite[Definition 3.2]{Seh18}
A Toeplitz representation is called \emph{strongly covariant} if it vanishes on the ideal $\I_e \lhd \left[ \T(X) \right]_e$ given by
\[
\I_e := \{f \in \left[ \T(X) \right]_e \mid \lim_F \nor{f}_F = 0\}
\]
where the limit is taken with respect to the partial order induced by inclusion on finite sets of $P$.
The universal C*-algebra with respect to the strongly covariant representations of $X$ is denoted by $A \times_{X} P$.
\end{definition}

That is $A \times_{X} P$ is the quotient $\T(X)/\I_\infty$ for the ideal $\I_\infty \lhd \T(X)$ of \emph{strong covariance relations} generated by $\I_e$.
One of the important points of Sehnem's theory is that $A \hookrightarrow A \times_{X} P$ faithfully.
As a quotient by an induced ideal of $\T(X)$, the C*-algebra $A \times_{X} P$ inherits the coaction of $G$.
The following is the main theorem of \cite{Seh18}.

%%%%%%%%%%%%%%%%%%%%%%%%%%%%%%%%
\begin{theorem}\cite[Theorem 3.10]{Seh18}
Let $P$ be a unital subsemigroup of a group $G$ and $X$ be a product system over $P$ with coefficients in $A$.
Then a $*$-homomorphism of $A \times_{X} P$ is faithful on $A$ if and only if it is faithful on the fixed point algebra $\left[ A \times_{X} P \right]_e$.
\end{theorem}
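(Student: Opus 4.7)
The forward implication is immediate: $A$ embeds as the $X_e$-summand of the fixed-point algebra $[A \times_X P]_e$, so any $*$-homomorphism faithful on the core is automatically faithful on $A$.

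For the converse, let $\rho \colon A \times_X P \to B$ be a $*$-homomorphism that is faithful on $A$. The plan is to exploit the explicit description of the strong covariance ideal. Since $\I_\infty \lhd \T(X)$ is the induced ideal generated by $\I_e = \{f \in [\T(X)]_e \mid \lim_F \nor{f}_F = 0\}$, and since the coaction of $G$ descends faithfully to $A \times_X P$, the quotient seminorm on the $e$-spectral subspace $[A \times_X P]_e$ is given by
\[
\nor{[f]} = \lim_F \nor{Q_{e,F}\Phi_F(f)Q_{e,F}} \qfor f \in [\T(X)]_e .
\]
Hence it suffices to exhibit, for every finite $F$, an estimate of the form $\nor{\rho([f])} \geq \nor{f}_F$; passing to the limit in $F$ then yields $\nor{\rho([f])} \geq \nor{[f]}$ and hence injectivity on the core.

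To produce such estimates, the idea is to induce the Fock-type representations $\Phi_F$ through $\rho$. Represent $B$ faithfully on a Hilbert space $H$ and form the interior tensor product module $X_F^+ \otimes_\rho H$, where the balancing uses the left $A$-action $\pi_F$ on $X_F^+$ composed with $\rho$. The operators $t_F(\xi_p) \otimes 1$ together with the coefficient action $a \mapsto \pi_F(a) \otimes 1$ define a new Toeplitz representation $(\pi_F^\rho, t_F^\rho)$ of $X$, and a direct computation shows that its compression at the $(e,e)$-corner satisfies
\[
(Q_{e,F} \otimes 1)\Phi_F^\rho(f)(Q_{e,F} \otimes 1) = \rho\bigl(Q_{e,F}\Phi_F(f)Q_{e,F}\bigr)
\]
after identifying $Q_{e,F} X_F^+ \otimes_\rho H$ with a subspace of $H$ via $\rho|_A$. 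Since $\rho$ is faithful on $A$, this operator has norm exactly $\nor{f}_F$. So if one can show that the compression-valued map $f \mapsto Q_{e,F}\Phi_F(f)Q_{e,F}$ is dominated in norm by the image of $\rho$ on $[A \times_X P]_e$, then the desired estimate follows.

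The main obstacle is precisely this last step: the Toeplitz representation $(\pi_F, t_F)$ need not be strongly covariant, and so $\Phi_F$ itself does not factor through $A \times_X P$. However, by the very construction of $\I_e$, the compression map $f \mapsto Q_{e,F}\Phi_F(f)Q_{e,F}$ defined on $[\T(X)]_e$ descends to a well-defined seminorm on the core $[A \times_X P]_e$, which is precisely what the specific ideals $I_{r^{-1}(r \vee F)}$ cutting down $X_F$ are engineered to enforce. The final technical task is therefore to argue at the level of compressions: verify that the induced compression $(Q_{e,F} \otimes 1)\Phi_F^\rho(\cdot)(Q_{e,F} \otimes 1)$ is also dominated by $\rho$ on the core, using only $A$-valued inner products from $X$ and the faithfulness of $\rho$ on $A$. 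Once this compatibility is in hand, taking the limit over finite $F \subseteq G$ recovers the full estimate $\nor{\rho([f])} \geq \nor{[f]}$ and completes the proof.
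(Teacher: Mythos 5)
First, note that this statement is not proved in the paper at all: it is quoted verbatim from Sehnem \cite[Theorem 3.10]{Seh18}, so there is no in-paper argument to compare against. Judged on its own terms, your proposal has a genuine gap at the decisive step. Your plan is to prove $\nor{\rho([f])} \geq \nor{f}_F$ for every finite $F$ and then pass to the limit. But the construction you offer only produces, for each $F$, a representation $\Phi_F^\rho$ of $\T(X)$ on $X_F^+ \otimes_\rho H$ whose $(e,e)$-compression has norm $\nor{f}_F$ (this part is fine: $T \mapsto T \otimes 1$ is an injective, hence isometric, $*$-homomorphism $\L(X_F) \to \L(X_F \otimes_{\rho|_A} H)$ when $\rho|_A$ is faithful). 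It does not relate that number to $\nor{\rho([f])}$, because $\Phi_F^\rho$ is a representation of the Toeplitz algebra that does not factor through $A \times_{X} P$, let alone through $\rho$. Your closing sentence --- ``if one can show that the compression-valued map $f \mapsto Q_{e,F}\Phi_F(f)Q_{e,F}$ is dominated in norm by the image of $\rho$'' --- is a restatement of the inequality $\nor{\rho([f])} \geq \nor{f}_F$ that was to be proved, so the argument is circular exactly where the theorem lives. Relatedly, the displayed identity $(Q_{e,F}\otimes 1)\Phi_F^\rho(f)(Q_{e,F}\otimes 1) = \rho\bigl(Q_{e,F}\Phi_F(f)Q_{e,F}\bigr)$ is ill-typed: $Q_{e,F}\Phi_F(f)Q_{e,F}$ lives in $\L(X_F)$, not in the domain of $\rho$, and $Q_{e,F}X_F^+ \otimes_\rho H = X_F \otimes_\rho H$ is not a subspace of $H$ since $X_F$ is a direct sum of modules $X_r I_{r^{-1}(r\vee F)}$ over all $r \in P$, not a copy of $A$.

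Two further cautions. Your identification of the quotient norm on $[A \times_{X} P]_e$ with $\lim_F \nor{f}_F$ already presupposes that $\I_\infty \cap [\T(X)]_e = \I_e$, which is itself a nontrivial part of Sehnem's analysis (it is what makes $A \hookrightarrow A \times_{X} P$ injective), so it cannot be taken as the starting point of a self-contained proof. And the actual argument in \cite{Seh18} proceeds quite differently: rather than norm estimates against the $\Phi_F$, it exhibits the core as an inductive limit of subalgebras indexed by finite sets, each admitting a composition series whose subquotients are detected by the coefficient algebra, and shows that an ideal of the core meeting $A$ trivially must vanish on every subquotient. If you want to complete a proof along your lines, the missing compatibility between $\rho$ and the compressions $Q_{e,F}\Phi_F(\cdot)Q_{e,F}$ is precisely the content you would have to supply, and it is not a routine verification.
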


Due to the grading $A \times_{X} P$ is the maximal C*-algebra of a Fell bundle over $G$.
We consider two reduced versions.

%%%%%%%%%%%%%%%%%%%%%%%%%%%%%%%%
\begin{definition}
Let $P$ be a unital subsemigroup of a group $G$ and $X$ be a product system over $P$ with coefficients in $A$.
We write $A \times_{X, \la} P$ for the reduced C*-algebra of the Fell bundle in $A \times_{X} P$.
If $q \colon \T(X) \to \T_\la(X)$ is the canonical $*$-epimorphism, then we write $q_{\scv}(\T_\la(X))$ for the quotient of $\T_\la(X)$ by the ideal $q(\I_\infty)$.
\end{definition}

%%%%%%%%%%%%%%%%%%%%%%%%%%%%%%%%
\begin{remark} \label{R:notation change}
The notation $\S\C X$ is used in \cite{DKKLL20} to denote the $G$-Fell bundle inside $A \times_{X}^G P$.
Therefore we have two ways of writing the related C*-algebras in the sense that
\[
A \times_{X} P = \ca(\S\C X) \qand A \times_{X, \la} P = \ca_\la(\S\C X).
\]
Sehnem shows in \cite[Lemma 3.9]{Seh18} that the strong covariance relations do not depend on the group embedding in the following sense.
Suppose that $P$ admits two group embeddings $i_G \colon P \to G$ and $i_H \colon P \to H$ and write $\ca_{\max}(\S\C_G X) = \ca(\pi^G, t^G)$ and $\ca_{\max}(\S\C_H X) = \ca(\pi^H, t^H)$.
Then there exists a $*$-isomoprihsm
\[
\ca_{\max}(\S\C_G X) \longrightarrow \ca_{\max}(\S\C_H X); t_{i_G(p)}^G(\xi_{i_G(p)}) \mapsto t_{i_H(p)}^H(\xi_{i_H(p)}).
\]

The $*$-isomorphism between $\ca_{\max}(\S\C_G X)$ and $\ca_{\max}(\S\C_H X)$ descends to a $*$-isomorphism that fixes $X$ at the reduced level, as well, and thus $A \times_{X, \la} P$ does not depend on the group embedding either.
Indeed suppose that $G$ is the enveloping group of $P$ and thus there exists a group homomoprhism $\ga \colon G \to H$ that is injective on $P$.
We then have that there is a $*$-homomorphism between the maximal C*-algebras induced by the $G$-Fell bundle and the $H$-Fell bundle on Sehnem's covariance algebra.
Sehnem's result \cite[Lemma 3.9]{Seh18} is that this $*$-homomorphism is faithful.
By Fell bundle theory we then get a canonical $*$-epimorphism
\[
\ca_{\la}(\S\C_G X) \longrightarrow \ca_{\la}(\S\C_H X)
\]
that fixes $X$.
Hence by construction it intertwines the normal faithful conditional expectations.
Their fixed point algebras are $*$-isomorphic to the fixed point algebras in the maximal C*-algebras and these are $*$-isomorphic by \cite[Lemma 3.9]{Seh18}.
Thus the $*$-epimorphism on the reduced models is faithful.
\end{remark}

We see that the representations $\Phi_F$ used to define the strong covariance relations are sub-representations of $\ol{\de}_{G, \la} \colon \T_\la(X) \to \T_\la(X) \otimes \ca_\la(G)$ for $\ol{\de}_{G, \la} = (\id \otimes \la) \ol{\de}_{G}$ where $\ol{\de}_G$ is the normal coaction on the Fock representation.
Indeed we can identify
\[
X_F^+ = \oplus_{g \in G} \oplus_{r \in P} X_r I_{r^{-1}(r \vee gF)}
\]
with a submodule of $\F X \otimes \ell^2(G)$ through the isometry given by
\[
X_{r} I_{r^{-1}(r \vee gF)} \ni \eta_r \mapsto \eta_r \otimes \de_g \in X_r \otimes \ell^2(G).
\]
Recall here that $\F X \otimes \ell^2(G)$ is the exterior tensor product of two modules (seeing $\ell^2(G)$ as a module over $\bC$), and there is a faithful $*$-homomorphism
\[
\T_\la(X) \otimes \ca_\la(G) \subseteq \L(\F X) \otimes \B(\ell^2(G)) \hookrightarrow \L(\F X \otimes \ell^2(G)).
\]
We then see that
\[
t_{F,p}(\xi_p) = (\ol{t}_p(\xi_p) \otimes \la_p)|_{X_F^+} = \ol{\de}_{G, \la}(\ol{t}_p(\xi_p))|_{X_F^+}
\foral
p \in P,
\]
and likewise for their adjoints.
Thus $X_F^+$ is reducing under $\ol{\de}_{G, \la}(\T_\la(X))$.
Recall also that $X_F$ is reducing for $[\T(X)]_e$ as the range of the projection $Q_{e,F}$ and so we obtain the representation
\[
\bigoplus\limits_{\textup{fin } F \subseteq G} \Phi_F(\cdot)|_{X_F} \colon [\T(X)]_e \longrightarrow [\T_\la(X)]_e \longrightarrow \prod\limits_{\text{fin } F \subseteq G} \L(X_F).
\]
In particular, by definition we have for an $f \in \T(X)$ that
\[
f \in \I_e \qiff \bigoplus\limits_{\textup{fin } F \subseteq G} \Phi_F(f)|_{X_F} \in c_0(\L(X_F) \mid \textup{fin } F \subseteq G).
\]
By definition we then get that the following diagram 
\[
\xymatrix{
%line 1
[\T(X)]_e \ar[d] \ar[rr] & & [\T_\la(X)]_e \ar[rr] \ar[d]^{q_{\scv}} & & 
\prod\limits_{\text{fin } F \subseteq G} \L(X_F) \ar[d] \\
%line 2
[A \times_{X} P]_e \ar[rr] & & [q_{\scv}(\T_\la(X))]_e \ar[rr] & & 
\quo{\prod\limits_{\text{fin } F \subseteq G} \L(X_F)}{c_0(\L(X_F) \mid \textup{fin } F \subseteq G)}
}
\]
is commutative.
Consequently the $e$-graded $*$-algebraic relations in $\T_\la(X)$ and $A \times_{X} P$ induce relations in $q_{\scv}(\T_\la(X))$.
In particular, since by \cite[Proposition 3.5]{Seh18} $A$ is represented faithfully in the bottom right corner of the above diagram, we obtain the following corollary.

%%%%%%%%%%%%%%%%%%%%%%%%%%%%%%%%
\begin{corollary} \label{C:A in red} \cite[Corollary 5.5]{DKKLL20}
Let $P$ be a unital subsemigroup of a group $G$ and $X$ be a product system over $P$ with coefficients in $A$.
Then $A \hookrightarrow q_{\scv}(\T_\la(X))$.
Moreover a $*$-homomorphism of $q_{\scv}(\T_\la(X))$ is faithful on $A$ if and only if it is faithful on $\left[ q_{\scv}(\T_\la(X)) \right]_e$.
Likewise for the reduced C*-algebra $A \times_{X, \la} P$.
\end{corollary}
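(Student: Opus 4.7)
The plan is to read off both conclusions from the commutative diagram displayed just before the statement and the canonical chain of $*$-epimorphisms
\[
A \times_X P \twoheadrightarrow q_{\scv}(\T_\la(X)) \twoheadrightarrow A \times_{X,\la} P
\]
recalled in (\ref{eq:maps}), together with Sehnem's fixed-point-algebra theorem \cite[Theorem 3.10]{Seh18}. For the embedding $A \hookrightarrow q_{\scv}(\T_\la(X))$, the diagram factors the composition $A \to \prod_F \L(X_F)/c_0(\L(X_F))$ through $q_{\scv}(\T_\la(X))$, and by \cite[Proposition 3.5]{Seh18} this composition is already injective; hence the intermediate arrow $A \to q_{\scv}(\T_\la(X))$ must be injective as well. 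The corresponding embedding $A \hookrightarrow A \times_{X,\la} P$ then follows by composing with the second $*$-epimorphism above and using the standard Fell bundle fact that the canonical quotient $\ca(\B) \to \ca_\la(\B)$ is isometric on every fiber, in particular on $A$.

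For the characterization of faithful $*$-homomorphisms, given $\pi$ on $q_{\scv}(\T_\la(X))$ (resp.\ on $A \times_{X,\la} P$) that is faithful on $A$, I would first compose with the appropriate $*$-epimorphism from $A \times_X P$ in the chain above to obtain a representation of $A \times_X P$ that is still faithful on $A$. Sehnem's theorem then yields faithfulness on $[A \times_X P]_e$, and transferring this faithfulness along the canonical map $[A \times_X P]_e \to [q_{\scv}(\T_\la(X))]_e$ (resp.\ $[A \times_X P]_e \to [A \times_{X,\la} P]_e$) gives the desired conclusion. The reverse implication is immediate since $A$ is contained in the $e$-fiber in either case.

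The hard part will be justifying that the canonical maps on $e$-fibers are in fact isomorphisms, so that no faithfulness is lost at the transfer step. For $A \times_X P \to A \times_{X,\la} P$ this is the Fell bundle fact cited above, applied to the bundle $\S\C X$ (see Remark \ref{R:notation change}). For $A \times_X P \to q_{\scv}(\T_\la(X))$ one uses the construction $q_{\scv}(\T_\la(X)) = \T_\la(X)/q(\I_\infty)$ together with the fact that $q \colon \T(X) \to \T_\la(X)$ is isometric on the $e$-fiber, to obtain
\[
[q_{\scv}(\T_\la(X))]_e = [\T_\la(X)]_e/q(\I_e) \cong [\T(X)]_e/\I_e = [A \times_X P]_e.
\]
Thus the chain of $e$-fibers collapses to a single C*-algebra, and Sehnem's fixed-point characterization propagates across both reduced quotients.
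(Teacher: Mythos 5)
Your overall route is the same as the paper's: the embedding $A \hookrightarrow q_{\scv}(\T_\la(X))$ is read off the commutative diagram preceding the statement together with \cite[Proposition 3.5]{Seh18}, and the fixed-point-algebra characterization is transferred from Sehnem's theorem for $A \times_X P$ along the canonical equivariant epimorphisms. That skeleton, including the treatment of $A \times_{X,\la} P$ via isometry of $\ca(\B) \to \ca_\la(\B)$ on fibers, is correct.

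There is, however, a genuine error in the step you single out as the hard part. You assert that $q \colon \T(X) \to \T_\la(X)$ is isometric on the $e$-fiber and use this to identify $[\T_\la(X)]_e/q(\I_e)$ with $[\T(X)]_e/\I_e$. This is false in general. Take $P = \bZ_+^2 \subseteq G = \bZ^2$ and $X_p = \bC$ for all $p$, so that $\T(X)$ is universal for a pair of commuting isometries $v_1, v_2$. The element $x = v_1^* v_2 - v_2 v_1^*$ lies in the $(-1,1)$-fiber, so $x^*x \in [\T(X)]_e$; in the Fock representation one has $V_1^* V_2 = V_2 V_1^*$, whence $q(x^*x) = 0$, while the Toeplitz representation $v_1 = v_2 = S$ (the unilateral shift) sends $x$ to $1 - SS^* \neq 0$, so $x^*x \neq 0$ in $[\T(X)]_e$. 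What is true --- and what the paper's diagram actually encodes --- is only the containment $\ker\big(q|_{[\T(X)]_e}\big) \subseteq \I_e$: the representations $\Phi_F$ defining $\I_e$ are sub-representations of $\ol{\de}_{G,\la}$ and hence factor through $\T_\la(X)$, so anything killed by $q$ is killed by every $\Phi_F|_{X_F}$ and a fortiori lies in $\I_e$. This weaker fact still produces your displayed isomorphism (a surjection whose kernel is contained in an ideal $J$ induces an isomorphism of the quotients by $J$ and by its image), so the gap is repairable; alternatively you can bypass the identification altogether, since the transfer step only needs that $[A \times_X P]_e$ maps \emph{onto} $[q_{\scv}(\T_\la(X))]_e$ --- injectivity of that map then falls out of Sehnem's theorem applied to the composite representation rather than being an input. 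A smaller omission: the equality $[q_{\scv}(\T_\la(X))]_e = [\T_\la(X)]_e/q(\I_e)$ quietly assumes that the ideal generated by $q(\I_e)$ meets the $e$-fiber in exactly $q(\I_e)$; this again follows from the $\Phi_F$ picture but deserves a sentence.
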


%%%%%%%%%%%%%%%%%%%%%%%%%%%%%%%%
\section{Compactly aligned product systems over weak right LCM-inclusions} \label{S:caps lcm}
%%%%%%%%%%%%%%%%%%%%%%%%%%%%%%%%

%%%%%%%%%%%%%%%%%%%%%%%%%%%%%%%%
\subsection{Weak right LCM-inclusions}
%%%%%%%%%%%%%%%%%%%%%%%%%%%%%%%%

A semigroup $P$ is said to be a \emph{right LCM-semigroup} if it is left-cancellative and satisfies \emph{Clifford's condition} \cite{Law12, Nor14}: 
\begin{center}
for every $p, q \in P$ with $p P \cap q P \neq \mt$ there exists a $w \in P$ such that $p P \cap q P = w P$.
\end{center}
In other words, if $p, q \in P$ have a right common multiple then they have a right Least Common Multiple.
As we always see a semigroup $P$ inside a group $G$, it follows that $P$ is by default cancellative, and we will refer to $(G,P)$ simply as a \emph{weak right LCM-inclusion}.
We use the adjective ``weak'' here to emphasize that we do not assume that the Least Common Multiple property holds for \emph{all} elements in $G$.

It is clear that $w$ is a right Least Common Multiple for $p, q$ if and only if $w x$ is a right LCM of $p, q$ for every $x \in P^*$.
A \emph{weak quasi-lattice} $(G,P)$ is a weak right LCM-inclusion with $P \cap P^{-1} = \{e\}$, i.e., when least common multiples are unique (whenever they exist).

%%%%%%%%%%%%%%%%%%%%%%%%%%%%%%%%
\begin{definition}
Let $(G,P)$ be a right weak LCM-inclusion. 
A finite set $F$ is said to be \emph{$\vee$-closed} if for any $p,q \in F$ with $p P \cap q P \neq \mt$ there exists a unique $w \in F$ such that $p P \cap q P = w P$.
\end{definition}

Equivalently, a finite $F \subseteq P$ is $\vee$-closed if and only if the familiar relation
\[
p \leq q \Leftrightarrow q^{-1}p \in  P
\]
defines a partial order on $F$. 
In particular, if $F$ is $\vee$-closed, then $p P \neq q P$ for any $p,q\in F$ with $p\neq q$. 
Furthermore, any $\vee$-closed set admits maximal and minimal elements.
Our terminology here regarding $\vee$-closed sets extends the familiar one from the case where $(G,P)$ is a weak quasi-lattice order.  
There is an alternative way for describing $\vee$-closed sets in the context of weak right LCM-inclusions.
Given a finite subset $F \subseteq P$ we write
\[
\I(F) := \{p P \mid p \in F\}
\]
for the set of principal ideals defined by $F$.
It then follows that $F$ is $\vee$-closed if and only if $\I(F)$ is closed under intersections and the partial order defined on $\I(F)$ by set  theoretic inclusion lifts to a partial order on $F$.

Let  $F\subseteq P$  be a finite  set  so that $\I(F)$ is closed under intersections.
From such  a set $F$ we can produce a $\vee$-closed subset $F^{\vee}$ such that $\I(F) = \I(F^\vee)$ by choosing a minimal set of distinct representatives for the principal ideals. This process does not  produce a unique $F^{\vee}$ in general.

%%%%%%%%%%%%%%%%%%%%%%%%%%%%%%%%
\subsection{Nica-covariant representations}
%%%%%%%%%%%%%%%%%%%%%%%%%%%%%%%%

Following Fowler's work \cite{Fow02}, Brownlowe, Larsen and Stammeier \cite{BLS18}, and Kwa\'{s}niewski and Larsen \cite{KL19b} considered product systems of right LCM-semigroups.

%%%%%%%%%%%%%%%%%%%%%%%%%%%%%%%%
\begin{definition}
A product system $X$ over a weak right LCM-semigroup $P$ with coefficients in $A$ is called \emph{compactly aligned} if for $p, q \in P$ with $p P \cap q P = w P$ we have that
\[
i_{p}^{w}(S) i_{q}^{w}(T) \in \K X_{w} \text{ whenever } S \in \K X_{p}, T \in \K X_{q}.
\]
\end{definition}

A note is in order for clarifying that this is independent of the choice of $w$.
Recall that if $w'$ is a right LCM of $p, q$ then $w' = wx$ for some $x \in P^*$.
Since $\L X_{w} \simeq \L X_{wx}$ we have that $i_{p}^{w}(S) i_{q}^{w}(T) \in \K X_w$ if and only if $i_{p}^{wx}(S) i_{q}^{wx}(T) = i^{wx}_w (i_{p}^{w}(S) i_{q}^{w}(T)) \in \K X_{wx}$ for all $x \in P^*$.

%%%%%%%%%%%%%%%%%%%%%%%%%%%%%%%%%
%\begin{details}
%Suppose that $wP = w'P$.
%Then there are $x, y \in P$ such that $wx= w'$ and $w=w'y$.
%Thus $wxy = w'y = w$ and so $xy = e$.
%Likewise $yx = e$ and so $x \in P^*$.
%\end{details}

%%%%%%%%%%%%%%%%%%%%%%%%%%%%%%%%
\begin{definition}
Let $X$ be a compactly aligned product system over a right LCM-semigroup $P$ with coefficients in $A$.
A \emph{Nica-covariant representation $(\pi, t)$} is a Toeplitz representation of $A$ that in addition satisfies the \emph{Nica-covariance} condition: for $S \in \K X_{p}$ and $T \in \K X_{q}$ we have that
\[
\psi_{p}(S) \psi_{q}(T) = 
\begin{cases}
\psi_{w} (i_{p}^{w}(S) i_{q}^{w}(T)) & \text{ if } p P \cap q P = w P, \\
0 & \text{ otherwise}.
\end{cases}
\]
The \emph{Toeplitz-Nica-Pimsner algebra $\N\T(X)$ of $X$} is the universal C*-algebra generated by $A$ and $X$ with respect to the representations of $X$.
The \emph{Toeplitz-Nica-Pimsner tensor algebra $\N\T(X)^+$ of $X$} is the subalgebra of $\T(X)$ generated by $A$ and $X$.
\end{definition}

%%%%%%%%%%%%%%%%%%%%%%%%%%%%%%%%
\begin{remark}
As noted in \cite{DKKLL20}, the definition of Nica-covariance requires that the right hand side is independent of the choice of the least common multiple, i.e., if $pP \cap qP = wP$ and $x \in P^*$ then
\[
\psi_{w} (i_{p}^{w}(S) i_{q}^{w}(T))
=
\psi_{wx} (i_{p}^{wx}(S) i_{q}^{wx}(T))
\foral
S \in \K X_p, T \in \K X_q.
\]
This is verified in \cite[Proposition 2.4]{DKKLL20} (see Proposition \ref{P:star inv LCM} herein) and completes the definition of Nica-covariance in \cite{KL19b}.
\end{remark}

%%%%%%%%%%%%%%%%%%%%%%%%%%%%%%%%
\begin{remark}
By definition $\N\T(X)$ is a quotient of $\T(X)$ by an ideal generated by a subspace of $\left[ \T(X) \right]_e$.
Even though $\N\T(X) = \T(X)$ when $P = \bZ_+$, this is not the case even when $P = \bZ_+^n$.
Dor-On and Katsoulis provide a counterexample to this effect in \cite[Example 5.2]{DK18a}.
The same example further shows that $\T(X)^+$ is not completely isometric to $\N\T(X)^+$.
\end{remark}

Under the assumption of compact alignment, one can check that the Fock representation is automatically Nica-covariant.
Thus $\N\T(X)$ is non-trivial.
As $\N\T(X)$ is a quotient of $\T(X)$ by an induced ideal, by \cite[Proposition A.1]{CLSV11} the non-degenerate and faithful coaction of $\T(X)$ descends canonically to one on $\N\T(X)$.
Alternatively one may use the arguments of the proof of Proposition \ref{P:t coaction} for the Toeplitz-Nica-Pimsner tensor algebra to deduce the following.

%%%%%%%%%%%%%%%%%%%%%%%%%%%%%%%%
\begin{proposition}\label{P:nt coa}
Let $(G,P)$ be a weak right LCM-inclusion and $X$ be a compactly aligned product system over $P$ with coefficients in $A$.
Suppose that $(\wh{\pi}, \wh{t})$ is a faithful representation of $\N\T(X)$.
Then the canonical $*$-homomorphism 
\[
\wh{\de} \colon \N\T(X) \longrightarrow \N\T(X) \otimes \ca(G) ; \wh{t}(\xi_p) \mapsto \wh{t}(\xi_p) \otimes u_p
\]
defines a coaction of $G$ on $\N\T(X)$.
\end{proposition}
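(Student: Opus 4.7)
The strategy is to derive the proposition from Proposition~\ref{P:t coaction} by showing that the coaction of $G$ on $\T(X)$ descends to the quotient $\N\T(X)$. This reduces the statement to the general descent principle for coactions through induced ideals recorded in Remark~\ref{R:Exel 2}.

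First, let $(\wt{\pi},\wt{t})$ be a faithful representation of $\T(X)$. Proposition~\ref{P:t coaction}, applied with $\vartheta=\id_G$, provides a faithful coaction
\[
\wt{\de}\colon \T(X) \longrightarrow \T(X)\otimes \ca(G); \quad \wt{t}(\xi_p)\mapsto \wt{t}(\xi_p)\otimes u_p,
\]
with spectral subspace $[\T(X)]_g$ densely spanned by monomials $\wt{t}_{p_1}(\xi_{p_1})\wt{t}_{p_2}(\xi_{p_2})^*\cdots \wt{t}_{p_n}(\xi_{p_n})^*$ whose total degree $p_1p_2^{-1}\cdots p_n^{-1}$ equals $g$.

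Second, identify the kernel $\I \lhd \T(X)$ of the canonical surjection $\T(X) \twoheadrightarrow \N\T(X)$. By definition, $\I$ is generated by the Nica-covariance relations
\[
\wt{\psi}_p(S)\wt{\psi}_q(T) - \wt{\psi}_w(i_p^w(S)\, i_q^w(T)) \qfor pP\cap qP = wP,
\]
together with $\wt{\psi}_p(S)\wt{\psi}_q(T)$ when $pP\cap qP = \mt$, for $S \in \K X_p$ and $T \in \K X_q$. Approximating $S$ and $T$ in norm by finite sums of rank-one operators shows that each such generator is a norm limit of monomials of the form $\wt{t}_p(\xi)\wt{t}_p(\eta)^*\wt{t}_q(\zeta)\wt{t}_q(\mu)^*$ and $\wt{t}_w(\cdot)\wt{t}_w(\cdot)^*$, which all have total degree $e$ by the spectral description above. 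Hence every generator of $\I$ lies in $[\T(X)]_e$, and therefore $\I=\sca{\I\cap [\T(X)]_e}$ is an induced ideal with respect to the topological grading of $\wt{\de}$.

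By Remark~\ref{R:Exel 2}, the coaction $\wt{\de}$ descends to a faithful coaction of $G$ on $\T(X)/\I=\N\T(X)$ whose value on generators is $\wh{t}(\xi_p)\mapsto \wh{t}(\xi_p)\otimes u_p$, which is precisely the formula defining $\wh{\de}$ in the statement. The only substantive point is the homogeneity observation that the Nica-covariance relations lie in the fixed point algebra $[\T(X)]_e$; once this is checked, the conclusion is immediate from the descent principle and there is no further obstacle.
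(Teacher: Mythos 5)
Your proposal is correct and follows essentially the same route as the paper: the text preceding the proposition observes that $\N\T(X)$ is the quotient of $\T(X)$ by an ideal generated inside the fixed point algebra $[\T(X)]_e$ (i.e.\ an induced ideal), and then invokes the descent of coactions through induced ideals (the paper cites \cite[Proposition A.1]{CLSV11}, which is the content of Remark \ref{R:Exel 2} that you use). Your explicit verification that the Nica-covariance relations are homogeneous of degree $e$ is exactly the homogeneity observation the paper records in its remark that ``$\N\T(X)$ is a quotient of $\T(X)$ by an ideal generated by a subspace of $[\T(X)]_e$.''
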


We have refrained from describing the spectral spaces for the coaction on $\N\T(X)$ because of the following additional property of Nica-covariant representations.
Let $(\pi,t)$ be a Nica-covariant representation of $X$.
We compute
\[
t_p(X_p)^* t_p(X_p) \cdot t_p(\xi_p)^* t_q(\xi_q) \cdot t_q(X_q)^* t_q(X_q)
\subseteq
\big[ t_p(X_p)^* \psi_p(\K X_p) \psi_q(\K X_q) t_q(X_q) \big].
\]
Next take a limit by c.a.i.'s in $\big[ t_p(X_p)^* t_p(X_p) \big]$ and in $\big[ t_q(X_q)^* t_q(X_q) \big]$, and derive that
\[
t_p(\xi_p)^* t_q(\xi_q) \in \big[ t_{p'}(X_{p'}) t_{q'}(X_{q'})^* \big] \qfor w P = pP \cap q P, p' = p^{-1} w, q' = q^{-1} w,
\]
and
\[
t_p(\xi_p)^* t_q(\xi_q) = 0 \qfor pP \cap qP = \mt.
\]
Hence the C*-algebra $\ca(\pi,t)$ generated by $\pi(A)$ and $t_p(X_p)$ admits a \emph{Wick ordering} in the sense that
\[
\ca(\pi,t) = \ol{\spn}\{t_p(\xi_p) t_q(\xi_q)^* \mid \xi_p \in X_p, \xi_q \in X_q \textup{ and } p,q \in P\}.
\]
In particular if $\N\T(X) = \ca(\wh{\pi}, \wh{t})$ then the spectral spaces that only matter are of the form
\[
\N\T(X)_{p q^{-1}} = \ol{\spn}\{ \wh{t}_p(\xi_p) \wh{t}_q(\xi_q)^* \mid \xi_p \in X_p, \xi_q \in X_q \},
\]
that is, only for $g \in G$ of the form $g = p q^{-1}$ for some $p, q \in P$.

The following proposition gives a direct criterion to check compact alignment.

%%%%%%%%%%%%%%%%%%%%%%%%%%%%%%%%
\begin{proposition}\label{P:com al}
Let $(G,P)$ be a weak right LCM-inclusion and let $X = \{X_p\}_{p \in P}$ be a product system over $X_e = A$. 
Let $(\pi, t)$ be an injective representation $X$.
Then $X$ is compactly aligned, if and only if for all $p,q \in P$ we have that
\[
t_p(X_p)^* t_q(X_q) \subseteq [t_{p^{-1} w}(X_{p^{-1}w}) t_{q^{-1}w}(X_{q^{-1}w})^*] \qfor wP = pP \cap qP,
\]
if and only if for all $p,q \in P$ we have that
\[
t_p(X_p) t_p(X_p)^* t_q(X_q) t_q(X_q)^* \subseteq [t_w(X_w) t_w(X_w)^*] = \psi_w(\K X_w) \qfor wp = pP \cap qP,
\]
with the understanding that the left hand sides are the zero space when $p$ and $q$ have no right common multiple.
\end{proposition}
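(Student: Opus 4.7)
My plan is to reduce everything to one key identity and then use injectivity of the induced compact representation.

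The first step is to establish that for any Toeplitz representation $(\pi,t)$ of $X$, any $p \in P$, and any $w = pp'$ with $p' \in P$, we have
\[
\psi_w(i_p^w(S)) = \psi_p(S) \qforal S \in \K X_p.
\]
By density one may check this for $S = \theta_{\xi,\eta}$ with $\xi,\eta \in X_p$. Choosing an approximate identity $(e_\al)$ in $\K X_{p'}$ and writing $e_\al = \sum_i \theta_{u_i^\al, v_i^\al}$, non-degeneracy of the left action on $X_{p'}$ together with the Toeplitz multiplicativity $t_w(\xi \cdot u) = t_p(\xi) t_{p'}(u)$ gives $\psi_w(i_p^w(\theta_{\xi,\eta})) = \lim_\al \sum_i t_p(\xi) t_{p'}(u_i^\al) t_{p'}(v_i^\al)^* t_p(\eta)^* = t_p(\xi) t_p(\eta)^* = \psi_p(\theta_{\xi,\eta})$.

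With this identity in hand, I would attack the equivalence of compact alignment with condition (iii) first. For $p,q$ with $wP = pP \cap qP$ and $S \in \K X_p$, $T \in \K X_q$, the identity yields
\[
\psi_p(S) \psi_q(T) = \overline{\psi}_w(i_p^w(S)) \, \overline{\psi}_w(i_q^w(T)) = \overline{\psi}_w(i_p^w(S) \, i_q^w(T)),
\]
where $\overline{\psi}_w$ denotes the canonical extension of $\psi_w$ to $\L X_w = M(\K X_w)$. Injectivity of $\pi$ propagates to isometricity of $\psi_w$ on $\K X_w$, and hence (via the multiplier algebra extension, which is faithful on $M(\K X_w)$) to injectivity of $\overline{\psi}_w$ on $\L X_w$. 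Consequently $\psi_p(S)\psi_q(T) \in \psi_w(\K X_w)$ if and only if $i_p^w(S) i_q^w(T) \in \K X_w$, which is precisely compact alignment. The vanishing case $pP \cap qP = \mt$ is handled under the stated convention.

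Next I would deal with (ii) $\Leftrightarrow$ (iii). The direction (ii) $\Rightarrow$ (iii) is obtained by multiplying the inclusion by $t_p(X_p)$ on the left and $t_q(X_q)^*$ on the right and invoking $t_p(X_p) t_{p^{-1}w}(X_{p^{-1}w}) \subseteq t_w(X_w)$ (and similarly for $q$); the right-hand side then sits in $[t_w(X_w) t_w(X_w)^*] = \psi_w(\K X_w)$. For (iii) $\Rightarrow$ (ii), given $\xi \in X_p$ and $\eta \in X_q$, I would sandwich
\[
t_p(\xi)^* t_q(\eta) = \lim_{\al,\be} t_p(\xi)^* \psi_p(f_\al) \psi_q(g_\be) t_q(\eta)
\]
using approximate identities $(f_\al) \subseteq \K X_p$ and $(g_\be) \subseteq \K X_q$. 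By (iii), $\psi_p(f_\al) \psi_q(g_\be) \in \psi_w(\K X_w)$; then, writing elements of $\K X_w$ as limits of $\theta_{\xi_p \cdot \xi_{p^{-1}w}, \, \eta_q \cdot \eta_{q^{-1}w}}$ (possible because left actions on $X_{p^{-1}w}$ and $X_{q^{-1}w}$ are non-degenerate), one computes
\[
t_p(\xi)^* t_w(\xi_p \cdot \xi_{p^{-1}w}) t_w(\eta_q \cdot \eta_{q^{-1}w})^* t_q(\eta) = t_{p^{-1}w}(\langle \xi,\xi_p\rangle \xi_{p^{-1}w}) \, t_{q^{-1}w}(\langle \eta, \eta_q\rangle \eta_{q^{-1}w})^*,
\]
which lies in $[t_{p^{-1}w}(X_{p^{-1}w}) t_{q^{-1}w}(X_{q^{-1}w})^*]$ as required.

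The main subtlety, and the step I would want to be most careful about, is the multiplier-algebra extension $\overline{\psi}_w \colon \L X_w \to \B(H)$ and the verification that it remains faithful under the injectivity hypothesis on $\pi$ — the rest of the argument consists of standard Hilbert-module manipulations and approximation by compacts.
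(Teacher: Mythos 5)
Your reduction of the second equivalence to the first (via density of $X_pX_p^*X_p$ in $X_p$ and sandwiching with approximate units of $\K X_p$, $\K X_q$) is sound and is exactly the paper's one-line argument. The problem lies in your ``key identity''. For $w=pp'$ with $p'\notin P^*$, the claim $\ol{\psi}_w(i_p^w(S))=\psi_p(S)$ is false for a general Toeplitz representation: the nets $\sum_i\theta_{\xi u_i^\al,\eta v_i^\al}$ converge to $i_p^w(\theta_{\xi,\eta})$ only in the strict topology of $\L X_w=M(\K X_w)$, not in norm, so your limit computation actually yields $\ol{\psi}_w(i_p^w(S))=\psi_p(S)\,P$, where $P$ is the projection onto $[t_w(X_w)H]$. (This is why the paper's Proposition \ref{P:star inv LCM} records the identity $\psi_{wx}(i_w^{wx}(k_w))=\psi_w(k_w)$ only for $x\in P^*$, where $i_w^{wx}$ is a genuine $*$-isomorphism carrying compacts to compacts.) Consequently the displayed equality $\psi_p(S)\psi_q(T)=\ol{\psi}_w(i_p^w(S)i_q^w(T))$ fails, and with it your proof of the implication ``compactly aligned $\Rightarrow$ (ii)/(iii)''. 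This implication cannot be repaired for an arbitrary injective Toeplitz representation: for the trivial compactly aligned system over $\bZ_+^2$, sending both generators to the unilateral shift $S$ gives an injective representation with $\psi_{(1,0)}(\K X_{(1,0)})\psi_{(0,1)}(\K X_{(0,1)})=\bC SS^*\not\subseteq\bC S^2S^{*2}=\psi_{(1,1)}(\K X_{(1,1)})$. That direction requires Nica covariance of $(\pi,t)$ (it is essentially the Wick-ordering computation preceding the proposition), and indeed the reference the paper cites, [Kat20, Proposition 3.2], works with the Fock representation, which is Nica covariant.

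The direction the paper actually uses --- (ii) or (iii) implies compact alignment --- survives, but even there you should not invoke the unrestricted operator identity: given $\psi_p(S)\psi_q(T)=\psi_w(k)$ with $k\in\K X_w$, apply both sides to $t_w(\zeta_w)$ and use that $t_w$ is isometric (since $\pi$ is injective) to conclude $i_p^w(S)i_q^w(T)\zeta_w=k\zeta_w$ for all $\zeta_w\in X_w$, hence $i_p^w(S)i_q^w(T)=k\in\K X_w$. Your appeal to faithfulness of $\ol{\psi}_w$ on $\L X_w$ is correct but is aimed at the wrong subtlety; the real issue is the compression by $P$ above.
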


\begin{proof}
The first equivalence follows in the same way with \cite[Proposition 3.2]{Kat20} and it is omitted.
By using that $X_p X_p^* X_p$ is dense in $X_p$ for every $p \in P$, we get the second equivalence.
\end{proof}

Let us now pass to the analysis of the cores of a Nica-covariant representation $(\pi,t)$ of $X$.
For a finite $F \subseteq P$ that is $\vee$-closed we write
\[
B_F := \spn\{ \psi_{p}(k_{p}) \mid k_p \in \K X_p, p \in F\}.
\]
Since $F$ is $\vee$-closed, Nica-covariance implies that $B_F$ is a $*$-subalgebra of $\ca(\pi,t)$.
In \cite[Proposition 2.10]{DKKLL20} we show that every $B_F$ is actually a C*-subalgebra.
Moreover for such an $F$ we write
\[
B_{F \cdot P} := \ol{\spn} \{ \psi_{q}(k_{q}) \mid k_q \in \K X_q, q \in F \cdot P\}.
\]
Likewise this is also a (closed) $*$-subalgebra.
Finally we write
\[
B_{P \setminus \{e\}} 
:= \ol{\spn} \{ \psi_p(k_p) \mid k_p \in \K X_p, e \neq p \in P \}
\qand
B_{P} 
:= \pi(A) + B_{P \setminus \{e\}}.
\]
We see that $B_{P \setminus \{e\}}$ is an ideal in $B_{P}$ and thus the sum $\pi(A) + B_{P \setminus \{e\}}$ is indeed closed.
We refer to these sets as the \emph{cores} of the representation $(\pi,t)$.
In \cite[Proposition 2.11]{DKKLL20} we showed that we can exhaust the cores by using finite $\vee$-closed sets, in the sense that 
\[
B_{P} 
=
\ol{\bigcup \{ B_F \mid F \subseteq P \text{ finite and $\vee$-closed} \}}.
\]

We denote by $\ol{B}_F$ the cores of $\T_\la(X) = \ca(\ol{\pi}, \ol{t})$. 
Recall that $\T_\la(X)$ admits the faithful conditional expectation
\[
\ol{E}_P \colon \T_\la(X) \longrightarrow \ol{B}_{P} ; \ol{t}_p(\xi_p) \ol{t}_q(\xi_q)^* = \de_{p,q} \ol{t}_p(\xi_p) \ol{t}_q(\xi_q)
\]
given by the sum of compressions to the $(r,r)$-entries in $\L(\F X)$ (see Proposition \ref{P:Fock ce}).

The Toeplitz-Nica-Pimsner algebra models the Fock algebra in this context.
A compactly aligned product system $X$ over $P$ with coefficients in $A$ is called \emph{amenable} if the Fock representation is faithful on $\N\T(X)$.
Let us give some equivalent condition for this to happen.

%%%%%%%%%%%%%%%%%%%%%%%%%%%%%%%%
\begin{theorem}\label{T:chara}
Let $(G,P)$ be a weak right LCM-inclusion and $X$ be a compactly aligned product system over $P$ with coefficients in $A$.
The following are equivalent:
\begin{enumerate}
\item The coaction of $G$ on $\N\T(X)$ is normal.
\item The conditional expectation on $\N\T(X)$ is faithful.
\item The Fock representation is faithful on $\N\T(X)$.
\item The representation $\N\T(X) \to \ca(\pi,t) \otimes \ca_\la(P) ; \wt{t}_p(\xi_p) \mapsto t_p(\xi_p) \otimes V_p$ is faithful for any injective Nica-covariant pair $(\pi,t)$.
\end{enumerate}
\end{theorem}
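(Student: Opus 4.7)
The plan is to close the cycle $(1) \Leftrightarrow (2) \Leftrightarrow (3) \Rightarrow (4) \Rightarrow (2)$, combining standard coaction techniques with a semigroup-level Fell absorption argument.

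For $(1) \Leftrightarrow (2)$ I would combine the coaction $\wh{\de}$ from Proposition \ref{P:nt coa} with the canonical faithful tracial state $\om_e$ on $\ca_\la(G)$ to express the conditional expectation as $E = (\id \otimes \om_e)(\id \otimes \la)\wh{\de}$, and use that $\id \otimes \om_e$ is faithful on positive elements of the minimal tensor product (because $\om_e$ is a faithful trace), so that $E$ is faithful if and only if $(\id \otimes \la)\wh{\de}$ is injective, i.e., iff $\wh{\de}$ is normal. For $(2) \Leftrightarrow (3)$: the Fock quotient $q \colon \N\T(X) \to \T_\la(X)$ intertwines the coactions of $G$ by Propositions \ref{P:nt coa} and \ref{P:f coaction}, and hence intertwines the conditional expectations, so the faithful expectation of Proposition \ref{P:Fock ce} transfers to give $(3) \Rightarrow (2)$. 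For $(2) \Rightarrow (3)$, I would first prove that $q$ is injective on $B_P = \overline{\bigcup_F B_F}$ via a Nica-covariant block decomposition of each core $B_F$ (for finite $\vee$-closed $F$) that depends only on the C*-correspondences $X_p$ and the injectivity of $\ol{\pi}$ on $A$; then the positivity argument
\[
q(f^*f) = 0 \implies q(E(f^*f)) = \ol{E}_P(q(f^*f)) = 0 \implies E(f^*f) = 0 \implies f = 0
\]
extends injectivity to all of $\N\T(X)$.

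For $(3) \Rightarrow (4)$ I would perform a Fell absorption at the semigroup level. Given an injective Nica-covariant $(\pi,t)$, one checks directly that $\wt{t}_p(\xi_p) \mapsto t_p(\xi_p) \otimes V_p$ defines a Nica-covariant representation of $X$ on $H \otimes \ell^2(P)$, since the range projections $V_p V_p^*$ obey the same LCM relations as the projections $\psi_p(\K X_p)$. The subspace $K := \bigoplus_{p \in P} t_p(X_p) \otimes \de_p \subseteq H \otimes \ell^2(P)$ is invariant under the induced representation $\sigma$, and the unitary $U \colon \F X \to K$, $\eta_p \mapsto \eta_p \otimes \de_p$ (using injectivity of $(\pi,t)$ to identify $X_p \simeq t_p(X_p)$), intertwines $\sigma|_K$ with the Fock representation. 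Hence $\ker \sigma \subseteq \ker \mathrm{Fock} = 0$ by $(3)$, so $\sigma$ is faithful.

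For $(4) \Rightarrow (2)$ I would pick any injective Nica-covariant $(\pi,t)$, let $\sigma$ be the induced faithful $*$-homomorphism, and exploit the canonical reduced coaction $V_p \mapsto V_p \otimes \la_p$ of $G$ on $\ca_\la(P)$ to obtain a faithful conditional expectation $E_P \colon \ca_\la(P) \to D_\la := \overline{\spn}\{V_pV_p^* \mid p \in P\}$ given by $V_pV_q^* \mapsto \de_{p,q} V_pV_p^*$ (concretely, this is the compression to the $\ell^\infty$-diagonal of $\B(\ell^2(P))$). A direct computation on the generators $\wh{t}_p(\xi_p)\wh{t}_q(\eta_q)^*$ gives $(\id \otimes E_P) \sigma = \iota \circ E$, where $E \colon \N\T(X) \to B_P$ is the canonical conditional expectation and $\iota \colon B_P \to \ca(\pi,t) \otimes D_\la$ is the $*$-homomorphism $\psi_p(k_p) \mapsto \psi_p(k_p) \otimes V_pV_p^*$. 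Since $\sigma$ is faithful and $\id \otimes E_P$ is faithful on positives in the minimal tensor product (because $E_P$ is faithful), the composition $\iota \circ E$ is faithful on positives, which forces $E$ to be faithful. The main obstacle is the injectivity of $q$ on each core $B_F$ required in $(2) \Rightarrow (3)$: this demands an explicit Nica-covariant block decomposition of $B_F$ that is manifestly model-independent beyond injectivity of the left action on $A$.
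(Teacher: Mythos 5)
Your proposal is correct, and in its overall architecture---closing a cycle of implications by playing faithful conditional expectations against one another---it matches the paper's proof. The genuine divergence is in $(3)\Rightarrow(4)$. The paper works on the fixed point algebra: it compresses $\sum_{p\in F}\psi_p(k_p)\otimes V_pV_p^*$ by $I\otimes Q_{p_0}$ for $p_0$ minimal with $k_{p_0}\neq 0$ to get a contradiction, and then uses the (by then faithful) conditional expectation to pass from the cores to all of $\N\T(X)$. You instead exhibit the Fock representation as the restriction of $\sigma$ to a reducing subspace of $H\otimes\ell^2(P)$ and conclude $\ker\sigma\subseteq\ker q=0$ directly; this is a genuine Fell-absorption argument and it works, provided you repair the typing: $t_p(X_p)\otimes\de_p$ is not a subspace of $H\otimes\ell^2(P)$, so $K$ should be $\ol{\spn}\{t_p(\xi_p)h\otimes\de_p \mid \xi_p\in X_p,\ h\in H\}$, and the intertwining unitary goes from $\F X\otimes_\pi H$ carrying the induced Fock representation (faithful on $\T_\la(X)$ because $\pi$ is faithful and the left actions are nondegenerate). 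Your $(4)\Rightarrow(2)$ via the diagonal expectation on $\ca_\la(P)$ is exactly the paper's ``intertwines the conditional expectations,'' spelled out, and $(1)\Leftrightarrow(2)$ and $(3)\Rightarrow(2)$ agree with the paper.

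The step you flag as the main obstacle---injectivity of $q\colon\N\T(X)\to\T_\la(X)$ on each core $\wh{B}_F$, needed for $(2)\Rightarrow(3)$---is not a further obstacle: it is settled by the same minimal-element compression, applied in the Fock model. Since $F$ is finite and $\vee$-closed, the algebraic span $\sum_{p\in F}\wh\psi_p(\K X_p)$ is already closed (\cite[Proposition~2.10]{DKKLL20}), so any $x\in\wh{B}_F\cap\ker q$ can be written as $\sum_{p\in F}\wh\psi_p(k_p)$. If some $k_p\neq0$, take $p_0$ minimal among those and compress $q(x)=\sum_{p}\ol\psi_p(k_p)$ by the projection $Q_{p_0}\colon\F X\to X_{p_0}$: the operator $\ol\psi_p(k_p)$ is supported on $\oplus_{r\in pP}X_r$, and $p_0\in pP$ with $p\in F$ forces $p=p_0$ by minimality and $\vee$-closedness, whence $k_{p_0}=0$, a contradiction. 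Thus $x=0$, and your positivity argument completes $(2)\Rightarrow(3)$. (The paper is equally terse at this point, disposing of the equivalence of (ii) and (iii) with ``for the same reason,'' so you have correctly isolated the one place where a real computation is required.)
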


\begin{proof}
By the universal property there exists a canonical $*$-representation
\[
\N\T(X) \longrightarrow \N\T(X) \otimes \ca_\la(G)
\]
that intertwines the conditional expectations.
Thus items (i) and (ii) are equivalent.
For the same reason items (ii) and (iii) are equivalent.

Assuming item (iii) we have to show that the representation $\N\T(X) \to \ca(\pi,t) \otimes \ca_\la(P)$ is faithful on the fixed point algebra.
It suffices to show injectivity on the $F$-boxes for arbitrary $\vee$-closed $F \subseteq P$.
To this end suppose that
\[
\sum_{p \in F} \psi_p(k_p) = 0
\]
for some $k_p \in \K X_p$ and let $p_0$ be minimal so that $k_{p_0} \neq 0$.
Injectivity of $\pi$ then induces that $\psi_{p_0}(k_{p_0}) \neq 0$ as well.
However, if $Q_{p_0} \colon \ell^2(P) \to \bC e_{p_0}$ is the canonical projection, minimality of $p_0$ yields
\[
\psi_{p_0}(k_{p_0}) = I \otimes Q_{p_0} \left( \sum_{p \in F} \psi_p(k_p) \right) I \otimes Q_{p_0} = 0,
\]
which is a contradiction.
This shows that item (iii) implies item (iv).

Since the $*$-representation $\N\T(X) \to \ca(\pi,t) \otimes \ca_\la(P)$ intertwines the conditional expectations, we finally have that item (iv) implies item (i), and the proof is complete.
\end{proof}

On the other hand strongly covariant representations are Nica-covariant (which is expected as Nica-covariance is an $e$-graded relation in $[\T_\la(X)]_e$).
It is proven by Sehnem in \cite[Proposition 4.2]{Seh18} for quasi-lattices, but the same proof passes to right LCM-semigroups as well \cite[Proposition 5.4]{DKKLL20}.
Hence $A \times_{X} P$ is a quotient of $\N\T(X)$.

%%%%%%%%%%%%%%%%%%%%%%%%%%%%%%%%
\begin{proposition} \label{P:sc lcm} \cite[Proposition 5.4]{DKKLL20} \cite[Proposition 4.2]{Seh18}
Let $X$ be a compactly aligned product system over a right LCM-semigroup $P$ with coefficients in $A$.
Let $\psi_{F, p} \colon \K X_p \to \L(X_F^+)$ be the induced representations from $(\pi_F, t_{F, p})$. 
A representation $(\pi,t)$ of $X$ is strongly covariant if and only if it is Nica-covariant and it satisfies
\[
\sum_{p \in F} \psi_{F, p}(k_p) |_{X_F} = 0 \Longrightarrow \sum_{p \in F} \psi_p(k_p) = 0
\]
for any finite $F \subseteq P$ and $k_p \in \K X_p$.
\end{proposition}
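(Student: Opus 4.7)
The plan is to combine the explicit description of the strong covariance ideal $\I_e$ with the monotonicity of the submodules $X_F$ under inclusion of finite subsets.

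First I would verify that every strongly covariant representation is Nica-covariant. For $p, q \in P$ with $pP \cap qP = wP$, and $S \in \K X_p$, $T \in \K X_q$, the Nica-covariance defect
\[
r := \psi_p(S) \psi_q(T) - \psi_w\bigl( i_p^w(S)\, i_q^w(T) \bigr) \in \left[ \T(X) \right]_e
\]
(with the second term absent when $pP \cap qP = \mt$) satisfies $\Phi_F(r) = 0$ for every finite $F$: the representation $(\pi_F, t_F)$ is built from the intrinsic left action of $X$ on $\bigoplus_{r \in P} X_r$ and thus inherits Nica-covariance from the Fock representation. Consequently $\nor{r}_F = 0$ for every $F$, so $r \in \I_e$, and the Nica-covariance relations descend to any strongly covariant representation.

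For the forward direction, I record the key monotonicity: whenever $F' \supseteq F$, the inclusion $I_{r^{-1}(r \vee F')} \subseteq I_{r^{-1}(r \vee F)}$ gives $X_{F'} \subseteq X_F$ inside $\bigoplus_{r \in P} X_r$; moreover, since $\psi_{F, p}(k_p)$ and $\psi_{F', p}(k_p)$ are implemented by the same left action of $k_p$ on $X_r$, their restrictions to the common submodule $X_{F'}$ agree. Hence for any core element $f$, $\Phi_{F'}(f)|_{X_{F'}}$ equals $\Phi_F(f)|_{X_{F'}}$, and in particular $\nor{f}_{F'} \leq \nor{f}_F$. Applied to $f = \sum_{p \in F} \psi_p(k_p)$ with $\sum_{p \in F} \psi_{F, p}(k_p)|_{X_F} = 0$, this forces $\nor{f}_{F'} = 0$ for every $F' \supseteq F$, hence $f \in \I_e$, and strong covariance of $(\pi, t)$ annihilates $f$.

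For the reverse direction, assume $(\pi, t)$ is Nica-covariant and satisfies the kernel condition. Nica-covariance allows $\pi$ to factor through $\N\T(X)$, and by the $G$-Fell-bundle structure the vanishing of $\pi$ on the strong covariance ideal reduces to its vanishing on $q(\I_e) \lhd B_P$, where $q \colon \T(X) \to \N\T(X)$ is the canonical quotient and $B_P = [\N\T(X)]_e$ is the $e$-graded core. Let $\J^{\mathrm{fin}}$ denote the set of finite sums $\sum_{p \in F} \psi_p(k_p) \in B_P$ that satisfy the kernel condition at level $F$. Nica-covariance of products together with the monotonicity make $\J^{\mathrm{fin}}$ a self-adjoint ideal in the dense $*$-subalgebra of all finite core sums, so the kernel condition kills $\pi$ on $\J^{\mathrm{fin}}$ and hence on its closure $\overline{\J^{\mathrm{fin}}}$. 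It remains to show $\overline{\J^{\mathrm{fin}}} = q(\I_e)$; the inclusion $\overline{\J^{\mathrm{fin}}} \subseteq q(\I_e)$ is immediate from the monotonicity, and the converse is obtained by choosing an approximate unit of $q(\I_e)$, approximating its members by finite core sums using the exhaustion $B_P = \overline{\bigcup_F B_F}$ over $\vee$-closed finite $F \subseteq P$, and then using the ideal property of $\J^{\mathrm{fin}}$ to absorb the approximating tails, following Sehnem's strategy in \cite[Proposition 4.2]{Seh18}.

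The main obstacle is precisely this density identification $\overline{\J^{\mathrm{fin}}} = q(\I_e)$: membership in $q(\I_e)$ only asks $\nor{f}_F \to 0$, while $\J^{\mathrm{fin}}$ requires exact vanishing at some $F$, so the approximation must take place simultaneously within the closed ideal $q(\I_e)$ and within the finite-sum subalgebra. The right-LCM hypothesis enters through the $\vee$-closed exhaustion of the cores, which provides the combinatorial setup needed to perform this approximation.
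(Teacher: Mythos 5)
First, note that the paper does not actually prove this proposition: it is quoted verbatim from \cite[Proposition 5.4]{DKKLL20} and \cite[Proposition 4.2]{Seh18}, so your attempt can only be measured against the strategy of those cited proofs. Your argument that strongly covariant representations are Nica-covariant (the defect $r$ is killed by every $\Phi_F$, hence lies in $\I_e$) and your forward direction are correct and standard: the monotonicity $X_{F'}\subseteq X_F$ for $F\subseteq F'$, coming from $I_{r^{-1}(r\vee F')}\subseteq I_{r^{-1}(r\vee F)}$, does give $\nor{f}_{F'}\leq\nor{f}_F$, so exact vanishing at level $F$ propagates to all larger $F'$ and places $\sum_{p\in F}\psi_p(k_p)$ in $\I_e$.

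The genuine gap is in the reverse direction, at precisely the point you flag as ``the main obstacle.'' You reduce everything to the identity $\ol{\J^{\mathrm{fin}}}=q(\I_e)$, but the mechanism you propose for the hard inclusion $q(\I_e)\subseteq\ol{\J^{\mathrm{fin}}}$ --- take an approximate unit of $q(\I_e)$, approximate it by finite core sums, and ``absorb the tails'' using the ideal property of $\J^{\mathrm{fin}}$ --- does not close. The ideal property only helps if one of the two factors already lies in $\J^{\mathrm{fin}}$, and neither $f\in q(\I_e)$ nor a generic finite-sum approximant of $e_\la$ does; as written the argument is circular. The missing ingredient is a quantitative statement: since $\si_F:=Q_{e,F}\Phi_F(\cdot)Q_{e,F}$ is a $*$-homomorphism on the C*-algebra $B_F$ and your kernel condition says $\ker\si_F\cap B_F\subseteq\ker\rho$ (where $\rho$ is the representation induced by $(\pi,t)$ on $\N\T(X)$), the induced map $\si_F(B_F)\to\rho(B_F)$ is a $*$-homomorphism between C*-algebras and hence contractive, giving $\nor{\rho(f)}\leq\nor{f}_F$ for all $f\in B_F$. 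With this estimate in hand one either (a) kills $\I_e$ directly by an $\eps/3$ argument using $B_P=\ol{\bigcup_F B_F}$ over $\vee$-closed $F$ and the fact that the net $\nor{f}_F$ is decreasing, so no exact-vanishing density claim is ever needed (this is essentially Sehnem's route), or (b) proves your density claim via the C*-identity $\dist(f,\ker\si_F\cap B_F)=\nor{\si_F(f)}$ together with the standard fact that a closed ideal of an inductive limit is the closure of its intersections with the building blocks. Without this quotient-norm input, your reverse direction does not go through.
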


Carlsen-Larsen-Sims-Vittadello \cite{CLSV11} explored the idea of finding the co-universal C*-algebra with respect to injective equivariant Nica-covariant representations of $X$.
By using the C*-envelope machinery we can prove that this object always exists, thus completing the co-universal aspect of their programme at the more general context of right weak LCM-inclusions.

%%%%%%%%%%%%%%%%%%%%%%%%%%%%%%%%
\begin{definition}
Let $(G,P)$ be a weak right LCM-inclusion and $X$ be a compactly aligned product system over $P$ with coefficients in $A$.
We say that a representation $(\pi,t)$ of $X$ is \emph{co-universal for $\N\T(X)$} if
\begin{enumerate}
\item $\pi$ is faithful.
\item $\ca(\pi,t)$ is an equivariant quotient of $\N\T(X)$.
\item $(\pi,t)$ factors through any other equivariant quotient of $\N\T(X)$ that is injective on $A$.
\end{enumerate}
\end{definition}

Of course the C*-algebras of co-universal representations are automatically $*$-isomorphic by an equivariant homomorphism.
In \cite{DKKLL20} we proved that the equivariant representation
\[
\N\T(X) \longrightarrow \cenv(\T_\la(X)^+, G, \ol{\de}_G)
\]
that is given by the diagram
\[
\xymatrix{
\N\T(X) \ar@{.>}[rr]  \ar[dr]& & \cenv(\T_\la(X)^+, G, \ol{\de}_G) \\
& \T_\la(X) \ar[ur] &
}
\]
is co-universal.
Let us review the main arguments and see what more we can obtain.

%%%%%%%%%%%%%%%%%%%%%%%%%%%%%%%%
\begin{proposition}\label{P:P coa B} \cite[Proposition 4.4]{DKKLL20}
Let $(G,P)$ be a weak right LCM-inclusion and $X$ be a compactly aligned product system over $P$ with coefficients in $A$.
Let $\Phi \colon \T_\la(X) \to B$ be a $*$-representation such that $\Phi|_{\ol{\pi}(A)}$ is faithful.
Then there exists a faithful $*$-homomorphism
\[
\T_\la(X) \longrightarrow B \otimes \ca_\la(P) ; \ol{t}_p(\xi_p) \mapsto \Phi \ol{t}_p(\xi_p) \otimes V_p.
\]
\end{proposition}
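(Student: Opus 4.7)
The plan is to realize $\rho$ as the composition
\[
\T_\la(X)\xrightarrow{(\id\otimes\la)\ol\de_G}\T_\la(X)\otimes\ca_\la(P)\xrightarrow{\Phi\otimes\id}B\otimes\ca_\la(P),
\]
where $\ol\de_G$ is the normal coaction of $G$ on $\T_\la(X)$ from Proposition \ref{P:f coaction}, and then to establish faithfulness by an intertwining-of-expectations argument combined with the compression argument used in the proof of Theorem \ref{T:chara}\,(iii)$\Rightarrow$(iv). For the first map, normality of $\ol\de_G$ makes $(\id\otimes\la)\ol\de_G$ faithful, and the Wick ordering of $\T_\la(X)$ guarantees that its image lies inside the subalgebra $\T_\la(X)\otimes\ca_\la(P)$, with $\ca_\la(P)\subseteq\ca_\la(G)$ generated by $V_p:=\la_p$ for $p\in P$. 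Composing with $\Phi\otimes\id$ then produces a $*$-homomorphism $\rho$ sending $\ol t_p(\xi_p)\mapsto\Phi\ol t_p(\xi_p)\otimes V_p$ on generators.

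For faithfulness, I would invoke the faithful conditional expectation $\ol E_P$ on $\T_\la(X)$ from Proposition \ref{P:Fock ce} and the canonical faithful expectation $\id\otimes\om_e\colon B\otimes\ca_\la(P)\to B$ determined by $\om_e(V_pV_q^*)=\de_{p,q}$. Checking on the Wick-ordered monomials $\ol t_p(\xi_p)\ol t_q(\eta_q)^*$ shows $(\id\otimes\om_e)\circ\rho=\Phi\circ\ol E_P$; combining this intertwining with the faithfulness of the two conditional expectations reduces the injectivity of $\rho$ on $\T_\la(X)$ to the injectivity of $\rho$ on the fixed-point algebra $\ol B_P$.

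Finally, since $\ol B_P$ is exhausted by the cores $\ol B_F$ for $\vee$-closed finite $F\subseteq P$, it suffices to treat each $\ol B_F$ separately. Given a non-zero $b=\sum_{p\in F}\ol\psi_p(k_p)\in\ol B_F$, choose $p_0\in F$ minimal with $k_{p_0}\neq 0$; injectivity of $\Phi\ol\pi$ on $A$ propagates to injectivity of every $\Phi\ol\psi_p$ on $\K X_p$, so $\Phi\ol\psi_{p_0}(k_{p_0})\neq 0$. Since $\rho(\ol\psi_p(k_p))=\Phi\ol\psi_p(k_p)\otimes V_pV_p^*$ and $V_pV_p^*$ is the projection onto $\ell^2(pP)$, compressing by $I\otimes Q_{p_0}$, where $Q_{p_0}$ is the rank-one projection onto $\bC e_{p_0}$, annihilates every term with $p\not\leq p_0$. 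By the minimality of $p_0$ in $F$ only the $p=p_0$ term survives, giving
\[
(I\otimes Q_{p_0})\,\rho(b)\,(I\otimes Q_{p_0})=\Phi\ol\psi_{p_0}(k_{p_0})\otimes Q_{p_0}\neq 0,
\]
so $\rho(b)\neq 0$. The main obstacle I anticipate is the bookkeeping around the Wick ordering and the minimality argument on $\vee$-closed sets; the coaction construction and the expectation intertwining are routine once the structural results from Section \ref{S:caps lcm} are in hand.
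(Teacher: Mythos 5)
Your overall architecture (produce the map, reduce injectivity to the cores via faithful conditional expectations, then prove injectivity on each $\ol{B}_F$ by compressing with $I \otimes Q_{p_0}$ at a minimal $p_0$) is exactly the architecture of the intended argument, and the last two steps are essentially the paper's own computation from the proof of Theorem \ref{T:chara} (iii)$\Rightarrow$(iv). The reduction and the core argument are fine, modulo the cosmetic point that your $\om_e$ should be the \emph{diagonal conditional expectation} $E_D \colon \ca_\la(P) \to \ol{\spn}\{V_pV_p^*\}$ with $E_D(V_pV_q^*) = \de_{p,q}V_pV_p^*$ (which is faithful), rather than a scalar-valued functional with $\om_e(V_pV_q^*)=\de_{p,q}$: the vector state at $\de_e$ is not faithful and does not satisfy that formula.

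The genuine gap is in the \emph{existence} of the $*$-homomorphism. You construct it as $(\Phi\otimes\id)\circ(\id\otimes\la)\circ\ol{\de}_G$ and then identify $\ca_\la(P)$ with the C*-subalgebra of $\ca_\la(G)$ generated by $V_p := \la_p$. That identification is false: $\la_p$ is a \emph{unitary} in $\ca_\la(G)$, whereas $V_p \in \ca_\la(P) \subseteq \B(\ell^2(P))$ is a proper isometry for $p \notin P^*$; already for $(\bZ,\bZ_+)$ the subalgebra of $\ca_\la(\bZ) \simeq C(\bT)$ generated by $\la_1$ is all of $C(\bT)$, while $\ca_\la(\bZ_+)$ is the Toeplitz algebra. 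Consequently your composition sends $\ol{t}_p(\xi_p)$ to $\Phi\ol{t}_p(\xi_p)\otimes\la_p$, not to $\Phi\ol{t}_p(\xi_p)\otimes V_p$, and there is no $*$-homomorphism carrying one to the other (compressing $\ell^2(G)$ to $\ell^2(P)$ is not multiplicative: $Q\la_p\la_p^*Q = Q$ but $V_pV_p^* \neq Q$). The correct route is to verify directly that $a \mapsto \Phi\ol{\pi}(a)\otimes I$, $\xi_p \mapsto \Phi\ol{t}_p(\xi_p)\otimes V_p$ is a Nica-covariant Toeplitz representation of $X$ -- this uses that the isometries $V_p$ on $\ell^2(P)$ satisfy $V_p^*V_q = V_{p^{-1}w}V_{q^{-1}w}^*$ when $pP\cap qP = wP$ and $V_p^*V_q=0$ otherwise, which is where the right LCM hypothesis enters -- so that it integrates to a $*$-homomorphism of $\N\T(X)$; one then shows this factors through $\T_\la(X)$ and is faithful there by precisely the expectation-intertwining and core-compression arguments you describe. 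So the back end of your proof is salvageable as written, but the front end must be replaced.
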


As a consequence the injective equivariant representations on product systems generate C*-covers for the cosystem $(\T_\la(X)^+, G, \ol{\de}_G)$.

%%%%%%%%%%%%%%%%%%%%%%%%%%%%%%%%
\begin{proposition}\label{P:P cover} \cite[Proposition 4.5]{DKKLL20}
Let $(G,P)$ be a weak right LCM-inclusion and $X$ be a compactly aligned product system over $P$ with coefficients in $A$.
Let $\Phi \colon \T_\la(X) \to B$ be an equivariant $*$-epimorphism such that $\Phi|_{\ol{\pi}(A)}$ is faithful.
Then $B$ is a C*-cover for the cosystem $(\T_\la(X)^+, G, \ol{\de}_G)$.
\end{proposition}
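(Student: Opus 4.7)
The plan is to verify the three defining properties for $B$ to be a C*-cover of the cosystem $(\T_\la(X)^+, G, \ol{\de}_G)$. First, since $\T_\la(X) = \ca(\T_\la(X)^+)$ and $\Phi$ is a $*$-epimorphism, we have $B = \ca(\Phi(\T_\la(X)^+))$. Second, equivariance of $\Phi$ is exactly the statement that a coaction $\de_B$ of $G$ exists on $B$ with $\de_B \Phi = (\Phi \otimes \id) \ol{\de}_G$. Thus the only nontrivial task is to show that $\Phi|_{\T_\la(X)^+}$ is completely isometric.

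For this I would apply Proposition \ref{P:P coa B} to $\Phi$ itself, obtaining a faithful $*$-homomorphism
\[
\iota_\Phi \colon \T_\la(X) \longrightarrow B \otimes \ca_\la(P), \qquad \ol{t}_p(\xi_p) \mapsto \Phi(\ol{t}_p(\xi_p)) \otimes V_p,
\]
which is automatically completely isometric. Separately, the canonical inclusion $\ell^2(P) \hookrightarrow \ell^2(G)$ yields a completely positive unital compression $\Theta \colon B \otimes \B(\ell^2(G)) \to B \otimes \B(\ell^2(P))$. Since each $\la_p$ with $p \in P$ preserves $\ell^2(P)$ and restricts to $V_p$, on the $P$-positive part of $B \otimes \ca_\la(G)$ one has $\Theta(b \otimes \la_p) = b \otimes V_p$ landing inside $B \otimes \ca_\la(P)$.

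The crucial identity is now that, by equivariance, for any polynomial $a = \sum_{p \in F} \ol{t}_p(\xi_p) \in \T_\la(X)^+$ one has $\de_B(\Phi(a)) = \sum_{p \in F} \Phi(\ol{t}_p(\xi_p)) \otimes u_p$, and therefore
\[
\Theta\bigl( (\id \otimes \la) \de_B (\Phi(a)) \bigr) = \sum_{p \in F} \Phi(\ol{t}_p(\xi_p)) \otimes V_p = \iota_\Phi(a).
\]
By continuity this extends to all $a \in \T_\la(X)^+$. Since $(\id \otimes \la) \de_B$ is a contractive $*$-homomorphism and $\Theta$ is completely contractive, passing to matrix amplifications gives, for $[a_{ij}] \in M_n(\T_\la(X)^+)$, the chain
\[
\|[a_{ij}]\| = \|[\iota_\Phi(a_{ij})]\| \leq \|[((\id \otimes \la)\de_B)(\Phi(a_{ij}))]\| \leq \|[\Phi(a_{ij})]\| \leq \|[a_{ij}]\|,
\]
and equality throughout yields complete isometry of $\Phi|_{\T_\la(X)^+}$.

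The main obstacle I anticipate is the factorisation $\iota_\Phi = \Theta \circ (\id \otimes \la) \de_B \circ \Phi$ on $\T_\la(X)^+$, which rests on the identification of $V_p$ as the restriction of $\la_p$ to the $\la_p$-invariant subspace $\ell^2(P)$ for $p \in P$, a standard feature of group-embeddable semigroups. Once this is laid out carefully for polynomials, continuity and matrix amplification handle the rest.
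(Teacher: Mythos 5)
Your proposal is correct and follows essentially the same route as the paper (and the cited \cite[Proposition 4.5]{DKKLL20}): the only nontrivial point is complete isometry of $\Phi|_{\T_\la(X)^+}$, and you obtain it by factoring the completely isometric map $\iota_\Phi$ of Proposition \ref{P:P coa B} as a composition of completely contractive maps through $\Phi$ --- precisely the circle of maps recorded in Figure \ref{F:diag}. The identification of $V_p$ as the compression of $\la_p$ to $\ell^2(P)$ and the reduction to polynomials $\sum_{p\in F}\ol{t}_p(\xi_p)$ are both handled correctly, so nothing is missing.
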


Another consequence of Proposition \ref{P:P coa B} provides a generalization of the Extension Theorem of \cite{KR16}. 
It essentially allows us to recognize a Fock tensor algebra by the presence of a coaction.

%%%%%%%%%%%%%%%%%%%%%%%%%%%%%%%%
\begin{theorem}[Extension Theorem] \label{T:extension}
Let $(G,P)$ be a weak right LCM-inclusion and $X$ be a compactly aligned product system over $P$ with coefficients in $A$.
Let ${\Phi} \colon \T_\la(X) \to B$ be a representation of $X$ and set
\[
\A := \ol{\alg}\{ \Phi \ol{\pi}(A), \Phi\ol{t}_p(X_p) \mid p \in P \}.
\]
Then the following are equivalent:
\begin{enumerate}
\item ${\Phi}|_{\T_\la(X)^+}$ is completely isometric.
\item There exists a completely contractive map 
\[
\A \longrightarrow B \otimes \ca(G) ; \Phi \ol{t}_p(\xi_p) \mapsto \Phi \ol{t}_p(\xi_p) \otimes u_p.
\]
\item There exists a completely contractive map 
\[
\A \longrightarrow B \otimes \ca_\la(G) ; \Phi \ol{t}_p(\xi_p) \mapsto \Phi \ol{t}_p(\xi_p) \otimes \la_p.
\]
\item There exists a completely contractive map 
\[
\A \longrightarrow B \otimes \ca_\la(P) ; \Phi \ol{t}_p(\xi_p) \mapsto \Phi \ol{t}_p(\xi_p) \otimes V_p.
\]
\end{enumerate}
\end{theorem}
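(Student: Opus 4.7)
The plan is to establish the cyclic chain (i) $\Rightarrow$ (ii) $\Rightarrow$ (iii) $\Rightarrow$ (iv) $\Rightarrow$ (i). The first three implications are essentially transfers of structure, while (iv) $\Rightarrow$ (i) is the substantive direction and pivots on the Fock-rigidity encoded in Proposition \ref{P:P coa B}.

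For (i) $\Rightarrow$ (ii), observe that under (i) the map $\Phi|_{\T_\la(X)^+} \colon \T_\la(X)^+ \to \A$ is a completely isometric surjection, hence a completely isometric isomorphism with completely isometric inverse. I would then define the sought c.c.\ map as
$$
(\Phi \otimes \id_{\ca(G)}) \circ \ol{\de}_G \circ (\Phi|_{\T_\la(X)^+})^{-1} \colon \A \longrightarrow B \otimes \ca(G),
$$
where $\ol{\de}_G$ is the normal coaction of Proposition \ref{P:f coaction}; each arrow is completely contractive, and a direct evaluation on generators confirms the prescribed action.

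For (ii) $\Rightarrow$ (iii) I would post-compose with the canonical c.c.\ $*$-homomorphism $\id_B \otimes \la$. For (iii) $\Rightarrow$ (iv) I would exploit the completely positive compression $\B(\ell^2(G)) \to \B(\ell^2(P))$ by the projection onto $\ell^2(P)$: since $\la_p$ leaves $\ell^2(P)$ invariant for every $p \in P$, this compression sends $\la_p \mapsto V_p$. Amplifying to $\id_B \otimes (\mathrm{compression})$ and composing with (iii) yields a c.c.\ map $\A \to B \otimes \B(\ell^2(P))$ with the prescribed behaviour on generators; since $\la_p \la_q = \la_{pq}$ with $pq \in P$, products of generators are mapped into the subspace $\ol{\spn}\{b \otimes V_p \mid b \in B, \, p \in P\}$, confirming that the image lies in $B \otimes \ca_\la(P)$.

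The main step is (iv) $\Rightarrow$ (i). Applying Proposition \ref{P:P coa B} to $\Phi$ itself (in the intended regime where $\Phi|_{\ol{\pi}(A)}$ is faithful) yields a faithful $*$-homomorphism $\tilde{\psi} \colon \T_\la(X) \to B \otimes \ca_\la(P)$ with $\tilde{\psi}(\ol{t}_p(\xi_p)) = \Phi \ol{t}_p(\xi_p) \otimes V_p$, hence completely isometric. The c.c.\ composition $\psi \circ \Phi$ agrees with $\tilde{\psi}$ on the generators of $\T_\la(X)^+$ and therefore on all of $\T_\la(X)^+$. For any $x$ in a matrix amplification of $\T_\la(X)^+$ this yields
$$
\|x\| = \|\tilde{\psi}(x)\| = \|\psi \circ \Phi(x)\| \leq \|\Phi(x)\| \leq \|x\|,
$$
forcing $\|\Phi(x)\| = \|x\|$ and, matricially, the complete isometry of $\Phi|_{\T_\la(X)^+}$. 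The principal obstacle is precisely this final step: the requisite lower bound $\|\Phi(x)\| \geq \|x\|$ is inaccessible without invoking the Fell-absorption phenomenon recorded in Proposition \ref{P:P coa B}, which converts the a priori c.c.\ datum of (iv) into the genuine isometry embedded in the faithful Fock-type $*$-homomorphism $\tilde{\psi}$.
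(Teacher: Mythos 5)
Your argument is correct and is essentially the paper's own proof written out in full: the paper compresses the same cycle (i) $\Rightarrow$ (ii) $\Rightarrow$ (iii) $\Rightarrow$ (iv) $\Rightarrow$ (i) into the chain of completely contractive maps of Figure \ref{F:diag}, with Proposition \ref{P:P coa B} supplying the decisive complete isometry $\ol{\alg}\{\Phi\ol{t}_p(\xi_p)\otimes V_p \mid p\in P\}\simeq \T_\la(X)^+$ that closes the loop. Two points you leave implicit are worth recording: the step ``agrees on generators, therefore on all of $\T_\la(X)^+$'' is legitimate only because products of generators are again generators, so that $\T_\la(X)^+$ is the \emph{closed linear span} (not merely the closed algebra generated) of $\ol{\pi}(A)\cup\bigcup_{p}\ol{t}_p(X_p)$ and bounded linear maps are determined there; and your caveat that Proposition \ref{P:P coa B} requires $\Phi|_{\ol{\pi}(A)}$ to be faithful is well taken, as this hypothesis is tacit in the paper's statement and proof as well.
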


\begin{proof}
In Figure \ref{F:diag} we have a diagram of completely contractive representations induced by Proposition \ref{P:P coa B} and Proposition \ref{P:P cover}.
If any of the items holds then it makes the representation of $\T_\la(X)^+$ to $\A$ completely isometric and the proof is complete.
\end{proof}

%%%%%%%%%%%%%%%%%%%%%%%%%%%%%%%%
\begin{figure}[h]
{\small
\[
\xymatrix@R=30pt@C=100pt{
\T_\la(X)^+ \ar[d]^{\simeq} \ar[rd] & \\
\ol{\alg} \{ \Phi \ol{t}_p(\xi_p) \otimes u_p \mid \xi_p \in X_p, p \in P\} \ar[d] & \A \\
\ol{\alg} \{ \Phi \ol{t}_p(\xi_p) \otimes \la_p \mid \xi_p \in X_p, p \in P\} \ar[d] & \\ 
\ol{\alg} \{ \Phi \ol{t}_p(\xi_p) \otimes V_p \mid \xi_p \in X_p, p \in P\} \ar[d]^{\simeq} & \\
\T_\la(X)^+ &
}
\]
}
\caption{Diagram of completely positive maps fixing the nonselfadjoint part}
\label{F:diag}
\end{figure}

We now come to the last part of \cite{DKKLL20} that connects reduced C*-algebras with the C*-envelope.
By Corollary \ref{C:A in red} and Proposition \ref{P:P cover} we get a canonical $*$-epimorphism
\[
q_{\scv}(\T_\la(X)) \longrightarrow A \times_{X, \la} P \simeq \cenv(\T_\la(X)^+, G, \ol{\de}_G).
\]
The reader is also reminded here of the notation used here and in \cite{DKKLL20} as explained in Remark \ref{R:notation change}.
The same remark asserts that the C*-envelope of the cosystem is independent of the group embedding \emph{in this setting}.

%%%%%%%%%%%%%%%%%%%%%%%%%%%%%%%%
\begin{theorem} \label{T:co-univ} \cite[Theorem 4.9, Theorem 5.3 and Corollary 5.6]{DKKLL20}
Let $(G,P)$ be a weak right LCM-inclusion and $X$ be a compactly aligned product system over $P$ with coefficients in $A$.
Then the equivariant $*$-epimorphism
\[
\N\T(X) \longrightarrow \cenv(\T_\la(X)^+, G, \ol{\de}_G)
\]
is co-universal.
Moreover we have an equivariant $*$-isomorphism
\[
\cenv(\T_\la(X)^+, G, \ol{\de}_G) \simeq A \times_{X, \la} P.
\]
The equivariant $*$-epimorphism
\[
q_{\scv}(\T_\la(X)) \longrightarrow A \times_{X, \la} P \simeq \cenv(\T_\la(X)^+, G, \ol{\de}_G)
\]
is faithful if and only if the coaction of $G$ on $q_{\scv}(\T_\la(X))$ is normal.
\end{theorem}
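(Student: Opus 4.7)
My plan is to treat the three assertions in turn. For the first, I construct the equivariant $*$-epimorphism $\N\T(X) \to \cenv(\T_\la(X)^+, G, \ol{\de}_G)$ as the composition of the Fock quotient $\N\T(X) \to \T_\la(X)$ with the equivariant $*$-epimorphism $\T_\la(X) \to \cenv(\T_\la(X)^+, G, \ol{\de}_G)$; the latter exists by combining Proposition \ref{P:P cover} (which exhibits $\T_\la(X)$ as a C*-cover of the cosystem) with the co-universal property of the cosystem C*-envelope. Injectivity on $A$ follows from injectivity of the Fock representation. For co-universality, given any equivariant Nica-covariant representation $(\pi,t)$ with $\pi$ faithful, the universal property of $\N\T(X)$ supplies an equivariant quotient $\tilde\Phi \colon \N\T(X) \twoheadrightarrow \ca(\pi,t)$; it suffices to exhibit $\ca(\pi,t)$ as a C*-cover of $(\T_\la(X)^+, G, \ol{\de}_G)$, whence the cosystem co-universal property produces the desired factorization. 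The grading is supplied by equivariance, and the only non-trivial point is the complete isometry of the induced map $\T_\la(X)^+ \to \ca(\pi,t)$. For this I rely on the Extension Theorem (Theorem \ref{T:extension}): the composite of the coaction $\de \colon \ca(\pi,t) \to \ca(\pi,t) \otimes \ca(G)$, the canonical $\id \otimes \la$, and compression along $\ell^2(P) \subseteq \ell^2(G)$ yields a completely contractive map from the tensor subalgebra $\A \subseteq \ca(\pi,t)$ into $\ca(\pi,t) \otimes \ca_\la(P)$ sending $t_p(\xi_p) \mapsto t_p(\xi_p) \otimes V_p$, which is exactly what the Extension Theorem needs to conclude complete isometry on the tensor algebra.

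For the $*$-isomorphism with $A \times_{X, \la} P$, I observe that $A \times_{X, \la} P$, as the reduced Fell bundle C*-algebra of $\S\C X$, is an equivariant quotient of $\N\T(X)$ injective on $A$ and carries a normal coaction. The co-universality just proved produces an equivariant $*$-epimorphism $A \times_{X, \la} P \to \cenv(\T_\la(X)^+, G, \ol{\de}_G)$. Since $\ol{\de}_G$ is normal on $\T_\la(X)$, Theorem \ref{T:fpa 1-1} forces the coaction on $\cenv(\T_\la(X)^+, G, \ol{\de}_G)$ to also be normal; both algebras contain $A$ faithfully, so Corollary \ref{C:A in red} makes the $*$-epimorphism injective on the fixed point algebra, and because it intertwines the two faithful conditional expectations from the normal coactions, the map is in fact a $*$-isomorphism.

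For the normality criterion, the forward direction is immediate: if the map is faithful then $q_{\scv}(\T_\la(X)) \simeq A \times_{X, \la} P$, which carries a normal coaction. For the converse, assume the coaction of $G$ on $q_{\scv}(\T_\la(X))$ is normal. Then $q_{\scv}(\T_\la(X))$ is an equivariant quotient of $\N\T(X)$ injective on $A$ (by Corollary \ref{C:A in red}), so the first part applies and delivers both the equivariant $*$-epimorphism $q_{\scv}(\T_\la(X)) \to A \times_{X, \la} P$ and the complete isometric embedding of $\T_\la(X)^+$ into $q_{\scv}(\T_\la(X))$. This $*$-epimorphism is injective on $A$, hence on the fixed point algebra by Corollary \ref{C:A in red}, and it intertwines the two faithful conditional expectations supplied by the normal coactions; a standard argument then gives global faithfulness. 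The main obstacle is the complete-isometry step in the first paragraph: although the compression-based construction of the requisite completely contractive map is natural, the Extension Theorem must be invoked carefully because no $*$-homomorphism $\T_\la(X) \to \ca(\pi,t)$ exists a priori, and one must also use the identification $\N\T(X)^+ \simeq \T_\la(X)^+$ arising from the Fock representation on the nonselfadjoint part.
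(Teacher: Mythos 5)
The paper does not actually prove this theorem: it is imported from \cite{DKKLL20} (Theorem 4.9, Theorem 5.3 and Corollary 5.6 there), and the surrounding text only recalls the ingredients, namely Propositions \ref{P:P coa B} and \ref{P:P cover}, the Extension Theorem \ref{T:extension}, Theorem \ref{T:fpa 1-1} and Corollary \ref{C:A in red}. Your reconstruction assembles exactly these ingredients in the way the paper itself indicates (cf.\ Figure \ref{F:diag} and the discussion preceding the statement): exhibit every equivariant quotient of $\N\T(X)$ that is injective on $A$ as a C*-cover of the cosystem $(\T_\la(X)^+, G, \ol{\de}_G)$ via the compression $\ca_\la(G) \to \B(\ell^2(P))$, invoke the co-universal property of $\cenv(\T_\la(X)^+, G, \ol{\de}_G)$, and settle the faithfulness claims by combining injectivity on the fixed point algebra (Corollary \ref{C:A in red}) with intertwined faithful conditional expectations coming from normality. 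This is the same route as the cited source, and you correctly isolate the one genuinely delicate point: the complete isometry $\T_\la(X)^+ \hookrightarrow \ca(\pi,t)$ must be extracted from Proposition \ref{P:P coa B} applied to the induced representation $t_p(\xi_p) \otimes V_p$, since no $*$-homomorphism $\T_\la(X) \to \ca(\pi,t)$ is available a priori.
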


%%%%%%%%%%%%%%%%%%%%%%%%%%%%%%%%
\section{Controlled maps} \label{S:ce}
%%%%%%%%%%%%%%%%%%%%%%%%%%%%%%%%

Let $\vartheta \colon (G,P) \to (\G, \P)$ be a semigroup preserving homomorphism between weak right LCM-inclusions and let $X$ be a compactly aligned product system over $P$ with coefficients in $A$. 
By Proposition \ref{P:coa ind} the Toeplitz algebra admits a $\G$-grading that contains the $G$-grading, and the same is true for the fixed point algebras.
Of course this may be useless; for example the $\vartheta$-fixed point algebra for the map for $\vartheta \colon G \to \{e\}$ is the entire C*-algebra.
Nevertheless more can be obtained for weak right LCM-inclusions as long as we impose axioms that control the map.
The following extends the controlled maps on quasi-lattice ordered groups from \cite{LR96}, see also \cite{Fow02} and \cite{CL07}, to the context of weak right LCM-inclusions.

%%%%%%%%%%%%%%%%%%%%%%%%%%%%%%%%
\begin{definition}\label{D:control}
A \emph{controlled map} $\vartheta \colon (G,P) \to (\G,\P)$ between weak right LCM-inclusions is a semigroup preserving homomorphism such that:
\begin{enumerate}
\item[(A1)] If $p P \cap q P \neq \mt$ then $\vartheta(p) \P \cap \vartheta(q) \P = \vartheta(p P \cap q P) \P$.
\item[(A2)] If $p P \cap q P \neq \mt$ and $\vartheta(p) = \vartheta(q)$ then $p=q$.
\end{enumerate}
\end{definition}

It is worth pointing out that in the case where $P = G$ then there is only one right ideal (generated by the identity).
Therefore a controlled map in this case is simply an injective group homomorphism due to (A2).

%%%%%%%%%%%%%%%%%%%%%%%%%%%%%%%%
\begin{remark}
It is clear that (A1) is equivalent to having $\vartheta(p) \P \cap \vartheta(q) \P = \vartheta(w) \P$ whenever $p P \cap q P = w P$.
Moreover, because of (A2) we have that $\vartheta^{-1}(e_\G) \cap P = \{e_G\}$.
Indeed as $\vartheta$ is a group homomorphism we have that $\vartheta(e_G) = e_\G$.
Now if $\vartheta(p) = e_\G$ for some $p \in P$, then since $p P \cap e_G P = p P \neq \mt$ we get by (A1) that $p = e_G$.
This extra generality is crucial when we wish to consider, the generalized length function given by abelianization on the free monoid $\mathbb F_n^+$ \cite{LR96}, and, more generally,  on Artin monoids of rectangular type \cite{CL02}.
\end{remark}

%%%%%%%%%%%%%%%%%%%%%%%%%%%%%%%%
\begin{remark}
A similar type of maps appear in \cite{Cri99} and \cite[Section 3]{BLS18}.
However the maps therein satisfy the stronger requirement that $\vartheta(p) \P \cap \vartheta(q) \P = \vartheta(p P \cap q P) \P$ for all $p,q \in P$.
This means that $p,q \in P$ have a right LCM if and only if so do $\vartheta(p), \vartheta(q) \in \P$.
In our Definition \ref{D:control}, the condition (A1) allows the possibility that $\vartheta(p), \vartheta(q)$ have a right LCM in $\P$ even when $p P \cap q P = \mt$.
\end{remark}

We will investigate the impact of the existence of a controlled map on Nica-covariant representations.
Henceforth fix a controlled map $\vartheta \colon (G,P) \to (\G,\P)$ between two weak right LCM-inclusions.
Suppose that $(\pi,t)$ is a Nica-covariant representation of a compactly aligned product system $X$ over $P$ with coefficients in $A$.
If $p,q \in P$ with $\vartheta(p) = \vartheta(q)$ then by (A2) either $p = q$ or $p P \cap q P = \mt$; thus Nica-covariance yields the orthogonality
\[
t_{p}(\xi_{p})^* t_{q}(\xi_{q})
=
\de_{p,q} \pi(\sca{\xi_{p}, \xi_{q}}).
\]
Hence the C*-algebra
\[
B_{\vartheta^{-1}(h)} := 
\ol{\spn} \{ \psi_{p,q}(k_{p,q}) \mid k_{p,q} \in \K(X_q, X_p), \vartheta(p) = h = \vartheta(q) \}
\]
is a matrix C*-algebra.
For a $\vee$-closed $\F \subseteq \P$ we define
\[
B_{\vartheta^{-1}(\F)} := \ol{\spn}\{ B_{\vartheta^{-1}(h)} \mid h \in \F \}.
\]
By conditions (A1) and (A2) of Definition \ref{D:control} we get that $\vartheta^{-1}(\F)$ is also $\vee$-closed (and thus the above space is a C*-algebra).
Therefore every $B_{\vartheta^{-1}(\F)}$ is the inductive limit of the matrix C*-subalgebras
\[
\spn\{ \psi_{p,q}(k_{p,q}) \mid k_{p,q} \in \K(X_q, X_p), p,q \in F, \vartheta(p) = \vartheta(q) \}
\textup{ for finite $\vee$-closed } F \subseteq \vartheta^{-1}(\F).
\]
Taking the closure of the union we obtain the $\vartheta$-fixed point algebra
\[
B_{\vartheta^{-1}(\P)} := \ol{\spn}\{ \psi_{p,q}(k_{p,q}) \mid k_{p,q} \in \K(X_q, X_p), \vartheta(p) = \vartheta(q) \}.
\]
It follows that
\[
B_{P} = \ol{\spn}\{ \psi_p(\K X_p) \mid p \in P \}
\subseteq 
B_{\vartheta^{-1}(\P)}.
\]
It is clear that the faithful conditional expectation $\ol{E}_\P$ on $\ca(\ol{\pi}, \ol{t}) = \T_\la(X)$ of Proposition \ref{P:Fock ce} is onto $\ol{B}_{\vartheta^{-1}(\P)}$.
We already commented on the effect of semigroup preserving homomorphisms $\vartheta \colon (G,P) \to (\G,\P)$ on $\T(X)$ and $\T_\la(X)$.
We give some basic facts about the effect of controlled maps on $\T_\la(X)$.

%%%%%%%%%%%%%%%%%%%%%%%%%%%%%%%%
\begin{proposition}\label{P:coa B 2}
Let $\vartheta \colon (G,P) \to (\G, \P)$ be a controlled map between weak right LCM-inclusions and let $X$ be a compactly aligned product system over $P$ with coefficients in $A$. 
Let $\Phi \colon \T_\la(X) \to B$ be a $*$-representation such that $\Phi|_{\ol{\pi}(A)}$ is faithful.
Then there exists a faithful $*$-homomorphism
\[
\T_\la(X) \longrightarrow B \otimes \ca_\la(\P) ; \ol{t}_p(\xi_p) \mapsto \Phi \ol{t}_p(\xi_p) \otimes V_{\vartheta(p)}.
\]
\end{proposition}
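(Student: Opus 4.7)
The proof would parallel Proposition \ref{P:P coa B} (its $\vartheta = \id$ case), replacing $\ca_\la(P)$ with $\ca_\la(\P)$ throughout. I would fix a faithful representation $B \subseteq \B(H)$, so that $\Phi$ becomes a representation of $\T_\la(X)$ on $H$. The aim is to realise the assignment $\ol{t}_p(\xi_p) \mapsto \Phi\ol{t}_p(\xi_p) \otimes V_{\vartheta(p)}$ as a faithful $*$-homomorphism $\rho \colon \T_\la(X) \to B \otimes \ca_\la(\P)$.

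My first step is to verify that $(\Phi\ol{\pi}(\cdot) \otimes I, \Phi\ol{t}_p(\cdot) \otimes V_{\vartheta(p)})$ is a Nica-covariant Toeplitz representation of $X$ in $B \otimes \ca_\la(\P)$. The multiplicativity and inner-product identities reduce to $V_{\vartheta(p)} V_{\vartheta(q)} = V_{\vartheta(pq)}$ and $V_{\vartheta(p)}^* V_{\vartheta(p)} = I$. Nica-covariance of the target relies on axiom (A1): whenever $pP \cap qP = wP$, it provides $\vartheta(p)\P \cap \vartheta(q)\P = \vartheta(w)\P$, so the Nica-covariance identity for $V_{\vartheta(p)}, V_{\vartheta(q)}$ in $\ca_\la(\P)$ is compatible with that for $\ol{t}_p, \ol{t}_q$ in $\T_\la(X)$, while the case $pP \cap qP = \mt$ is automatic since $\ol{t}_p^* \ol{t}_q = 0$ already in $\T_\la(X)$. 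Universality of $\N\T(X)$ then yields a $*$-homomorphism $\wt{\rho} \colon \N\T(X) \to B \otimes \ca_\la(\P)$.

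Next I would construct an intertwining isometry. Consider the interior tensor product $\F X \otimes_{\Phi\ol{\pi}} H$, on which $\T_\la(X)$ acts faithfully by $Z \mapsto Z \otimes_{\Phi\ol{\pi}} I$ thanks to the faithfulness of $\Phi\ol{\pi}$ on $A$. Define
\[
U \colon \F X \otimes_{\Phi\ol{\pi}} H \longrightarrow H \otimes \ell^2(\P), \quad \eta_q \otimes v \longmapsto \Phi\ol{t}_q(\eta_q) v \otimes \de_{\vartheta(q)}.
\]
That $U$ is isometric uses axiom (A2): for $q \neq r$ in $P$ with $\vartheta(q) = \vartheta(r)$, (A2) forces $qP \cap rP = \mt$, and Nica-covariance in $\T_\la(X)$ yields $\ol{t}_q(\eta_q)^* \ol{t}_r(\mu_r) = 0$, so the off-diagonal terms in the inner-product computation vanish. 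A direct calculation gives the intertwining
\[
U \big(\ol{t}_p(\xi_p) \otimes_{\Phi\ol{\pi}} I\big) = \big(\Phi\ol{t}_p(\xi_p) \otimes V_{\vartheta(p)}\big) U.
\]
Once $\rho$ is known to be well-defined on $\T_\la(X)$, faithfulness follows: $\rho(Z) = 0$ gives $U(Z \otimes_{\Phi\ol{\pi}} I) = 0$, whence $Z = 0$ by the isometry of $U$ and the faithfulness of the action on $\F X \otimes_{\Phi\ol{\pi}} H$.

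The main obstacle will be making the descent of $\wt{\rho}$ from $\N\T(X)$ to the Fock quotient $\T_\la(X)$ rigorous, a gap of relevance when the product system fails to be amenable. Concretely I must verify that $\wt{\rho}$ annihilates the ideal $\ker(\N\T(X) \to \T_\la(X))$ of Fock-level relations. The intertwining gives $\wt{\rho}(Z)|_{\ran U} = 0$ for $Z$ in this kernel, and extending this vanishing to all of $H \otimes \ell^2(\P)$ requires exploiting the normal $\G$-coaction of Proposition \ref{P:f coaction} on $\T_\la(X)$ alongside the induced $\G$-graded structure in the image inside $\ca_\la(\P)$, so that Fock-level relations descend through the intertwining on each graded spectral subspace.
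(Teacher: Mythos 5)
Your setup is correct as far as it goes: (A1) does make the pair $(\Phi\ol{\pi}(\cdot)\otimes I,\ \Phi\ol{t}_p(\cdot)\otimes V_{\vartheta(p)})$ Nica-covariant, (A2) does make $U$ isometric, and the generator intertwining gives you the \emph{easy} inclusion $\ker\wt{\rho}\subseteq\ker\big(\N\T(X)\to\T_\la(X)\big)$ essentially for free. But the content of the proposition is the opposite inclusion --- that $\wt{\rho}$ annihilates every Fock-level relation --- and that is precisely the step you leave open. Your proposed repair (``extend the vanishing from $\ran U$ to all of $H\otimes\ell^2(\P)$ using the graded spectral subspaces'') is not a proof and does not obviously become one: $\ran U$ is a proper and in general non-reducing subspace, and the grading alone does not force an operator vanishing there to vanish everywhere. (Note also that even the assertion $\wt{\rho}(Z)|_{\ran U}=0$ for $Z\in\ker q$ uses the intertwining on words containing adjoints, which you only verified on the generators; the adjoint case needs (A1), (A2) and $\vartheta^{-1}(e_\G)\cap P=\{e_G\}$ again, with some care about $P^*$.)

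The route the paper takes (following the uncontrolled case, \cite[Proposition 4.4]{DKKLL20}) avoids the well-definedness problem altogether and puts all the work where you have none: the candidate map is realized directly on $\T_\la(X)$, and faithfulness is obtained from a conditional-expectation sandwich. Both $\T_\la(X)$ and $\ca\big(\Phi\ol{t}_p(\xi_p)\otimes V_{\vartheta(p)}\big)$ carry faithful conditional expectations onto their $\vartheta$-cores --- $\ol{E}_\P$ of Proposition \ref{P:Fock ce} on one side, $\id_B\otimes(\text{compression to the diagonal of }\ell^2(\P))$ on the other --- and these are compatible with the canonical Fourier expectation onto $\wh{B}_{\vartheta^{-1}(\P)}$. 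It therefore suffices to show the two maps have the same kernel on the cores, and this is exactly where the observation that each $\ol{B}_{\vartheta^{-1}(h)}$ is a matrix algebra enters: by (A2) the spaces $\ol{\psi}_{p,q}(\K(X_q,X_p))$ with $\vartheta(p)=\vartheta(q)=h$ multiply like matrix units over the diagonal blocks $\ol{\psi}_p(\K X_p)$, faithfulness of $\Phi\ol{\pi}$ forces injectivity on each block, and an induction over finite $\vee$-closed $\F\subseteq\P$ using minimal elements (compressing to $\de_{h_0}$, respectively to the corresponding summand of $\F X$) identifies the kernels. None of this core analysis appears in your proposal, so the argument is incomplete at its central point.
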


\begin{proof}
The proof follows the same lines with Proposition \ref{P:P coa B} with the observation that $\ol{B}_{\vartheta^{-1}(h)}$ for $h \in \G$ is a matrix algebra.
\end{proof}

As an immediate consequence we have the following corollary which extends Theorem \ref{T:extension} to the controlled setting.

%%%%%%%%%%%%%%%%%%%%%%%%%%%%%%%%
\begin{corollary} \label{C:ext thm}
Let $\vartheta \colon (G,P) \to (\G, \P)$ be a controlled map between weak right LCM-inclusions and let $X$ be a compactly aligned product system over $P$ with coefficients in $A$.
Let ${\Phi} \colon \T_\la(X) \to B$ be a $*$-representation and set
\[
\A := \ol{\alg}\{ \Phi \ol{\pi}(A), \Phi\ol{t}_p(X_p) \mid p \in P \}.
\]
Then the following are equivalent:
\begin{enumerate}
\item ${\Phi}|_{\T_\la(X)^+}$ is completely isometric.
\item There exists a completely contractive map 
\[
\A \longrightarrow B \otimes \ca(\G) ; \Phi \ol{t}_p(\xi_p) \mapsto \Phi \ol{t}_p(\xi_p) \otimes u_{\vartheta(p)}.
\]
\item There exists a completely contractive map 
\[
\A \longrightarrow B \otimes \ca_\la(\G) ; \Phi \ol{t}_p(\xi_p) \mapsto \Phi \ol{t}_p(\xi_p) \otimes \la_{\vartheta(p)}.
\]
\item There exists a completely contractive map 
\[
\A \longrightarrow B \otimes \ca_\la(\P) ; \Phi \ol{t}_p(\xi_p) \mapsto \Phi \ol{t}_p(\xi_p) \otimes V_{\vartheta(p)}.
\]
\end{enumerate}
\end{corollary}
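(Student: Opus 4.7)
My plan is to mirror the proof of Theorem~\ref{T:extension}, using Proposition~\ref{P:coa B 2} in place of Proposition~\ref{P:P coa B}. The crucial input is that Proposition~\ref{P:coa B 2} provides a faithful $*$-homomorphism $\Psi \colon \T_\la(X) \to B \otimes \ca_\la(\P)$ given by $\Psi(\ol{t}_p(\xi_p)) = \Phi\ol{t}_p(\xi_p) \otimes V_{\vartheta(p)}$, whose restriction to $\T_\la(X)^+$ is completely isometric. This will serve as the anchor against which the completely isometric property is propagated along a chain of completely contractive maps analogous to Figure~\ref{F:diag}.

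Concretely, I would assemble a chain of completely contractive homomorphisms
\[
\T_\la(X)^+ \longrightarrow \ol{\alg}\{\Phi\ol{t}_p(\xi_p) \otimes u_{\vartheta(p)}\} \longrightarrow \ol{\alg}\{\Phi\ol{t}_p(\xi_p) \otimes \la_{\vartheta(p)}\} \longrightarrow \ol{\alg}\{\Phi\ol{t}_p(\xi_p) \otimes V_{\vartheta(p)}\} \xrightarrow{\;\simeq\;} \T_\la(X)^+,
\]
where the first arrow comes from the $\G$-coaction of Proposition~\ref{P:f coaction} postcomposed with $\Phi \otimes \id$, the second from $\id \otimes \la$ for the canonical quotient $\la \colon \ca(\G) \to \ca_\la(\G)$, the third from the completely contractive compression $\ca_\la(\G) \to \ca_\la(\P)$ obtained by cutting down to $\ell^2(\P)$ (so that $\la_h \mapsto V_h$ for $h \in \P$), and the final isomorphism is $\Psi|_{\T_\la(X)^+}$. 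By construction the full composition equals $\Psi|_{\T_\la(X)^+}$, which is completely isometric.

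Both directions of the equivalence then fall out as in the proof of Theorem~\ref{T:extension}. For (i) $\Rightarrow$ (ii), (iii), (iv): if $\Phi|_{\T_\la(X)^+} \colon \T_\la(X)^+ \to \A$ is completely isometric, composing its completely contractive inverse with the three partial compositions of the chain produces the required completely contractive maps from $\A$ into $B \otimes \ca(\G)$, $B \otimes \ca_\la(\G)$ and $B \otimes \ca_\la(\P)$ respectively. Conversely, given any of (ii), (iii), (iv), composing the resulting completely contractive map $\A \to B \otimes (\cdot)$ with the remaining downward arrows of the chain yields a completely contractive map $\A \to \ol{\alg}\{\Phi\ol{t}_p(\xi_p) \otimes V_{\vartheta(p)}\}$; precomposing with $\Phi \colon \T_\la(X)^+ \to \A$ recovers $\Psi|_{\T_\la(X)^+}$, which is completely isometric, forcing $\Phi|_{\T_\la(X)^+}$ to be completely isometric as well. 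The only subtle verification is that the compression $\ca_\la(\G) \to \ca_\la(\P)$, $\la_h \mapsto V_h$ for $h \in \P$, is well defined and completely contractive; this follows from $P_{\ell^2(\P)} \la_h P_{\ell^2(\P)} = V_h$ for $h \in \P$, with care needed since $\P$ is only a subsemigroup, not a subgroup, of $\G$.
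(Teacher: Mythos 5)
Your proposal is correct and follows essentially the same route as the paper: the paper's proof of Corollary \ref{C:ext thm} is precisely the argument of Theorem \ref{T:extension} with the chain of Figure \ref{F:diag} re-labelled through $\vartheta$, anchored by Proposition \ref{P:f coaction} at the top and the complete isometry from Proposition \ref{P:coa B 2} at the bottom. You even spell out the one point the paper leaves implicit, namely that the compression $\ca_\la(\G) \to \ca_\la(\P)$ is multiplicative on the relevant nonselfadjoint subalgebra because $\ell^2(\P)$ is invariant under $\la_h$ for $h \in \P$.
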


\begin{proof}
The proof follows as in Theorem \ref{T:extension}, modulo Proposition \ref{P:f coaction} and Proposition \ref{P:coa B 2}.
\end{proof}

%%%%%%%%%%%%%%%%%%%%%%%%%%%%%%%%
\subsection{Controlled elimination} \label{Ss:ce}
%%%%%%%%%%%%%%%%%%%%%%%%%%%%%%%%

We will require the following lemma for solving polynomial equations in the $\vartheta$-fixed point algebra.

%%%%%%%%%%%%%%%%%%%%%%%%%%%%%%%%
\begin{lemma}\label{L:off diagonal}
Let $\vartheta \colon (G,P) \to (\G, \P)$ be a controlled map between weak right LCM-inclusions and let $X$ be a compactly aligned product system over $P$ with coefficients in $A$. 
Let $(\pi,t)$ be an injective Nica-covariant representation of $X$.

\noindent
(i) Let $p, q$ be distinct in $\vartheta^{-1}(h)$.
For $r,s \in \vartheta^{-1}(h)$ with $(r, s) \neq (p, q)$ we get
\[
t_p(X_p)^* \psi_{r,s}(k_{r,s}) t_q(X_q) = (0) \foral k_{r,s} \in \K(X_s, X_r).
\]
(ii) Let $\F \subseteq \P$ be $\vee$-closed and $F \subseteq \vartheta^{-1}(\F)$ be finite and $\vee$-closed.
Let $(r, s) \in F \times F$ with $\vartheta(r) = \vartheta(s)$ and $k_{r,s} \in \K(X_s, X_r)$ such that
\[
\sum_{r,s \in F, \vartheta(r) = \vartheta(s)} \psi_{r,s}(k_{r,s}) = 0,
\]
and suppose that $h \neq e_\G$ is minimal in $\F$ so that $k_{p,q} \neq 0$ for distinct $p, q \in \vartheta^{-1}(h)$. 
Then there exists a $\vee$-closed $\F' \subseteq \P$ and a finite $\vee$-closed $F' \subseteq \vartheta^{-1}(\F')$ with $e_\G \notin \F'$ and $|\F'| \leq |\F| - 1$ such that
\[
t_p(X_p)^* \psi_{p,q}(k_{p,q}) t_q(X_q) \subseteq B_{F'}.
\]
\end{lemma}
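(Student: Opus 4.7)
Both parts rely on the Wick expansion $\psi_{r,s}(k_{r,s})=\lim_i\sum t_r(\eta_i)t_s(\zeta_i)^*$ obtained by approximating $k_{r,s}$ by finite-rank kernels, on Nica covariance, and on the axioms (A1)--(A2) of the controlled map.

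\textbf{For (i).} I would expand
\[
t_p(\xi_p)^*\psi_{r,s}(k_{r,s})t_q(\xi_q)=\lim_i (t_p(\xi_p)^*t_r(\eta_i))(t_s(\zeta_i)^*t_q(\xi_q)).
\]
Since $\vartheta(p)=\vartheta(r)=h$, axiom (A2) forces either $p=r$ or $pP\cap rP=\mt$; Nica covariance renders $t_p(\xi_p)^*t_r(\eta_i)=0$ in the latter case. The symmetric dichotomy applies to $t_s(\zeta_i)^*t_q(\xi_q)$. The hypothesis $(r,s)\neq(p,q)$ guarantees that at least one of the two factors is zero for every $i$, so the product vanishes.

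\textbf{For (ii).} My plan is to sandwich the given equation between $t_p(X_p)^*$ on the left and $t_q(X_q)$ on the right. By part (i), every summand with $\vartheta(r)=\vartheta(s)=h$ other than $(r,s)=(p,q)$ vanishes---this includes the diagonal $(r,r)$ terms at level $h$, since $(r,r)\neq(p,q)$ when $p\neq q$. Thus
\[
t_p(X_p)^*\psi_{p,q}(k_{p,q})t_q(X_q)=-\!\!\!\sum_{\substack{(r,s)\in F\times F\\ \vartheta(r)=\vartheta(s)=h'\neq h}}\!\!\! t_p(X_p)^*\psi_{r,s}(k_{r,s})t_q(X_q),
\]
and I would examine each surviving summand in three cases. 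Case (a): if $h\P\cap h'\P=\mt$, then the contrapositive of (A1) forces $pP\cap rP=\mt$, and Nica covariance gives $t_p(X_p)^*t_r(X_r)=0$, so the summand vanishes. Case (b): if $h'\leq h$, so $h\vee h'=h$, the minimality of $h$ annihilates $k_{r,s}$ whenever $r\neq s$. For the surviving diagonal terms $r=s$, I would compute $w_1P=pP\cap rP$ and $w_2P=rP\cap qP$; axiom (A1) pins $\vartheta(w_i)\in h\P^*$, and Wick ordering reduces the summand (modulo a sandwich by $\pi$-valued factors) to $t_{r^{-1}p}(X_{r^{-1}p})^*\,t_{r^{-1}q}(X_{r^{-1}q})$. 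Since $\vartheta(r^{-1}p)=\vartheta(r^{-1}q)=h'^{-1}h$ but $r^{-1}p\neq r^{-1}q$ (as $p\neq q$), axiom (A2) forces $(r^{-1}p)P\cap(r^{-1}q)P=\mt$ and Nica covariance zeroes the expression. Case (c): if $h\vee h'$ exists in $\P$ and $h'\not\leq h$, set $m:=h\vee h'$ and write $w_1=p\al_1=r\be_1$ and $w_2=s\al_2=q\be_2$ with $\vartheta(w_i)\in m\P^*$. Nica covariance places the summand inside
\[
[t_{\al_1}(X_{\al_1})t_{\be_1}(X_{\be_1})^*t_{\al_2}(X_{\al_2})t_{\be_2}(X_{\be_2})^*],
\]
and a final Wick step on the middle factor $t_{\be_1}^*t_{\al_2}$ (to which (A2) applies because $\vartheta(\be_1)=\vartheta(\al_2)=h'^{-1}m$ modulo $\P^*$) reduces the summand to operators with outer $\vartheta$-indices equal to $h^{-1}m\neq e_\G$.

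I would then set
\[
\F':=\{h^{-1}(h\vee h')\mid h'\in\F\setminus\{h\},\,h\vee h'\text{ exists in }\P,\,h'\not\leq h\},
\]
and close $\F'$ under $\vee$ in $\P$. Then $e_\G\notin\F'$ by construction, $|\F'|\leq|\F|-1$, and $\F'$ is $\vee$-closed because $\F$ is: given $m_i=h^{-1}(h\vee h_i')$ in $\F'$ with $m_1\P\cap m_2\P\neq\mt$, the element $h^{-1}(h\vee h_1'\vee h_2')$ belongs to $\F'$ (since $h_1'\vee h_2'\in\F$ by $\vee$-closedness of $\F$) and realizes the required join. Letting $F'\subseteq\vartheta^{-1}(\F')$ be the finite $\vee$-closed set generated by the indices $\al_1\ga_1,\be_2\ga_2$ that arise in case (c) (where $\ga_1,\ga_2$ are read off from $uP=\be_1P\cap\al_2P$), the three-case analysis yields $t_p(X_p)^*\psi_{p,q}(k_{p,q})t_q(X_q)\subseteq B_{F'}$.

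The main obstacle lies in case (b), where both the minimality hypothesis on $h$ and two successive applications of (A2) must be orchestrated (first to pin down the LCMs $w_1,w_2$, and then to cancel the residual $t_{r^{-1}p}^*t_{r^{-1}q}$ using $p\neq q$). Case (c) is conceptually easier but requires careful bookkeeping of $\vartheta$-images to ensure that the finite set $F'\subseteq P$ one arrives at is genuinely $\vee$-closed and contained in $\vartheta^{-1}(\F')$.
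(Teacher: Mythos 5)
Your overall strategy is the same as the paper's: prove (i) from (A2) plus the Wick ordering, then in (ii) sandwich the relation, use (i) to isolate the $(p,q)$ term at level $h$, kill the contributions that would be $e_\G$-graded by the minimality of $h$, and collect everything else into $B_{F'}$ with $\F'$ built out of the joins $h\vee h'$ (the paper takes $\F'=\{h^{-1}g\mid g\in\F,\ g>h\}$, which is automatically $\vee$-closed because $\F$ is and contains your elements $h^{-1}(h\vee h')$ since $h\vee h'\in\F$; your version, closing $\{h^{-1}(h\vee h')\}$ under joins, amounts to the same thing). Part (i) and cases (a), (c), and the off-diagonal part of (b) are fine.

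The gap is in your treatment of the diagonal terms $\psi_{r,r}(k_{r,r})$ in case (b). Writing $pP\cap rP=w_1P$ with $w_1=p\al_1=r\be_1$, condition (A1) only gives $\vartheta(\al_1)\in\P^*$; for a general controlled map this does \emph{not} imply $\al_1\in P^*$ (that lifting of units is precisely axiom (A3) of saturation, which is not assumed in this lemma). Hence $r^{-1}p=\be_1\al_1^{-1}$ need not lie in $P$, and your reduction of the summand to $t_{r^{-1}p}(X_{r^{-1}p})^*t_{r^{-1}q}(X_{r^{-1}q})$ is not justified; when $\al_1\notin P^*$ the sandwiched term is \emph{a priori} only an element of $\bigl[t_{\al_1}(X_{\al_1})t_{\be_1}(X_{\be_1})^*t_{\be_2}(X_{\be_2})t_{\al_2}(X_{\al_2})^*\bigr]$, whose $\vartheta$-degree lies in $\P^*$ but need not be $e_\G$, so it is neither obviously zero nor obviously absorbed into your $B_{F'}$. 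The paper avoids this entirely with a one-line argument that disposes of \emph{all} diagonal terms at once (your case (c) diagonal terms included, which you instead let survive into $B_{F'}$): if $t_p(\xi_p)^*\psi_{r,r}(k)t_q(\xi_q)\neq 0$ then Nica covariance requires $\be_1P\cap\be_2P\neq\mt$, whence $w_1P\cap w_2P\neq\mt$ and therefore $pP\cap qP\neq\mt$, contradicting (A2) since $p\neq q$ and $\vartheta(p)=\vartheta(q)$. Substituting that observation for your case-(b)/(c) diagonal analysis repairs the argument and brings it in line with the paper's proof.
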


\begin{proof}
(i) First we note that condition (A2) of Definition \ref{D:control} yields $p P \cap q P = \mt$.
By Nica-covariance we have that $t_p(X_p)^* \psi_{r,s}(k_{r,s}) t_q(X_q) = (0)$ unless:
\begin{equation}\label{eq:unions}
\exists w, z, v \in P \, \textup{ such that } \, p P \cap r P = w P, q P \cap s P = z P \text{ and } r^{-1} w P \cap s^{-1} z P = v P.
\end{equation}
If $(r,s) \neq (p,q)$ and $\vartheta(r) = h = \vartheta(s)$, then condition (A2) of Definition \ref{D:control} implies that $p P \cap r P = \mt$ or $q P \cap s P = \mt$ in which case
\[
t_p(X_p)^* \psi_{r,s}(k_{r,s}) t_q(X_q) = (0).
\]

\noindent
(ii) Minimality of $h$ in $\F$ forces minimality of $p, q$ in $F$.
If (\ref{eq:unions}) holds, then Nica-covariance yields
\[
t_p(\xi_p)^* \psi_{r,s}(k_{r,s}) t_q(\xi_q) \in \psi_{p^{-1} r v, q^{-1} s v}(\K(X_{q^{-1} s v}, X_{p^{-1} r v})),
\]
otherwise the product is zero.
If $r=s$ and $v$ exists then there are $p', q', x, x', y, y' \in P$ such that
\[
pp' = rx, qq' = ry \text{ and } xx' = yy'.
\]
But then 
\[
pp'x' = rxx' = ryy' = qq'y,
\]
giving the contradiction that $p P \cap q P \neq \mt$.
Hence in this case the product is zero.
We will show that the product is zero also when $\vartheta(p^{-1} r v) = e_\G = \vartheta(q^{-1} s v)$ for $r \neq s$ unless $(r,s) = (p,q)$.
If $\vartheta(p^{-1} r v) = e_\G$ then condition (A2) of Definition \ref{D:control} yields $p \in rP$.
Likewise $q \in sP$.
Minimality of $p, q$ in $F$ forces that either $(p,q) = (r,s)$ or that $k_{r,s} = 0$.
Set 
\[
\F' := \{ h^{-1} g \mid g \in \F, g > h\}
\qand
F' := \{u^{-1} v \mid u,v \in F, \vartheta(u) = h, u > v\} \subseteq \vartheta^{-1}(\F').
\]
We see that $\F'$ is $\vee$-closed with $|\F'| \leq |\F| - 1$ and so  $F'$ is $\vee$-closed with 
\[
|F'| \leq |F \setminus \{p,q\}| = |F| - 2.
\]
Moreover we see that $p^{-1} rv, q^{-1} s v \in F'$ whenever $v$ exists.
Hence for every $\xi_p \in X_p$ and $\xi_q \in X_q$ there are suitable $k'_{r',s'}$ with non-trivial $r', s' \in F'$ so that
\begin{align*}
0 
& = 
\sum_{r,s} t_p(\xi_p)^* \psi_{r,s}(k_{r,s})  t_q(\xi_q)
 = 
t_p(\xi_p)^* \psi_{p,q}(k_{p,q}) t_q(\xi_q) + \sum_{r',s'} \psi_{r',s'}(k_{r',s'}'),
\end{align*}
and the proof is complete.
\end{proof}

In the next proposition we show that we can eliminate elements of the form $\psi_{r,s}(k_{r,s})$ for $r \neq s$ with $\vartheta(r) = \vartheta(s)$, from a polynomial equation in the $\vartheta$-fixed point algebra.
Such arguments for the left-regular representation appear in \cite[Proposition 2.10]{Din91} and \cite[Lemma 4.1]{LR96} for semigroups over quasi-lattices, i.e., when $X_p = \bC$ for every $p \in P$ and $(G,P)$ is a quasi-lattice.
Here we need to move in three directions: (a) beyond one-dimensional fibers, (b) beyond quasi-lattices, and (c) beyond just the left regular representation.
A step towards this direction is done in \cite{Kak19} for quasi-lattices that are controlled by $(\bZ^n, \bZ^n_+)$, and here we expand further on this approach.

%%%%%%%%%%%%%%%%%%%%%%%%%%%%%%%%
\begin{proposition}\label{P:injective}
Let $\vartheta \colon (G,P) \to (\G, \P)$ be a controlled map between weak right LCM-inclusions and let $X$ be a compactly aligned product system over $P$ with coefficients in $A$. 
Let $(\pi,t)$ and $(\pi',t')$ be injective Nica-covariant representations such that there exists a canonical $*$-epimorphism
\[
\Phi \colon \ca(\pi', t') \longrightarrow \ca(\pi,t) 
\text{ with }
\Phi(\pi'(a)) = \pi(a), \Phi(t_p'(\xi_p)) = t_p(\xi_p).
\]
Then $\Phi$ is injective on $B_{P}'$ if and only if it is injective on $B_{\vartheta^{-1}(\P)}'$.
\end{proposition}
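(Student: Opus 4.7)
One inclusion is trivial: since $B'_P \subseteq B'_{\vartheta^{-1}(\P)}$, injectivity of $\Phi$ on the larger algebra forces it on the smaller. For the converse, my plan is to reduce, by density, to showing that $\Phi$ is injective on each matrix-type C*-subalgebra
\[
M'_F := \spn\{\psi'_{r,s}(k_{r,s}) : r,s \in F,\ \vartheta(r) = \vartheta(s)\}
\]
for $F \subseteq P$ finite $\vee$-closed and contained in $\vartheta^{-1}(\F)$ for some $\vee$-closed $\F \subseteq \P$; injectivity on each such $M'_F$ promotes to an isometric embedding on the directed union, and hence on its closure $B'_{\vartheta^{-1}(\P)}$.

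Next I would fix $f = \sum_{r,s \in F,\ \vartheta(r) = \vartheta(s)} \psi'_{r,s}(k_{r,s}) \in M'_F$ with $\Phi(f) = 0$ and decompose it as $f = f_d + f_o$, where $f_d := \sum_{r \in F} \psi'_r(k_{r,r}) \in B'_P$ is the diagonal part and $f_o$ the off-diagonal one. If $f_o = 0$, then the injectivity of $\Phi$ on $B'_P$ immediately gives $f = f_d = 0$; so the heart of the proof is to show $f_o = 0$. Suppose for contradiction that $f_o \neq 0$. Since $\vartheta^{-1}(e_\G) \cap P = \{e_G\}$, there are no off-diagonal terms at level $e_\G$, so I would pick $h \in \vartheta(F) \setminus \{e_\G\}$ minimal with the property that $k_{p,q} \neq 0$ for some distinct $p, q \in F$ with $\vartheta(p) = \vartheta(q) = h$.

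The contradiction will come from two complementary descriptions of the compression $t_p(X_p)^* \psi_{p,q}(k_{p,q}) t_q(X_q)$ in $\ca(\pi,t)$. On one hand, applying Lemma \ref{L:off diagonal}(ii) to $(\pi, t)$ with the equation $\Phi(f) = \sum_{r,s} \psi_{r,s}(k_{r,s}) = 0$ produces a finite $\vee$-closed $F' \subseteq \vartheta^{-1}(\F')$ with $e_\G \notin \F'$ (so $F' \subseteq P \setminus \{e_G\}$) for which
\[
t_p(X_p)^* \psi_{p,q}(k_{p,q}) t_q(X_q) \subseteq B_{F'} \subseteq B_{P \setminus \{e\}}.
\]
On the other hand, a direct rank-one computation together with continuity yields
\[
t_p(\xi_p)^* \psi_{p,q}(k_{p,q}) t_q(\xi_q) = \pi(\sca{\xi_p, k_{p,q}(\xi_q)}) \in \pi(A) \qfor \xi_p \in X_p,\ \xi_q \in X_q.
\]
Combining these, every $\pi(\sca{\xi_p, k_{p,q}(\xi_q)})$ lies in $\pi(A) \cap B_{P \setminus \{e\}}$.

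I would then close the argument by invoking the fact that $\pi(A) \cap B_{P \setminus \{e\}} = \{0\}$ for any injective Nica-covariant representation; combined with injectivity of $\pi$ and non-degeneracy of the $A$-valued inner product on $X_p$, this forces $k_{p,q} = 0$, contradicting the choice of $p, q$. The main obstacle I anticipate lies precisely in this last direct-sum-type property of the cores, which is not explicitly recorded in the excerpt; I would verify it either via the structure of $B_F$ for finite $\vee$-closed $F$ developed around \cite[Proposition 2.11]{DKKLL20}, or by absorbing $\ca(\pi, t)$ into a Fock-type tensor representation through Proposition \ref{P:P coa B} and compressing to the $X_e$-summand, where the separation of $\pi(A)$ from $B_{P \setminus \{e\}}$ is transparent.
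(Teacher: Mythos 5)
Your reduction to finite $\vee$-closed sets, the choice of a minimal level $h$, and the appeal to Lemma \ref{L:off diagonal}(ii) all match the paper's strategy, and the identity $t_p(\xi_p)^*\psi_{p,q}(k_{p,q})t_q(\xi_q)=\pi(\sca{\xi_p,k_{p,q}\xi_q})$ is correct. But the step you yourself flag as the main obstacle is where the argument genuinely breaks: the claim that $\pi(A)\cap B_{P\setminus\{e\}}=\{0\}$ for \emph{any} injective Nica-covariant representation is false. Take $P=\bZ_+$, $A=\bC$, $X_1=\bC^n$ and $(\pi,t)$ the representation generating $\O_n$; then $\pi(1)=\sum_i t_1(e_i)t_1(e_i)^*\in B_{P\setminus\{e\}}$. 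More to the point, Proposition \ref{P:injective} is applied precisely to quotients such as $q_{\scv}(\T_\la(X))$ and $A\times_{X,\la}P$, where Cuntz-type relations $\pi(a)=\psi_p(\vphi_p(a))$ hold and the cores collapse onto one another, so for the intended targets your intersection is typically all of $\pi(A)$. Neither proposed repair works: Proposition \ref{P:P coa B} produces a faithful copy of $\T_\la(X)$ inside $B\otimes\ca_\la(P)$, but it gives no $*$-homomorphism \emph{from} $\ca(\pi,t)$ into a Fock-type representation (both are quotients of $\N\T(X)$ with no map between them in general); and if such an embedding existed, the cores of $\ca(\pi,t)$ would be forced to be linearly independent and the proposition would be nearly vacuous. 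A related symptom of the same issue is that the coefficients $k_{r,s}$ are not determined by $f$, so ``$f_o=0$'' is not even well posed without first normalizing the representing sum.

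The paper's proof avoids all of this by never trying to conclude anything inside $\ca(\pi,t)$ alone. It runs an induction on $|\F|$: after normalizing so that each nonzero $\psi'_{r,s}(k_{r,s})$ does not already lie in $B'_{\vartheta^{-1}(\vartheta(r)\P)}$, it uses Lemma \ref{L:off diagonal}(ii) to place $t_p(X_p)^*\psi_{p,q}(k_{p,q})t_q(X_q)$ in a core indexed by a strictly smaller $\F'$ with $e_\G\notin\F'$, then uses the \emph{inductive hypothesis} --- injectivity of $\Phi$ on $B'_{\vartheta^{-1}(\F')}$ --- to pull that relation back to $\ca(\pi',t')$, and finally multiplies by approximate units of $\psi'_p(\K X_p)$ and $\psi'_q(\K X_q)$ to obtain $\psi'_{p,q}(k_{p,q})\in B'_{\vartheta^{-1}(h\P)}$, contradicting the normalization. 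To salvage your outline you must replace the appeal to $\pi(A)\cap B_{P\setminus\{e\}}=\{0\}$ by this transfer-and-induct step; the rest of your skeleton then survives.
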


\begin{proof}
As $B'_{P} \subseteq B'_{\vartheta^{-1}(\P)}$ we need to show just one direction.
To this end suppose that $\Phi$ is injective on the C*-subalgebras of the form
\[
B_F' = \ol{\spn}\{ \psi_p'(\K X_p) \mid p \in F\},
\]
for every finite $\vee$-closed $F \subseteq P$.
We will show that $\Phi$ is injective on every
\[
B_{\vartheta^{-1}(\F)}' = \ol{\spn}\{ \psi_{r,s}'(k_{r,s}) \mid \vartheta(r) = \vartheta(s) \in \F\},
\]
for all $\vee$-closed $\F \subseteq \P$.
Our strategy is to show the implication
\[
\sum_{r,s \in F, \vartheta(r) = \vartheta(s) \in \F} \psi_{r,s}'(k_{r,s}) \in \ker \Phi
\, \Longrightarrow \,
k_{r,s} = 0 \text{ whenever } r \neq s,
\]
for every finite $\vee$-closed $F \subseteq \vartheta^{-1}(\F)$.
Then injectivity of $\Phi$ in the smaller cores yields
\[
\sum_{r \in F} \psi_{r}(k_{r})
=
\sum_{r,s \in F, \vartheta(r) = \vartheta(s) \in \F} \psi_{r,s}(k_{r,s})
=
0
\, \Longrightarrow \,
\sum_{r \in F} \psi_{r}'(k_{r})
=
0,
\]
and so
\[
\sum_{r,s \in F, \vartheta(r) = \vartheta(s) \in \F} \psi_{r,s}'(k_{r,s})
=
\sum_{r \in F} \psi_{r}'(k_{r})
=
0.
\]
Since $F$ is arbitrary this proves injectivity of $\Phi$ on $B_{\vartheta^{-1}(\F)}'$.
We proceed by induction on the size of $\F$.

\smallskip

\noindent
{\bf Case 1.} Assume that $\F = \{h\}$ and let $F$ be a finite $\vee$-closed subset of $\vartheta^{-1}(\F)$.
Suppose that
\[
\sum_{r,s \in F, \vartheta(r) = \vartheta(s) = h} \psi_{r,s}(k_{r,s}) = 0,
\]
and fix $p,q \in \vartheta^{-1}(h)$.
Then condition (A2) of Definition \ref{D:control} implies that
\[
\psi_p(\K X_p) \psi_{p,q}(k_{p,q}) \psi_q(\K X_q)
=
\psi_p(\K X_p) \left( \sum_{r,s \in F, \vartheta(r) = \vartheta(s) = h} \psi_{r,s}(k_{r,s}) \right) \psi_q(\K X_q)
=
(0).
\]
Using an approximate identity on both sides gives that $\psi_{p,q}(k_{p,q}) = 0$ and injectivity of $\psi$ implies that $k_{p,q} = 0$.
As $(p,q)$ was arbitrary we have that $k_{r,s} = 0$ for all $r,s \in \vartheta^{-1}(h)$ and so
\[
\sum_{r,s \in F, \vartheta(r) = \vartheta(s) = h} \psi_{r,s}'(k_{r,s}) = 0.
\]
Hence $\Phi$ is injective on $B'_{\vartheta^{-1}(\F)}$ whenever $|\F| = 1$.

\smallskip

\noindent
{\bf Case 2.} Assume that $\F = \{ e_\G, h\}$ and let $F$ be a finite $\vee$-closed subset of $\vartheta^{-1}(\F)$.
Suppose that
\[
\sum_{r,s \in F, \vartheta(r) = \vartheta(s)} \psi_{r,s}(k_{r,s}) = 0.
\]
By condition (A1) of Definition \ref{D:control} we have that if $p \neq q$ with $\vartheta(p) = \vartheta(q) \in \F$ then $p, q \in \vartheta^{-1}(h)$.
As before and by using item (i) of Lemma \ref{L:off diagonal} on $p,q$ we get that
\[
\psi_p(\K X_p) \psi_{p,q}(k_{p,q}) \psi_q(\K X_q)
=
\psi_p(\K X_p) \left(\sum_{r,s \in F, \vartheta(r) = \vartheta(s)} \psi_{r,s}(k_{r,s}) \right) \psi_q(\K X_q)
=
(0).
\]
Using an approximate identity eventually gives that $k_{p,q} = 0$ whenever $p \neq q$.
Hence $k_{r,s} = 0$ whenever $r \neq s$ in $F$ and injectivity of $\Phi$ on $B'_F$ gives that 
\[
\sum_{r,s \in F, \vartheta(r) = \vartheta(s)} \psi_{r,s}'(k_{r,s}) = 0.
\]
Hence $\Phi$ is injective on $B'_{\vartheta^{-1}(\F)}$ whenever $\F = \{e,h\}$.

\smallskip

\noindent
{\bf Case 3.} Assume that $\F = \{h_1, h_2\}$ and let $F$ be a finite $\vee$-closed subset of $\vartheta^{-1}(\F)$.
Suppose that
\[
\sum_{r,s \in F, \vartheta(r) = \vartheta(s) \in \{h_1, h_2\}} \psi_{r,s}'(k_{r,s}) \in \ker \Phi.
\]
Without loss of generality assume that it is written with the understanding that for every $\psi'_{r,s}(k_{r,s})$ we have that either $\psi'_{r,s}(k_{r,s}) = 0$ or that 
\[
\psi'_{r,s}(k_{r,s}) \notin B'_{\vartheta^{-1}(\vartheta(r) \P)}.
\]
Choose $h \in \F$ be minimal such that $\psi'_{p,q}(k_{p,q}) \neq 0$ for distinct $p,q \in \vartheta^{-1}(h)$.
Hence $k_{p,q} \neq 0$ and so 
\[
0 \neq \psi'_{p,q}(k_{p,q}) \notin B'_{\vartheta^{-1}(h \P)}.
\]
By using Lemma \ref{L:off diagonal} item (ii) we have that
\[
t_p(X_p)^* \psi_{p,q}(k_{p,q}) t_q(X_q) \subseteq B_{\vartheta^{-1}(\F')}
\qfor
|\F'| \leq 1,
\]
with $e_\G \notin \F'$.
By using injectivity of Case 2 we then derive that
\[
\psi'_p(\K X_p) \psi_{p,q}'(k_{p,q}) \psi'_q(\K X_q) \subseteq B'_{\vartheta^{-1}(h \P)}.
\]
By using approximate identities on both sides we get the contradiction
\[
\psi'_{p,q}(k_{p,q}) \in B'_{\vartheta^{-1}(h \P)}.
\]
Hence $\Phi$ is injective on $B'_{\vartheta^{-1}(\F)}$ whenever $|\F| \leq 2$.

\smallskip

\noindent 
{\bf Case 4.} Let $\F \subseteq \P$ be $\vee$-closed with $|\F| = n+1$ and assume that $\Phi$ is injective on $B'_{\vartheta^{-1}(\F')}$ for all $\F' \subseteq \P$ with $|\F'| \leq n$.
We will show that it is injective on $B'_{\vartheta^{-1}(\F)}$.
To this end let $F$ be a finite $\vee$-closed subset of $\vartheta^{-1}(\F)$ and suppose that
\[
\sum_{r,s \in F, \vartheta(r) = \vartheta(s) \in \F} \psi_{r,s}'(k_{r,s}) \in \ker \Phi,
\]
with the understanding that for every $\psi'_{r,s}(k_{r,s})$ we have that either $\psi'_{r,s}(k_{r,s}) = 0$ or that
\[
\psi'_{r,s}(k_{r,s}) \notin B'_{\vartheta^{-1}( \vartheta(r) \P)}.
\]
Choose $h \in \F$ be minimal such that $\psi'_{p,q}(k_{p,q}) \neq 0$ for distinct $p,q \in \vartheta^{-1}(h)$.
Hence $k_{p,q} \neq 0$ and so 
\[
0 \neq \psi'_{p,q}(k_{p,q}) \notin B'_{\vartheta^{-1}(h \P)}.
\]
By using Lemma \ref{L:off diagonal} item (ii) we then have that
\[
t_p(X_p)^* \psi_{p,q}(k_{p,q}) t_q(X_q) \subseteq B_{\vartheta^{-1}(\F')}
\qfor
|\F'| \leq |\F| - 1 = n.
\]
By using the induction hypothesis we then derive that
\[
\psi_p'(\K X_p) \psi_{p,q}'(k_{p,q}) \psi_q'(\K X_q) \subseteq B_{\vartheta^{-1}(\F')} \subseteq B'_{\vartheta^{-1}(h \P)}.
\]
By using approximate identities on both sides we have the contradiction
\[
\psi'_{p,q}(k_{p,q}) \in B'_{\vartheta^{-1}(h \P)}.
\]
This concludes the proof of the proposition.
\end{proof}

Combining with \cite[Theorem 3.10]{Seh18} we get the following corollary.

%%%%%%%%%%%%%%%%%%%%%%%%%%%%%%%%
\begin{corollary}\label{C:fpa con}
Let $\vartheta \colon (G,P) \to (\G, \P)$ be a controlled map between weak right LCM-inclusions and let $X$ be a compactly aligned product system over $P$ with coefficients in $A$. 
Then the following are equivalent for a strongly covariant representation $(\pi,t)$ of $A \times_{X} P$:
\begin{enumerate}
\item The $*$-representation $\pi$ is faithful on $A$.
\item The induced $*$-representation is faithful on the fixed point algebra $B_{P}$ of $A \times_{X} P$.
\item The induced $*$-representation is faithful on the $\vartheta$-fixed point algebra $B_{\vartheta^{-1}(\P)}$ of $A \times_{X} P$. 
\end{enumerate}
In particular this holds for the $*$-representations of $q_{\scv}(\T_\la(X))$ and $A \times_{X, \la} P$.
\end{corollary}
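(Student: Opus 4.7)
The plan is to reduce everything to two facts already in hand: Sehnem's fixed-point characterization \cite[Theorem 3.10]{Seh18} (which gives (i)$\Leftrightarrow$(ii)) and the controlled elimination Proposition \ref{P:injective} (which will supply (ii)$\Leftrightarrow$(iii)). The trivial half of the latter is free: since $B_P \subseteq B_{\vartheta^{-1}(\P)}$, the implication (iii)$\Rightarrow$(ii) is automatic, and (ii)$\Rightarrow$(i) is equally immediate from $\pi(A) \subseteq B_P$. So only (i)$\Rightarrow$(iii) requires real work.

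For that implication, I would set $(\pi', t')$ to be the canonical generating strongly covariant representation of $A\times_X P$. By Proposition \ref{P:sc lcm} this is Nica-covariant, and by Sehnem's theorem it is injective on $A$, hence injective as a Nica-covariant representation. Now a strongly covariant $(\pi, t)$ satisfying (i) integrates to a $*$-epimorphism $\Phi \colon A\times_X P \to \ca(\pi,t)$ which fixes the generators. Applying Sehnem's theorem again, condition (i) upgrades to $\Phi$ being injective on the fixed point algebra $B_P'$ of $A\times_X P$. Plugging this into Proposition \ref{P:injective} delivers injectivity of $\Phi$ on the larger algebra $B_{\vartheta^{-1}(\P)}'$, which is precisely (iii).

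For the final assertion about $q_{\scv}(\T_\la(X))$ and $A\times_{X,\la} P$, I would note that the canonical images of $X$ in both algebras are strongly covariant representations: in $q_{\scv}(\T_\la(X))$ this is by construction, as this is the quotient $\T_\la(X)/q(\I_\infty)$ imposing exactly the strong covariance relations, and in $A\times_{X,\la} P$ because the latter is a Fell-bundle quotient of $A\times_X P$. Hence every $*$-representation of either algebra pulls back to a strongly covariant representation of $X$, and the main equivalence can be applied verbatim.

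There is no real obstacle here, since the hard step — eliminating the off-diagonal matrix entries $\psi_{r,s}(k_{r,s})$ with $r\neq s$ and $\vartheta(r)=\vartheta(s)$ — was already carried out in Proposition \ref{P:injective}. The one subtlety worth noting is that Proposition \ref{P:injective} requires both representations involved to be injective on $A$, and this is exactly why we must first pass through the equivalence (i)$\Leftrightarrow$(ii) before invoking it: having (i) in hand ensures the comparison map $\Phi$ fits the hypotheses of the controlled elimination.
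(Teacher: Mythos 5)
Your proof is correct and follows essentially the same route as the paper, which obtains the corollary precisely by combining Sehnem's fixed-point theorem \cite[Theorem 3.10]{Seh18} (giving (i)$\Leftrightarrow$(ii)) with the controlled elimination of Proposition \ref{P:injective} (giving (ii)$\Leftrightarrow$(iii)), the remaining implications being trivial inclusions. Your handling of the final assertion via Corollary \ref{C:A in red} is also the intended one.
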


A second application of the controlled elimination allows to pass in-between the C*-envelopes induced by $G$ and $\G$.

%%%%%%%%%%%%%%%%%%%%%%%%%%%%%%%%
\begin{proposition}\label{P:same G}
Let $\vartheta \colon (G,P) \to (\G, \P)$ be a controlled map between weak right LCM-inclusions, and let $X$ be a compactly aligned product system over $P$ with coefficients in $A$. 
Let $\ol{\de}_\G$ be the induced coaction of $\G$ on $\T_\la(X)$ and $\T_\la(X)^+$.
Then $\cenv(\T_\la(X)^+, G, \ol{\de}_G)$ inherits a normal coaction of $\G$ and there exists a $\G$-equivariant $*$-isomorphism
\[
\cenv(\T_\la(X)^+, G, \ol{\de}_G) \simeq \cenv(\T_\la(X)^+, \G, \ol{\de}_\G)
\]
that fixes $\T_\la(X)^+$.
\end{proposition}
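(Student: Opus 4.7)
The plan is to produce a canonical $\G$-equivariant $*$-epimorphism from $\cenv(\T_\la(X)^+, G, \ol{\de}_G)$ onto $\cenv(\T_\la(X)^+, \G, \ol{\de}_\G)$ that fixes $\T_\la(X)^+$, and then show it is faithful by invoking the controlled elimination method of Subsection \ref{Ss:ce}. First I would use Theorem \ref{T:co-univ} to identify the $G$-envelope with $A \times_{X, \la} P$, equipped with its normal $G$-coaction. Applying Proposition \ref{P:coa ind} to the group homomorphism $\vartheta \colon G \to \G$ then endows this C*-envelope with a normal $\G$-coaction; by construction this coaction sends $\ol{t}_p(\xi_p)$ to $\ol{t}_p(\xi_p) \otimes u_{\vartheta(p)}$ and so restricts to $\ol{\de}_\G$ on $\T_\la(X)^+$. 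Consequently $\cenv(\T_\la(X)^+, G, \ol{\de}_G)$ becomes a C*-cover of the cosystem $(\T_\la(X)^+, \G, \ol{\de}_\G)$, and the co-universal property of the $\G$-envelope produces a $\G$-equivariant $*$-epimorphism
\[
\Phi \colon \cenv(\T_\la(X)^+, G, \ol{\de}_G) \longrightarrow \cenv(\T_\la(X)^+, \G, \ol{\de}_\G)
\]
that fixes $\T_\la(X)^+$.

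Next I would establish injectivity of $\Phi$. Since $\Phi$ fixes $\T_\la(X)^+$ it is in particular completely isometric on $A$, and so injective on the canonical copy of $A$ inside $A \times_{X, \la} P$. Regarding $\Phi$ as a strongly covariant representation of $A \times_{X} P$ that factors through $A \times_{X, \la} P$, I would appeal to Corollary \ref{C:fpa con}: the controlled elimination method of Proposition \ref{P:injective} upgrades injectivity on $A$ to injectivity on the $\vartheta$-fixed point algebra $B_{\vartheta^{-1}(\P)}$. To finish, I would use that the induced $\G$-coaction on the $G$-envelope is normal and hence provides a faithful conditional expectation $E_\G$ onto $B_{\vartheta^{-1}(\P)}$ that $\Phi$ intertwines with the corresponding faithful conditional expectation on the $\G$-envelope. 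A standard positivity argument of the form $\Phi(x) = 0 \Rightarrow \Phi(x^* x) = 0 \Rightarrow E_\G(x^* x) \in \ker\Phi|_{B_{\vartheta^{-1}(\P)}} = (0) \Rightarrow x = 0$ then yields faithfulness of $\Phi$.

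The main obstacle is precisely the upgrade from faithfulness on the $G$-fixed point algebra $B_P$, which Theorem \ref{T:fpa 1-1} delivers for free, to faithfulness on the strictly larger $\vartheta$-fixed point algebra $B_{\vartheta^{-1}(\P)}$. It is exactly this step that forces the use of the hypotheses (A1)--(A2) defining a controlled map, since off-diagonal elements $\psi_{p,q}(k_{p,q})$ with $\vartheta(p) = \vartheta(q)$ but $p \neq q$ must be eliminated, and this is precisely what Proposition \ref{P:injective} accomplishes; combining that with the normality of the induced $\G$-coaction closes the argument.
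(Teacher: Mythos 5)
Your proposal is correct and follows essentially the same route as the paper: establish the normal $\G$-coaction on $\cenv(\T_\la(X)^+, G, \ol{\de}_G)$ via Proposition \ref{P:coa ind}, obtain the canonical $\G$-equivariant $*$-epimorphism $\Phi$ from co-universality of the $\G$-envelope, and then upgrade faithfulness from the $G$-fixed point algebra to the $\vartheta$-fixed point algebra by the controlled elimination of Proposition \ref{P:injective}, closing with the intertwined faithful conditional expectations. The only cosmetic difference is that you reach faithfulness on the fixed point algebras through Corollary \ref{C:fpa con} (hence via Sehnem's theorem), whereas the paper invokes Theorem \ref{T:fpa 1-1} directly; both are valid and rest on the same key lemma.
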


\begin{proof}
By Theorem \ref{T:fpa 1-1}, Proposition \ref{P:coa ind} and Proposition \ref{P:f coaction}, we get that $\cenv(\T_\la(X)^+, G, \ol{\de}_G)$ admits a normal coaction of $\G$ and therefore there exists a $\G$-equivariant $*$-epimorphism
\[
\Phi \colon \cenv(\T_\la(X)^+, G, \ol{\de}_G) \longrightarrow \cenv(\T_\la(X)^+, \G, \ol{\de}_\G)
\]
that fixes $\T_\la(X)^+$.
By construction $\Phi$ is $\G$-equiva\-riant, and so it intertwines the faithful conditional expectations induced by $\G$.
On the other hand, by Theorem \ref{T:fpa 1-1} the map $\Phi$ is faithful on the $G$-fixed point algebra of $\cenv(\T_\la(X)^+, G, \ol{\de}_G)$.
By Proposition \ref{P:injective} the map $\Phi$ is faithful on the $\G$-fixed point algebra of $\cenv(\T_\la(X)^+, G, \ol{\de}_G)$.
Consequently $\Phi$ is injective.
\end{proof}

%%%%%%%%%%%%%%%%%%%%%%%%%%%%%%%%
\section{Applications}\label{S:app}
%%%%%%%%%%%%%%%%%%%%%%%%%%%%%%%%

%%%%%%%%%%%%%%%%%%%%%%%%%%%%%%%%
\subsection{Co-universality of Sehnem's covariance algebra}
%%%%%%%%%%%%%%%%%%%%%%%%%%%%%%%%

We will consider weak right LCM-inclusions that are controlled by exact groups.
In this case we get normality of the coaction of $G$ on $q_{\scv}(\T_\la(X))$, and thus the latter coincides with $A \times_{X, \la} P$, and by \cite[Theorem 5.3]{DKKLL20} with $\cenv(\T_\la(X)^+, G, \ol{\de}_G)$.
This provides another algebraic description of $\cenv(\T_\la(X)^+, G, \ol{\de}_G)$ by the strong covariance relations in the Fock space representation.

%%%%%%%%%%%%%%%%%%%%%%%%%%%%%%%%
\begin{theorem}\label{T:cenv red}
Let $\vartheta \colon (G,P) \to (\G, \P)$ be a controlled map between weak right LCM-inclusions and let $X$ be a compactly aligned product system over $P$ with coefficients in $A$. 
Consider the canonical $*$-epimorphisms
\[
q_{\scv}(\T_\la(X)) \longrightarrow A \times_{X, \la} P \simeq \cenv(\T_\la(X)^+, G, \ol{\de}_G) \longrightarrow \cenv(\T_\la(X)^+).
\]
If $\G$ is exact then the left map is faithful.
If in addition $\G$ is abelian then the right map is also faithful.
\end{theorem}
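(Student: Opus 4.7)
The approach is to pass from the $G$-grading to the coarser $\G$-grading, where the exactness/abelian hypotheses apply, and then to transfer faithfulness back using the controlled elimination of Proposition~\ref{P:injective}.

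For the first map, my first step would be to observe that $q(\I_\infty) \lhd \T_\la(X)$ is generated by elements of $[\T_\la(X)]_{e_G}$ (the strong covariance relations) and, since $\vartheta(e_G)=e_\G$, these lie in $[\T_\la(X)]_{e_\G}$, so $q(\I_\infty)$ is an induced ideal for the normal $\G$-coaction $\ol{\de}_\G$ furnished by Proposition~\ref{P:f coaction}. Since $\G$ is exact, Remark~\ref{R:Exel 2} then implies that $\ol{\de}_\G$ descends to a faithful normal $\G$-coaction on $q_{\scv}(\T_\la(X))$, yielding a faithful conditional expectation $E_\G$ onto the $\vartheta$-fixed point algebra $B_{\vartheta^{-1}(\P)}$. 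By Proposition~\ref{P:coa ind} the target $A \times_{X, \la} P$ also carries a (normal) $\G$-coaction, and the canonical $*$-epimorphism $\Phi$ is $G$-equivariant and hence $\G$-equivariant, so it intertwines the conditional expectations. Since $\Phi$ is faithful on $A$, Corollary~\ref{C:A in red} forces faithfulness on the $G$-core $B_P$, and Proposition~\ref{P:injective} upgrades this to faithfulness on $B_{\vartheta^{-1}(\P)}$. A standard Fell bundle argument closes the proof: if $\Phi(x^*x)=0$, then $\Phi(E_\G(x^*x))=E_\G(\Phi(x^*x))=0$, whence $E_\G(x^*x)=0$ by faithfulness of $\Phi$ on $B_{\vartheta^{-1}(\P)}$, and then $x=0$ by faithfulness of $E_\G$.

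For the second map, I would assume $\G$ abelian and combine two identifications. First, Proposition~\ref{P:same G} gives a $\G$-equivariant $*$-isomorphism $\cenv(\T_\la(X)^+, G, \ol{\de}_G) \simeq \cenv(\T_\la(X)^+, \G, \ol{\de}_\G)$ fixing $\T_\la(X)^+$. Second, Remark~\ref{R:cenv ab} shows that any coaction of an abelian group on an operator algebra automatically lifts to a coaction on its C*-envelope, so the canonical co-universal $*$-epimorphism
\[
\cenv(\T_\la(X)^+, \G, \ol{\de}_\G) \longrightarrow \cenv(\T_\la(X)^+)
\]
must in fact be an isomorphism. Composing yields $\cenv(\T_\la(X)^+, G, \ol{\de}_G) \simeq \cenv(\T_\la(X)^+)$, giving faithfulness of the right map.

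The principal subtlety is that exactness is hypothesized on $\G$ rather than on $G$, and normality of the descended $G$-coaction on $q_{\scv}(\T_\la(X))$ is not directly available (which would short-circuit the proof via Theorem~\ref{T:co-univ}). The workaround, and the main technical content, is the ability to propagate faithfulness of $\Phi$ from the $G$-fixed-point algebra $B_P$ to the larger $\vartheta$-fixed-point algebra $B_{\vartheta^{-1}(\P)}$; this is exactly what the controlled elimination of Proposition~\ref{P:injective} delivers, and it is the essential bridge between the two gradings.
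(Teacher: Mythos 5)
Your proof is correct and follows essentially the same route as the paper's: show that the ideal of strong covariance relations is $\G$-induced, use exactness of $\G$ to descend the normal coaction and obtain a faithful conditional expectation onto $B_{\vartheta^{-1}(\P)}$, transfer faithfulness of the canonical $*$-epimorphism from the $G$-core to the $\G$-core via the controlled elimination of Proposition \ref{P:injective} (packaged in the paper as Corollary \ref{C:fpa con}), and invoke Remark \ref{R:cenv ab} together with co-universality for the abelian case. Your direct observation that the generators of $q(\I_\infty)$ already lie in $[\T_\la(X)]_{e_\G}$ is a mild streamlining of the paper's verification that the ideal is $\G$-induced, but the substance of the argument is identical.
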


\begin{proof}
First we show that the ideal of the strong covariance relations is $\G$-induced.
Let $\I_{\la}$ be the image of the strong covariance relations in $\T_\la(X)$ so that $\T_\la(X)/\I_\la = q_{\scv}(\T_\la(X))$.
Let us denote by $\ol{B}_F$ the cores of the Fock representation $(\ol{\pi}, \ol{t})$ and let $q_{\I_\la} \colon \T_\la(X) \to q_{\scv}(\T_\la(X))$ be the canonical $*$-epimorphism.
Proposition \ref{P:injective} implies that
\begin{align*}
\I_\la \cap \ol{B}_{\vartheta^{-1}(\P)}
& =
\ol{\bigcup_{\text{finite, $\vee$-closed }\F \subseteq \P} \ker q_{\I_\la} \cap \ol{B}_{\vartheta^{-1}(\F)}} \\
& =
\ol{\bigcup_{\text{finite, $\vee$-closed }F \subseteq P} \ker q_{\I_\la} \cap \ol{B}_{F}}
=
\I_\la \cap \ol{B}_{P}.
\end{align*}
Therefore we get that
\[
\I_{\la} 
=
\sca{ \I_{\la} \cap \ol{B}_{P} }
= 
\sca{ \I_{\la} \cap \ol{B}_{\vartheta{^-1}(\P)} },
\]
showing that $\I_\la$ is indeed $\G$-induced.

Consequently, by exactness of $\G$ we derive that the normal coaction of $\G$ on $\T_\la(X)$ descends to a normal coaction on the quotient $q_{\scv}(\T_\la(X))$.
Thus by Proposition \ref{P:P cover} we have that $q_{\scv}(\T_\la(X))$ is a C*-cover for $(\T_\la(X)^+, \G, \ol{\de}_\G)$.
Therefore there exists a $\G$-equivariant $*$-epimorphism
\[
\Phi \colon q_{\scv}(\T_\la(X)) \longrightarrow \cenv(\T_\la(X)^+, \G, \ol{\de}_\G)
\]
that fixes $\T_\la(X)^+$.
The $*$-epimorphism $\Phi$ intertwines the coactions (and thus the faithful conditional expectations implemented by normality and exactness of $\G$), and it is faithful on the $G$-fixed point algebra $[q_{\scv}(\T_\la(X))]_e$ by Corollary \ref{C:A in red}.
Hence we derive that $\Phi$ is faithful by Corollary \ref{C:fpa con}.
By Proposition \ref{P:same G} we conclude that
\[
q_{\scv}(\T_\la(X)) \simeq \cenv(\T_\la(X)^+, \G, \ol{\de}_\G) \simeq \cenv(\T_\la(X)^+, G, \ol{\de}_G).
\]

Now if in addition $\G$ is abelian then $\cenv(\T_\la(X)^+)$ inherits the coaction of $\G$ by the dual gauge action $\wh{\G}$.
Due to co-universality we thus derive
\[
\cenv(\T_\la(X)^+, G, \ol{\de}_G) \simeq \cenv(\T_\la(X)^+, \G, \ol{\de}_\G) \simeq \cenv(\T_\la(X)^+)
\]
and the proof is complete.
\end{proof}

Combining with Proposition \ref{P:coa ind} and Proposition \ref{P:P cover} we get the following corollary.

%%%%%%%%%%%%%%%%%%%%%%%%%%%%%%%%
\begin{corollary}\label{C:coun}
Let $\vartheta \colon (G,P) \to (\G, \P)$ be a controlled map between weak right LCM-inclusions such that $\G$ is exact, and let $X$ be a compactly aligned product system over $P$ with coefficients in $A$. 
Then $q_{\scv}(\T_\la(X))$ is co-universal with respect to $G$-equivariant and to $\G$-equivariant quotients of $\T_\la(X)$ that are faithful on $A$.
\end{corollary}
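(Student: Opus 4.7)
The plan is to assemble the corollary from three already-established ingredients: the identification
\[
q_{\scv}(\T_\la(X)) \simeq \cenv(\T_\la(X)^+, G, \ol{\de}_G) \simeq \cenv(\T_\la(X)^+, \G, \ol{\de}_\G)
\]
obtained by chaining Theorem \ref{T:cenv red} with Proposition \ref{P:same G}; the universal property of the C*-envelope of a cosystem from Theorem \ref{T:fpa 1-1}; and, for the $\G$-equivariant case, the controlled analogue of the Extension Theorem, namely Corollary \ref{C:ext thm}. Once $q_{\scv}(\T_\la(X))$ is realized as the C*-envelope of the cosystem $(\T_\la(X)^+, G, \ol{\de}_G)$ and simultaneously as the C*-envelope of $(\T_\la(X)^+, \G, \ol{\de}_\G)$, co-universality in each grading will reduce to verifying that any candidate quotient is a C*-cover of the corresponding cosystem.

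For the $G$-equivariant case, let $\Phi \colon \T_\la(X) \to B$ be a $G$-equivariant $*$-epimorphism with $\Phi|_{\ol{\pi}(A)}$ faithful. I would first invoke Proposition \ref{P:P cover} to conclude that $B$ is a C*-cover for the cosystem $(\T_\la(X)^+, G, \ol{\de}_G)$. The definition of the C*-envelope for a cosystem then yields a $G$-equivariant $*$-epimorphism
\[
B \longrightarrow \cenv(\T_\la(X)^+, G, \ol{\de}_G) \simeq q_{\scv}(\T_\la(X))
\]
that fixes the tensor algebra, giving the desired co-universal property.

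For the $\G$-equivariant case, let $\Phi \colon \T_\la(X) \to B$ be a $\G$-equivariant $*$-epimorphism with $\Phi|_{\ol{\pi}(A)}$ faithful, and write $\A := \ol{\alg}\{\Phi\ol{\pi}(A), \Phi\ol{t}_p(X_p) \mid p \in P\}$. Restricting the coaction $\de_B \colon B \to B \otimes \ca(\G)$ to $\A$ yields a completely contractive map sending $\Phi\ol{t}_p(\xi_p)$ to $\Phi\ol{t}_p(\xi_p) \otimes u_{\vartheta(p)}$; this is precisely item (ii) of Corollary \ref{C:ext thm}, and item (i) of that corollary then forces $\Phi|_{\T_\la(X)^+}$ to be completely isometric. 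Consequently $B$ is a C*-cover for the cosystem $(\T_\la(X)^+, \G, \ol{\de}_\G)$, and the universal property of its C*-envelope produces a $\G$-equivariant $*$-epimorphism
\[
B \longrightarrow \cenv(\T_\la(X)^+, \G, \ol{\de}_\G) \simeq q_{\scv}(\T_\la(X))
\]
fixing the tensor algebra, as required.

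The substantive content is packed into the prior results: Theorem \ref{T:cenv red} supplies the normality of the $\G$-coaction on $q_{\scv}(\T_\la(X))$ (via exactness of $\G$) that underlies the identification with the C*-envelope, and Corollary \ref{C:ext thm} provides the mechanism that upgrades a $\G$-equivariant quotient to a complete isometry on the tensor algebra. The only mild subtlety to monitor is that the $*$-isomorphism of Proposition \ref{P:same G} is $\G$-equivariant and fixes $\T_\la(X)^+$, so that the two universal properties are compatible; once this is in hand, the argument is a direct assembly.
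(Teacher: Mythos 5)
Your proposal is correct and follows essentially the same route as the paper, which derives the corollary by combining Theorem \ref{T:cenv red} (and Proposition \ref{P:same G}) with the fact that any equivariant quotient of $\T_\la(X)$ faithful on $A$ is a C*-cover of the corresponding cosystem. The only cosmetic difference is that for the $\G$-equivariant case you invoke Corollary \ref{C:ext thm} to get the complete isometry on $\T_\la(X)^+$, whereas the paper cites Propositions \ref{P:coa ind} and \ref{P:P cover}; both rest on the same underlying mechanism (Proposition \ref{P:coa B 2}), so the arguments coincide in substance.
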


%%%%%%%%%%%%%%%%%%%%%%%%%%%%%%%%
\begin{remark}\label{R:Hao-Ng}
As an immediate consequence of Theorem \ref{T:cenv red} we get that the coaction of $G$ on $q_{\scv}(\T_\la(X))$ is normal.
Therefore one can use the results of \cite{DKKLL20} to derive that the reduced Hao-Ng Problem for discrete group actions on $A \times_{X, \la} P$ has a positive answer when $(G,P)$ is controlled by $(\G,\P)$ with $\G$ exact.
A similar method applies whenever the C*-envelope functor is stable under crossed products, e.g., for dynamics over abelian locally compact groups or when the tensor algebra is hyperrigid \cite{Kat20, KR16}, and we leave this to the interested reader.
\end{remark}

Next we consider \emph{amenably controlled} weak right LCM-inclusions, i.e., the range of the controlled map is inside an amenable group.
In this case the reduced C*-algebras become universal with respect to classes of representations.
%By universality $\T(X)$ admits a coaction of $\G$ which by \cite[Proposition A.1]{CLSV11} descends to $\N\T(X)$ and $A \times_{X} P$.
First we consider $\N\T(X)$.
(A variant of) the following has been obtained by Fowler \cite{Fow02} for non-degenerate product systems over quasi-lattices.
Here we extend it to the weak right LCM-inclusions framework with a different approach that does not require non-degeneracy of $X$.

%%%%%%%%%%%%%%%%%%%%%%%%%%%%%%%%
\begin{theorem}\label{T:ntx un}
Let $\vartheta \colon (G,P) \to (\G,\P)$ be a controlled map between weak right LCM-inclusions with $\G$ amenable and let $X$ be a compactly aligned product system over $P$ with coefficients in $A$.
Then the Fock representation is faithful on $\N\T(X)$.

Conversely, suppose that $(\pi,t)$ is an injective $\G$-equivariant Nica-covariant representation of $X$ and for every $\vee$-closed $\F \subseteq \P$ we have linear independence in the $\vartheta$-cores in the sense that
\[
B_{\vartheta^{-1}(\F)} = \sumoplus_{h \in \F} B_{\vartheta^{-1}(h)}.
\]
Then $(\pi,t)$ integrates to a faithful representation of $\N\T(X)$.

In particular a Nica-covariant pair $(\pi, t)$ defines a faithful representation of $\N\T(X)$ if and only if the associated representation is $\G$-equivariant and satisfies the condition:
\[
\sum_{p \in F} \psi_{p}(k_{p}) = 0 \Longrightarrow k_p = 0 \foral p \in F,
\]
for every $\vee$-closed $\F \subseteq \P$ and every finite $\vee$-closed $F \subseteq \vartheta^{-1}(\F)$.
\end{theorem}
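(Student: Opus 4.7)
\emph{The forward implication.} The strategy combines amenability of $\G$ with the controlled elimination of Proposition \ref{P:injective}. By Propositions \ref{P:nt coa} and \ref{P:coa ind}, $\N\T(X)$ carries a $\G$-coaction $\wh{\de}_\G$. Amenability of $\G$ makes the canonical map $\ca(\G) \to \ca_\la(\G)$ a $*$-isomorphism, so $\wh{\de}_\G$ is automatically normal and yields a faithful conditional expectation $\wh{E}_\G \colon \N\T(X) \to B_{\vartheta^{-1}(\P)}$. By Proposition \ref{P:Fock ce}, $\T_\la(X)$ carries an analogous faithful conditional expectation $\ol{E}_\G$, and the canonical Fock $*$-epimorphism $\Phi \colon \N\T(X) \to \T_\la(X)$ intertwines them. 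Therefore to conclude faithfulness of $\Phi$ it suffices to prove injectivity on $B_{\vartheta^{-1}(\P)}$, because $\Phi(a) = 0$ forces $\ol{E}_\G \Phi(a^*a) = 0$, and the intertwining together with injectivity on the fixed point algebra then gives $\wh{E}_\G(a^*a) = 0$, hence $a = 0$.

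\emph{Injectivity on the $\vartheta$-core.} The next step would show $\Phi$ is injective on the $P$-core $B_{P}$ by a Fock compression argument: for a finite $\vee$-closed $F \subseteq P$ and a relation $\sum_{p \in F} \ol{\psi}_p(k_p) = 0$, compressing to $X_{p_0}$ for $p_0$ minimal in $F$ forces $k_{p_0} = 0$, and iterating on $F \setminus \{p_0\}$ peels off every $k_p$. Proposition \ref{P:injective}, applied with the universal and Fock Nica-covariant representations (both injective on $A$), then upgrades injectivity of $\Phi$ on $B_P$ to injectivity on $B_{\vartheta^{-1}(\P)}$, completing the forward implication.

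\emph{The converse.} Given $(\pi, t)$ as in the converse hypothesis, the universal property produces a $\G$-equivariant $*$-epimorphism $\Psi \colon \N\T(X) \to \ca(\pi, t)$; amenability of $\G$ again delivers faithful $\G$-conditional expectations on both sides intertwined by $\Psi$, so the task reduces by Proposition \ref{P:injective} to showing that $\Psi$ is injective on $B_P$. Suppose $\Psi(\sum_{p \in F} \wh{\psi}_p(k_p)) = \sum_{p \in F} \psi_p(k_p) = 0$ for a finite $\vee$-closed $F \subseteq P$. Grouping by $h = \vartheta(p)$ writes this as $\sum_h b_h = 0$ with $b_h \in B_{\vartheta^{-1}(h)}$, and the direct-sum hypothesis in $\ca(\pi, t)$ forces each $b_h = 0$. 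Condition (A2) of Definition \ref{D:control} provides $t_p(X_p)^* t_q(X_q) = (0)$ for distinct $p, q \in \vartheta^{-1}(h)$, so the summands $\psi_p(k_p)$ of $b_h$ are C*-algebraically orthogonal; taking $(\cdot)^* (\cdot)$ of the vanishing sum forces each $\psi_p(k_p^* k_p) = 0$, and injectivity of $\pi$ (and hence of every $\psi_p$) yields $k_p = 0$. Lifting back, $\sum_{p \in F} \wh{\psi}_p(k_p) = 0$ in $\N\T(X)$, as desired.

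\emph{The ``in particular'' characterization and main obstacle.} For the forward direction, a faithful Nica-covariant $(\pi, t)$ identifies $\ca(\pi, t)$ with $\N\T(X) \simeq \T_\la(X)$, transporting both the $\G$-coaction and the Fock compression argument; diagonal linear independence in $\ca(\pi, t)$ follows. For the backward direction, the diagonal linear independence hypothesis says exactly that $\Psi \colon \N\T(X) \to \ca(\pi, t)$ is injective on every $B_F$ for finite $\vee$-closed $F \subseteq P$, hence on $B_P$; Proposition \ref{P:injective} lifts this to $B_{\vartheta^{-1}(\P)}$, and the amenability argument of the forward implication concludes (note that injectivity of $\pi$ is automatic from taking $F = \{e\}$ in the hypothesis). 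The technical heart of the proof is Proposition \ref{P:injective}, which translates diagonal-type relations on $B_P$ into matrix-type relations on $B_{\vartheta^{-1}(\P)}$; amenability of $\G$ then handles the passage from the $\vartheta$-fixed point algebras to the full C*-algebras, and condition (A2) ensures that the $\vartheta$-fibers decompose as matrix blocks controlled by $\pi(A)$.
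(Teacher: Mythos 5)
Your proposal is correct and follows essentially the same route as the paper: amenability of $\G$ supplies faithful conditional expectations onto the $\vartheta$-fixed point algebras, intertwined by the canonical $*$-epimorphisms, which reduces everything to injectivity on the cores; that in turn is settled by Fock-space compression together with the controlled elimination of Proposition \ref{P:injective}. The only cosmetic difference is in the forward direction, where the paper compresses the matrix entries $Q_{q_1}\Phi(f)Q_{q_2}$ of an element of the $\vartheta$-core directly (so the off-diagonal coefficients are handled without invoking Proposition \ref{P:injective}), whereas you route through the diagonal core $B_P$ and then apply Proposition \ref{P:injective} --- which is precisely how the paper itself argues the final ``in particular'' characterization.
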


\begin{proof}
Let $(\wh{\pi}, \wh{t})$ be a faithful representation of $\N\T(X)$ and let the canonical $*$-epimorphism
\[
\Phi \colon \N\T(X) = \ca(\wh{\pi}, \wh{t}) \longrightarrow \T_\la(X) = \ca(\ol{\pi},\ol{t}).
\] 
Let $\ol{E}_\G$ be the faithful conditional expectation induced by Proposition \ref{P:Fock ce} on $\T_\la(X)$.
Let $\wh{E}_\G$ be the faithful conditional expectation on $\N\T(X)$ induced by the amenable $\G$.
Since $\Phi \wh{E}_\G = \ol{E}_\G \Phi$ it suffices to show injectivity of $\Phi$ on $\wh{B}_{\F}$ for every $\vee$-closed $\F \subseteq \P$.
To this end fix a finite $\vee$-closed $F \subseteq \vartheta^{-1}(\F)$ and suppose that
\[
f := \sum \{ \wh{\psi}_{r_1, r_2}(k_{r_1, r_2}) \mid k_{r_1, r_2} \in \K(X_{r_2}, X_{r_1}), r_1, r_2 \in F, \vartheta(r_1) = \vartheta(r_2) \} \in \ker\Phi.
\]
Let $h$ be minimal in $\F$ such that $k_{q_1, q_2} \neq 0$ with $\vartheta(q_1) = \vartheta(q_2) = h$.
By using condition (A2) of Definition \ref{D:control} and the Fock space representation we have that
\[
k_{q_1, q_2} = Q_{q_1} \Phi(f) Q_{q_2} = 0
\]
for the projections $Q_p \colon \F X \to X_p$, which gives the required contradiction.
Thus the Fock representation is injective and also we have linear independence of the cores.
The converse follows with a similar proof.

For the last part it is clear that the condition with $F = \{e_G\}$ and $\F = \{e_\G\}$ implies that $\pi$ is injective.
Moreover the condition shows that the canonical $*$-epimorphism $\Phi$ is injective on the C*-subalgebras 
\[
\wh{B}_{F} = \spn\{ \wh{\psi}_{r}(k_r) \mid r \in F\}
\]
for every finite $\vee$-closed $F \subseteq P$, and so $\Phi$ is injective on $\wh{B}_{P}$. 
Thus by Proposition \ref{P:injective} the map $\Phi$ is injective on $\wh{B}_{\vartheta^{-1}(\P)}$ and hence on $\N\T(X)$.
\end{proof}

Next we consider the universal covariance algebra $A \times_{X} P$.

%%%%%%%%%%%%%%%%%%%%%%%%%%%%%%%%
\begin{theorem}\label{T:co-un seh}
Let $\vartheta \colon (G,P) \to (\G,\P)$ be a controlled map between weak right LCM-inclusions with $\G$ amenable and let $X$ be a compactly aligned product system over $P$ with coefficients in $A$.
Then a strongly covariant representation of $X$ integrates to a faithful representation of $A \times_{X} P$, if and only if it is injective and $G$-equivariant, if and only if it is injective and $\G$-equivariant.
\end{theorem}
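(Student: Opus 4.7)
My plan is to prove a cyclic chain of implications, ordering the three conditions as follows: (1) $(\pi,t)$ integrates to a faithful representation of $A \times_X P$; (2) $(\pi,t)$ is injective and $G$-equivariant; (3) $(\pi,t)$ is injective and $\G$-equivariant.

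The implication $(1) \Rightarrow (2)$ will be essentially free: by Sehnem's theorem $A$ embeds faithfully in $A \times_X P$, so any faithful integration forces $\pi$ to be injective, while $A \times_X P$ carries its canonical $G$-coaction from the grading (the Fell bundle structure $\mathcal{SC} X$), and a $*$-isomorphism transports this coaction to $\ca(\pi, t)$, making $(\pi, t)$ $G$-equivariant. The implication $(2) \Rightarrow (3)$ then follows immediately from Proposition \ref{P:coa ind} applied to $\vartheta \colon G \to \G$: the induced $\G$-coaction on $\ca(\pi,t)$ places each $t_p(X_p)$ in the $\vartheta(p)$-spectral space, which is exactly the notion of $\G$-equivariance used throughout.

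The heart of the proof is $(3) \Rightarrow (1)$. Given an injective $\G$-equivariant strongly covariant $(\pi, t)$, universality of $A \times_X P$ produces a $*$-epimorphism $\Phi \colon A \times_X P \to \ca(\pi, t)$. The $G$-coaction on $A \times_X P$ induces, via Proposition \ref{P:coa ind}, a $\G$-coaction whose $\vartheta(p)$-spectral space contains $t_p(X_p)$, so $\Phi$ is $\G$-equivariant by definition. Amenability of $\G$ will be used in its standard incarnation: every $\G$-coaction on a C*-algebra is normal and yields a faithful conditional expectation onto its fixed point algebra. Consequently both $A \times_X P$ and $\ca(\pi, t)$ carry faithful conditional expectations onto their $\vartheta$-fixed point algebras $B_{\vartheta^{-1}(\P)}$, and $\Phi$ intertwines these expectations. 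Injectivity of $\Phi$ therefore reduces to injectivity of $\Phi|_{B_{\vartheta^{-1}(\P)}}$, which is precisely the content of Corollary \ref{C:fpa con}, whose hypothesis (i) is satisfied since $\Phi|_A = \pi$ is injective.

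The main obstacle is arranging the setup so that Corollary \ref{C:fpa con} may be invoked: one needs to identify the $\vartheta$-fixed point algebras on both sides as the images of the respective $\G$-conditional expectations and verify that $\Phi$ intertwines them, which is where amenability of $\G$ enters essentially to guarantee faithfulness of these expectations. Everything else is bookkeeping: the $\G$-coaction always exists functorially from the $G$-coaction via $\vartheta$, and the controlled-elimination step is already packaged inside Corollary \ref{C:fpa con}.
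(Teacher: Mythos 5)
Your proposal is correct, but it takes a genuinely different (and more direct) route than the paper. The paper's proof is a two-line reduction: Theorem \ref{T:ntx un} shows that amenability of $\G$ forces amenability of $X$, so that $A \times_{X} P$, $q_{\scv}(\T_\la(X))$ and $A \times_{X,\la} P$ all coincide, and then the statement is read off from the co-universality of $q_{\scv}(\T_\la(X))$ established in Corollary \ref{C:coun} (which in turn rests on Theorem \ref{T:cenv red} and the C*-envelope machinery, using that amenable groups are exact). You instead argue the nontrivial implication by hand: universality of $A \times_{X} P$ produces the $\G$-equivariant $*$-epimorphism $\Phi$, amenability of $\G$ makes both $\G$-coactions normal and hence supplies intertwined faithful conditional expectations onto the $\vartheta$-fixed point algebras, and faithfulness of $\Phi$ then reduces to Corollary \ref{C:fpa con}. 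This bypasses the identification of the universal and reduced covariance algebras and the co-universality theorem entirely, at the cost of redoing the standard expectation argument; both proofs ultimately bottom out in the same controlled elimination (Proposition \ref{P:injective} via Corollary \ref{C:fpa con}). The easy implications -- faithful integration transports the canonical $G$-coaction and forces injectivity of $\pi$ by Sehnem's embedding theorem, and $G$-equivariance passes to $\G$-equivariance by Proposition \ref{P:coa ind} -- are handled the same way in spirit. One small point worth making explicit in a write-up: the fixed point algebra of the induced $\G$-coaction on $A \times_{X} P$ is exactly $B_{\vartheta^{-1}(\P)}$, which follows from the Wick ordering of strongly covariant (hence Nica-covariant) representations; this is what lets you apply Corollary \ref{C:fpa con} to the range of the conditional expectation.
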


\begin{proof}
By Theorem \ref{T:ntx un} we have that $A \times_{X} P$ coincides with $q_{\scv}(\T_\la(X))$ and $A \times_{X, \la} P$.
Thus the result follows from Corollary \ref{C:coun}.
\end{proof}

%%%%%%%%%%%%%%%%%%%%%%%%%%%%%%%%
\begin{remark}
When $(G,P)$ is amenably controlled then we have a wider selection for a coaction that implements the Extension Theorem.
%Moreover due to the universal property of $\T_\la(X)$ we can replace $B$ with $\ca(\pi,t)$ for any Nica-covariant representation of $X$.
Figure \ref{F:diag cp} depicts those.
We denote restrictions of $*$-homomorphisms by solid arrows, and we have used Proposition \ref{P:P coa B} for the upper and lower completely isometric maps.
Recall that if $\G$ is amenable then $\ca(\G) \simeq \ca_\la(\G)$ is nuclear, and by \cite{Li13} $\ca_\la(\P)$ is also nuclear.
\end{remark}

%%%%%%%%%%%%%%%%%%%%%%%%%%%%%%%%
\begin{corollary}\label{C:extension con}
Let $\vartheta \colon (G,P) \to (\G, \P)$ be a controlled map between weak right LCM-inclusions with $\G$ amenable.
Suppose that $A, X_p \subseteq B(\H)$ for $p \in P$ define a compactly aligned product system $X = \{X_p\}_{p \in P}$ and set
\[
\A := \ol{\alg}\{A, X_p \mid p \in P \}.
\]
Then the following are equivalent:
\begin{enumerate}
\item There is a completely isometric isomorphism 
\[
\A \longrightarrow \T_\la(X)^+; \xi_p \mapsto \ol{t}(\xi_p).
\]
\item There is a completely contractive map 
\[
\A \longrightarrow \T_\la(X)^+ \otimes \ca(G) ; \xi_p \mapsto \ol{t}(\xi_p) \otimes u_p.
\]
\item There is a completely contractive map 
\[
\A \longrightarrow \T_\la(X)^+ \otimes \ca_\la(G) ; \xi_p \mapsto \ol{t}(\xi_p) \otimes \la_p.
\]
\item There is a completely contractive map 
\[
\A \longrightarrow \T_\la(X)^+ \otimes \ca_\la(P) ; \xi_p \mapsto \ol{t}(\xi_p) \otimes V_p.
\]
\item There is a completely contractive map 
\[
\A \longrightarrow \T_\la(X)^+ \otimes \ca(\G) ; \xi_p \mapsto \ol{t}(\xi_p) \otimes u_{\vartheta(p)}.
\]
\item There is a completely contractive map 
\[
\A \longrightarrow \T_\la(X)^+ \otimes \ca_\la(\P) ; \xi_p \mapsto \ol{t}(\xi_p) \otimes V_{\vartheta(p)}.
\]
\end{enumerate}
\end{corollary}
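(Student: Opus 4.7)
I would combine Theorem \ref{T:extension} and Corollary \ref{C:ext thm}, adapted to this abstract setting where $\A$ is the operator algebra generated by $A$ and the $X_p$ in $B(\H)$ rather than the image of an explicit $*$-homomorphism $\Phi \colon \T_\la(X) \to B$. The forward direction (i) $\Rightarrow$ each of (ii)--(vi) is immediate: the completely isometric identification $\A \simeq \T_\la(X)^+$ from (i) allows us to realize each cc map by composing with the canonical cc arrow out of $\T_\la(X)^+$ into the relevant tensor-product algebra---the $G$-coaction for (ii) and its $\G$-analogue (Proposition \ref{P:f coaction} combined with Proposition \ref{P:coa ind}) for (v), the composition with $\id \otimes \la$ for (iii), and Propositions \ref{P:P coa B} and \ref{P:coa B 2} (with $B = \T_\la(X)$ and $\Phi = \id$) for (iv) and (vi).

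For the converse implications, the main task in each case is to produce a cc map $\mu \colon \A \to \T_\la(X)^+$, $\xi_p \mapsto \ol{t}(\xi_p)$, which together with the canonical surjection $\rho \colon \T(X)^+ \to \A$ coming from the universal property of $\T(X)^+$ satisfies $\mu \circ \rho = \textup{Fock}|_{\T(X)^+}$ on generators. For (ii) and (v), $\mu$ is obtained by post-composing the hypothesized cc map with the canonical trivial character of $\ca(G)$ respectively $\ca(\G)$. For (iv) and (vi), Propositions \ref{P:P coa B} and \ref{P:coa B 2} applied with $\Phi = \id$ identify $\T_\la(X)^+$ completely isometrically with $\ol{\alg}\{\ol{t}(\xi_p) \otimes V_p\}$ respectively $\ol{\alg}\{\ol{t}(\xi_p) \otimes V_{\vartheta(p)}\}$, and $\mu$ is obtained by inverting these identifications and composing with the given cc arrow. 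For (iii), one must invoke the Fell-absorption chain from Figure \ref{F:diag} of Theorem \ref{T:extension}.

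With $\mu$ in hand, the remaining step is to upgrade it to a completely isometric isomorphism. I would replicate the diagram-closing argument from Theorem \ref{T:extension} by constructing the loop that takes $\T_\la(X)^+$ completely isometrically onto $\ol{\alg}\{\ol{t}(\xi_p) \otimes c_p\}$ for the $c_p$ coming from the assumed hypothesis (via the relevant coaction or Fell-absorption embedding), inserts the hypothesized cc arrow from $\A$, and composes back to $\T_\la(X)^+$ through $\mu$ to give the identity on $\T_\la(X)^+$. Sandwiching a cc map between two cc maps whose composition with it is a completely isometry forces $\mu$ to be completely isometric, and symmetrically yields that $\ol{t}(\xi_p) \mapsto \xi_p$ extends to a cc inverse, establishing (i).

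The main obstacle is case (iii), where no direct trivial character of $\ca_\la(G)$ exists in the absence of amenability of $G$, forcing the use of the indirect Fell-absorption construction from Theorem \ref{T:extension}. The amenability of $\G$ plays two complementary roles in the $\G$-variants: it gives $\ca(\G) \simeq \ca_\la(\G)$, removing the analogous obstacle in (v), and it furnishes the trivial character on $\ca_\la(\P) \simeq \ca(\P)$ needed to define $\mu$ in case (vi) without having to appeal to a Fell-absorption detour.
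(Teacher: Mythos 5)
Your overall strategy (close a loop of completely contractive maps through $\A$ so that every map in the loop, in particular the one landing in $\A$, is forced to be completely isometric) is exactly the mechanism behind the paper's Figure \ref{F:diag cp}, and your treatment of the forward direction and of the construction of the cc map $\mu \colon \A \to \T_\la(X)^+$, $\xi_p \mapsto \ol{t}(\xi_p)$, from each of (ii)--(vi) is sound. But there is a genuine gap in the converse: the loop cannot be closed with the ingredients you supply. To conclude that $\mu$ is completely isometric you need a completely contractive map in the \emph{opposite} direction, $\nu \colon \T_\la(X)^+ \to \A$, $\ol{t}(\xi_p) \mapsto \xi_p$, so that $\mu \circ \nu = \id_{\T_\la(X)^+}$. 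Your substitute is the surjection $\rho \colon \T(X)^+ \to \A$ from the universal Toeplitz tensor algebra, together with the identity $\mu \circ \rho = q$ where $q \colon \T(X)^+ \to \T_\la(X)^+$ is the Fock quotient. This only yields $\|q(f)\| \le \|\rho(f)\|$, i.e.\ the same information as the existence of $\mu$; it gives nothing toward the reverse inequality $\|\rho(f)\| \le \|q(f)\|$, and your ``sandwich'' has $q$ as the total composition, which is \emph{not} a complete isometry on $\T(X)^+$ (the paper explicitly notes, via \cite[Example 5.2]{DK18a}, that $\T(X)^+$ and $\N\T(X)^+$ need not be completely isometric).

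The missing ingredient is precisely the main use of the amenability hypothesis, which you do not invoke: since the concrete system is compactly aligned, the identity representation is Nica-covariant and hence induces a cc surjection $\N\T(X)^+ \to \A$; and since $\G$ is amenable, Theorem \ref{T:ntx un} gives $\N\T(X) \simeq \T_\la(X)$, so this surjection becomes the required map $\nu \colon \T_\la(X)^+ \to \A$. The loop $\T_\la(X)^+ \xrightarrow{\nu} \A \to \ol{\alg}\{\ol{t}(\xi_p) \otimes c_p\} \xrightarrow{\simeq} \T_\la(X)^+$ then composes to the identity and forces $\nu$ (hence $\mu = \nu^{-1}$) to be completely isometric. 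By contrast, the two roles you assign to amenability are secondary, and one of them is shaky: the ``trivial character'' $V_h \mapsto 1$ on $\ca_\la(\P)$ need not exist for a general right LCM semigroup $\P$ (it is Nica-covariant only when $\P$ is directed), and it is not needed anyway, since Proposition \ref{P:coa B 2} already identifies $\ol{\alg}\{\ol{t}(\xi_p) \otimes V_{\vartheta(p)}\}$ completely isometrically with $\T_\la(X)^+$, as you note elsewhere in your argument.
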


\begin{proof}
The proof follows by the system of maps in Figure \ref{F:diag cp}, where the solid arrows denote the maps that arise from restrictions of $*$-homomorphisms from the appropriate C*-algebras to the required subalgebras.
\end{proof}

%%%%%%%%%%%%%%%%%%%%%%%%%%%%%%%%
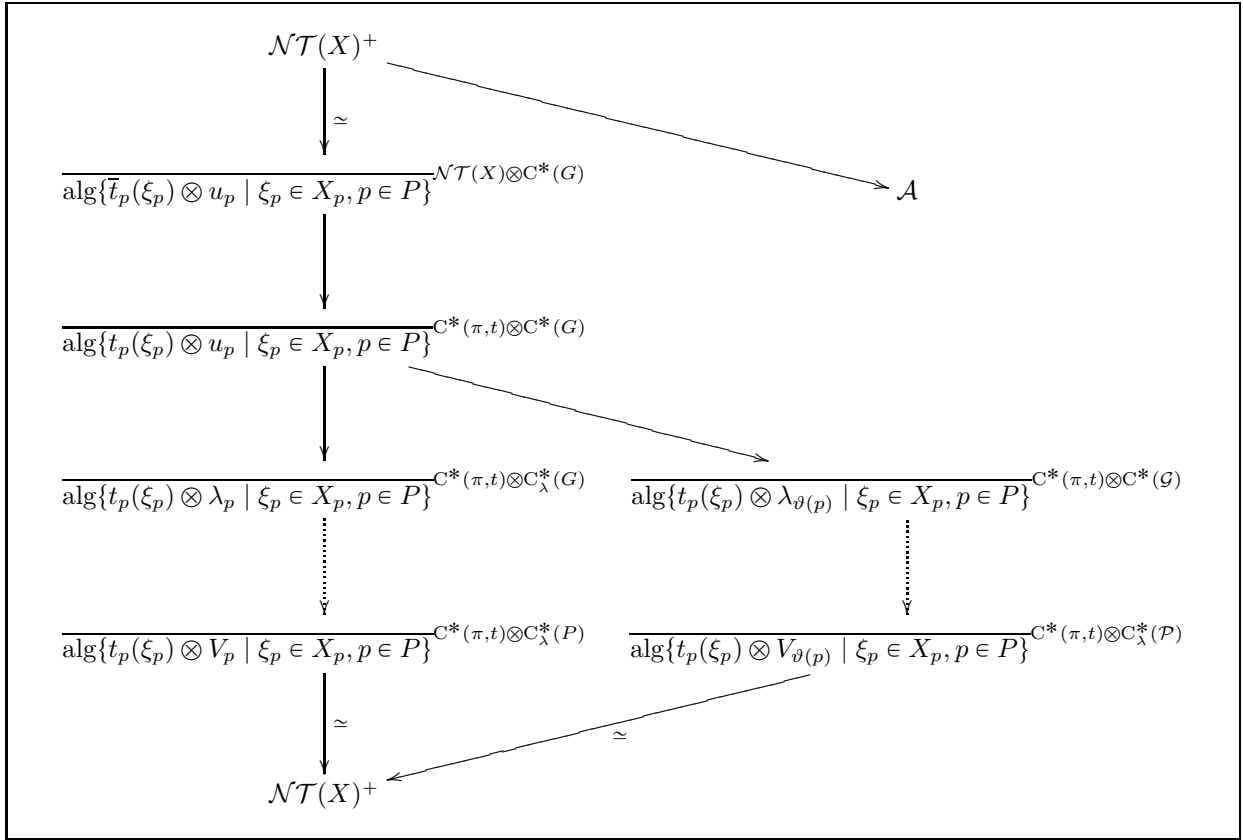
\begin{figure}[h]
{\small
\[
\xymatrix@R=35pt@C=10pt{
\N\T(X)^+ \ar[d]^{\simeq} \ar[rd] & \\
\ol{\alg\{ \ol{t}_p(\xi_p) \otimes u_p \mid \xi_p \in X_p, p \in P\}}^{\N\T(X) \otimes \ca(G)} \ar[d] &
\A \\
\ol{\alg\{ {t}_p(\xi_p) \otimes u_p \mid \xi_p \in X_p, p \in P\}}^{\ca(\pi,t) \otimes \ca(G)} \ar[d] \ar[dr] & \\
\ol{\alg\{ {t}_p(\xi_p) \otimes \la_p \mid \xi_p \in X_p, p \in P\}}^{\ca(\pi,t) \otimes \ca_\la(G)} \ar@{..>}[d] 
& 
\ol{\alg\{ {t}_p(\xi_p) \otimes \la_{\vartheta(p)} \mid \xi_p \in X_p, p \in P\}}^{\ca(\pi,t) \otimes \ca(\G)} \ar@{..>}[d] \\
\ol{\alg\{ {t}_p(\xi_p) \otimes V_p \mid \xi_p \in X_p, p \in P\}}^{\ca(\pi,t) \otimes \ca_\la(P)} \ar[d]^{\simeq} 
& 
\ol{\alg\{ {t}_p(\xi_p) \otimes V_{\vartheta(p)} \mid \xi_p \in X_p, p \in P\}}^{\ca(\pi,t) \otimes \ca_\la(\P)} \ar[dl]^{\simeq} \\
\N\T(X)^+
}
\]
}
\caption{Diagram of completely positive maps fixing the nonselfadjoint part}
\label{F:diag cp}
\end{figure}

%%%%%%%%%%%%%%%%%%%%%%%%%%%%%%%%
\subsection{Exactness and nuclearity}
%%%%%%%%%%%%%%%%%%%%%%%%%%%%%%%%

We will require some results about nuclearity which we record here for convenience.

%%%%%%%%%%%%%%%%%%%%%%%%%%%%%%%%
\begin{lemma}\label{L:Kat} \cite[Proposition B.8]{Kat04}
Let $(\pi, t)$ be a representation of a C*-correspondence $X$ over $A$ such that $\pi(A) \subseteq B$ and $t(X) \subseteq Y$ for a second C*-correspondence $X$ over $B$.
If $\pi \colon A \to B$ is nuclear then the induced map $\psi \colon \K X \to \K Y$ is nuclear.
\end{lemma}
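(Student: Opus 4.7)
My plan is to produce explicit nuclear ccp factorizations of $\psi$ by combining compressions induced by finite tuples of vectors in $X$ with the matrix amplification of $\pi$. For a tuple $\un{\xi} = (\xi_1, \ldots, \xi_n) \in X^n$, regarded as an adjointable operator $A^n \to X$ via $(a_i)_i \mapsto \sum_i \xi_i a_i$, I associate the compressions
\begin{align*}
\Phi_{\un{\xi}} \colon \K X \longrightarrow M_n(A), & \quad k \longmapsto \un{\xi}^* k \un{\xi} = [\sca{\xi_i, k \xi_j}]_{i,j}, \\
\Psi_{t(\un{\xi})} \colon M_n(B) \longrightarrow \K Y, & \quad [b_{ij}] \longmapsto t(\un{\xi}) [b_{ij}] t(\un{\xi})^* = \sum_{i,j} \theta_{t(\xi_i) b_{ij},\, t(\xi_j)},
\end{align*}
where $t(\un{\xi}) := (t(\xi_1), \ldots, t(\xi_n)) \in Y^n$. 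Being compressions by adjointable operators, both are completely positive with cb-norms bounded by $\nor{\un{\xi}}^2$, noting that $\nor{t(\un{\xi})}^2 = \nor{\pi^{(n)}(\un{\xi}^* \un{\xi})} \leq \nor{\un{\xi}}^2$.

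The first main step is to verify the key identity
\[
\Psi_{t(\un{\xi})} \circ \pi^{(n)} \circ \Phi_{\un{\xi}}(k) \;=\; \psi(e \, k \, e) \qfor e := \sum_{i=1}^n \theta_{\xi_i, \xi_i} = \un{\xi} \un{\xi}^*.
\]
This combines two ingredients: first, the representation property $\pi(\sca{\xi_i, k \xi_j}) = \sca{t(\xi_i), \psi(k) t(\xi_j)}$, obtained by checking $t(k \zeta) = \psi(k) t(\zeta)$ on rank-one operators and extending by linearity and continuity; second, the rank-one identity $\theta_{t(\xi_i), t(\xi_i)} \psi(k) \theta_{t(\xi_j), t(\xi_j)} = \theta_{t(\xi_i) \sca{t(\xi_i), \psi(k) t(\xi_j)},\, t(\xi_j)}$, which when summed over $i, j$ telescopes into $\psi(e) \psi(k) \psi(e) = \psi(eke)$.

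Since $\pi$ is nuclear, so is the matrix amplification $\pi^{(n)} \colon M_n(A) \to M_n(B)$ for every $n$: any ccp factorization $\pi \approx \beta \circ \alpha$ through $M_k$ amplifies coordinatewise to $\pi^{(n)} \approx \beta^{(n)} \circ \alpha^{(n)}$ through $M_{nk}$. Hence each composition $\Psi_{t(\un{\xi})} \circ \pi^{(n)} \circ \Phi_{\un{\xi}} \colon \K X \to \K Y$ is a nuclear ccp map with cb-norm at most $\nor{\un{\xi}}^4$.

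To finish, I would pick a bounded approximate unit $\{e_\la\}$ of $\K X$ of the form $e_\la = \un{\xi}^\la (\un{\xi}^\la)^*$, whose existence is standard since elements of this form are norm-dense in $(\K X)_+$. Then $e_\la k e_\la \to k$ in norm for every $k \in \K X$, and by continuity of $\psi$ the key identity gives
\[
\Psi_{t(\un{\xi}^\la)} \circ \pi^{(n_\la)} \circ \Phi_{\un{\xi}^\la}(k) \;=\; \psi(e_\la k e_\la) \;\longrightarrow\; \psi(k)
\]
pointwise in $k$. As a point-norm limit of uniformly bounded nuclear ccp maps, $\psi$ itself is nuclear. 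The main technical obstacle is verifying the key identity above, which requires careful bookkeeping of how inner products and rank-one compositions interact with the representation $(\pi, t)$; once it is established, the approximate-unit argument proceeds routinely.
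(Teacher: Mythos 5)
Your argument is correct: the key identity $\Psi_{t(\un{\xi})}\circ\pi^{(n)}\circ\Phi_{\un{\xi}}=\psi(e\,\cdot\,e)$ checks out, the approximate unit of the form $\un{\xi}\un{\xi}^*$ exists with $\nor{\un{\xi}}^2=\nor{e_\la}\leq 1$ so the factorizations are uniformly bounded, and point-norm limits of nuclear ccp maps are nuclear. The paper gives no proof of this lemma (it is quoted verbatim from Katsura's Proposition B.8), and your reconstruction is essentially the standard argument used there, so there is nothing further to compare.
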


%%%%%%%%%%%%%%%%%%%%%%%%%%%%%%%%
\begin{lemma} \label{L:nuc-quot} \cite[Proposition 3.1]{Kak19}
Let $A,A'$ be C*-algebras and let the ideals $I \lhd A$ and $I'\lhd A'$. 
Suppose we have the following commutative diagram of short exact sequences
\[
\xymatrix{
0 \ar[r] & I \ar[r] \ar[d]^{\varphi_0} & A \ar[r] \ar[d]^{\varphi} & A / I \ar[r] \ar[d]^{\widetilde{\varphi}} & 0\\
0 \ar[r] & I' \ar[r] & A' \ar[r] & A' / I' \ar[r] & 0
}
\]
where $\varphi \colon A \rightarrow A'$ is an injective $*$-homomorphism that satisfies $\varphi(I) \subseteq I'$, $\widetilde{\varphi} \colon A / I \rightarrow A' / I'$  is the induced map and $\varphi_0 :=\varphi|_{I}$. 
If $\varphi \colon A\rightarrow A'$ is nuclear, then $\varphi_0$ and $\widetilde{\varphi}$ are both nuclear.
\end{lemma}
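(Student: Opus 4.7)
The plan is to transfer ccp matrix factorizations of the nuclear $*$-homomorphism $\varphi$ to both the restriction $\varphi_0$ and the induced quotient $\widetilde{\varphi}$. Fix nets of ccp maps $\alpha_\lambda \colon A \to M_{n_\lambda}$ and $\beta_\lambda \colon M_{n_\lambda} \to A'$ with $\beta_\lambda \alpha_\lambda \to \varphi$ in point-norm.

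For $\varphi_0 = \varphi|_I \colon I \to I'$, the restriction $\alpha_\lambda|_I$ remains ccp, but $\beta_\lambda$ takes values in $A'$ rather than in $I'$. To correct this, fix an approximate unit $(f_j)$ of $I'$ and set $\beta_\lambda^{(j)}(T) := f_j \beta_\lambda(T) f_j$; this is ccp with range in $I'$. For $x \in I$, the element $\varphi(x)$ lies in $I'$, hence $f_j \varphi(x) f_j \to \varphi(x)$. An $\varepsilon/2$-argument combining the $\lambda$ and $j$ limits (then passing to a diagonal subnet) produces a ccp matrix approximation of $\varphi_0$ into $I'$.

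For $\widetilde{\varphi}$, compose with the quotient $q' \colon A' \to A'/I'$ to obtain a nuclear $*$-homomorphism $q' \varphi \colon A \to A'/I'$ which factors through $A/I$ as $\widetilde{\varphi} \circ q$ by the commutativity of the diagram. The maps $q' \beta_\lambda$ and $\alpha_\lambda$ already give a ccp approximation of $\widetilde{\varphi} \circ q$, and the task is to descend the domain of $\alpha_\lambda$ from $A$ to $A/I$. Extending to biduals yields $\alpha_\lambda^{**} \colon A^{**} \to M_{n_\lambda}$, and under the central decomposition $A^{**} = I^{**} \oplus (A/I)^{**}$ we define a ccp $\widetilde{\alpha}_\lambda \colon A/I \to M_{n_\lambda}$ as the restriction of $\alpha_\lambda^{**}$ to the $(A/I)^{**}$-summand. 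For a lift $a$ of $\bar{a}$, the difference $\widetilde{\alpha}_\lambda(\bar{a}) - \alpha_\lambda(a)$ is absorbed by $\alpha_\lambda^{**}(aI^{**})$, which is negligible after applying $q' \beta_\lambda$ because $q' \varphi(a e_i) = 0$ for $e_i$ in an approximate unit of $I$.

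The main obstacle is this last descent step, as the error term $\widetilde{\alpha}_\lambda(\bar{a}) - \alpha_\lambda(a)$ lives in $M_{n_\lambda}$ and depends on $\lambda$, so care is needed in interchanging the $\lambda$-limit with the approximate-unit limit in $I$. An alternative cleaner route uses the tensor product characterization of nuclearity together with the exactness $(A \otimes_{\max} C)/(I \otimes_{\max} C) \cong (A/I) \otimes_{\max} C$ of the maximal tensor product on short exact sequences; applying this to $\varphi \otimes \id_C \colon A \otimes_{\max} C \to A' \otimes_{\min} C$ and chasing the induced quotient maps directly produces the desired factorizations $(A/I) \otimes_{\max} C \to (A'/I') \otimes_{\min} C$ and $I \otimes_{\max} C \to I' \otimes_{\min} C$, thereby witnessing nuclearity of $\widetilde{\varphi}$ and $\varphi_0$.
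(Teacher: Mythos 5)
The paper itself offers no proof of this statement: it is imported verbatim from \cite[Proposition 3.1]{Kak19}, so your attempt can only be measured against a correct argument rather than against an in-paper one. Your treatment of $\varphi_0$ is correct and standard: restricting $\alpha_\lambda$ to $I$ and compressing $\beta_\lambda$ by an approximate unit $(f_j)$ of the ideal $I'$ forces the range into $I'$, and since $\|f_j\|\le 1$ you may first fix $\lambda$ on a finite subset of $I$ and then fix $j$, so the $\varepsilon$-bookkeeping closes.

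The quotient part, however, has a genuine gap which you flag but do not close, and your fallback does not repair it. In the bidual construction the error term is $q'\beta_\lambda\alpha_\lambda^{**}(za)$, where $z$ is the central projection with $zA^{**}=I^{**}$. To kill it you must estimate $\|q'\beta_\lambda\alpha_\lambda(e_ia)\|\le\|\beta_\lambda\alpha_\lambda(e_ia)-\varphi(e_ia)\|$ with the limit in $i$ taken \emph{before} the limit in $\lambda$; point-norm convergence of $\beta_\lambda\alpha_\lambda$ to $\varphi$ gives no control uniform over the infinite bounded family $\{e_ia\}_i\subseteq I$, and one cannot fix $i$ first because $e_i\to z$ only weak-$*$. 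The ``cleaner route'' rests on a false characterization of nuclearity: exhibiting, for every $C$, a bounded (indeed $*$-homomorphic) extension $(A/I)\otimes_{\max}C\to(A'/I')\otimes_{\min}C$ of $\widetilde{\varphi}\odot\id_C$ does not witness nuclearity of $\widetilde{\varphi}$ --- the identity of an arbitrary C*-algebra $D$ always induces the canonical quotient $D\otimes_{\max}C\to D\otimes_{\min}C$, yet $\id_D$ is nuclear only when $D$ is nuclear. Note finally that the quotient half of the lemma already contains the Choi--Effros theorem that quotients of nuclear C*-algebras are nuclear (take $\varphi=\id_A$ and $I=I'$), whose known proofs pass through semidiscreteness of $(A/I)^{**}=(1-z)A^{**}$ or the completely positive lifting theorem; so no argument that merely reshuffles the given ccp factorizations of $\varphi$ can succeed, and some such additional input is indispensable here.
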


%%%%%%%%%%%%%%%%%%%%%%%%%%%%%%%%
\begin{lemma} \label{L:nuc-ext} \cite[Proposition 3.3]{Kak19}
Let $A, A'$ be C*-algebras and let the ideals $I \lhd A$ and $I'\lhd A'$. 
Suppose we have the following commutative diagram of short exact sequences
\[
\xymatrix{
0 \ar[r] & I \ar[r] \ar[d]^{\varphi_0} & A \ar[r] \ar[d]^{\varphi} & A / I \ar[r] \ar[d]^{\widetilde{\varphi}} & 0\\
0 \ar[r] & I' \ar[r] & A' \ar[r] & A' / I' \ar[r] & 0
}
\]
where $\varphi \colon A \rightarrow A'$ is an injective $*$-homomorphism that satisfies $\varphi(I) \subseteq I'$, $\widetilde{\varphi} \colon A / I \rightarrow A' / I'$  is the induced map and $\varphi_0 :=\varphi|_{I}$. 
Suppose further that there exists a c.a.i.\ $(e_i)$ of $I'$ such that $\varphi(a)e_i \in \varphi_0(I)$ for all $a\in A$. 
If $\varphi_0$ and $\widetilde{\varphi}$ are nuclear, then so is $\varphi$.
\end{lemma}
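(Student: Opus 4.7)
The plan is to adapt the classical argument showing that an extension of nuclear C*-algebras is nuclear: the hypothesis that $\varphi(a)e_i \in \varphi_0(I)$ plays the role of approximate compatibility of $\varphi$ with the ideal structure, forcing $\varphi$-images, after compression by $e_i$, to land in the region where the nuclearity of $\varphi_0$ can be invoked.

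Fix $F \subseteq A$ finite and $\epsilon > 0$. First I would handle the quotient piece. Nuclearity of $\widetilde\varphi$ yields ccp maps $\alpha \colon A/I \to M_k$ and $\beta \colon M_k \to A'/I'$ with $\|\beta\alpha(\widetilde a) - \widetilde\varphi(\widetilde a)\| < \epsilon/4$ on $F$, and because $M_k$ is nuclear the Choi--Effros lifting theorem provides a ccp lift $\widetilde\beta \colon M_k \to A'$ of $\beta$. Setting $\Phi_1 := \widetilde\beta \circ \alpha \circ q$ gives a ccp map $A \to A'$ factoring through $M_k$ with $\varphi(a) - \Phi_1(a)$ at distance at most $\epsilon/4$ from $I'$ for $a \in F$. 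Next I would handle the ideal piece. For $i$ large enough, multiplication by $e_i$ moves each almost-$I'$-valued element $\varphi(a)-\Phi_1(a)$ by less than $\epsilon/4$, so $\varphi(a) - \Phi_1(a) \approx \varphi(a)e_i - \Phi_1(a)e_i$ up to a controlled error. By hypothesis $\varphi(a)e_i$ lies in $\varphi_0(I)$; since $\varphi_0$ is injective, and hence isometric on its image, I may select preimages $c_a \in I$ with $\varphi_0(c_a) = \varphi(a)e_i$ and $\|c_a\| \leq \|a\|$. Applying nuclearity of $\varphi_0$ to $\{c_a : a \in F\}$ provides ccp maps $S_0 \colon I \to M_\ell$ and $T_0 \colon M_\ell \to I'$ with $\|T_0 S_0(c_a) - \varphi(a)e_i\| < \epsilon/4$, and because $M_\ell$ is injective, Arveson's extension theorem extends the cp map $a \mapsto S_0(c_a)$ (initially defined on the finite-dimensional operator subsystem $\spn(F \cup F^*) + \bC\cdot 1_A$) to a ccp map $\sigma \colon A \to M_\ell$.

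The final map is obtained by combining $\Phi_1$ with $T_0 \sigma$ into a single ccp map $\Phi \colon A \to A'$ factoring through $M_k \oplus M_\ell \simeq M_{k+\ell}$, arranged so that $\|\Phi(a) - \varphi(a)\| < \epsilon$ on $F$; this gives nuclearity of $\varphi$. The hardest step will be this final assembly: the obstacle is that $\Phi_1(a)e_i$ lies in $I'$ but not manifestly in $\varphi_0(I)$, so subtracting its contribution within a ccp framework cannot be done naively by right-multiplication by $e_i$ (which is not ccp). The fix is to pass to a quasi-central refinement of the cai and replace the asymmetric product $\Phi_1(a)e_i$ by the ccp sandwich $(1-e_i)^{1/2}\Phi_1(a)(1-e_i)^{1/2}$, using quasi-centrality to show the commutator errors are negligible; this is the technical core of the argument, and it is precisely where the cai hypothesis of the lemma is essential, as it localizes the correction inside $\varphi_0(I)$ where the nuclear factorization of $\varphi_0$ is available.
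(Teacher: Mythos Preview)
The paper does not give its own proof of this lemma: it is stated with a citation to \cite[Proposition 3.3]{Kak19} and invoked as a black box in the proof of Theorem~\ref{T:nuclear 2}. There is therefore no argument in the present paper to compare against.

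On the merits of your sketch: the overall architecture is the right one, namely the standard proof that extensions of nuclear maps are nuclear, and you correctly flag quasi-central approximate units and the symmetric sandwich as the technical heart. There is, however, a genuine gap in the middle. You invoke Arveson extension for ``the cp map $a \mapsto S_0(c_a)$'' where $c_a = \varphi_0^{-1}(\varphi(a)e_i)$, but this map is not even $*$-preserving: one has $c_{a^*} = \varphi_0^{-1}\bigl(\varphi(a)^* e_i\bigr)$ whereas $(c_a)^* = \varphi_0^{-1}\bigl(e_i\varphi(a)\bigr)$, and these differ unless $e_i$ commutes with $\varphi(a)$. So Arveson extension does not apply here, and your proposed fix at the end (replacing $\Phi_1(a)e_i$ by a sandwich) is aimed at the wrong term---the same defect already occurs one step earlier, with $\varphi(a)e_i$.

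The clean repair is to observe that the hypothesis $\varphi(a)e_i \in \varphi_0(I)$, together with closedness of the C*-subalgebra $\varphi_0(I) \subseteq I'$, forces $e_i^{1/2}\varphi(a)e_i^{1/2} \in \varphi_0(I)$ for every $a \in A$: taking adjoints gives $e_i\varphi(A) \subseteq \varphi_0(I)$, an easy induction then gives $e_i^k\varphi(A) \subseteq \varphi_0(I)$ for all $k \geq 1$, and approximating $e_i^{1/2}$ by polynomials in $e_i$ without constant term finishes the job. Consequently $a \mapsto \varphi_0^{-1}\bigl(e_i^{1/2}\varphi(a)e_i^{1/2}\bigr)$ is an honest ccp map $A \to I$, to which the nuclearity of $\varphi_0$ applies directly---no extension step is needed. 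After passing to a quasi-central refinement of $(e_i)$ (which preserves the hypothesis, as $\varphi_0(I)$ is convex and closed), the two pieces $T_0 S_0\bigl(\varphi_0^{-1}(e_i^{1/2}\varphi(a)e_i^{1/2})\bigr)$ and $(1-e_i)^{1/2}\widetilde\beta\alpha(a+I)(1-e_i)^{1/2}$ assemble into the desired ccp approximation of $\varphi$ factoring through $M_k \oplus M_\ell$.
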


First we provide a nuclearity/exactness result for $\T_\la(X)$.
%As $\ca_\la(P)$ is the Fock algebra for the trivial product system $X$ with $X_p = \bC$ for all $p \in P$, the following theorem extends the relevant nuclearity results of \cite{Li13} for amenable weak right LCM-inclusions.

%%%%%%%%%%%%%%%%%%%%%%%%%%%%%%%%
\begin{theorem}\label{T:nuclear}
Let $(G,P)$ be a weak right LCM-inclusion and $X$ be a compactly aligned product system over $P$ with coefficients in $A$.
Let $\ol{E}_P \colon \T_\la(X) \to \ol{B}_{P}$ be the faithful conditional expectation that arises by compressing to the diagonal.
Then the following are equivalent:
\begin{enumerate}
\item $A$ is nuclear (resp.\ exact) and $\ol{E}_P \otimes_{\max} \id_D$ is a faithful conditional expectation on $\T_\la(X) \otimes_{\max} D$ for all C*-algebras $D$.
\item $\T_\la(X)$ is nuclear (resp.\ exact).
\end{enumerate}
\end{theorem}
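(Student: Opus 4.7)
The plan proceeds through both directions of the equivalence, with most of the work in $(i)\Rightarrow(ii)$. For $(ii)\Rightarrow(i)$, $A$ embeds into $\T_\la(X)$ via $\ol{\pi}$ as the corner cut out by the projection onto the $X_e$-summand of the Fock module $\F X$, so nuclearity (resp.\ exactness) descends from $\T_\la(X)$ to $A$. In the nuclear case one has $\T_\la(X)\otimes_{\max}D=\T_\la(X)\otimes_{\min}D$ for every $D$, whence the minimal tensor of the faithful CE $\ol{E}_P$ with $\id_D$ is faithful by the slice-map property; the exact case is analogous after realizing $\T_\la(X)$ inside a nuclear C*-algebra and transporting the expectation.

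For $(i)\Rightarrow(ii)$ I would argue in four steps. First, applying Lemma \ref{L:Kat} to the identity representation $(\id_A,\id_{X_p})$ of $X_p$ shows that $\K X_p$ is nuclear (resp.\ exact) for every $p\in P$. Second, I would induct on $|F|$ for finite $\vee$-closed $F\subseteq P$ to prove that $\ol{B}_F$ is nuclear (resp.\ exact): the base $|F|=1$ reduces to Step 1, and for the step I would pick a minimal $p\in F$, observe that $F_p:=F\cap pP$ is $\vee$-closed and, by Nica-covariance, that $\ol{B}_{F_p}\lhd\ol{B}_F$ with quotient identified with the core of a strictly smaller $\vee$-closed set. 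Lemma \ref{L:nuc-ext} (whose approximate-unit hypothesis is automatic in this self-intersection setting) then closes the inductive step for nuclearity, while for exactness one uses the standard closure of exactness under extensions. Third, the net $\{\ol{B}_F\}_F$ is upward-directed with dense union in $\ol{B}_P$, so $\ol{B}_P$ inherits nuclearity (resp.\ exactness) as an inductive limit. Fourth, for any C*-algebra $D$, the canonical $*$-epimorphism $\pi_D\colon\T_\la(X)\otimes_{\max}D\twoheadrightarrow\T_\la(X)\otimes_{\min}D$ intertwines $\ol{E}_P\otimes_{\max}\id_D$ and $\ol{E}_P\otimes_{\min}\id_D$; nuclearity of $\ol{B}_P$ forces $\pi_D|_{\ol{B}_P\otimes_{\max}D}$ to be injective, and the max-faithfulness hypothesis then makes any positive $a\in\ker\pi_D$ satisfy $(\ol{E}_P\otimes_{\max}\id_D)(a)=0$, hence $a=0$, which yields $\T_\la(X)$ nuclear. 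The exact case is obtained by replaying the same diagram chase on a short exact sequence $0\to J\to D\to D/J\to 0$, using that the max tensor product always preserves short exact sequences, exactness of $\ol{B}_P$ at the fixed-point level, and the max-faithfulness hypothesis to lift exactness to $\T_\la(X)$.

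The main obstacle is Step 2. While a minimal-element decomposition of $F$ is natural, the complement $F\setminus F_p$ need not be $\vee$-closed in general, and one must argue via the Wick ordering and the faithful diagonal expectation that $\ol{B}_F/\ol{B}_{F_p}$ is nonetheless a core of the expected type associated with a strictly smaller $\vee$-closed set. A cleaner alternative is to present $\ol{B}_F$ as the sectional C*-algebra of a finite Fell bundle over the poset $F$ and perform the extension induction inside Fell-bundle theory, bypassing the explicit bookkeeping entirely.
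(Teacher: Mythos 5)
Your skeleton agrees with the paper's in most respects: the $(ii)\Rightarrow(i)$ direction, the reduction of $(i)\Rightarrow(ii)$ to nuclearity (resp.\ exactness) of $\ol{B}_{P}$ via the commutative diagram intertwining $\ol{E}_P\otimes_{\max}\id_D$ with $\ol{E}_P\otimes\id_D$, the passage to finite $\vee$-closed $F$ by inductive limits, and the base case for a single fiber via Katsura are all exactly what the paper does. The genuine gap is the one you flag yourself in Step 2, and it is not merely bookkeeping. In your decomposition the newly chosen element $p$ contributes the \emph{ideal} $\ol{B}_{F_p}$, and all remaining elements are pushed into the \emph{quotient}; but $\ol{B}_F/\ol{B}_{F_p}$ is spanned by the images of the $\ol{\psi}_q(\K X_q)$ with $q\in F\setminus F_p$ subject to the extra relations that every Nica product whose least common multiple falls into $F_p$ is set equal to zero. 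Already for $F=\{e,p,q,w\}\subseteq\bZ_+^2$ with $wP=pP\cap qP$ the set $F\setminus\{w\}$ fails to be $\vee$-closed, and the quotient is not of the form $\ol{B}_{F'}$ for a $\vee$-closed $F'$ in the given Fock representation, so the inductive hypothesis cannot be applied to it. Neither of your proposed repairs (a Wick-ordering argument, or Fell bundles over the poset $F$) is carried out, and each would require genuine additional work.

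The paper closes the induction by reversing the roles of ideal and quotient. Enumerate $F=\{p_1,\dots,p_n\}$ along a linear extension of the partial order with deeper elements listed first, so that every initial segment $F_k=\{p_1,\dots,p_k\}$ is upward closed in $F$ and hence $\vee$-closed, while $p_{k+1}$ is minimal in $F_{k+1}$. Then any least common multiple of $p_{k+1}$ with an element of $F_k$ again lies in $F_k$, so Nica covariance makes $\ol{B}_{F_k}$ an ideal of $\ol{B}_{F_{k+1}}$; compressing to the summand $X_{p_{k+1}}$ of $\F X$ shows that $\ol{B}_{F_{k+1}}=\ol{B}_{F_k}\oplus\ol{\psi}_{p_{k+1}}(\K X_{p_{k+1}})$, so the quotient is the single algebra $\ol{\psi}_{p_{k+1}}(\K X_{p_{k+1}})$, which is nuclear (resp.\ exact) by the base case. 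One never needs to recognize a quotient as the core of a smaller $\vee$-closed set, and the extension step only uses closure of nuclearity (resp.\ exactness) under extensions of C*-algebras, not Lemma \ref{L:nuc-ext} (which concerns nuclear maps and is needed only in Theorem \ref{T:nuclear 2}). With this re-organization of the induction your argument becomes the paper's proof.
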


\begin{proof}
We will show nuclearity; exactness follows in the same way.
Notice that for any C*-algebra $D$ we have the following commutative diagram
\[
\xymatrix@R=35pt
{
\ca(\ol{\pi}, \ol{t}) \otimes_{\max} D \ar[rr] \ar[d]^{\ol{E}_P \otimes_{\max} \id} & & \ca(\ol{\pi}, \ol{t}) \otimes D \ar[d]^{\ol{E}_P \otimes \id} \\
\ol{B}_{P} \otimes_{\max} D \ar[rr] & & \ol{B}_{P} \otimes D
}
\]
and recall that $\ol{E}_P \otimes \id$ is faithful on $\ca(\ol{\pi}, \ol{t}) \otimes D$.

Suppose first that $\ca(\ol{\pi}, \ol{t})$ is nuclear.
Then trivially $\ol{E}_P \otimes_{\max} \id$ is faithful on $\ca(\ol{\pi}, \ol{t}) \otimes_{\max} D$.
Since $A$ is the corner of $\ca(\ol{\pi}, \ol{t})$ at the $(e,e)$-place we have that $A$ is nuclear, as the compression of a nuclear C*-algebra.

For the converse, the diagram above implies that it suffices to show that $\ol{B}_{P}$ is nuclear.
Equivalently it suffices to show that $\ol{B}_F$ is nuclear for every finite $\vee$-closed $F \subseteq P$.
To this end let $F= \{p_1, \dots, p_n\}$.
We choose the enumeration so that it covers the partial order in $F$ in the sense that if $p_m > p_{m'}$ then $m < m'$.
We will use induction on $n$.

For the first step we have that $\ol{\psi}_{p_1}(\K X_{p_1})$ is nuclear as $A$ is nuclear by \cite[Proposition B.7]{Kat04}.
For the inductive step suppose that $\ol{B}_{F_k}$ is nuclear for $F_k = \{p_1, \dots, p_k\}$ (which is $\vee$-closed by the choice of the enumeration).
We will show that so is $\ol{B}_{F_{k+1}}$ for $F_{k+1}=\{p_1, \dots, p_k, p_{k+1}\}$.
The enumeration shows that $p_{k+1}$ is minimal in $F_{k+1}$ and hence
\[
\ol{B}_{F_{k+1}} = \ol{B}_{F_k} \oplus \ol{\psi}_{p_{k+1}}(\K X_{p_{k+1}}).
\]
Indeed let $k_{p_i} \in \K X_{p_i}$ such that
\[
\sum_{i=1}^{k+1} \ol{\psi}_{p_i}(k_{p_i}) = 0.
\]
Due to minimality of $p_{k+1}$ in $F_{k+1}$ we have that
\[
k_{p_{k+1}} = Q_{p_{k+1}} \left( \sum_{i=1}^{k+1} \ol{\psi}_{p_i}(k_{p_i}) \right) Q_{p_{k+1}} = 0,
\]
for the projection $Q_{p_{k+1}} \colon \F X \to X_{p_{k+1}}$.
Minimality of $p_{k+1}$ also gives that $\ol{B}_{F_{k}}$ is an ideal in $\ol{B}_{F_{k+1}}$, and we thus derive the following short exact sequence
\[
\xymatrix{
0 \ar[r] & \ol{B}_{F_{k}} \ar[r] & \ol{B}_{F_{k+1}} \ar[r] & \ol{\psi}_{p_{k+1}}(\K X_{p_{k+1}}) \ar[r] & 0 .
}
\]
Since $\ol{B}_{F_k}$ is nuclear by the inductive hypothesis and $\ol{\psi}_{p_{k+1}}(\K X_{p_{k+1}})$ is nuclear by the base case we have that $\ol{B}_{F_{k+1}}$ is nuclear.
Inducing on $k$ gives that $\ol{B}_{F} = \ol{B}_{F_n}$ is nuclear.
\end{proof}

In the amenably controlled case, and by combining with Theorem \ref{T:ntx un}, we can deduce nuclearity/exactness of $\N\T(X)$ from nuclearity/exactness of $A$, and conversely.
The exactness equivalence passes to $A \times_{X} P$, however this fails for nuclearity even for $P = \bZ_+$ due to a counterexample of Ozawa in \cite{Kat04}.
In \cite{Kak19} it is shown that $A \times_{X} P$ is nuclear if and only if the embedding $A \hookrightarrow A \times_{X} P$ is nuclear when $(G,P)$ is a quasi-lattice controlled by $(\bZ^n, \bZ_+^n)$ that satisfies a minimality condition.
In fact this holds for any quotient in-between the Toeplitz-Nica-Pimsner and the covariance algebra.
Here we generalize to controlled maps by amenable weak right LCM-inclusions.
Recall that in the amenably controlled case the reduced C*-algebras are universal.

%%%%%%%%%%%%%%%%%%%%%%%%%%%%%%%%%
\begin{theorem}\label{T:exact}
Let $\vartheta \colon (G,P) \to (\G,\P)$ be a controlled map between weak right LCM-inclusions with $\G$ amenable and let $X$ be a compactly aligned product system over $P$ with coefficients in $A$.
Let $(\pi,t)$ be an equivariant injective Nica-covariant representation of $X$.
Then $A$ is exact if and only if $\ca(\pi,t)$ is exact.
\end{theorem}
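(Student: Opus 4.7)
The forward direction is immediate since $\pi$ embeds $A$ as a C*-subalgebra of $\ca(\pi,t)$ and exactness passes to C*-subalgebras.

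For the converse, assume $A$ is exact. The plan is to show the $\vartheta$-fixed point algebra $B_{\vartheta^{-1}(\P)} \subseteq \ca(\pi,t)$ is exact and then to transfer exactness to $\ca(\pi,t)$ using amenability of $\G$. For each $h \in \P$, set $Y_h := \sumoplus_{p \in \vartheta^{-1}(h)} X_p$, a right Hilbert $A$-module, and extend $\K(X_s, X_r) \ni k \mapsto \psi_{r,s}(k)$ to a $*$-homomorphism $\K Y_h \to B_{\vartheta^{-1}(h)}$. Surjectivity is by construction, injectivity follows from injectivity of $\pi$, and multiplicativity uses condition (A2) together with Nica-covariance to eliminate cross-terms: if $s \neq r'$ with $\vartheta(s) = \vartheta(r')$ then $sP \cap r'P = \mt$ by (A2), so $t_s(\xi_s)^* t_{r'}(\xi_{r'}) = 0$, matching exactly the matrix-unit structure of $\K Y_h$. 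Since $\K Y_h$ is Morita equivalent to the ideal $\ol{\sca{Y_h, Y_h}} \lhd A$, exactness of $A$ propagates to exactness of each $B_{\vartheta^{-1}(h)}$.

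Next I would show that $B_{\vartheta^{-1}(\F)}$ is exact for every finite $\vee$-closed $\F \subseteq \P$, by induction on $|\F|$. Pick $h \in \F$ minimal and set $\F' := \F \setminus \{h\}$, which remains $\vee$-closed. For $g \in \F'$, by (A1) and the Wick ordering, the product of an element of $B_{\vartheta^{-1}(g)}$ with one of $B_{\vartheta^{-1}(h)}$ either vanishes or lies in $B_{\vartheta^{-1}(h'')}$, where $h''\P = g\P \cap h\P \in \F$ by $\vee$-closedness; minimality of $h$ and $g \neq h$ force $h'' \neq h$, so $h'' \in \F'$. Hence $B_{\vartheta^{-1}(\F')}$ is a closed two-sided ideal of $B_{\vartheta^{-1}(\F)}$, and the quotient identifies with a quotient of $B_{\vartheta^{-1}(h)} \simeq \K Y_h$ and is therefore exact. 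Since extensions of exact C*-algebras are exact, the induction closes, and passing to the inductive limit over finite $\vee$-closed $\F$ delivers exactness of $B_{\vartheta^{-1}(\P)}$.

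Finally, $\G$-equivariance of $(\pi,t)$ makes $\ca(\pi,t)$ topologically $\G$-graded with unit fiber $B_{\vartheta^{-1}(\P)}$, realising it (isomorphically) as the cross-sectional C*-algebra of an associated Fell bundle over $\G$. Amenability of $\G$ collapses the full and reduced cross-sectional algebras and transfers exactness of the unit fiber to the bundle C*-algebra by a standard Fell bundle argument. This last step is the main obstacle of the proof: the earlier structural computations are self-contained consequences of conditions (A1)--(A2), Nica-covariance, and the Wick ordering, whereas the conclusion invokes a general principle from Fell bundle theory for amenable groups.
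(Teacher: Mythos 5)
Your proof is correct, but it takes a genuinely different route from the paper's. The paper first uses amenability of $\G$ (Theorem \ref{T:ntx un}) together with Kirchberg's theorem that quotients of exact C*-algebras are exact to reduce the claim to exactness of $\T_\la(X)$, and then verifies the criterion of Theorem \ref{T:nuclear} --- faithfulness of $\ol{E}_P \otimes_{\max} \id_D$ on $\T_\la(X) \otimes_{\max} D$ --- by building an auxiliary compactly aligned product system with fibers $\ol{\ol{t}_p(X_p) \odot D}$ inside $\T_\la(X) \otimes_{\max} D$ and identifying the latter with a Fock algebra. You instead stay with the given representation, decompose the $\G$-unit fiber $B_{\vartheta^{-1}(\P)}$ as an inductive limit of iterated extensions whose subquotients are quotients of the algebras $\K Y_h \simeq B_{\vartheta^{-1}(h)}$ (exact because Morita equivalent to ideals of $A$), and then invoke Exel's theorem that the reduced cross-sectional C*-algebra of a Fell bundle over an exact group with exact unit fiber is exact, together with the collapse of full and reduced cross-sectional algebras over amenable groups. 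Both arguments lean on Kirchberg's quotient theorem; yours additionally outsources the last step to Exel's Fell-bundle exactness result (Math.\ Ann.\ \textbf{323} (2002)), which the paper deliberately avoids, while the paper's tensor trick has the advantage that the same scaffolding (Theorems \ref{T:ntx un} and \ref{T:nuclear}) is reused for the nuclearity statement; your decomposition of the unit fiber is essentially the one the paper carries out anyway in the proof of Theorem \ref{T:nuclear 2}. One small point to align with the paper's conventions: the element $h$ you remove in the induction should be the one generating a \emph{maximal} principal ideal $h\P$ (equivalently, minimal in the divisibility order $g \leq h \Leftrightarrow h \in g\P$); with the order as literally defined in the paper ($p \leq q \Leftrightarrow q^{-1}p \in P$) this is a maximal element, and with that reading your claims that $\F \setminus \{h\}$ remains $\vee$-closed and that $B_{\vartheta^{-1}(\F \setminus \{h\})}$ is an ideal are exactly right.
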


\begin{proof}
We are going to introduce new product systems from $X$.
Therefore in order to make a distinction we will write $\ol{E}^X_P$ for the faithful conditional expectation on the Fock C*-algebra $\T_\la(X)$ of $X$.

If $\ca(\pi,t)$ is exact then so is $A$, since exactness passes to C*-subalgebras.
For the converse by Theorem \ref{T:ntx un} we have that $X$ is amenable and thus $\ca(\pi,t)$ is a quotient of $\T_\la(X)$.
Hence it suffices to show that $\T_\la(X)$ is exact.
In view of Theorem \ref{T:nuclear} it suffices to show that $\ol{E}_P^X \otimes_{\max} \id_D$ is faithful on $\T_\la(X) \otimes_{\max} D$ for all C*-algebras $D$.

Towards this end let the product system $Y = \{Y_p\}_{p \in P}$ be defined by
\[
Y_p := \ol{\ol{t}_p(X_p) \odot D}^{\otimes_{\max}} \subseteq \T_\la(X) \otimes_{\max} D.
\]
That $Y$ is a product system follows by that $X$ is so.
Since $X$ is compactly aligned we have that
\[
Y_p Y_p^* Y_q Y_q^* \subseteq \ol{\ol{\psi}_p(\K X_p) \ol{\psi}_q(\K X_q) \odot D}^{\otimes_{\max}} = \ol{\ol{\psi}_w(\K X_w) \odot D}^{\otimes_{\max}} = [Y_w Y_w^*]
\]
for $wP = pP \cap qP$, with the understanding that $Y_p Y_p^* Y_q Y_q^* = (0)$ when $p$ and $q$ have no common right common multiple.
Thus by Proposition \ref{P:com al} we get that $Y$ is a compactly aligned product system over $P$ with coefficients in $A$.

Again by Theorem \ref{T:ntx un} we have that $Y$ is amenable.
Our goal is to show that the identity representation on $Y$ is faithful on $\N\T(Y) \simeq \T_\la(Y)$, and thus we have that
\[
\N\T(Y) \simeq \T_\la(Y) \simeq \T_\la(X) \otimes_{\max} D.
\]
We then derive that the faithful conditional expectation $\ol{E}^Y_P$ on $\T_\la(Y)$ coincides with $\ol{E}^X_P \otimes_{\max} \id_D$ and the proof will be completed.
We will invoke Theorem \ref{T:ntx un}.

First we see that the identity representation is $\G$-equivariant.
Indeed we have that $(\ol{\pi}, \ol{t})$ admits a coaction $\ol{\de}_\G$ of $\G$ and thus we have an equivariant $*$-homomorphism
\[
\ol{\de}_\G \otimes_{\max} \id_D \colon \T_\la(X) \otimes_{\max} D \longrightarrow (\T_\la(X) \otimes \ca(\G)) \otimes_{\max} D.
\]
By amenability of $\G$ and associativity of the maximal tensor product we get that
\begin{align*}
(\T_\la(X) \otimes \ca(\G)) \otimes_{\max} D
& \simeq
\T_\la(X) \otimes_{\max} \ca(\G) \otimes_{\max} D \\
& \simeq
(\T_\la(X) \otimes_{\max} D) \otimes_{\max} \ca(\G)
\simeq
(\T_\la(X) \otimes_{\max} D) \otimes \ca(\G)
\end{align*}
and thus we deduce that $\ol{\de}_\G \otimes_{\max} \id_D$ is a coaction of $\G$ on $\T_\la(X) \otimes_{\max} D$. % (and trivially it has a left inverse by using the character of $\ca(\G)$).
By construction $\ol{\de}_\G \otimes_{\max} \id_D$ satisfies the coaction identity with aligned fibers in the sense that 
\[
[\T_\la(X) \otimes_{\max} D]_g = \ol{[\T_\la(X)]_g \odot D}^{\otimes_{\max}}.
\]

Secondly let $F \subseteq P$ be $\vee$-closed finite set and let $k'_p \in \K Y_p$ such that $\sum_{p \in F} \id(k'_p) = 0$.
For every state $\phi \in \S(D)$ we have the completely contractive map
\[
\id \otimes_{\max} \phi \colon [Y_p Y_p^*] \otimes_{\max} D  \longrightarrow \ol{\psi}_p(\K X_p); \ol{\psi}_p(k_p) \otimes d \mapsto \phi(d) \ol{\psi}_p(k_p).
\]
Therefore we derive 
\[
\sum_{p \in F} (\id \otimes_{\max} \phi)(k'_p) = (\id \otimes_{\max} \phi)(\sum_{p \in F} k'_p) = 0.
\]
Note here that this is a relation in $\T_\la(X)$ with every $(\id \otimes_{\max} \phi)(k'_p) \in \ol{\psi}_p(\K X_p)$.
Thus if $p_0$ is a minimal element in $F$ such that $(\id \otimes_{\max} \phi)(k'_{p_0}) \neq 0$ then we get
\[
P_{p_0} (\id \otimes_{\max} \phi)(k'_{p_0}) P_{p_0} = P_{p_0} (\id \otimes_{\max} \phi)(\sum_{p \in F} k'_p) P_{p_0} = 0,
\]
where $P_{p_0} \colon \F X \to X_{p_0}$ is the canonical projection.
However the compression to $X_{p_0}$ is a faithful $*$-representation on $\ol{\psi}_{p_0}(\K X_{p_0})$, and thus we get the contradiction that $(\id \otimes_{\max} \phi)(k'_{p_0}) = 0$.
Continuing inductively we deduce that $(\id \otimes_{\max} \phi)(k'_p) = 0$ for all $p \in F$ (one by one for fixed $\phi$).
As this holds for all $\phi$ and the family $\{\id \otimes_{\max} \phi\}_{\phi \in \S(D)}$ separates $[Y_p Y_p^*] \otimes_{\max} D$ we get that $k'_p = 0$ for all $p \in F$.
Hence the assumptions of Theorem \ref{T:ntx un} hold for $Y$ and the proof is completed.
\end{proof}

%%%%%%%%%%%%%%%%%%%%%%%%%%%%%%%%%
\begin{theorem}\label{T:nuclear 2}
Let $\vartheta \colon (G,P) \to (\G,\P)$ be a controlled map between weak right LCM-inclusions with $\G$ amenable and let $X$ be a compactly aligned product system over $P$ with coefficients in $A$.
Let $(\pi,t)$ be an equivariant injective Nica-covariant representation of $X$.
Then $A \hookrightarrow \ca(\pi,t)$ is nuclear if and only if $\ca(\pi,t)$ is nuclear.
\end{theorem}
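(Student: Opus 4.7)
The direction ``$\ca(\pi,t)$ nuclear $\Rightarrow$ $A \hookrightarrow \ca(\pi,t)$ nuclear'' is immediate: factor the inclusion as $A \hookrightarrow \ca(\pi,t) \xrightarrow{\id} \ca(\pi,t)$, and since the identity on a nuclear C*-algebra is a nuclear map, the composition is nuclear.

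For the converse, suppose $\iota \colon A \hookrightarrow \ca(\pi,t)$ is nuclear. The plan is to follow the inductive strategy from \cite[Section 3]{Kak19}, which handles the case $(\G,\P) = (\bZ^n, \bZ^n_+)$, by lifting nuclearity of $\iota$ up through the cores. First, for each $p \in P$, Lemma \ref{L:Kat} applied to $(\pi, t_p)$ with coefficient map $\iota$ gives that the inclusion $\psi_p(\K X_p) \hookrightarrow \ca(\pi,t)$ is nuclear. Second, by induction on $|F|$ for finite $\vee$-closed $F \subseteq P$, one applies Lemma \ref{L:nuc-ext} to the short exact sequence
\[
0 \to B_{F \setminus \{p\}} \to B_F \to \psi_p(\K X_p) \to 0
\]
(obtained by choosing a minimal $p \in F$, as in the proof of Theorem \ref{T:nuclear}) together with the ideal $I'$ in $\ca(\pi,t)$ generated by $B_{F \setminus \{p\}}$, to obtain that $B_F \hookrightarrow \ca(\pi,t)$ is nuclear; the direct limit over $\vee$-closed $F$ yields $B_{P} \hookrightarrow \ca(\pi,t)$ nuclear. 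Third, the matrix-algebra structure of the blocks $B_{\vartheta^{-1}(h)}$ for $h \in \P$, combined with controlled elimination (Proposition \ref{P:injective}) and a further inductive step over finite $\vee$-closed $\F \subseteq \P$, upgrades this to nuclearity of $B_{\vartheta^{-1}(\P)} \hookrightarrow \ca(\pi,t)$.

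The final step is the key observation: amenability of $\G$ supplies a faithful ccp conditional expectation $E_\P \colon \ca(\pi,t) \to B_{\vartheta^{-1}(\P)}$, and the composition
\[
B_{\vartheta^{-1}(\P)} \hookrightarrow \ca(\pi,t) \xrightarrow{E_\P} B_{\vartheta^{-1}(\P)}
\]
is the identity on $B_{\vartheta^{-1}(\P)}$. Nuclearity of the inclusion therefore forces nuclearity of $\id_{B_{\vartheta^{-1}(\P)}}$, so $B_{\vartheta^{-1}(\P)}$ is itself nuclear. Since $\ca(\pi,t)$ is, by amenability of $\G$, the (reduced equals full) cross-sectional C*-algebra of a Fell bundle over $\G$ with unit fiber $B_{\vartheta^{-1}(\P)}$, the standard result that amenability of the acting group together with nuclearity of the unit fiber implies nuclearity of the cross-sectional algebra gives that $\ca(\pi,t)$ is nuclear.

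The principal obstacle is verifying the c.a.i.~hypothesis of Lemma \ref{L:nuc-ext} in the inductive step: one needs an approximate identity $(e_i)$ for the ideal in $\ca(\pi,t)$ generated by $B_{F \setminus \{p\}}$ such that $b \cdot e_i \in B_{F \setminus \{p\}}$ for every $b \in B_F$. Nica-covariance and the Wick ordering of $\ca(\pi,t)$ should provide this, but the bookkeeping must be carried out carefully in the weak right LCM setting, where least common multiples need not be unique. A secondary subtlety arises in passing from $B_{P}$ to $B_{\vartheta^{-1}(\P)}$, which relies crucially on condition (A2) of Definition \ref{D:control} to control the off-diagonal matrix blocks that distinguish the two cores.
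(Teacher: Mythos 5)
Your overall architecture is sound and, at both ends, coincides with the paper's: the easy direction is as you say, and your last two steps (that $\id_{B_{\vartheta^{-1}(\P)}} = E_\P \circ \iota$ forces $B_{\vartheta^{-1}(\P)}$ to be nuclear, and that nuclearity of the $\G$-fixed point algebra gives nuclearity of $\ca(\pi,t)$ because $\G$ is amenable) are exactly what the paper establishes, except that instead of quoting a Fell-bundle theorem it proves the last implication by hand, intertwining the faithful conditional expectations $E \otimes_{\max} \id_D$ and $E \otimes \id_D$ in a commuting square.

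The gaps are in the middle, where the real work is. First, the short exact sequence $0 \to B_{F\setminus\{p\}} \to B_F \to \psi_p(\K X_p) \to 0$ is valid for the Fock representation, where the cores decompose as direct sums as in the proof of Theorem \ref{T:nuclear}, but not for a general equivariant injective Nica-covariant $(\pi,t)$: already for $P=\bZ_+$ and a Cuntz--Pimsner type quotient one has $\pi(a)=\psi_1(\vphi_1(a))$ for $a$ in Katsura's ideal, so $B_F/B_{F\setminus\{p\}}$ is only a proper quotient of $\K X_p$. Second, your ideal $I'$ --- the ideal of $\ca(\pi,t)$ generated by $B_{F\setminus\{p\}}$ --- does not satisfy the c.a.i.\ hypothesis of Lemma \ref{L:nuc-ext}: a c.a.i.\ of $B_{F\setminus\{p\}}$ is not approximately a unit for elements such as $t_s(\xi_s)^*\psi_q(k_q)t_r(\xi_r)$ lying in the generated two-sided ideal; you flag this as the principal obstacle but do not resolve it. Third, Proposition \ref{P:injective} cannot ``upgrade'' nuclearity from $B_{P}$ to $B_{\vartheta^{-1}(\P)}$; it is a statement about injectivity of $*$-homomorphisms on cores and says nothing about nuclearity of inclusions. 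The paper resolves all three points at once by abandoning the $P$-cores and working entirely inside the fixed point algebra: for $h\in\P$ it makes $Y_h=\sum_{p\in\vartheta^{-1}(h)}t_p(X_p)$ into a C*-correspondence over $A$ (condition (A2) supplies the $A$-valued inner product), so that $B_{\vartheta^{-1}(h)}=[Y_hY_h^*]$, and Lemma \ref{L:Kat} applied to the nuclear map $A\to B_{\vartheta^{-1}(\P)}$ yields nuclearity of $B_{\vartheta^{-1}(h)}\hookrightarrow B_{\vartheta^{-1}(h\P)}$ directly. The induction is then run over finite $\vee$-closed $\F\subseteq\P$ organized into levels of maximal elements, with target ideals $B_{\vartheta^{-1}(\F'\cdot\P)}\lhd B_{\vartheta^{-1}(\F''\cdot\P)}$ --- for which the c.a.i.\ condition of Lemma \ref{L:nuc-ext} is verifiable because $B_{\vartheta^{-1}(\F')}$ contains a c.a.i.\ of $B_{\vartheta^{-1}(\F'\cdot\P)}$ --- and with Lemma \ref{L:nuc-quot} supplying the nuclear map on the (possibly proper) quotients.
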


\begin{proof}
It is clear that if $\ca(\pi,t)$ is nuclear then $A \hookrightarrow \ca(\pi,t)$ is nuclear.
Let us prove the converse.
By Theorem \ref{T:ntx un} we have that $\T_\la(X) \simeq \N\T(X)$ and so $(\pi,t)$ promotes to a $*$-representation of $\T_\la(X)$.
Due to amenability $\ca(\G) = \ca_\la(\G)$ is nuclear (and so the minimal and the maximal tensor product coincide).
Let $\de \colon \ca(\pi,t) \to \ca(\pi,t) \otimes \ca_\la(\G)$ be the coaction of $\G$ and let $E = (\id \otimes E_\G) \de$ be the faithful conditional expectation induced on $\ca(\pi,t)$ by the faithful conditional expectation $E_\G$ of $\ca_\la(\G)$.
Let $D$ be any C*-algebra.
Associativity of $\otimes_{\max}$ and nuclearity of $\ca_\la(\G)$ yields
\[
D \otimes_{\max} \ca(\pi,t) \otimes_{\max} \ca_\la(\G) \simeq (D \otimes_{\max} \ca(\pi,t) ) \otimes \ca_\la(\G)
\]
and so $\id_D \otimes_{\max} \id \otimes_{\max} E_\G = (\id_D \otimes_{\max} \id) \otimes E_\G$ is faithful on $D \otimes_{\max} \ca(\pi,t) \otimes_{\max} \ca_\la(\G)$.
Hence 
\[
\id_D \otimes_{\max} E := (\id_D \otimes_{\max} \id \otimes_{\max} E_\G) (\id_D \otimes_{\max} \de)
\]
is a faithful conditional expectation of $D \otimes_{\max} \ca(\pi,t)$ on $D \otimes_{\max} B_\P$.
Therefore we have the following commutative diagram
\[
\xymatrix@R=35pt
{
\ca(\pi, t) \otimes_{\max} D \ar[rr] \ar[d] & & \ca(\pi, t) \otimes D \ar[d] \\
B_{\P} \otimes_{\max} D \ar[rr] & & B_{\P} \otimes D
}
\]
where the vertical arrows are faithful conditional expectations.
Hence it suffices to show that if $\pi \colon A \to B_{\P}$ is nuclear then the fixed point algebra $B_{\P}$ is nuclear.
As the latter is an inductive limit, it suffices to show that nuclearity of $\pi$ in $B_{\P}$ induces nuclearity of the embedding $B_{\vartheta^{-1}(F)} \hookrightarrow B_{\vartheta^{-1}(\P)}$ for every finite $\vee$-closed $\F \subseteq \P$.
We will actually show nuclearity of the embedding 
\[
B_{\vartheta^{-1}(\F)} \hookrightarrow B_{\vartheta^{-1}(\F \cdot \P)} \subseteq B_{\vartheta^{-1}(\P)},
\]
where we write
\[
\vartheta^{-1}(\F \cdot \P) = \{p P \mid \vartheta(p) \in \F \}.
\]

First we remark that $B_\F$ contains a c.a.i.\ for $B_{\F \cdot \P}$.
Indeed let $(e_i)$ be a c.a.i.\ for $B_\F$ so that $\lim_i e_i \psi_p(k_p) = \psi_p(k_p)$ for every $p \in \vartheta^{-1}(\F)$.
Consequently $\lim_i e_i t_p(\xi_p) = t_p(\xi_p)$ for every $p \in \vartheta^{-1}(\F)$ and thus
\[
\lim_i e_i t_p(\xi_p) t_r(\xi_r) t_s(\xi_s)^* = t_p(\xi_p) t_r(\xi_r) t_s(\xi_s)^* \foral r, s \in P.
\]
Thus $\lim_i e_i \psi_{p,q}(k_{p,q}) = \psi_{p,q}(k_{p,q})$ for every $p, q \in \vartheta^{-1}(\F \cdot P)$.

Now fix a finite $\vee$-closed $\F$.
By using maximal elements we can write $\F$ in levels, i.e.,
\[
\F =
\{
h_{11}, \cdots, h_{1 n_1}, 
h_{21}, \cdots, h_{2 n_2}, 
\cdots, 
h_{m1}, \cdots, h_{m n_m}
\},
\]
such that every
\[
\F_i := \{h_{i1}, \dots, h_{i n_i}\} \text{ with } i \in \{1, \dots m\},
\]
consists of the maximal elements of $\F \setminus \cup_{j=1}^{i-1} \F_j$ and $\F_1$ consists of the maximal elements of $\F$.

We now proceed by induction.
For the base case let $h \in \P$ and consider the space
\[
Y_h := \sum_{p \in \vartheta^{-1}(h)} t_{p}(X_{p}).
\]
By using condition (A2) of Definition \ref{D:control} we can equip $Y_h$ with the $A$-valued bilinear map defined by
\[
\sca{y_h, y'_h} := y_h^* y'_h \in \pi(A) \foral y_h, y'_h \in Y_h.
\] 
Then each $Y_h$ becomes a C*-correspondence over $A$, since $\pi$ is faithful.  
The embedding $Y_h \hookrightarrow \big[ Y_h B_{\vartheta^{-1}(\P)} \big]$ and nuclearity of $\pi(A) \hookrightarrow B_{\vartheta^{-1}(\P)}$ imply nuclearity of the embedding
\[
B_{\vartheta^{-1}(h)} = [Y_h Y_h^*] \hookrightarrow \big[ Y_h B_{\vartheta^{-1}(\P)} Y_h^* \big] = B_{\vartheta^{-1}(h \P)}, \foral h \in \P,
\]
by \cite[Proposition B.8]{Kat04}.
Maximality of the $h_{1j}$ in $\F$ yields that the $h_{1j} \P$ are minimal in $\{h \P \mid h \in \F\}$ with respect to inclusions.
As $\F$ is $\vee$-closed we have that $h_{1 j} \P \cap h_{1 j'} \P = \mt$ for $j \neq j'$.
Hence the C*-algebras $B_{\vartheta^{-1}(h_{1j})}$ are orthogonal and thus the embedding
\[
B_{\F_1} = \sumoplus_{j=1}^{n_1} B_{\vartheta^{-1}(h_{1j})} \hookrightarrow \sum_{j=1}^{n_1} B_{\vartheta^{-1}(h_{1 j} \P)} \subseteq B_{\vartheta^{-1}(\F_1 \cdot \P)}
\]
is nuclear.
For the inductive hypothesis suppose that we have shown that the embedding $B_{\vartheta^{-1}(\F')} \hookrightarrow B_{\vartheta^{-1}(\F' \cdot \P)}$
is nuclear for
\[
\F' =
\{
h_{11}, \cdots, h_{1 n_1}, 
\cdots  
h_{i1}, \cdots, h_{ij}
\}
\]
for some $j \in \{1, \dots, n_i\}$.
If $j < n_i$ then set $h := h_{i (j+1)}$; if $j = n_i$ then set $h = h_{(i+1) 1}$.
We will show that the embedding
\[
B_{\vartheta^{-1}(\F'')} \hookrightarrow B_{\vartheta^{-1}(\F'' \cdot \P)}
\qfor
\F'' := \F' \cup \{h\}
\]
is nuclear.
By construction $B_{\vartheta^{-1}(\F')}$ is an ideal in $B_{\vartheta^{-1}(\F'')}$ and $B_{\vartheta^{-1}(F'')} = B_{\vartheta^{-1}(h)} + B_{\vartheta^{-1}(\F')}$; thus
\[
\quo{B_{\vartheta^{-1}(\F'')}}{B_{\vartheta^{-1}(\F')}} \simeq \quo{B_{\vartheta^{-1}(h)}}{B_{\vartheta^{-1}(h)} \cap B_{\vartheta^{-1}(\F')}}.
\]
Likewise $B_{\vartheta^{-1}(\F' \cdot \P)}$ is an ideal of $B_{\vartheta^{-1}(\F'' \cdot \P)}$.
From the base case we have nuclearity of the map
\[
B_{\vartheta^{-1}(h)} \hookrightarrow B_{\vartheta^{-1}(h \P)} \subseteq B_{\vartheta^{-1}(\F'' \cdot \P)}.
\]
By applying Lemma \ref{L:nuc-quot} on the commutative diagram of short exact sequences
\[
\xymatrix{
0 \ar[r] & \quo{B_{\vartheta^{-1}(h)}}{B_{\vartheta^{-1}(\F')}} \ar[r] \ar[d] & B_{\vartheta^{-1}(h)} \ar[r] \ar[d] & \quo{B_{\vartheta^{-1}(h)}}{B_{\vartheta^{-1}(h)} \cap B_{\vartheta^{-1}(\F')}} \ar[r] \ar[d] & 0\\
0 \ar[r] & B_{\vartheta^{-1}(\F' \cdot P)} \ar[r] & B_{\vartheta^{-1}(\F'' \cdot \P)} \ar[r] & \quo{B_{\vartheta^{-1}(\F'' \cdot \P)}}{B_{\vartheta^{-1}(\F' \cdot \P)}} \ar[r] & 0
}
\]
we get that the right vertical arrow is nuclear, i.e., the map
\[
\quo{B_{\vartheta^{-1}(\F'')}}{B_{\vartheta^{-1}(\F')}} \simeq \quo{B_{\vartheta^{-1}(h)}}{B_{\vartheta^{-1}(h)} \cap B_{\vartheta^{-1}(\F')}} \longrightarrow \quo{B_{\vartheta^{-1}(\F'' \cdot \P)}}{B_{\vartheta^{-1}(\F' \cdot \P)}}
\]
is nuclear.
Let $(e_i) \subseteq B_{\vartheta^{-1}(\F')}$ be a c.a.i.\ for $B_{\vartheta^{-1}(\F' \cdot \P)}$, and note that
\[
B_{\vartheta^{-1}(\F'')} \cdot e_i \subseteq B_{\vartheta^{-1}(\F'')} \cdot B_{\vartheta^{-1}(\F')} = B_{\vartheta^{-1}(\F')}.
\]
Using the inductive hypothesis and Lemma \ref{L:nuc-ext} on the commutative diagram of short exact sequences
\[
\xymatrix{
0 \ar[r] & B_{\vartheta^{-1}(\F')} \ar[r] \ar[d] & B_{\vartheta^{-1}(\F'')} \ar[r] \ar[d] & \quo{B_{\vartheta^{-1}(\F'')}}{B_{\vartheta^{-1}(\F')}} \ar[r] \ar[d] & 0\\
0 \ar[r] & B_{\vartheta^{-1}(\F' \cdot \P)} \ar[r] & B_{\vartheta^{-1}(\F'' \cdot \P)} \ar[r] & \quo{B_{\vartheta^{-1}(\F'' \cdot \P)}}{B_{\vartheta^{-1}(\F' \cdot \P)}} \ar[r] & 0
}
\]
we derive that the middle vertical arrow is nuclear, as required.
This concludes the inductive step.
Now by using induction we derive that $B_{\vartheta^{-1}(\F)} \hookrightarrow B_{\vartheta^{-1}(\F \cdot \P)}$ is nuclear, and the proof is complete.
\end{proof}

%%%%%%%%%%%%%%%%%%%%%%%%%%%%%%%%
\section{Saturated controlled maps} \label{S:scm}
%%%%%%%%%%%%%%%%%%%%%%%%%%%%%%%%

%%%%%%%%%%%%%%%%%%%%%%%%%%%%%%%%
\subsection{A product system re-parametrization}
%%%%%%%%%%%%%%%%%%%%%%%%%%%%%%%%

Let $\vartheta \colon (G,P) \to (\G, \P)$ be a controlled map between weak right LCM-inclusions and let $X$ be a compactly aligned product system over $P$ with coefficients in $A$. 
We can then define the C*-correspondence
\[
Y_h := \sumoplus_{p \in \vartheta^{-1}(h)} X_p \foral h \in \P.
\]
One is tempted to consider the family $Y = \{Y_h\}_{h \in \P}$ and associate its C*-algebras with those of $X$.
However it is not clear that $Y$ is in general a product system (let alone compactly aligned).
Nevertheless this happens for controlled maps that satisfy one extra condition.

%%%%%%%%%%%%%%%%%%%%%%%%%%%%%%%%
\begin{definition}
Let $\vartheta \colon (G, P) \rightarrow (\G, \P)$ be a controlled map of weak right LCM-inclusions.
We say that $\vartheta$ is \emph{saturated} if for any $h \in \P$ and $t \in \vartheta^{-1}(h \P)$ there exists an $s \in P$ with $\vartheta(s) \P = h \P$ and $t \in s P$.
\end{definition}

%%%%%%%%%%%%%%%%%%%%%%%%%%%%%%%%
\begin{remark}
In particular, saturated maps satisfy the following property:
\begin{enumerate}
\item[(A3)] If $z \in \P^*$ then there exists an $x \in P^*$ such that $\vartheta(x) = z$.
\end{enumerate}
Indeed, we apply the saturation property for $z \in \P^*$ and $t = e_G \in \vartheta^{-1}(z \P)$ to obtain an $x \in P$ with $e_G \in x P$.
Hence we get that $P = x P$ giving that $x \in P^*$.
\end{remark}

%Let $y \in P$ such that $xy=e$.
%Then $yx = yxy y^{-1} = y e y^{-1} = e$ and thus $x\in P^*$.

The following provides a good supply of saturated controlled maps.
Recall that a pair $(G,P)$ is a \emph{total order} if $G = P^{-1} \cup P$ and $P^{-1} \cap P = \{e_G\}$.
It is clear that total orders, being lattices, form weak right LCM-inclusions. 
%In particular an abelian weak right LCM-inclusion $(G,P)$ is a total order if and only if $-P \cap P = \{0\}$. xxxx

%%%%%%%%%%%%%%%%%%%%%%%%%%%%%%%%
\begin{proposition}\label{P:supply}
Let $(G, P)$ be an abelian total order.
For $n \in \bN \cup \{\infty\}$ consider the free product $(\Asterisk_{i=1}^n G, \Asterisk_{i=1}^n P)$ of $n$ copies of $(G, P)$.
Then the map
\[
\vartheta \colon (\Asterisk_{i=1}^n G, \Asterisk_{i=1}^n P) \longrightarrow (G, P) ; (g_1)_{i_1} (g_2)_{i_2} \dots (g_k)_{i_k} \mapsto g_1 + g_2 + \cdots + g_k
\]
is a saturated controlled map.
\end{proposition}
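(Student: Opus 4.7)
The plan is to verify the two axioms (A1), (A2) of a controlled map and the saturation property separately. The map $\vartheta$ is well-defined as a group homomorphism by the universal property of the free product of groups applied to the identity maps from each copy of $G$ into the abelian group $G$; since each copy of $P$ is mapped into $P$, it restricts to a monoid homomorphism $\Asterisk_{i=1}^n P \to P$. I will work with the standard normal form: an element of $\Asterisk_{i=1}^n P$ is a reduced word $p = (a_1)_{i_1}(a_2)_{i_2} \cdots (a_k)_{i_k}$ with $i_j \neq i_{j+1}$ and $a_j \in P \setminus \{e\}$, and then $\vartheta(p) = a_1 + a_2 + \cdots + a_k$ (writing the abelian $G$ additively).

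The key structural step is the claim: whenever $pP \cap qP \neq \emptyset$ in $\Asterisk_{i=1}^n P$, one of $p, q$ lies in the principal right ideal generated by the other. To see this, suppose $r = px = qy$ and write $p = p_1 \cdots p_k$, $q = q_1 \cdots q_\ell$ in reduced form (WLOG $k \leq \ell$). Uniqueness of the reduced form of $r$ forces $p_j = q_j$ for $j < k$; furthermore, when $k < \ell$, the $k$-th letter $q_k$ must share its index with $p_k$ and have inner element at least $a_k$ in $P$, while when $k = \ell$ the $k$-th letters share the same index and their inner elements are comparable via the total order on that copy of $P$. In each case, $q$ (or $p$, by symmetry) is expressible as $p \cdot r'$ (respectively $q \cdot r'$) for some $r' \in \Asterisk_{i=1}^n P$, so $pP \cap qP$ is either empty or equal to one of $pP, qP$.

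Granted the claim, (A1) and (A2) follow immediately. Assume without loss of generality $pP \cap qP = qP$, so $q = p r$ for some $r \in \Asterisk_{i=1}^n P$; then $\vartheta(q) = \vartheta(p) + \vartheta(r) \in \vartheta(p) + P$, which in the total order $(G,P)$ gives $\vartheta(p) P \cap \vartheta(q) P = \vartheta(q) P = \vartheta(pP \cap qP) P$, settling (A1). If in addition $\vartheta(p) = \vartheta(q)$, then $\vartheta(r) = e$; since the inner elements of the reduced form of $r$ all lie in $P$ and $P \cap (-P) = \{e\}$, each such inner element must equal $e$, forcing the reduced form of $r$ to be empty. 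Hence $r = e$ and $p = q$, which is (A2).

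For saturation, fix $h \in P$ and $t = (c_1)_{k_1} \cdots (c_m)_{k_m} \in \vartheta^{-1}(hP)$, so $\vartheta(t) = c_1 + \cdots + c_m \geq h$ in the total order. The partial sums $S_j := c_1 + \cdots + c_j$ satisfy $S_0 = e \leq h \leq S_m$, so there is a smallest index $j$ with $S_j \geq h$; let $d \in P$ be the unique element with $S_{j-1} + d = h$ (so $e < d \leq c_j$ when $h \neq e$, and the case $h = e$ is trivially handled by taking $s = e$). Then
\[
s := (c_1)_{k_1} \cdots (c_{j-1})_{k_{j-1}} (d)_{k_j}
\]
is a reduced word with $\vartheta(s) = h$, so $\vartheta(s) P = h P$, and setting $r' := (c_j - d)_{k_j} (c_{j+1})_{k_{j+1}} \cdots (c_m)_{k_m}$ yields $s \cdot r' = t$, because the leading factor $(c_j - d)_{k_j}$ of $r'$ combines with the last letter $(d)_{k_j}$ of $s$ into $(c_j)_{k_j} = t_j$ when $c_j \neq d$, and is dropped when $c_j = d$. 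The main obstacle is the careful bookkeeping of reductions in the free product monoid when analyzing common right multiples, but the total-order assumption on each copy of $P$ reduces the comparison at the contested position to a routine min/max computation.
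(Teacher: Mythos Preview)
Your proof is correct and follows essentially the same approach as the paper's: both establish that if $\overline{p}\,(\Asterisk P)\cap\overline{q}\,(\Asterisk P)\neq\emptyset$ then $\overline{p}$ and $\overline{q}$ are comparable, deduce (A1) and (A2) from this together with $P\cap(-P)=\{e\}$, and construct the saturating prefix by truncating at the first partial sum $\geq h$ and adjusting the last letter. You supply more detail on the comparability claim (the paper invokes ``the freeness construction'' without further comment), but the arguments are otherwise identical.
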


\begin{proof}
For condition (A1) of Definition \ref{D:control}, if $\ol{p}, \ol{q} \in \Asterisk_{i=1}^n P$ with $\ol{p} (\Asterisk_{i=1}^n P) \bigcap \ol{q} (\Asterisk_{i=1}^n P) \neq \mt$ then the freeness construction implies that either $\ol{p} \leq \ol{q}$ or $\ol{q} \leq \ol{p}$.

For condition (A2) of Definition \ref{D:control} suppose that $\ol{p}, \ol{q}$ have a right LCM and they satisfy $\vartheta(\ol{p}) = \vartheta(\ol{q})$.
Without loss of generality assume that $\ol{r} = \ol{p}^{-1} \ol{q} \in \Asterisk_{i=1}^n P$.
Then $\vartheta(\ol{r}) = 0$.
If $\ol{r} = (r_1)_{i_1} \cdots (r_k)_{i_k}$ then $r_1 + \cdots + r_k = 0$ giving that $r_k \in -P \cap P = \{0\}$.
Inductively we get that $r_1 =\cdots = r_k = 0$ and so $\ol{p} = \ol{q}$.

Next we verify that $\vartheta$ is saturated.
To this end let 
\[
\overline{p} = (p_1)_{i_1} (p_2)_{i_2} \dots (p_k)_{i_k},
\]
and let $h \in \P$ with 
\[
h \leq \vartheta(\overline{p}) =p_1+p_2+\dots +p_k.
\]
Let $\ell \in \{1, \dots, k\}$ be the smallest index so that $h \leq p_1+p_2+\dots +p_{\ell}$.
Set
\[
h' = 
\begin{cases}
h & \text{if } \ell=1, \\
h-(p_1+p_2+\dots +p_{\ell-1}) & \text{otherwise},
\end{cases}
\]
and notice that $h' \in P$ with $h' \leq p_\ell$. 
Let 
\[
\overline{q} = 
\begin{cases}
(h')_{i_1} & \text{if } \ell=1, \\
(p_1)_{i_1} (p_2)_{i_2} \cdots (p_{\ell-1})_{i_{\ell-1}} (h')_{i_\ell} & \text{otherwise.}
\end{cases}
\]
Then $\overline{q} \leq \overline{p}$ and $\vartheta(\overline{q}) = h$, as desired.
\end{proof}

%%%%%%%%%%%%%%%%%%%%%%%%%%%%%%%%
\begin{example}
A second example comes from types of semi-direct products.
Let $(G,P)$ and $(H,S)$ be quasi-lattice ordered groups and let an action $\al \colon H \to \Aut(G)$ such that $\al|_S \colon S \to \Aut(P)$ restricts to automorphisms of $P$.
Then we can form the semi-direct products $G \rtimes_\al H$ and $P \rtimes_\al S$ with respect to the relations $\al_h(g) h = h g$.
The condition on $\al$ makes $P \cdot S$ a subsemigroup of the semi-direct product, and in \cite{Kak19} it is shown that the pair $(G \rtimes_\al H, P \rtimes_\al S)$ is quasi-lattice ordered.
Now suppose that $(G, P)$ admits an abelian controlled map $\vartheta_1$ in $(\G_1, \P_1)$ and $(H, S)$ admits an abelian controlled map $\vartheta_2$ in $(\G_2, P_2)$.
In order for the semi-direct product to inherit the obvious controlled map on $(\G_1 \oplus \G_2, \P_1 \oplus \P_2)$ it is necessary that $\al$ is $\vartheta_1$-invariant in the sense that $\vartheta_1 \al_h = \vartheta_1$ for all $h \in H$.
We can then define the homomorphism
\[
\vartheta \colon (G \rtimes_\al H, P \rtimes_\al S) \longrightarrow (\G_1 \oplus \G_2, \P_1 \oplus \P_2) \textup{ such that } \vartheta(g h) = (\vartheta_1(g), \vartheta_2(h)).
\]
We claim that if $\vartheta_1$ and $\vartheta_2$ are saturated, then so is $\vartheta$.
Suppose that $\vartheta(gh) = (\vartheta_1(g), \vartheta_2(h)) \geq (m, \ell)$.
Then there are $s_1, r_1 \in G$ and $s_2, r_2 \in H$ such that
\[
g = s_1 r_1, \vartheta_1(s_1) = m
\qand
h = s_2 r_2, \vartheta(s_2) = \ell.
\]
It follows that $\vartheta(s_1 s_2) = (m, \ell)$ and $gh = s_1 s_2 \al_{s_2}^{-1}(r_1) r_2$.
\end{example}

The following examples show that surjectivity is not enough to render a controlled map saturated.

%%%%%%%%%%%%%%%%%%%%%%%%%%%%%%%%
\begin{example}
Take the free quasi-lattice on two symbols $a,b$ and take $\vartheta$ be its abelianization map.
Then for $ab$ and $(0,1) \in \bZ^2$ we have that $\vartheta(ab) = (1,1) \geq (0,1)$.
However $\{b\} = \vartheta^{-1}((0,1))$ and $ab \not\geq b$.
(Although, Proposition \ref{P:supply} induces a saturated map on free quasi-lattices.)
\end{example}

%%%%%%%%%%%%%%%%%%%%%%%%%%%%%%%%
\begin{example}
Consider the Baumslag-Solitar group $B(3, 3) = \sca{a, b \mid a^3 b = b a^3}$.
Recall that every element $x \in B(3, 3)$ admits a unique normal form
\[
x = a^{p_1} b^{\eps_1} a^{p_2} \cdots a^{p_{k}} b^{\eps_k} a^{p_{k+1}}
\text{ with }
p_1, \dots, p_k \in \{0, 1, 2\}, p_{k+1} \in \bZ, k \in \bZ_+.
\]
Let $B_+(3, 3)$ be its sub-semigroup generated by $a, b$.
It follows that if $x$ is in its normal form as above then
\[
x = a^{p_1} b^{\eps_1} a^{p_2} \cdots a^{p_{k}} b^{\eps_k} a^{p_{k+1}} \in B_+(3, 3)
\qiff
\eps_1, \dots, \eps_{k} = 1, p_{k+1} \geq 0.
\]
By \cite[Theorem 2.11]{Spi12} we have that the pair $(B(3, 3), B_+(3, 3))$ is a quasi-lattice ordered group. 
In \cite{Kak19} it is shown that the abelianization gives a surjective controlled map
\[
\vartheta \colon (B(3, 3), B_+(3, 3)) \longrightarrow (\bZ^2, \bZ_+^2) ; a^{p_1} b a^{p_2} b \cdots a^{p_k} b a^{p_{k+1}} \mapsto (p_1 + \cdots + p_{k+1}, k).
\]
However this map is not saturated. 
Take $t = a^2 b$ and $h = (1,1)$ so that 
\[
\vartheta(t) = (2,1) \in (1,1) + \bZ_+^2.
\]
We have that $\vartheta^{-1}(1,1) = \{ab, ba\}$ and thus these are the only choices for a possible $s$ with $\vartheta(s) = (1, 1)$ and $s \leq t$.
However we see that
\[
(ab)^{-1} t  = b^{-1} a b \notin B_+(3, 3)
\qand
(ba)^{-1} t = a^{-1} b^{-1} a^{2} b \notin B_+(3, 3).
\]
\end{example}

%%%%%%%%%%%%%%%%%%%%%%%%%%%%%%%%
\begin{theorem}\label{T:con ta}
Let $\vartheta \colon (G, P) \to (\G, \P)$ be a saturated controlled map between weak right LCM-inclusions.
Let $X$ be a (resp.\ injective) compactly aligned product system over $P$ with coefficients in $A$ and let
\begin{equation*} \label{eq:Y}
Y_h: = \sumoplus_{p \in \vartheta^{-1}(h)} X_p \qfor h \in \P.
\end{equation*}
Then the collection $Y = \{Y_h\}_{h \in \P}$ is a (resp.\ injective) compactly aligned product system over $\P$ with coefficients in $A$ such that $\T_\la(X)^+ \simeq \T_\la(Y)^+$ with
\[
\T_\la(X) \simeq \T_\la(Y) 
\qand
A \times_{X, \la} P \simeq  A \times_{Y, \la} \P,
\]
by $*$-homomorphisms that preserve the inclusions $X_p \mapsto Y_{\vartheta(p)}$ for all $p \in P$.
These $*$-isomor\-phisms further lift to $*$-isomorphisms
\[
\N\T(X) \simeq \N\T(Y)
\qand
A \times_{X} P \simeq A \times_{Y} \P,
\]
 that preserve the inclusions $X_p \hookrightarrow Y_{\vartheta(p)}$ for all $p \in P$.
\end{theorem}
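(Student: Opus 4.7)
The plan is to construct $Y$ as a compactly aligned (resp.\ injective) product system over $\P$ and then identify the five algebras via the Fock representation at the tensor algebra level together with the C*-envelope and Fell bundle machinery. For the product system structure, the multiplication $Y_h \otimes_A Y_k \to Y_{hk}$ is defined on simple tensors by $\xi_p \otimes \eta_q \mapsto \xi_p \eta_q \in X_{pq}$ for $p \in \vartheta^{-1}(h)$ and $q \in \vartheta^{-1}(k)$. The key combinatorial input is that every $r \in \vartheta^{-1}(hk)$ admits a unique decomposition $r = pq$ with $\vartheta(p) = h$ and $\vartheta(q) = k$: existence comes from saturation (producing some $s \in P$ with $r \in sP$ and $\vartheta(s)\P = h\P$) combined with (A3) (adjusting $s$ by a unit of $P$ so that $\vartheta(s) = h$ exactly), while uniqueness follows from (A2), since any two such decompositions force the two candidate $p$'s to share a common right multiple. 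Injectivity of each $Y_h$ when $X$ is injective is immediate from the direct sum definition.

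Compact alignment of $Y$ is the main technical obstacle. By Proposition \ref{P:com al}, it suffices to establish in the Fock representation that $\ol{\psi}^Y_h(\K Y_h) \cdot \ol{\psi}^Y_k(\K Y_k) \subseteq [\ol{\psi}^Y_u(\K Y_u)]$ whenever $h\P \cap k\P = u\P$. Expanding the product into summands indexed by quadruples $(p, p', q, q')$ with $\vartheta(p) = \vartheta(p') = h$ and $\vartheta(q) = \vartheta(q') = k$, and applying Nica-covariance of $X$ to the middle term $\ol{t}_{p'}(X_{p'})^* \ol{t}_q(X_q)$, reduces each nonzero summand to an element of $\ol{\psi}_{pv_1, q'v_2}(\K(X_{q'v_2}, X_{pv_1}))$, where $w = p'v_1 = q v_2$ with $p'P \cap qP = wP$. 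Condition (A1) then gives $\vartheta(w)\P = u\P$, so $\vartheta(w) = uz$ for some $z \in \P^*$; (A3) supplies $y \in P^*$ with $\vartheta(y) = z$, and Proposition \ref{P:star inv LCM} lets me absorb the $y$-factor (since $\ol{t}_y(X_y) \ol{t}_y(X_y)^* = \ol{\pi}(A)$) so as to rewrite the summand inside $\ol{\psi}_{pv_1 y^{-1}, q'v_2 y^{-1}}(\K(X_{q'v_2 y^{-1}}, X_{pv_1 y^{-1}}))$, which sits in $\ol{\psi}^Y_u(\K Y_u)$ because $pv_1 y^{-1}, q'v_2 y^{-1} \in \vartheta^{-1}(u)$.

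Once $Y$ is a compactly aligned (injective) product system over $\P$, the Fock modules $\F(X) = \bigoplus_{p \in P} X_p$ and $\F(Y) = \bigoplus_{h \in \P} Y_h$ coincide as Hilbert $A$-modules, and the Fock actions of $X$ and $Y$ agree on the generators $X_p \subseteq Y_{\vartheta(p)}$. Hence $\T_\la(X)^+ = \T_\la(Y)^+$ and $\T_\la(X) = \T_\la(Y)$ as concrete operator algebras inside $\L(\F(X))$, and the coaction $\ol{\de}_\G$ on $\T_\la(X)$ induced by $\vartheta$ via Proposition \ref{P:f coaction} agrees with the natural $\P$-coaction on $\T_\la(Y)$. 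Combining Theorem \ref{T:co-univ} for both $X$ and $Y$ with Proposition \ref{P:same G} then yields
\[
A \times_{X, \la} P \simeq \cenv(\T_\la(X)^+, G, \ol{\de}_G) \simeq \cenv(\T_\la(X)^+, \G, \ol{\de}_\G) = \cenv(\T_\la(Y)^+, \G, \ol{\de}_\G) \simeq A \times_{Y, \la} \P,
\]
preserving the inclusions $X_p \hookrightarrow Y_{\vartheta(p)}$ at every stage.

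To lift to the universal objects, I would view $A \times_{X, \la} P$ as the reduced Fell bundle C*-algebra of its $G$-grading and note that the $\P$-grading induced by $\vartheta$ produces a coarsened Fell bundle whose fibers $F_h = \bigoplus_{\vartheta(g) = h} [A \times_{X, \la} P]_g$ coincide with the $h$-spectral subspaces of $A \times_{Y, \la} \P$. Since $*$-representations of a Fell bundle correspond bijectively with $*$-representations of any coarsening obtained by grouping fibers along a group quotient, the universal Fell bundle C*-algebras of the $G$-bundle and of its $\P$-coarsening are canonically isomorphic, giving $A \times_X P \simeq A \times_Y \P$. The identification $\N\T(X) \simeq \N\T(Y)$ follows from the same coarsening principle applied at the Toeplitz-Nica-Pimsner level, or equivalently by the bijection $t \leftrightarrow t^Y_h := \oplus_{p \in \vartheta^{-1}(h)} t_p$ between Nica-covariant representations of $X$ and of $Y$, whose covariance verification rests on the same saturation arguments used in the second paragraph.
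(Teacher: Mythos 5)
Your construction of $Y$ (unique factorization of each $r \in \vartheta^{-1}(hk)$ via saturation, (A3) and (A2)), the verification of compact alignment, the identification of $\T_\la(X)$ with $\T_\la(Y)$ through the common Fock module, and the chain of isomorphisms through $\cenv(\T_\la(X)^+, \G, \ol{\de}_\G)$ giving $A \times_{X, \la} P \simeq A \times_{Y, \la} \P$ all match the paper's proof in substance; the paper checks compact alignment with the first criterion of Proposition \ref{P:com al} rather than the second, but that is cosmetic, and your representation-bijection argument for $\N\T(X) \simeq \N\T(Y)$ is also the paper's route.

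The gap is in the final step, $A \times_{X} P \simeq A \times_{Y} \P$. The ``coarsening principle'' you invoke --- that $*$-representations of a Fell bundle correspond bijectively to $*$-representations of the coarsened bundle, so that the universal cross-sectional C*-algebras agree --- is false in general. A representation of the fine bundle need not extend boundedly to the coarsened fibers, which are norm closures of infinite algebraic sums taken inside the \emph{reduced} algebra; already the trivial bundle over a nonamenable group, coarsened along $G \to \{e\}$, shows that the two universal algebras can differ. In the present situation only one direction is formal: the fiberwise inclusions $[A \times_{X} P]_{pq^{-1}} \simeq [A \times_{X, \la} P]_{pq^{-1}} \hookrightarrow [A \times_{Y, \la} \P]_{\vartheta(pq^{-1})} \simeq [A \times_{Y} \P]_{\vartheta(pq^{-1})}$ yield a canonical $*$-epimorphism $\Psi \colon A \times_{X} P \to A \times_{Y} \P$, and its injectivity is precisely what has to be proved. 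A direct bijection between strongly covariant representations of $X$ over $P$ and of $Y$ over $\P$ is not available either, since the two covariance conditions are limits over finite subsets of different groups. The paper closes this by observing that the ideals of strong covariance relations are induced, so it suffices to show that $\Psi$ is faithful on the $\G$-fixed point algebra of $A \times_{X} P$; this follows from faithfulness of $\Psi|_A$ together with the controlled elimination result, Corollary \ref{C:fpa con}. Your argument supplies no substitute for this input, so the last displayed isomorphism is not established.
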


\begin{proof}
Let $A$ act on both the left and right of each $Y_h$ with $h \in \P$ via the usual multiplication of operators.
By using condition (A2) of Definition \ref{D:control} we can equip $Y_h$ with the $A$-valued bilinear map defined by
\[
\sca{y_h, y'_h} := y_h^* y'_h \in A \subseteq \T_\la(X) \foral y_h, y'_h \in Y_h.
\] 
Then each $Y_h$ becomes a C*-correspondence over $A$.  
Since $\ker \vphi_{Y_h} = \bigcap_{p \in \vartheta^{-1}(h)} \ker \vphi_{X_p}$ we have that every $Y_h$ is injective when every $X_p$ is so.

We now show that $Y:=\{Y_h\}_{h \in \P}$ is a product system.
Since $[Y_h Y_g] \simeq Y_h \otimes_A Y_g$ we have to show that
\[
[Y_h Y_g] = Y_{h g} \foral h, g \in \P.
\]
As $(\ol{\pi}, \ol{t})$ is a Toeplitz representation we have that $Y_h Y_g \subseteq Y_{h g}$ for all $h,g \in \P$.
For the reverse inclusion, let $p \in P$ with $\vartheta(p)=hg$ and we will show that $\ol{t}_p(X_p) \in [Y_h Y_g]$.
Since $\vartheta$ is saturated there are $q, q' \in P$ such that
\[
p = q q' \qand \vartheta(q) \P = h \P.
\]
We can write $\vartheta(q) = h z$ for some $z \in \P^*$ and let $w \in P^*$ with $\vartheta(w) = z$ by condition (A3) of the saturation property.
Since $\vartheta(q) \vartheta(q') = \vartheta(p) = h g$ it follows that $\vartheta(q') = z^{-1} g$.
We thus conclude that
\[
\vartheta(q w^{-1}) = h \qand \vartheta(wq') = g.
\]
Recall that $X_w$ satisfies $[\ol{t}_{w^{-1}}(X_{w^{-1}}) \ol{t}_{w}(X_{w})] = \ol{\pi}(A)$.
By taking elementary vectors we get the required
\begin{align*}
\ol{t}_p(X_p)
& =
[\ol{t}_q(X_q) \ol{t}_{q'}(X_{q'})]
=
[\ol{t}_q(X_q) \ol{\pi}(A) \ol{t}_{q'}(X_{q'})] \\
& =
[\ol{t}_q(X_q) \ol{t}_{w^{-1}}(X_{w^{-1}}) \ol{t}_w(X_w) \ol{t}_{q'}(X_{q'})] \\
& \subseteq
[\ol{t}_{q w^{-1}}(X_{q w^{-1}}) \ol{t}_{w q'}(X_{w q'})]
\subseteq
[Y_{\vartheta(q) \vartheta(w^{-1})} Y_{\vartheta(w) \vartheta(q')}]
=
[Y_h Y_g].
\end{align*}

Next we show that $Y$ is compactly aligned.
Let $h, h' \in \P$ and take $p \in \vartheta^{-1}(h)$ and $q \in \vartheta^{-1}(h')$.
If $h \vee h' = \infty$ then $p \vee q = \infty$  as well for all $p \in \vartheta^{-1}(h)$ and $q \in \vartheta^{-1}(h')$, and so
\[
Y_h^* Y_{h'} = \sum_{p \in \vartheta^{-1}(h), q \in \vartheta^{-1}(h')} \ol{t}_p(X_p)^* \ol{t}_q(X_q) = (0).
\]
On the other hand, if $h \vee h' < \infty$ and $p \vee q < \infty$ for $p \in \vartheta^{-1}(h)$ and $q \in \vartheta^{-1}(h')$, then
\[
\ol{t}_p(X_p)^* \ol{t}_q(X_q) \subseteq [\ol{t}_{p^{-1}w}(X_{p^{-1}w})\ol{t}_{q^{-1}w}(X_{q^{-1}w})^*].
\]
Since $w = px = q y$ we have that $\vartheta(p^{-1}w) = h^{-1} \vartheta(w) = (h')^{-1} \vartheta(w) = \vartheta(q^{-1} w)$ and also $\vartheta(w) = h \vee h'$.
Hence
\[
Y_h^* Y_{h'} = \sum_{p \in \vartheta^{-1}(h), q \in \vartheta^{-1}(h'), p \vee q < \infty} \ol{t}_p(X_p)^* \ol{t}_q(X_q) \subseteq [Y_{h^{-1} (h \vee h')} Y_{h' (h \vee h')}].
\]
Thus Proposition \ref{P:com al} gives that $Y$ is compactly aligned.

By definition we have that $\F X \simeq \F Y$ (by grouping together summands with the same $\vartheta$-image), and therefore we have that $\T_\la(X) \simeq \T_\la(Y)$ and that $\T_\la(X)^+ \simeq \T_\la(Y)^+$.
Notice that these identifications are $\G$-compatible.
By applying Proposition \ref{P:same G} and Theorem \ref{T:co-univ} we then get 
\begin{align*}
A \times_{X, \la} P & \simeq \cenv(\T_\la(X)^+, G, \ol{\de}_G) \simeq \cenv(\T_\la(X)^+, \G, \ol{\de}_\G) \simeq \cenv(\T_\la(Y)^+, \G, \ol{\de}_\G) \simeq A \times_{Y, \la} \P.
\end{align*}

The second part of the proof is treated likewise.
First note that any representation of $X$ lifts to a representation of $Y$ in a unique way, as every fiber of $Y$ is spanned independently by the corresponding fibers of $X$.
Applying similar arguments as above for a representation $(\pi,t)$ in the place of the Fock representation we see that this correspondence preserves Nica-covariant representations.
Hence we get that $\N\T(X) \simeq \N\T(Y)$.

Finally the $*$-isomorphisms $A \times_{X,\la} P \simeq A \times_{Y, \la} \P$ gives an injective map
\[
[A \times_{X} P]_{p q^{-1}} \simeq [A \times_{X,\la} P]_{p q^{-1}} \hookrightarrow [A \times_{Y, \la} \P]_{\vartheta(p q^{-1})} \simeq [A \times_{Y} \P]_{\vartheta(p q^{-1})}.
\]
Therefore we get a commutative diagram
\[
\xymatrix{
\N\T(X) \ar[rr]^{\Phi} \ar[d]^{q_X} & & \N\T(Y) \ar[d]^{q_Y} \\
A \times_{X} P \ar[rr]^{\Psi} & & A \times_{Y} \P
}
\]
where the upper horizontal arrow is a $*$-isomorphism.
Since the ideals of strong covariance relations are induced, it suffices to show that
\[
\ker \Phi q_Y \bigcap [\N\T(X)]_{e_\G} \subseteq \ker q_X.
\]
Equivalently that $\Psi$ is faithful on the $\G$-fixed point algebra defined on $A \times_{X} P$.
However this follows by Corollary \ref{C:fpa con} as $\Psi$ is by definition faithful on $A$.
\end{proof}

Theorem \ref{T:con ta} gives a very clear picture for the covariance algebras of a product system over a free product order of the form $(\Asterisk_{i=1}^n G, \Asterisk_{i=1}^n P)$ for an abelian total order $(G, P)$. 
It is well-known that the Cuntz C*-algebra $\O_n$, $n \in \bN$, can be viewed as either the Nica-Cuntz-Pimsner C*-algebra of the trivial product system over the free semigroup on $n$ generators or as the Cuntz-Pimsner C*-algebra of the C*-correspondence $(\bC^n, \bC)$. 
Our next result generalizes this fact to arbitrary product systems over the free semigroup.

%%%%%%%%%%%%%%%%%%%%%%%%%%%%%%%%
\begin{corollary} \label{C:freesem}
Let $X$ be a compactly aligned product system over the free semigroup $\bF^+_n = \sca{i_1, \dots, i_n}$.
Then $A \times_{X} \bF^+_n \simeq \O_Y$ for the C*-correspondence $Y = \sumoplus\limits_{j=1, \dots, n} X_{i_j}$. 
\end{corollary}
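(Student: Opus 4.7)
The plan is to apply Theorem \ref{T:con ta} together with Proposition \ref{P:supply}. Viewing $\bF^+_n$ as the free product $\Asterisk_{i=1}^n \bZ_+$ of $n$ copies of the abelian total order $(\bZ, \bZ_+)$, Proposition \ref{P:supply} furnishes the length homomorphism
\[
\vartheta \colon (\bF^n, \bF^+_n) \longrightarrow (\bZ, \bZ_+) ; \quad (g_1)_{i_1} \cdots (g_k)_{i_k} \mapsto g_1 + \cdots + g_k,
\]
as a saturated controlled map.

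Next I would invoke Theorem \ref{T:con ta} to obtain a compactly aligned product system $Z = \{Z_m\}_{m \in \bZ_+}$ over $\bZ_+$ with coefficients in $A$, defined by
\[
Z_m := \sumoplus_{p \in \vartheta^{-1}(m)} X_p,
\]
together with a $*$-isomorphism $A \times_{X} \bF^+_n \simeq A \times_{Z} \bZ_+$ that preserves the inclusions of fibers. Observe that $Z_1 = \sumoplus_{j=1}^n X_{i_j} = Y$. Since $(\bZ, \bZ_+)$ is a total order, the product system axioms applied iteratively to $Z$ yield $Z_m \simeq Z_1^{\otimes m} = Y^{\otimes m}$ for every $m \geq 0$, so that $Z$ is canonically the product system associated with the single C*-correspondence $Y$.

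It then remains to identify $A \times_{Z} \bZ_+$ with the Cuntz-Pimsner algebra $\O_Y$. This is the one nontrivial identification in the argument and the step to watch carefully: for $P = \bZ_+$, Sehnem's strong covariance relations specialize to Katsura's covariance condition on the ideal $J_Y$, so her covariance algebra of a product system over $\bZ_+$ is the Cuntz-Pimsner algebra of the underlying correspondence. This is contained in (and was a motivating example for) the theory of \cite{Seh18} and can be invoked directly. Chaining the two $*$-isomorphisms yields $A \times_{X} \bF^+_n \simeq \O_Y$, as claimed.
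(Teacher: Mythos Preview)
Your proposal is correct and follows exactly the approach the paper intends: the corollary is stated immediately after Theorem \ref{T:con ta} as a direct consequence of that theorem together with the saturated controlled length map from Proposition \ref{P:supply}, and the identification of Sehnem's covariance algebra over $\bZ_+$ with Katsura's $\O_Y$ is the standard base case. The paper gives no further details beyond the preceding discussion, so your write-up is in fact more explicit than what appears there.
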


%%%%%%%%%%%%%%%%%%%%%%%%%%%%%%%%
\subsection{Reversible product systems and total orders}
%%%%%%%%%%%%%%%%%%%%%%%%%%%%%%%%

%%%%%%%%%%%%%%%%%%%%%%%%%%%%%%%%
%\begin{remark}
An application of Burns-Hale Theorem \cite{BH72} asserts that $G$ admits a total order if and only if for every non-trivial finitely-generated subgroup $H$ of $G$ there exists a totally ordered $L$ and a non-trivial homomorphism $H \to L$.
If $L = \bZ$ then the group is called \emph{left indicable}.
There are plenty of abelian total orders.
Examples include $\bR^2$ with the lexicographical order and $\bZ^2$ with the semigroup given by the half-plane defined by any line through the origin with irrational slope.
Conrad's Theorem asserts that if $(G,P)$ is a total order and $G$ is Archimedean then $G$ embeds in $\bR$ so that $P$ embeds in $\bR^+$ \cite{Con59}.
Here we say that $G$ is \emph{Archimedean} if whenever $e_G < x < y$, there exists an $n \in \bN$ such that $y  < x^n$.
We refer the reader to \cite{CR16} for an exposition of these results.
%\end{remark}

There are not many ways for a total order to be controlled by an \emph{abelian} total order.

%%%%%%%%%%%%%%%%%%%%%%%%%%%%%%%%
\begin{proposition}
Let $(G,P)$ be a total order and let $\vartheta_{\textup{ab}} \colon (G,P) \to (G_{\textup{ab}}, P_{\textup{ab}})$ be the abelianization map.
Then the following are equivalent:
\begin{enumerate}
\item There is a controlled map $\vartheta \colon (G,P) \to (\G,\P)$ where $(\G,\P)$ is an abelian total order.
\item $\vartheta_{\textup{ab}}^{-1}(0) \cap P = \{e_G\}$.
\item The abelianization map is a controlled map and $(G_{\textup{ab}}, P_{\textup{ab}})$ is a total order.
\end{enumerate}
If any (and thus all) of the above holds then the abelianization is a saturated controlled map.
\end{proposition}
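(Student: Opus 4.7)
The plan is to prove the cycle (iii) $\Rightarrow$ (i) $\Rightarrow$ (ii) $\Rightarrow$ (iii), and then derive the saturation claim. The implication (iii) $\Rightarrow$ (i) is immediate: take $\vartheta := \vartheta_{\textup{ab}}$. For (i) $\Rightarrow$ (ii), the key is that since $\G$ is abelian, the controlled map $\vartheta \colon G \to \G$ factors through the abelianization as $\vartheta = \bar\vartheta \circ \vartheta_{\textup{ab}}$ for a unique homomorphism $\bar\vartheta \colon G_{\textup{ab}} \to \G$. Then for any $p \in \vartheta_{\textup{ab}}^{-1}(0) \cap P$ we compute $\vartheta(p) = \bar\vartheta(0) = e_\G = \vartheta(e_G)$, and since $p P \cap e_G P = p P \neq \emptyset$, condition (A2) of Definition \ref{D:control} forces $p = e_G$.

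The decisive step is (ii) $\Rightarrow$ (iii). I expect this to be the main conceptual point: condition (ii) already forces $G$ to be abelian. Indeed, let $c \in [G,G] = \ker \vartheta_{\textup{ab}}$ with $c \neq e_G$. Since $(G,P)$ is a total order we have $G = P \cup P^{-1}$, so either $c \in P \setminus \{e_G\}$ or $c^{-1} \in P \setminus \{e_G\}$. Because $[G,G]$ is a subgroup, both $c$ and $c^{-1}$ lie in $\ker \vartheta_{\textup{ab}}$; whichever one is in $P$ produces a non-identity element of $\vartheta_{\textup{ab}}^{-1}(0) \cap P$, contradicting (ii). Hence $[G,G] = \{e_G\}$, so $G = G_{\textup{ab}}$ and $\vartheta_{\textup{ab}} = \id_G$. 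Thus $(G_{\textup{ab}}, P_{\textup{ab}}) = (G,P)$ is a total order, and $\vartheta_{\textup{ab}} = \id$ trivially satisfies (A1) and (A2).

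Once $G$ has been shown to be abelian, the saturation of $\vartheta_{\textup{ab}}$ is immediate: the map is the identity, so for any $h \in P_{\textup{ab}}$ and $t \in \vartheta_{\textup{ab}}^{-1}(h P_{\textup{ab}}) = h P$ we can simply take $s := h$, which gives $\vartheta_{\textup{ab}}(s) \P = h \P$ and $t \in s P$. The main obstacle is therefore purely the observation that, in the presence of the total-order structure $G = P \cup P^{-1}$, condition (ii) collapses the kernel of the abelianization to the trivial subgroup; after that, every remaining verification reduces to a tautology about the identity map.
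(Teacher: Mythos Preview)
Your proof is correct and in fact sharper than the paper's. The paper establishes (ii) $\Rightarrow$ (iii) by verifying directly that $(G_{\textup{ab}}, P_{\textup{ab}})$ is a total order (checking $-P_{\textup{ab}} \cup P_{\textup{ab}} = G_{\textup{ab}}$ and $-P_{\textup{ab}} \cap P_{\textup{ab}} = \{0\}$) and then checking axioms (A1) and (A2) for $\vartheta_{\textup{ab}}$ by hand; for saturation it picks a preimage $s \in P$ of $h$ and uses the total order on $G$ together with strict monotonicity of $\vartheta_{\textup{ab}}$ (a consequence of (ii)) to rule out $s > t$. Your argument instead extracts the hidden content of the proposition: since $G = P \cup P^{-1}$, any nontrivial element of $[G,G] = \ker\vartheta_{\textup{ab}}$ (or its inverse) would land in $P \cap \ker\vartheta_{\textup{ab}}$, so (ii) forces $[G,G] = \{e_G\}$ and hence $\vartheta_{\textup{ab}} = \id_G$. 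After that, (iii) and saturation are tautologies. What your approach buys is a cleaner equivalent formulation of the proposition---namely that a total order $(G,P)$ admits a controlled map into an abelian total order if and only if $G$ is already abelian---which the paper's axiomatic verification leaves implicit. The paper's approach, on the other hand, is closer in spirit to how one would argue if the target were not required to be a \emph{total} order, where the quotient could genuinely be smaller.
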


\begin{proof}
If item (i) holds then $\vartheta$ factors through the abelianization.
Since $\vartheta^{-1}(e_\G) \cap P = \{e_G\}$ being a controlled map, then $\vartheta_{\textup{ab}}^{-1}(0) \cap P = \{e_G\}$ as well.

Assume that item (ii) holds and we will show that $(G_{\textup{ab}}, P_{\textup{ab}})$ is a total order.
First we clearly have that
\[
-P_{\textup{ab}} \cup P_{\textup{ab}} = \vartheta_{\textup{ab}}(P^{-1} \cup P) = G_{\textup{ab}}.
\]
Next suppose that $-P_{\textup{ab}} \cap P_{\textup{ab}} \neq \{0\}$ so that there are $h, g \in P_{\textup{ab}}$ with $h+g = 0$.
As the abelianization map is surjective there are $p, q \in P$ with $pq = e_G$ with $\vartheta(p) = h$ and $\vartheta(q) = g$.
As $(G,P)$ is a total order we derive that $p = q = e_G$ and thus $h = g = 0$.
Next we show that $\vartheta_{\textup{ab}}$ satisfies conditions (A1) and (A2) of Definition \ref{D:control}.
Let $p,q \in P$.
Then either $p \leq q$ or $q \leq p$ and condition (A1) follows.
For condition (A2) suppose without loss of generality that $p \leq q$ with $\vartheta_{\textup{ab}}(p) = \vartheta_{\textup{ab}}(q)$.
Then $q = ps$ for $s \in P \cap \vartheta_{\textup{ab}}^{-1}(0)$.
Then $s = e_G$ and so $p =q$.

If item (iii) holds then clearly item (i) holds, concluding the equivalences between all items.

For the saturation property let a $t \in P$ and an $h \in P_{\textup{ab}}$ such that $\vartheta_{\textup{ab}}(t) = h + h'$.
Take an $s \in \vartheta_{\textup{ab}}^{-1}(h)$ since the abelianization map is surjective.
Then either $s \leq t$ or $s > t$.
But if $s > t$ then $h = \vartheta_{\textup{ab}}(s) > \vartheta_{\textup{ab}}(t)$ which is a contradiction.
Thus we must have that $s \leq t$.
\end{proof}

%%%%%%%%%%%%%%%%%%%%%%%%%%%%%%%%
\begin{remark}
There are exact total orders for which the abelianization map is not controlled.
An example is given by the Klein bottle group
\[
\bK := \langle x,y \mid x^{-1} y x = y^{-1} \rangle = \langle x,y \mid x = y x y \rangle
\]
with the total order induced by the semigroup $\bK^+$ generated by $x, y$ in $\bK$.
It is not hard to see that $\bK^+$ induces a total order on $\bK$, being left indicable (or since $\bK$ is the extension $\bZ \rtimes \bZ$).
Alternatively one can see that every element in $\bK$ is written (uniquely) in the form $x^m y^n$ for $m,n \in \bZ$ and we take cases:
if $m,n \geq 0$ then $x^m y^n \in \bK$;
if $m \geq 1$ and $n \leq 0$ then have that $x^m y^n = x^{m-1} y^{-n} x \in \bK^+$; 
if $m=0$ and $n \leq 0$ then $x^m y^n = y^n \in (\bK^+)^{-1}$.
By symmetry these cover all cases.
We see that $\vartheta_{\textup{ab}}(yxy) = \vartheta_{\textup{ab}}(x)$ and so $e_{\bK} \neq y^2 \in \bK^+ \cap \vartheta^{-1}_{\textup{ab}}(0)$.
In fact we have that $\bK_{\textup{ab}} = \bZ \times \bZ_2$ and $\bK^+_{\textup{ab}} = \bZ^+ \times \bZ_2$ and thus it does not define a total order as $-\bK_{\textup{ab}}^+ \cap \bK^+_{\textup{ab}} = \bZ_2$.
\end{remark}

%%%%%%%%%%%%%%%%%%%%%%%%%%%%%%%%
\begin{definition} 
Let $(G,P)$ be a total order and let $X$ be a product system over $P$ with coefficients in $A$.
We say that $X$ is a \textit{reversible} product system if every $X_p$ is a Hilbert bimodule in $A \times_{X, \la} P$, i.e., if $A \times_{X, \la} P = \ca(\pi,t)$ then $t_p(X_p) t_p(X_p)^* \subseteq A$ for all $p \in P$.
\end{definition} 

It follows that reversible product systems consist of Hilbert bimodules.
The converse holds also for injective product systems, as in this case every strongly covariant representation is Katsura-covariant fiberwise.

%%%%%%%%%%%%%%%%%%%%%%%%%%%%%%%%
\begin{proposition}\label{P:aut cov} 
Let $(G,P)$ be a total order and let $X$ be a product system over $P$ with coefficients in $A$.
Suppose that every $X_p$ is injective.
If $(\pi,t)$ is a strongly covariant representation of $X$ then $(\pi, t_p)$ is a covariant representation of $X_p$, in the sense of Katsura, for every $p \in P$.

Therefore an injective product system $X$ is reversible if and only if every $X_p$ is a Hilbert bimodule.
\end{proposition}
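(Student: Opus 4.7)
The plan is to use Sehnem's seminorms $\|\cdot\|_F$ directly. The total-order hypothesis combined with injectivity of $X$ collapses the cores $X_F$ so drastically that the element $a - \psi_p(\vphi_p(a))$ already has $\|\cdot\|_{\{e,p\}}$-seminorm zero, which is exactly what is required for Katsura-covariance. The main technical ingredients are a careful case analysis of the ideals $I_{r^{-1}(r \vee F)}$ in the total-order setting, together with the standard fact that tensoring with the identity on an injective C*-correspondence is isometric.

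For Part 1, fix $p \in P \setminus \{e\}$ and $a \in \vphi_p^{-1}(\K X_p) = J_{X_p}$ (equality by injectivity), set $f_0 := a - \psi_p(\vphi_p(a)) \in [\T(X)]_e$, and take $F = \{e, p\}$. For each $r \in P$ the factor $I_{r^{-1}K_{\{r,e\}}}$ equals $A$ since $r \in rP = K_{\{r,e\}}$, while $I_{r^{-1}K_{\{r,p\}}} = A$ when $r \geq p$; for $r < p$ it equals $\bigcap_{s \in P}\ker\vphi_{(r^{-1}p)s}$, which vanishes by injectivity because $r^{-1}p \in P \setminus \{e\}$. Hence $X_F = \bigoplus_{r \geq p} X_r$, and on each $X_r$ with $r = ps$ the product-system multiplication forces $\vphi_r(a) = i_p^r(\vphi_p(a))$, so $\Phi_F(f_0)|_{X_r} = 0$ and $\|f_0\|_F = 0$. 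By the standard monotonicity $\|\cdot\|_{F'} \leq \|\cdot\|_F$ for $F \subseteq F'$, we get $\lim_F \|f_0\|_F = 0$, so $f_0 \in \I_e \subseteq \I_\infty$ and every strongly covariant representation annihilates $f_0$.

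For Part 2, the ``Hilbert bimodule implies reversible'' direction follows from Part 1: if $\K X_p \subseteq \vphi_p(A)$, then each $k \in \K X_p$ writes as $k = \vphi_p(a)$ with $a \in J_{X_p}$, and Katsura-covariance inside $A \times_{X, \la} P$ yields $\psi_p(k) = \pi(a) \in \pi(A)$. For the converse, assume reversibility and pick $k \in \K X_p$ and $a \in A$ with $\psi_p(k) = \pi(a)$. Since the representations $\Phi_F$ factor through $\T_\la(X)$, the seminorms $\|\cdot\|_F$ descend to $[\T_\la(X)]_e$, and Corollary \ref{C:A in red} identifies the kernel of $[\T_\la(X)]_e \to [A \times_{X, \la} P]_e$ with $q(\I_e)$; hence $\lim_F \|f_0\|_F = 0$ for $f_0 := a - \psi_p(k)$.

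To extract the Hilbert bimodule condition, compute this limit along $F = \{e, p, h\}$ with $h > p$ arbitrary. The same ideal analysis (now with the extra constraint from $h$) shrinks $X_F$ to $\bigoplus_{r \geq h} X_r$, and on each $X_r \simeq X_p \otimes X_{p^{-1}r}$ the operator $\Phi_F(f_0)|_{X_r}$ equals $(\vphi_p(a) - k) \otimes \id_{X_{p^{-1}r}}$, with norm $\|\vphi_p(a) - k\|_{\L X_p}$ because $X_{p^{-1}r}$ is injective. Therefore $\lim_F \|f_0\|_F = \|\vphi_p(a) - k\|$, which being zero forces $k = \vphi_p(a)$, and $\K X_p \subseteq \vphi_p(A)$, making $X_p$ a Hilbert bimodule.
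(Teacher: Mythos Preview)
Your Part 1 is correct and essentially identical to the paper's: both reduce to showing $\Phi_F(a - \psi_p(\vphi_p(a)))|_{X_F} = 0$ for $F = \{e,p\}$ via the same case split on $r \geq p$ versus $r < p$. The paper invokes Proposition~\ref{P:sc lcm} to conclude, whereas you appeal directly to monotonicity of the seminorms; either route works.

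For Part 2, the forward direction is fine. The reverse direction, however, contains a small gap and is much harder than it needs to be. The sets $\{e, p, h\}$ with $h > p$ are \emph{not} cofinal in the directed set of all finite subsets of $G$, so computing the limit ``along'' this family does not by itself give $\lim_F\|f_0\|_F$. This is easily repaired: the same ideal analysis applies to \emph{any} finite $F \supseteq \{e,p\}$, since for a total order with $X$ injective one always gets $X_F = \bigoplus_{r \geq m_F} X_r$ with $m_F := \max(F \cap P) \geq p$, and on each summand $\Phi_F(f_0)$ again acts as $(\vphi_p(a) - k) \otimes \id_{X_{p^{-1}r}}$. Injectivity of $X_{p^{-1}r}$ then gives $\|f_0\|_F = \|\vphi_p(a) - k\|$ for every such $F$, and these $F$ are cofinal.

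That said, the whole seminorm computation for the reverse direction is overkill. Once you have $\psi_p(k) = \pi(a)$ in $A \times_{X,\la} P$, simply apply both sides to $t_p(\xi)$: this yields $t_p(k\xi) = t_p(\vphi_p(a)\xi)$, and since $\pi$ is faithful on $A$ (hence $t_p$ is isometric), $k\xi = \vphi_p(a)\xi$ for all $\xi \in X_p$, so $k = \vphi_p(a) \in \vphi_p(A)$. This one-line argument is what the paper's ``Therefore'' is pointing to.
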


\begin{proof}
Fix $p \in P$ and $a \in A$ such that $\vphi_p(a) = k_p \in \K X_p$.
In view of strong covariance of Proposition \ref{P:sc lcm} and Katsura covariance we have to show that 
\[
\left[\pi_F(a) + \psi_{p,F}(k_p)\right]_{X_F} = 0 \qfor F = \{e, p\},
\]
where
\[
X_F = \oplus_{r \in P} X_r I_{r^{-1} (r \vee F)}.
\]
Let $r \in P$ with $r = ps$ for some $s \in P$.
Then for every $\xi_r = \xi_p \xi_s \in X_r$ and $b \in I_{r^{-1}(r \vee F)}$ we have that
\[
\pi_F(a) \xi_r b = (\vphi_p(a)\xi_p) \xi_s b = (k_p \xi_p) \xi_s b = \psi_{F,p}(k_p) \xi_r b.
\]
Now suppose that $r < p$.
Then by construction $\psi_{F,p}(k_p) \xi_r b = 0$ and we have to show that $\pi_F(a) \xi_r b = 0$ as well.
To this end it suffices to show that 
\[
I_{r^{-1}(r \vee F)} := I_{r^{-1} K_{\{r, e\}}} \bigcap I_{r^{-1} K_{\{r, p\}}} = (0).
\]
Since $r < p$ we have that $r \notin K_{\{r, p\}} \subseteq p P$ while $p \in K_{\{r, p\}}$.
Therefore $r^{-1} p \neq e_G$ and so
\[
I_{r^{-1} K_{\{r, p\}}} = \bigcap_{t \in K_{\{r,p\}}} \ker \vphi_{r^{-1}t} \subseteq \ker \vphi_{r^{-1}p} = (0),
\]
and the proof is complete.
\end{proof}

In the case of $(G,P) =(\bZ, \bZ_+)$, the following result was established in \cite{Kak13}.

%%%%%%%%%%%%%%%%%%%%%%%%%%%%%%%%
\begin{proposition}\label{P:Dirichlet}
Let $(G,P)$ be a total order and let $X$ be a product system over $P$ with coefficients in $A$.
Then $X$ is a reversible product system if and only if the tensor algebra $\T_\la(X)^+$ is Dirichlet in $A \times_{X, \la} P$.
\end{proposition}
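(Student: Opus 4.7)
The plan is to analyze the situation via the $G$-grading on $C := A \times_{X,\la} P$ provided by the normal coaction of Theorem~\ref{T:co-univ}, and to use the associated faithful conditional expectation $E \colon C \to B_P$ onto the $e$-spectral subspace $B_P = [C]_e$. Let $(\pi,t)$ be the canonical Nica-covariant representation and identify $\T_\la(X)^+$ completely isometrically with $\A := \ol{\pi(A) + \sum_{p \in P \setminus \{e\}} t_p(X_p)} \subseteq C$, which is legitimate since $C$ is a C*-cover of $\T_\la(X)^+$.

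For the forward direction, suppose $X$ is reversible. Since $(G,P)$ is a total order, every $g \in G \setminus \{e\}$ lies in $P$ or in $P^{-1}$. If $g \in P$, then $rs^{-1} = g$ forces $r = gs \in P$, and the Wick ordering of $C$ yields
\[
[C]_g = \ol{\spn}\{t_{gs}(\xi)\, t_s(\eta)^* \mid s \in P,\ \xi \in X_{gs},\ \eta \in X_s\}.
\]
Using density of elementary tensors in $X_{gs}$ coming from $X_g \otimes_A X_s \simeq X_{gs}$, each $t_{gs}(\xi)t_s(\eta)^*$ is a norm-limit of combinations $t_g(\xi') t_s(\xi'') t_s(\eta)^* = t_g(\xi')\, \psi_s(\theta_{\xi'',\eta})$. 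Reversibility gives $\psi_s(\theta_{\xi'',\eta}) \in \pi(A)$, and therefore $t_g(\xi')\,\psi_s(\theta_{\xi'',\eta}) \in t_g(X_g) \subseteq \A$. Hence $[C]_g \subseteq \A$ for $g \in P \setminus \{e\}$, and by taking adjoints $[C]_{g^{-1}} \subseteq \A^*$. For $g = e$, reversibility directly gives $B_P = \ol{\spn}\{\psi_s(\K X_s) \mid s \in P\} \subseteq \pi(A) \subseteq \A$. Since the spectral subspaces span a norm-dense subset of $C$, we conclude $C = \ol{\A + \A^*}$, so $\A$ is Dirichlet in $C$.

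For the converse, assume $\A$ is Dirichlet, so $C = \ol{\A + \A^*}$. An element of $\A$ is a norm-limit of sums $\pi(a) + \sum_{p \neq e} t_p(\xi_p)$; applying $E$ kills every $p$-graded summand with $p \neq e$, so $E(\A) \subseteq \pi(A)$ and likewise $E(\A^*) \subseteq \pi(A)$. By continuity of $E$ and closedness of $\pi(A)$ in $C$, we obtain $B_P = E(C) = E(\ol{\A + \A^*}) \subseteq \pi(A)$. Consequently $\psi_p(\K X_p) \subseteq B_P \subseteq \pi(A)$ for every $p \in P$, which is precisely the statement that each $X_p$ is a Hilbert bimodule, i.e., $X$ is reversible.

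The main obstacle is the Wick-ordering step in the forward direction: one must be careful that $X_{gs}$ is only the closed span of elementary tensors $X_g \cdot X_s$, so an approximation argument replaces a clean algebraic identity. The essential point is that the total order on $(G,P)$ forces every $g \in G$ to admit a factorization $g = (gs)\,s^{-1}$ with both factors in $P$, allowing us to push the ``$s$-part'' of a spanning element through $\psi_s(\K X_s) \subseteq \pi(A)$ by reversibility; this would fail outside of the total-order setting where off-diagonal grades $pq^{-1}$ need not lie in $P \cup P^{-1}$.
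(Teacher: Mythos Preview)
Your proof is correct and follows essentially the same approach as the paper. The only cosmetic difference is that in the forward direction you organize the argument by spectral subspaces $[C]_g$, while the paper works directly with the Wick-ordered generators $t_s(X_s)t_r(X_r)^*$; in both cases the key step is the factorization $X_{gs} \simeq X_g \otimes_A X_s$ combined with reversibility to absorb $\psi_s(\K X_s)$ into $\pi(A)$, and the converse via $E(\A+\A^*)\subseteq\pi(A)$ is identical to the paper's.
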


\begin{proof} 
Fix $(\pi, t)$ be a faithful representation of $A \times_{X, \la} P$.
Suppose first that $X$ is a reversible product system so that $t_p(X_p) t_p(X_p)^* \subseteq \pi(A)$ for all $p \in P$.
We will show that
\[
A \times_{X, \la} P = \ol{\spn}\{ t_s(X_s) + t_r(X_r)^* \mid s,r \in P\}.
\]
Let $s, r \in P$. 
If $rs^{-1} \in P$ then we have that
\[
t_s(X_s) t_r(X_r)^* \subseteq \big[ t_s(X_s) t_s(X_s)^* t_{rs^{-1}}(X_{rs^{-1}})^* \big] \subseteq \big[ \pi(A) t_{rs^{-1}}(X_{rs^{-1}})^* \big] = t_{rs^{-1}}(X_{rs^{-1}})^*.
\]
If $sr^{-1} \in P$ then we have that
\[
t_s(X_s) t_r(X_r)^* \subseteq \big[ t_{sr^{-1}}(X_{sr^{-1}}) t_r(X_r) t_r(X_r)^* \big] \subseteq \big[ t_{sr^{-1}}(X_{sr^{-1}}) \pi(A) \big] = t_{sr^{-1}}(X_{sr^{-1}}).
\]
Hence
\[
A \times_{X, \la} P = \ol{\spn} \{t_s(X_s) t_r(X_r)^* \mid s, r \in P\} \subseteq \ol{\spn}\{ t_s(X_s) + t_r(X_r)^* \mid s, r \in P\} \subseteq A \times_{X, \la} P,
\]
and so $\T_\la(X)^+$ is Dirichlet in $A \times_{X, \la} P$.

Conversely, assume that $\T_\la(X)^+$ is Dirichlet in $A \times_{X, \la} P$ and let $E$ be the conditional expectation induced by the coaction of $G$ on $A \times_{X, \la} P$.
Then $E(\T_\la(X)^+) = \pi(A)$ and 
\[
E(A \times_{X, \la} P)= E( \ol{\T_\la(X)^+ +  (\T_\la(X)^+)^*})= \pi(A).
\]
Thus for each $p \in P$ we have that $t_p(X_p) t_p(X_p)^* \subseteq E(A \times_{X, \la} P) = \pi(A)$ as desired.
\end{proof}

The next corollary squares with the fact that Popescu's non-commutative disc algebra is not Dirichlet.
Recall that for abelian coactions the C*-envelope of a cosystem coincides with the usual C*-envelope of the ambient operator algebra.

%%%%%%%%%%%%%%%%%%%%%%%%%%%%%%%%
\begin{corollary}
Let $\vartheta \colon (G,P) \to (\G,\P)$ be a saturated controlled map between weak right LCM-inclusions and suppose that $(\G,\P)$ is an abelian total order.
Let $X$ be an injective product system over $P$ with coefficients in $A$. 
Then $\T_\la(X)^+$ is Dirichlet if and only if every strongly covariant representation $(\pi,t)$ of $X$ satisfies $t_p(X_p) t_q(X_q)^* \subseteq A$ whenever $\vartheta(p) = \vartheta(q)$.
\end{corollary}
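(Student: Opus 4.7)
The plan is to reduce the claim to Proposition \ref{P:Dirichlet} via the re-parametrization of Theorem \ref{T:con ta}. Since $\vartheta$ is saturated and $X$ is injective, I would pass to the injective compactly aligned product system $Y = \{Y_h\}_{h \in \P}$ over $\P$ defined by $Y_h := \sumoplus_{p \in \vartheta^{-1}(h)} X_p$, for which Theorem \ref{T:con ta} supplies canonical $*$-isomorphisms $\T_\la(X)^+ \simeq \T_\la(Y)^+$ and $A \times_{X, \la} P \simeq A \times_{Y, \la} \P$ that send $X_p \hookrightarrow Y_{\vartheta(p)}$.

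Next I would identify the C*-envelope of $\T_\la(X)^+$ with $A \times_{X,\la} P$. Since $\G$ is abelian, Remark \ref{R:cenv ab} yields $\cenv(\T_\la(X)^+, \G, \ol{\de}_\G) \simeq \cenv(\T_\la(X)^+)$, and combining with Proposition \ref{P:same G} and Theorem \ref{T:co-univ} gives
\[
\cenv(\T_\la(X)^+) \simeq \cenv(\T_\la(X)^+, G, \ol{\de}_G) \simeq A \times_{X, \la} P.
\]
Therefore $\T_\la(X)^+$ is Dirichlet if and only if $A \times_{X, \la} P = \ol{\T_\la(X)^+ + (\T_\la(X)^+)^*}$, which, via the isomorphisms of Theorem \ref{T:con ta}, is equivalent to $\T_\la(Y)^+$ being Dirichlet inside $A \times_{Y, \la} \P$.

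Because $(\G,\P)$ is a total order and $Y$ is injective, Proposition \ref{P:Dirichlet} applies to $Y$: the Dirichlet condition for $\T_\la(Y)^+$ in $A \times_{Y,\la} \P$ is equivalent to $Y$ being reversible, that is, $t^Y_h(Y_h) t^Y_h(Y_h)^* \subseteq A$ for every $h \in \P$. Expanding $Y_h$ along the decomposition over $\vartheta^{-1}(h)$, this translates precisely into the condition $t_p(X_p) t_q(X_q)^* \subseteq A$ whenever $\vartheta(p) = \vartheta(q)$, interpreted in $A \times_{X, \la} P$. To upgrade this to \emph{every} strongly covariant representation (equivalently, to the universal algebra $A \times_X P$), I would use that $t_p(X_p) t_q(X_q)^*$ lies in the $\vartheta$-fixed point algebra of $A \times_X P$ when $\vartheta(p) = \vartheta(q)$, and by Corollary \ref{C:fpa con} the canonical $*$-epimorphism $A \times_X P \to A \times_{X, \la} P$ is faithful on this $\vartheta$-fixed point algebra; hence the relation lifts back to $A \times_X P$ and therefore to any strongly covariant representation.

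The main subtlety is the bookkeeping behind the chain $\cenv(\T_\la(X)^+) \simeq A \times_{X, \la} P$, where abelianness of $\G$ is essential in Remark \ref{R:cenv ab} to move between the operator-algebraic and the cosystem C*-envelopes; once this identification is secured, the reduction to Proposition \ref{P:Dirichlet} via Theorem \ref{T:con ta} and the passage from reduced to universal via Corollary \ref{C:fpa con} are straightforward.
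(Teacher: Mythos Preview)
Your proposal is correct and follows essentially the same route as the paper: reduce to $Y$ via Theorem \ref{T:con ta}, identify $\cenv(\T_\la(X)^+)$ with the reduced covariance algebra using abelianness of $\G$, and apply Proposition \ref{P:Dirichlet} to $Y$ over the total order $(\G,\P)$. The only cosmetic difference is that the paper bundles the C*-envelope identification into a single citation of Theorem \ref{T:cenv red} (and tacitly uses amenability of $\G$ to identify $A\times_X P$ with $A\times_{X,\la} P$, together with Proposition \ref{P:aut cov} for the Hilbert-bimodule characterisation), whereas you unpack it via Remark \ref{R:cenv ab}, Proposition \ref{P:same G}, Theorem \ref{T:co-univ}, and then lift to all strongly covariant representations through Corollary \ref{C:fpa con}; both arguments are valid and arrive at the same chain of equivalences.
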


\begin{proof}
By Theorem \ref{T:cenv red}, and since the controlling pair is abelian, the C*-envelope of $\T_\la(X)^+$ is $A \times_{X} P$.
For the injective $X$, let $Y$ be the injective product system over $\P$ with coefficients in $A$ constructed in Theorem \ref{T:con ta}.
By construction we see that $Y_h$ with $h \in \P$ is a Hilbert bimodule if and only if $t_p(X_p) t_q(X_q)^* \subseteq A$ for all $p,q \in \vartheta^{-1}(h)$.
By applying Remark \ref{R:cenv ab}, Theorem \ref{T:cenv red}, Theorem \ref{T:con ta}, Proposition \ref{P:aut cov}, and Proposition \ref{P:Dirichlet} we have that the Fock tensor algebra $\T_\la(X)^+$ is Dirichlet in $A \times_{X} P$, if and only if $\T_\la(Y)^+$ is Dirichlet in $A \times_{Y} \P$, if and only if every $Y_h$ with $h \in \P$ is a Hilbert bimodule, if and only if $t_p(X_p) t_q(X_q)^* \subseteq \pi(A)$ whenever $\vartheta(p) = \vartheta(q) = h$ for all $h \in \P$.
\end{proof}

The next theorem shows that, for weak right LCM-inclusions that are controlled by total orders in a saturated way, reversible product systems produce all possible covariance algebras.

%%%%%%%%%%%%%%%%%%%%%%%%%%%%%%%%
\begin{theorem} \label{T:dil rev}
Let $\vartheta \colon (G,P) \to (\G,\P)$ be a saturated controlled map between weak right LCM-inclusions and suppose that $(\G,\P)$ is a total order.
Let $X$ be a (resp.\ injective) product system over $P$ with coefficients in $A$.
Then there exists a (resp.\ injective) reversible product system $Z$ over $\P$ with coefficients in a C*-algebra $B$ such that
\begin{equation}
A \subseteq B \qand X_p \subseteq Z_{\vartheta(p)} \foral p \in P,
\end{equation}
that satisfies
\begin{equation}
A \times_{X} P \simeq B \times_{Z} \P
\qand
A \times_{X, \la} P \simeq B \times_{Z, \la} \P,
\end{equation}
by $*$-homomorphisms that preserve the inclusions $X_p \hookrightarrow Z_{\vartheta(p)}$ for all $p \in P$.
\end{theorem}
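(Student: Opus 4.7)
The plan is to reduce to a product system over the total order $\P$ via Theorem \ref{T:con ta}, construct $Z$ using the Fell bundle structure in the reduced covariance algebra $D = A \times_{Y,\la} \P$ of the re-parametrized system, and match covariance algebras using co-universality. First I would apply Theorem \ref{T:con ta} to obtain a (resp.\ injective) compactly aligned product system $Y = \{Y_h\}_{h \in \P}$ over $\P$ with coefficients in $A$, satisfying $X_p \subseteq Y_{\vartheta(p)}$ and canonical $\G$-equivariant $*$-isomorphisms $A \times_X P \simeq A \times_Y \P$, $A \times_{X,\la} P \simeq A \times_{Y,\la} \P$ preserving the inclusions. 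This reduces matters to constructing the reversible dilation for $Y$.

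Next I would set $D := A \times_{Y,\la} \P$ with its canonical normal coaction of $\G$, so that the spectral subspaces $\{\M_g\}_{g \in \G}$ form a Fell bundle. Since $(\G,\P)$ is a total order we have $\G = \P \cup \P^{-1}$, and I define $B := \M_{e_\G}$ and $Z_h := \M_h$ for $h \in \P$; then $A \subseteq B$ and $Y_h \subseteq Z_h$. Each $Z_h$ is automatically a Hilbert $B$-bimodule with $[Z_h Z_h^*] \subseteq B$ and $Z_h^* Z_h \subseteq B$, so $Z$ will be reversible by construction. The Fell-bundle multiplication gives $Z_h Z_g \subseteq Z_{hg}$; for the reverse inclusion, Wick ordering combined with the total order on $\P$ yields $Z_h = [Y_h B]$, and hence $Z_{hg} = [Y_{hg} B] = [Y_h Y_g B] \subseteq [Y_h Z_g] \subseteq [Z_h Z_g]$. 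Compact alignment is automatic: for $h \le g$ in $\P$ one has $[Z_h Z_h^*] \subseteq B$, so $[Z_h Z_h^*][Z_g Z_g^*] \subseteq [Z_g Z_g^*] = [Z_{h \vee g} Z_{h \vee g}^*]$.

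Third I would identify the covariance algebras. The inclusion $Z \hookrightarrow D$ is a $\G$-equivariant Nica-covariant representation faithful on $B$, whose image generates $D$ since $\G = \P \cup \P^{-1}$. By Theorem \ref{T:co-univ} applied to $Z$ this yields $B \times_{Z,\la} \P \simeq D \simeq A \times_{Y,\la} \P$ via a $*$-isomorphism fixing $Y_h$ and $A$. For the universal version, both $A \times_Y \P$ and $B \times_Z \P$ are the maximal C*-algebras of their respective $\G$-Fell bundles $\S\C_\G Y$ and $\S\C_\G Z$; since the fibers of a Fell bundle are canonically identified with the spectral subspaces of its reduced C*-algebra, the reduced identification matches these Fell-bundle fibers (both equal $\{\M_g\}$), so the maximal C*-algebras are also $*$-isomorphic, yielding $A \times_Y \P \simeq B \times_Z \P$. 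Composing with the reduction of the first step gives the required isomorphisms for $X$.

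The injective case requires showing that if every $Y_h$ is injective then so is every $Z_h$: given $b \in B$ with $b \cdot Z_h = 0$, the identity $Z_h = [Y_h B]$ forces $b Y_h = 0$, and one must leverage the approximation of $B$ by finite sums of elements of the form $[Y_q Y_q^*]$ with $q \in \P$ together with injectivity of the left action of $A$ on each $Y_h$ to conclude $b = 0$. I expect this propagation of injectivity through the Fell bundle to be the main technical obstacle, as it is essentially a generalization of Pimsner's fixed-point-algebra dilation to the total-order setting; the remaining content is largely bookkeeping in the Fell bundle framework combined with the C*-envelope machinery of Theorem \ref{T:co-univ}.
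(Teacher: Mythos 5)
Your construction is essentially the paper's: after reducing to $(\G,\P)$ via Theorem \ref{T:con ta}, the paper also takes $B$ to be the core $B_{\P}$ and $Z_h := [t_h(Y_h)B]$ inside a faithful copy of $A\times_{Y,\la}\P$, which for a total order coincides with your spectral subspace $\M_h$; compact alignment and the identification of the reduced algebras then follow from the same C*-envelope machinery, and the universal case is handled, as you suggest, through the Fell bundle structure. One step you gloss over: invoking Theorem \ref{T:co-univ} only hands you a $\G$-equivariant $*$-epimorphism $D \to B\times_{Z,\la}\P$, and you still must argue it is injective. The paper does this by observing that $D = \ol{\T_\la(Z)^+ + (\T_\la(Z)^+)^*}$, so the cosystem is Dirichlet and $D$ is therefore already the C*-envelope (which simultaneously yields reversibility via Proposition \ref{P:Dirichlet}); alternatively, your epimorphism intertwines the faithful conditional expectations and is faithful on the common fixed point algebra $B$, hence is injective. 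Either way the step needs to be supplied rather than asserted.

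The genuine gap is the injective case, which the theorem explicitly claims and which you leave as an acknowledged ``main technical obstacle.'' The plan you sketch --- approximate $b \in B$ by finite sums from the $\psi_q(\K Y_q)$ and use injectivity of the left actions of $A$ --- does not reach the conclusion: knowing that $f = \sum_i \psi_{p_i}(k_{p_i})$ annihilates $Z_p$ only tells you that $\sum_i i_{p_i}^{r}(k_{p_i}) = 0$ as an operator on $Y_r$ for $r \geq p$, and deducing that $f$ itself vanishes in $B \subseteq A\times_{Y,\la}\P$ is precisely a strong covariance statement, not a consequence of injective left actions alone. The paper's argument first reduces, via a c.a.i.\ of $\psi_p(\K Y_p)$ and the decomposition $B = B_{\{s< p\}} + B_{p\P}$, to a finite sum over a chain $p_1 < \cdots < p_n = p$, and then verifies the criterion of Proposition \ref{P:sc lcm} on the modules $X_F$ for $F = \{p_1,\dots,p_n\}$: for $r \geq p$ one uses that $f \in \ker\vphi_p^Z \subseteq \ker\vphi_r^Z$, while for $r < p$ one uses that injectivity of the fibers forces $I_{r^{-1}(r\vee F)} \subseteq \ker\vphi_{r^{-1}p} = (0)$, so those summands of $X_F$ vanish. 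This interplay between injectivity and the strong covariance ideals is the missing ingredient; without it the injectivity of $Z$ does not follow from your outline.
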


\begin{proof}
By Theorem \ref{T:con ta} we can assume that $(G,P) = (\G,\P)$.
Fix $(\pi,t)$ be a faithful representation of $A \times_{X, \la} P$ and let 
\[
B := B_P = \ca( \{ t_s(X_s) t_s(X_s)^*\mid s\in P\} \mbox{ and } Z_p := [t_p(X_{p}) B] \foral p \in P \backslash \{e\}.
\]
The trivial C*-correspondence structure on $A \times_{X, \la} P$ descends to a C*-correspondence structure on each $Z_p$ over $B$.
Notice here that since $(G,P)$ is totally ordered we automatically have that the product system $Z = \{Z_p\}_{p \in P}$ is compactly aligned.
Also $\ca(B,Z) = \ca(\pi,t)$ admits a coaction of $G$ from $A \times_{X, \la} P$.
Hence by Theorem \ref{T:extension} we have that
\[
\overline{\alg}\{B, Z_p \mid p \in P\} \simeq \T_\la(Z)^+.
\]
By construction
\[
A \times_{X, \la} P = \ol{\T_\la(Z)^+ + (\T_\la(Z)^+)^*},
\]
thus the cosystem of $\T_\la(Z)^+$ over $G$ is Dirichlet in a C*-cover.
This gives at the same time that this C*-cover $A \times_{X, \la} P$ is the C*-envelope of the cosystem $\T_\la(Z)^+$ over $G$, and that $Z$ is reversible by Proposition \ref{P:Dirichlet}.
Theorem \ref{T:co-univ} then concludes that
\[
B \times_{Z, \la} P \simeq \cenv(\T_\la(Z)^+, G, \ol{\de}_G) \simeq A \times_{X, \la} P.
\]

For the case of the universal C*-algebras we proceed as in Theorem \ref{T:con ta}.
That is first we notice that the $*$-isomorphism between the reduced C*-algebras implies an embedding of the Fell bundles
\[
[A \times_{X} P]_{p q^{-1}} \simeq [A \times_{X, \la} P]_{p q^{-1}} \hookrightarrow [B \times_{Z, \la} \P]_{p q^{-1}} \simeq [B \times_{Z} P]_{p q^{-1}}
\]
which lifts to a $*$-epimorphism $\Psi \colon A \times_{X} P \to B \times_{Z} P$.
Since $X \subseteq Z$ we also have a $*$-epimorphism at the level of the Nica-Toeplitz-Pimsner algebras and thus the following diagram
\[
\xymatrix{
\N\T(X) \ar[rr]^{\Phi} \ar[d]^{q_X} & & \N\T(Z) \ar[d]^{q_Z} \\
A \times_{X} P \ar[rr]^{\Psi} & & B \times_{Z} P
}
\]
is commutative, and fixes $X$.
Since the ideals of strong covariance relations are induced, it suffices to show that
\[
\ker \Phi q_Z \bigcap [\N\T(X)]_{e} \subseteq \ker q_X.
\]
Equivalently that $\Psi$ is faithful on the $G$-fixed point algebra defined on $A \times_{X} P$, which by definition is $B$.
However this follows by the property of $A \times_{X} P$ as $\Psi|_A$ is by construction faithful.

It is left to show that injective of $X$ implies injective of $Z$.
%Now suppose that $X$ is injective and we will show that so is $Z$.
By Theorem \ref{T:con ta} we can still assume that $(G,P) = (\G,\P)$.
To this end let $p \in P$ and $f \in \ker \vphi_{p}^Z$.
We need to show that $f = 0$.

As $B_{p P}$ is an ideal in $B$ we have that $B = B_{\{s < p\}} + B_{p P}$, and let $f_1 \in B_{\{s < p\}}$ and $f_2 \in B_{p P}$ such that $f = f_1 + f_2$.
Let $(e_i)$ be a c.a.i.\ of $\psi_p(\K X_p)$ so that
\[
0 = f e_i = f_1 e_i + f_2 e_i.
\]
However $(e_i)$ is also a c.a.i.\ for $B_{p P}$ and so 
\[
\lim_i f_1 e_i = - \lim_\la f_2 e_i = f_2.
\]
By Nica-covariance $f_1 e_i \in B_p$ for all $i$, and so we have that $f_2 \in B_p$.
Thus we can assume without loss of generality that $f \in B_{\{s \leq p\}}$.
As $B_{\{s \leq p\}}$ is the inductive limit of $B_{F}$ for $F = \{p_1 < p_2 < \cdots < p_n = p\}$ we may assume that
\[
f = \sum_{i=1}^n \psi_{p_i}(k_{p_i}) \text{ with } k_{p_i} \in \K X_{p_i} \text{ and } p_1 < p_2 < \cdots < p_n = p.
\]
Recall the representation $(\pi_F, t_F)$ on $X_F = \oplus_{r \in P} X_r I_{r^{-1}(r \vee F)}^X$ and we will show that
\[
\sum_{i=1}^n \psi_{F, p_i}(k_{p_i})|_{X_F} = 0.
\]
As $(\pi,t)$ is strongly covariant this will give that $f = 0$ by Proposition \ref{P:sc lcm}.
For $r \geq p$ we have that $f \in \ker \vphi_{p}^Z \subseteq \ker \vphi_{r}^Z$, and for every $\eta_r \in X_r I_{r^{-1}(r \vee F)}^X$ we have that $t_r(\eta_r) \in t_r(X_r) \subseteq Z_r$.
Hence
\[
t_r( \sum_{i=1}^n i_{p_i}^{r}(k_{p_i}) (\eta_r) ) = \sum_{i=1}^n \psi_{p_i}(k_{p_i}) t_r(\eta_r) = f t_r(\eta_r) = 0.
\]
As $t$ is isometric we obtain
\begin{equation}\label{eq:aut cov 1}
\sum_{i=1}^n \psi_{F, p_i}(k_{p_i})|_{X_r I_{r^{-1}(r \vee F)}^X} = \sum_{i=1}^n i_{p_i}^{r}(k_{p_i}) = 0, \qfor r \geq p.
\end{equation}
On the other hand for $r < p$ we have that $r^{-1} p \neq e_G$ and so
\[
I_{r^{-1}(r \vee F)}^X \subseteq I_{r^{-1} K_{\{r,p\}}}^X \subseteq \ker\vphi_{{r^{-1}p}}^X = (0).
\]
Hence trivially
\begin{equation}\label{eq:aut cov 2}
\sum_{i=1}^n \psi_{F, p_i}(k_{p_i})|_{X_r I_{r^{-1}(r \vee F)}^X} = 0, \qfor r < p.
\end{equation}
By equations (\ref{eq:aut cov 1}) and (\ref{eq:aut cov 2}) we have that  $\sum_{i=1}^n \psi_{p_i, F}(k_{p_i})|_{X_F} = 0$,
and the proof is complete.
\end{proof}

%%%%%%%%%%%%%%%%%%%%%%%%%%%%%%%%

\end{document}